\newtheorem{thm}{Theorem}[section]
\newtheorem{lemma}[thm]{Lemma}
\newtheorem{cor}[thm]{Corollary}
\newtheorem{prop}[thm]{Proposition}
\newtheorem{defn}[thm]{Definition}
\newtheorem{example}[thm]{Example}
\newtheorem{remark}[thm]{Remark}
\newtheorem{conjecture}[thm]{Conjecture}
\numberwithin{equation}{section}
\newcommand{\bb}{\boldsymbol{b}}
\newcommand{\be}{\mathbf{1}}
\newcommand{\AI}{A_\infty}
\newcommand{\Hom}{{\rm Hom}}
\newcommand{\WT}[1]{\widetilde{#1}}
\newcommand{\WH}[1]{\widehat{#1}}
\newcommand{\cF}{\mathcal{F}}
\newcommand{\cH}{\mathcal{H}}
\newcommand{\Z}{\mathbb{Z}}
\newcommand{\R}{\mathbb{R}}
\newcommand{\C}{\mathbb{C}}
\newcommand{\MF}{\mathrm{MF}}
\newcommand{\WF}{\mathcal{WF}}
\newcommand{\Mil}{M}
\newcommand{\Jac}{\mathrm{Jac}}
\newcommand{\bL}{\mathbb{L}}
\newcommand{\Cone}{\mathrm{Cone}}
\begin{document}

\title[Berglund--H\"ubsch mirrors of invertible curve singularities via Floer theory]{Berglund--H\"ubsch mirrors of invertible curve singularities via Floer theory}

\author[Cho]{Cheol-Hyun Cho}
\address{Department of Mathematical Sciences, Research Institute in Mathematics\\ Seoul National University\\ Seoul \\ South Korea}
\email{chocheol@snu.ac.kr}
\author[Choa]{Dongwook Choa}
\address{Institute for Basic Science Center for Geometry and Physics \\ Nam-gu\\Pohang\\South Korea}
\email{dwchoa@ibs.re.kr}
\author[Jeong]{Wonbo Jeong}
\address{Department of Mathematical Sciences, Research Institute in Mathematics\\ Seoul National University\\ Gwanak-gu\\Seoul \\ South Korea}
\email{wonbo.jeong@gmail.com}
 
 \begin{abstract}
 We find a Floer theoretic approach to obtain the transpose polynomial $W^T$ of an invertible curve singularity $W$. This gives an intrinsic construction of the mirror transpose polynomial and enables us to define a canonical $\AI$ functor that takes Lagrangians in the Milnor fiber of W and converts them into matrix factorizations of $W^T$. We find Lagrangians in the Milnor fiber of $W$ that are mirror to the indecomposable matrix factorizations of $W^T$ when $W^T$ is ADE singularity and discover that Auslander-Reiten exact sequences can be realized as surgery exact triangles of Lagrangians in the mirror.

There are two primary steps in the Floer theoretic method for obtaining a transposition polynomial: To get a Lagrangian $L$ and corresponding disc potential function $W_L$, we first determine the quotient $X$ by the maximal symmetry group for the Milnor fiber. 
Second, we define a class $\Gamma$ of symplectic cohomology of $X$ based on the monodromy of the singularity $W$. Another disc counting function, $g$, is defined by the closed-open image of $\Gamma$ on $L$. We demonstrate that restricting to the hypersurface $g = 0$ transforms the disc potential function $W_L$ into the transpose polynomial W T. This second step is the mirror of taking the cone of quantum cap action by the monodromy class $\Gamma$.
\footnotemark  
\end{abstract}

\maketitle
\stepcounter{footnote}
\footnotetext{
This work has been extracted from version 1 of the preprint \cite{CCJ20}(arXiv:2010.09198v1), except for Section 8.
After significantly expanding, its first half became version 2 of the preprint (arXiv:2010.09198v2) in which we provided a general construction of the $\AI$-category associated with the log Fano or log Calabi-Yau type Landau-Ginzburg orbifold.}
 \tableofcontents


\section{Introduction}
Consider the following polynomials, which are called invertible curve singularities.
Here $p,q \in \mathbb{N}$ ($p,q \geq 2$).
\begin{table}[h!]
  \begin{center}
    \label{tab:table1}
    \begin{tabular}{c|c|c} 
      \textbf{Type} & $W$ & $W^T$ \\       \hline
   \textit{Fermat} $F_{p,q}$ &$x^p+y^q$ & $x^p+y^q$     \\  
        \hline
   \textit{Chain} $C_{p,q}$ & $x^p+xy^q$ & $x^py+y^q$     \\  
        \hline
   \textit{Loop} $L_{p,q}$ & $x^py+xy^q$ & $x^py+xy^q$     \\  
     \end{tabular}
  \end{center}
\end{table}

For a given polynomial $W$ in the above, exponents of monomials form an invertible $(2\times2)$ matrix $A$,
and the transpose polynomial $W^T$ is then defined by the transpose matrix $A^T$. 
In this paper, we propose a symplectic geometric approach to obtain the transpose polynomial $W^T$ from the geometry of $W$.

This is a part of the so-called Berglund--Hubsch (homological) mirror symmetry conjecture between invertible polynomials. Its exact form requires some explanation and inevitably involves the symmetry groups of these polynomials.
First, an invertible polynomial is given by
 \[W(x_1, \ldots, x_n)=\sum_{i=1}^n \prod_{j=1}^n x_j^{a_{ij}}\] 
where the matrix of exponents $A=(a_{ij})$ is an $n\times n$ invertible matrix whose entries are nonnegative integers. Kreuzer--Sharke \cite{KrSk} classified invertible singularities and demonstrated that, up to a change in variables, they are given by a Thom--Sebastiani sum of Fermat, Chain, and Loop type polynomials. 

The symmetry group of $W$ is the next crucial piece of information.
The \emph{maximal (abelian) symmetry group} of $W$ is a finite abelian group defined by
\[G_{W} := \left\{ (\lambda_1, \ldots, \lambda_n) \in (\mathbb{C}^{*})^n | \;W(\lambda_{1}x_{1},\ldots, \lambda_{n}x_{n}) = W \right\}.\]
   
Berglund, H\"ubsch, and Henningson \cite{BH93} \cite{BH95} proposed the following mirror pairs.
\begin{defn}
Let $W$ be an invertible polynomial with an exponent matrix $A$, and let $W^T$ be the transpose polynomial whose exponent matrix is $A^T$.

Let $G$ be a subgroup of $G_W$. Define the transpose subgroup $G^T$ by
	$$G^{T}: = \Hom(G_{W}/G,\mathbb{C}^{*}) \subset \mathrm{Hom}(G_{W}, \mathbb{C}^{*}) \simeq G_{W^T}. $$
The following are called Berglund--H\"ubsch(--Henningson) dual pairs of each other.
$$\left(W, G\right)  \Longleftrightarrow \left(W^T, G^T\right)$$
\end{defn}
In particular, two extreme cases arise when $G$ or $G^T$ are trivial.
$$W  \Longleftrightarrow  \left(W^T, G_{W^T}\right), \hspace{.5cm} (W,G_W)  \Longleftrightarrow W^T.$$

\begin{conjecture}[Berglund--H\"ubsch homological mirror symmetry]
\label{Conj: BHHMS}
There exists a derived equivalence between Fukaya category associated with $(W, G)$ and $G^T$-equivariant matrix factorization category $\mathcal{MF}(W^T, G^T)$.
\end{conjecture}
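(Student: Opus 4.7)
The plan is to specialise to the curve case $n=2$ and to the extreme dual pair $(W,G_W)\leftrightarrow W^T$; the general mixed pair should follow by combining the construction below with a $G^T$-equivariant refinement. Rather than attempting an abstract derived equivalence, the strategy is to construct an explicit $\AI$-functor $\cF(W,G_W)\to\mathcal{MF}(W^T)$ out of Floer-theoretic disc counts, and then verify the equivalence on generators in a case-by-case manner.

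The first stage is geometric: pass to the quotient $X := \Mil/G_W$ of the Milnor fiber by the maximal abelian symmetry group. Because $G_W$ has mild isotropy and the complex dimension is one, $X$ is a concrete orbifold Riemann surface on which disc counts are tractable. The plan is then to single out a reference Lagrangian $L\subset X$ which should generate the relevant part of $\cF(W,G_W)$, and to compute its disc potential $W_L$ as a Laurent polynomial in holonomy variables. For the three families $F_{p,q},C_{p,q},L_{p,q}$ a direct enumeration of polygonal discs should yield $W_L$ in closed form.

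Next comes the monodromy correction, which is the core of the construction. I would introduce the class $\Gamma\in SH^{*}(X)$ determined by the loop around the critical value of $W$ and form $g := CO^{0}(\Gamma)\in HF^{*}(L,L)$, again a polynomial in the holonomy variables. The heart of the argument is the identity
\[ W_L \equiv W^T \pmod{g}, \]
which realises $W^T$ as the disc potential restricted to the hypersurface $\{g=0\}$. Given this identity, for any other Lagrangian $\Lambda$ the complex $CF^{*}(L,\Lambda)$, twisted by the $\Gamma$-action, becomes a $\Z/2$-graded curved complex whose curvature is exactly $W^T$, producing a matrix factorization and hence the desired functor. Structurally, this realises the Floer mirror of the cone of quantum cap action by $\Gamma$.

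To upgrade this functor to a derived equivalence, I would exploit the ADE hypothesis on $W^T$: every indecomposable matrix factorization should be the image of an explicit Lagrangian (a short arc or a circle of prescribed class), and the Auslander--Reiten exact sequences should lift to Lagrangian surgery triangles, as suggested by the abstract. Combined with a split-generation argument in the low-dimensional Fukaya category and a check that the functor is fully faithful on generators, this yields the equivalence. The main obstacle is the identity $W_L\equiv W^T\pmod{g}$: it requires a complete enumeration of holomorphic discs on $L$ in $X$, a precise computation of $CO^{0}(\Gamma)$, and control of signs, spin structures, and bulk contributions so that $W^T$ appears on the nose rather than up to an unknown change of variables. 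A secondary, but still serious, obstacle is proving split-generation of $\cF(W,G_W)$ by the explicit Lagrangians chosen; in the curve case this is within reach but not automatic, and handling it cleanly is essential for promoting the explicit functor to a genuine derived equivalence.
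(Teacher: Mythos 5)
The statement you are trying to prove is a conjecture in the paper, and the paper does not prove it; your proposal reproduces the paper's program (quotient $[M_W/G_W]$, Seidel Lagrangian with disc potential $W_\bL$, monodromy class $\Gamma_W$ with Kodaira--Spencer image $g$, the identity $W_\bL=W^T+xy\,g$, matrix factorizations via the localized mirror functor, ADE Lagrangians and AR sequences as surgery triangles) but then asserts an equivalence at exactly the point where the genuine obstruction sits. The domain category $\cF(W,G_W)$ is defined by popsicle maps with $\Gamma_W$-insertions, and for invertible curve singularities (which are log general type) the compactification of these moduli spaces has codimension-one strata with popsicle sphere bubbles; the paper can only rule these out far enough to get an $A_3$-structure, and even that fails for $F_{3,3}$, $C_{3,2}$, $L_{2,2}$. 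Without an $\AI$- (or at least triangulated) structure on $\cF(W,G_W)$, the phrase ``derived equivalence'' is not yet meaningful, so your final step --- split-generation plus full faithfulness on generators --- has nothing to attach to. What is actually available is weaker: a quasi-isomorphism of $\AI$-\emph{bimodules} over $\mathcal{WF}([M_W/G_W])$ between the cone of $\cap\Gamma_W$ and $\mathcal{MF}(W^T)$ (Theorem \ref{thm:c1}), and an isomorphism of endomorphism algebras for one split generator $K$ (Theorem \ref{thm:qiso}).

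Two further points where your plan would fail as written. First, the functor you construct by restricting the localized mirror functor to $\{g=0\}$, namely $\mathcal{G}:\mathcal{WF}([M_W/G_W])\to\mathcal{MF}(W^T)$, is explicitly \emph{not} an equivalence (several non-isomorphic Lagrangians map to the same matrix factorization); the equivalence is supposed to hold only after replacing the source by the cone category $\cF(W,G_W)$, which returns you to the obstruction above. Also note that the curved complex producing a matrix factorization is $CF(\Lambda,\bL)$ deformed by the weak bounding cochain $\bb$ on the immersed Seidel Lagrangian, with curvature $W_\bL$, not $W^T$; one only lands in $\mathcal{MF}(W^T)$ after the bimodule-cone/restriction step. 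Second, your generation argument leans on the ADE hypothesis, but the conjecture concerns all invertible polynomials $W$ and all subgroups $G\subset G_W$ (with the $G^T$-equivariant structure on the B-side); ADE curve singularities are a finite list, and by Buchweitz--Greuel--Schreyer the non-simple cases have infinitely many indecomposable matrix factorizations, so matching indecomposables case by case cannot be the route to the general statement. In short, your write-up is a faithful summary of the paper's strategy and of its partial results, but it does not supply the missing ingredients (the $\AI$-structure on $\cF(W,G)$, an equivalence rather than a bimodule quasi-isomorphism, and the equivariant/general-$G$ and higher-dimensional cases), so the conjecture remains open under your argument.
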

\subsection{Pertinent works}
When $G$ is a trivial subgroup of $G_W$, the associated Fukaya category of $W$ is the Fukaya-Seidel category ${FS}(W)$ \cite{S08}. It is arguably one of the most deeply studied objects in the community since then. Conjecture \ref{Conj: BHHMS} also has been extensively studied. To name a few, we refer to \cite{Sei01} for type $A_n$, \cite{AKO08} for Del-Pezzo type, \cite{FU11} for Brieskorn-Pham cases, \cite{FU13} for type $D_n$, \cite{KST07, T10} for the quiver algebra approach, \cite{HS19, Habermann22} for invertible curve cases. In these works, homological mirror symmetry is proved by ingenious computation of both sides. 

In the related context, homological mirror symmetry for the \emph{Milnor fiber} of $W$ was also studied. See \cite{LU20, LU18} for a precise statement and the proof for simple singularities via the deformation technique. Later \cite{Gam24} generalizes the result using perverse schobers. 

But for a non-trivial subgroup of $G_W$, Fukaya Seidel category associated with $(W, G)$ is not fully defined because equivariant Morsification is not possible in general (see  \cite{Hab22} for the special case of admissible subgroups for invertible curve singularities that allows equivariant Morsification.) In \cite{CCJ20}, we have given a definition for the case that $W$ is log Fano or log Calabi--Yau type based on a different approach. Unfortunately, invertible curve singularities are log general type
except $F_{2,2}$, and hence the full construction of \cite{CCJ20} does not apply.  Recall that a weighted homogeneous polynomial of weight $(w_{1}, \dots, x_{n}; h)$ is called log Fano if $\sum_{i=1}^{n} w_{i} - h > 0$ and log Calabi-Yau if $\sum_{i=1}^{n} w_{i} - h = 0$. 

\subsection{Results of the paper}
Let us explain our approach, which is somewhat orthogonal to the above literature.
We plan to construct the mirror dual pair $(W^T, G^T)$ from the symplectic geometry of $(W,G)$.

We work with the Milnor fiber $M_W$ of $W$, together with a monodromy automorphism $\rho:M_W \to M_W$,
instead of considering a  Morsification of $W$. It is well-known that $M_W$ has the structure of a Liouville manifold (hence an exact symplectic manifold) and $\rho$ can be chosen to be an exact symplectomorphism. 
By the symplectic geometry of $W$, we mean these two data $M_W$ and $\rho$. 

 Next, we take the quotient of the Milnor fiber $M_W$ by the maximal symmetry group $G_W$. 
 We find that when the symplectic side has the maximal symmetry $G = G_W$, the geometry of the quotient
simplifies dramatically such that it allows mirror construction.  Namely, the quotient orbifold $[M_W/G_W]$ is always a partial orbifold compactification of a pair of pants. This enables us to find its mirror. The general subgroup cases follow from the maximal case by incorporating the dual group action.

We have two main steps corresponding to these two data $M_W$ and $\rho$.
The first step is to construct the mirror disc potential $W_\bL$ of the orbifold quotient $[M_W/G_W]$ of the Milnor fiber.
The second step is to define a symplectic cohomology class $[\Gamma]$ of $[M_W/G_W]$ from the monodromy $\rho$,
and use its closed-open image to define another disc potential function $g$. 
Then, the restriction of  $W_\bL$ to
the hypersurface defined by $g=0$ becomes the desired transpose polynomial $W^T$.

  \begin{thm}
  \label{thm: summary}
  Given an invertible curve singularity $W$ with maximal symmetry group $G_W$, its mirror $W^T$ can be obtained as follows.
\begin{enumerate}  
\item There exist an weakly unobstructed immersed Lagrangian submanifold $\bL$ in  the quotient orbifold $[M_W/G_W]$
whose  disc potential function is
   \[
  W_\bL = \left.
  \begin{cases}
   x^p+y^q+xyz,   & \text{for } \;\;F_{p,q}  \\
   y^q+xyz,  & \text{for } \;\;C_{p,q} \\
    xyz , & \text{for } \;\;L_{p,q}
  \end{cases}
   \right. 
\] 
\item There exists a Hamiltonian orbit $\Gamma_W$ on the orbifold $[M_W/G_W]$ whose image under Kodaira-Spencer map (decorated with bounding cochains) is given by  $g(x,y,z) \cdot 1_\bL$ where
  \begin{equation}
 g(x,y,z) =
  \begin{cases} \label{eqn:g}
   z   &  \text{for } \;\;F_{p,q}  \\
   z-x^{p-1}  & \text{for } \;\;C_{p,q} \\
    z-x^{p-1} -y^{q-1} & \text{for } \;\;L_{p,q}
  \end{cases}
  \end{equation}
  \item Transpose polynomial $W^T$ can be obtained by restricting $W_\bL$ on the hypersurface $g=0$:
   $$W_\bL\vert_{g=0}(x,y)=W^T(x,y).$$
\end{enumerate}
 \end{thm}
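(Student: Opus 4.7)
The plan is to establish the three parts of the theorem in sequence. Parts (1) and (2) each require substantive geometric input, and part (3) is then a one-line algebraic substitution.

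For part (1) I would first identify $[M_W/G_W]$ explicitly. The $G_W$-action on $M_W$ is diagonal by coordinate rescaling, so passing to the quotient yields a sphere with prescribed punctures and orbifold points: Fermat produces $\mathbb{P}^1$ with two orbifold points of orders $p,q$ and one puncture, Chain yields $\mathbb{P}^1$ with one orbifold point of order $q$ and two punctures, and Loop is the pair of pants. I would take $\bL$ to be the Seidel-type immersed circle with three self-intersection points whose odd-degree generators are named $x,y,z$. Its $\AI$ disc potential can be computed by the usual pair-of-pants disc count augmented by orbi-disc corrections: smooth triangular discs contribute the $xyz$ term, and an orbi-disc wrapping an order-$n$ corner contributes the $n$-th power of the corresponding variable. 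This yields the stated $W_\bL$ in each case, and weak unobstructedness follows from the resulting identity $m_0(\bL)=W_\bL\cdot 1_\bL$.

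For part (2) I would construct $\Gamma_W$ from the monodromy $\rho$. Because $\rho$ can be chosen $G_W$-equivariant and exact, it descends to $[M_W/G_W]$, and a Hamiltonian mapping-torus model of its flow furnishes a distinguished $1$-periodic orbit $\Gamma_W$ supported near the puncture arising from the Milnor-fiber boundary. The Kodaira--Spencer image of $[\Gamma_W]$ at $\bL$ is then the count, weighted by bounding cochains, of pseudo-holomorphic discs with boundary on $\bL$ and one interior asymptote on $\Gamma_W$. I expect the smooth cap that closes off the puncture to contribute the leading $z$; additional caps that first travel around an orbifold point of order $p$ (resp.\ $q$) before closing up contribute $-x^{p-1}$ (resp.\ $-y^{q-1}$), with the signs determined by standard orientations on orbi-moduli. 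Summing in each type gives the stated $g$. This is the technical heart of the argument.

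Finally for part (3), the verification is purely algebraic: for Fermat $g=z$ so $W_\bL|_{z=0}=x^p+y^q=W^T$; for Chain substituting $z=x^{p-1}$ into $y^q+xyz$ gives $y^q+x^py=W^T$; for Loop substituting $z=x^{p-1}+y^{q-1}$ into $xyz$ gives $x^py+xy^q=W^T$. The principal obstacle throughout is the disc count in (2): identifying the correct Hamiltonian realization of $\rho$ on the quotient, arranging transversality in the orbifold with an interior Reeb-type asymptote, and keeping track of the bounding cochains require the bulk of the work. By contrast (1) is a standard orbi-pair-of-pants computation and (3) is formal once (1) and (2) are in hand.
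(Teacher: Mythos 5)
Your parts (1) and (3) are essentially the paper's argument: the quotients $[M_W/G_W]$ are as you describe, the potential of the Seidel Lagrangian is obtained by counting the finitely many polygons bounded by $\bL$ (note these are \emph{smooth} discs, counted via their lifts to the equivariant tessellation of the Milnor fiber, not orbi-discs with interior orbifold points; and weak unobstructedness is not a consequence of the formula for $m_0$ but uses the reflection-symmetry argument of Cho--Hong--Lau), and the substitution in (3) is the same one-line check.

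The genuine gap is in part (2), in your description of $\Gamma_W$ and of the discs computing its Kodaira--Spencer image. In the paper $\Gamma_W$ is not a single orbit "supported near the puncture arising from the Milnor-fiber boundary": the link $\partial M_W$ descends to a union of circles, one at \emph{each} puncture of $[M_W/G_W]$, and $\Gamma_W$ is the Morse--Bott fundamental class of this quotient link, i.e.\ a \emph{sum} of Hamiltonian orbits, one per puncture, whose winding numbers are forced by expressing the monodromy element of $G_W$ through the covering homomorphism $\phi:\pi_1^{\mathrm{orb}}(\mathbb{P}^1_{a,b,c})\to G_W$: it is $\gamma_3^{-1}$ for Fermat, $\gamma_1^{1-p}+\gamma_3^{-1}$ for chain, and $\gamma_1^{1-p}+\gamma_2^{1-q}+\gamma_3^{-1}$ for loop (Proposition \ref{prop:Gamma computation}). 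The terms $-x^{p-1}$ and $-y^{q-1}$ of $g$ are the closed-open images of these extra orbit components: by the $\Omega$- and $H_1$-grading, $\mathrm{KS}^{\bb}(\gamma_i^{-k})$ can only be a multiple of the $k$-th power of the variable at the immersed corner of $\bL$ opposite that puncture. They do \emph{not} arise, as you propose, from caps that "first travel around an orbifold point of order $p$ (resp.\ $q$) before closing up": in the loop case the quotient is an honest pair of pants with no orbifold points at all, yet $g=z-x^{p-1}-y^{q-1}$; and in the chain case the orbifold point has order $q$ while the punctures have orders $pq$ and $pq/\gcd(p-1,q)$, so no orbifold datum produces the exponent $p-1$. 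The exponents are winding numbers of the components of $\Gamma_W$ determined by the monodromy, not orbifold orders, and your mechanism would give the wrong $g$ (indeed an empty count) outside the Fermat case. Finally, even with the correct $\Gamma_W$, the paper's computation requires comparing these symplectic-cohomology inputs with orbifold/tangency insertions after filling the punctures (Tonkonog's domain-stretching), which your outline does not supply; without the corrected $\Gamma_W$ and this comparison the disc count in (2) does not go through.
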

 
The above construction allows us to construct the following explicit functorial correspondences as well.

 \begin{thm}[Theorem \ref{thm:um}] \label{thm:intro1}
 Let $W$ be an invertible curve singularity.
 \begin{enumerate}
 \item  We have a localized mirror $\AI$-functor which gives a derived equivalence
$$\mathcal{F}^\bL:  \mathcal{WF}([M_W/G_W]) \to \mathcal{MF}(W_\bL)$$
 \item  The restriction of $W_\bL$ to the hypersurface $\{g=0\}$ defines a corresponding functor between their matrix factorization categories, and when composed with $\mathcal{F}^\bL$, it defines an $\AI$-functor
 $$\mathcal{G}: \mathcal{WF}([M_W/G_W]) \to \mathcal{MF}(W^T)$$
\end{enumerate}
\end{thm}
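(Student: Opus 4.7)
The plan for part (1) is to apply the localized mirror $\AI$-functor machinery (in the style of Cho--Hong--Lau) to the immersed Lagrangian $\bL \subset [M_W/G_W]$. Granted the weakly unobstructed structure and disc potential $W_\bL$ from Theorem \ref{thm: summary}(1), I would write the bounding cochain as $b = xX + yY + zZ$, where $X, Y, Z$ are the immersed generators paired with the formal coordinates $x, y, z$ of the mirror. Define $\mathcal{F}^\bL$ on objects by sending a Lagrangian brane $L$ to $CF^*(\bL, L) \otimes k[x,y,z]$ equipped with the differential $d_L := \sum_{k \geq 0} m_{k+1}(b^{\otimes k}, -)$; the weak Maurer--Cartan equation $\sum_k m_k(b^{\otimes k}) = W_\bL \cdot 1_\bL$ combined with the curved $\AI$-relations of the orbifold Fukaya category yields $d_L^2 = W_\bL \cdot \mathrm{id}$, so the image lies in $\mathcal{MF}(W_\bL)$. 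The higher components of the functor are assembled from $\AI$-operations with $b$ inserted on the $\bL$-slots, and the $\AI$-functor relations follow formally from the same curved $\AI$-relations.

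For the derived equivalence, the strategy is to exhibit matched split-generators on both sides. On the symplectic side, $[M_W/G_W]$ is a pair of pants with orbifold punctures, and standard generation results for (orbifold) Liouville surfaces provide an explicit finite collection of Lagrangian arcs that split-generates $\mathcal{WF}([M_W/G_W])$. On the matrix factorization side, $\mathcal{MF}(W_\bL)$ is the singularity category of an explicit hypersurface in $\C^3$ (of one of the three shapes $x^p + y^q + xyz$, $y^q + xyz$, $xyz$), whose indecomposable matrix factorizations can be enumerated directly, e.g.\ via Koszul-type constructions adapted from the Kajiura--Saito--Takahashi description. The main technical obstacle will be the explicit Floer computation showing that the chosen Lagrangian generators map under $\mathcal{F}^\bL$ to this list of indecomposable matrix factorizations: this requires identifying Floer differentials, multiplications, and higher $\AI$-products with corners at the self-intersections of $\bL$, and confirming that no disc correction destroys the match. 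Fully faithfulness on the generators then upgrades to a derived equivalence by a split-generation criterion.

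For part (2), Theorem \ref{thm: summary}(3) gives $W_\bL \equiv W^T$ modulo the element $g$ displayed in \eqref{eqn:g}. In each of the three cases $g$ is a non-zero-divisor in $k[x,y,z]$ with $k[x,y,z]/(g) \cong k[x,y]$, so tensoring with $k[x,y,z]/(g)$ yields an exact DG functor of $\Z/2$-periodic categories $R : \mathcal{MF}(W_\bL) \to \mathcal{MF}(W^T)$, sending $(P_0, P_1; d_0, d_1)$ with $d_0 d_1 = W_\bL \cdot \mathrm{id}$ to its reduction $(\bar P_0, \bar P_1; \bar d_0, \bar d_1)$ satisfying $\bar d_0 \bar d_1 = W^T \cdot \mathrm{id}$. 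Setting $\mathcal{G} := R \circ \mathcal{F}^\bL$ then produces the desired $\AI$-functor. The deeper meaning, advertised in the abstract, is that $R$ realizes the $B$-side mirror of the mapping cone of the quantum cap action of the monodromy class $[\Gamma_W] \in SH^*([M_W/G_W])$, whose closed--open image on $\bL$ is $g \cdot 1_\bL$ by Theorem \ref{thm: summary}(2); this identification is what justifies calling $\mathcal{G}$ a mirror functor to $\mathcal{MF}(W^T)$, although the bare existence claim of part (2) requires only the purely algebraic construction of $R$.
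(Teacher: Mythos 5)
Your construction of the functor itself is essentially the paper's: $\mathcal{F}^\bL$ is the localized mirror functor of \cite{CHL} attached to the weak bounding cochain $\bb = xX+yY+zZ$, and part (2) is exactly the paper's restriction functor --- since $g$ has the form $z-f(x,y)$, one has $\C[x,y,z]/(g)\cong\C[x,y]$ and $W_\bL\equiv W^T$ modulo $(g)$ (indeed $W_\bL-W^T=xy\cdot g$), so reduction modulo $g$ is an exact DG functor $\mathcal{MF}(W_\bL)\to\mathcal{MF}(W^T)$ and $\mathcal{G}$ is the composition. That part is fine and matches the paper.

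The gap is in your plan for the derived equivalence in part (1), on the matrix factorization side. You propose to enumerate the indecomposable matrix factorizations of $W_\bL$ ``via Koszul-type constructions adapted from the Kajiura--Saito--Takahashi description'' and match them with A-side generators. This step would fail: in all three cases $W_\bL\in\{x^p+y^q+xyz,\ y^q+xyz,\ xyz\}$ has a \emph{non-isolated} singularity (the critical locus contains the $z$-axis), so $\mathcal{MF}(W_\bL)$ is not of finite type and no such classification is available. Relatedly, the arcs you take as A-side generators produce matrix factorizations which, under Orlov's equivalence, are supported on the $z$-axis; to generate $\mathcal{MF}(W_\bL)$ one also needs an object supported at the origin, and in the paper this is supplied by the Seidel Lagrangian itself, $\mathcal{F}^\bL(\bL)\simeq k^{\mathrm{stab}}_{W_\bL}$, which must therefore be included among the matched generators. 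The paper proves B-side generation not by classification but by Orlov's equivalence $\mathcal{MF}(W_\bL)\simeq D_{sg}$ together with the fact that the skyscraper at the origin and the structure sheaf of the $z$-axis account for all components of the critical locus (citing \cite{Stev13}). Finally, full faithfulness is not a formality to be ``upgraded'': the actual content of the paper's proof is (i) split-generation of $\mathcal{WF}(M_W)$ by the lifts of one explicit arc $L$ via Abouzaid's criterion applied to an explicit Hochschild cycle, as in \cite{Lee}; (ii) the computation of $CW^\bullet(L,L)$ and of $\mathrm{End}_{\MF}(M_L)$ as a DG algebra of polynomial differential operators in the style of \cite{Dy}; and (iii) a filtration/spectral-sequence comparison showing $\cF^\bL_1$ is a quasi-isomorphism. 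Your proposal defers exactly these computations as a ``technical obstacle,'' but they are where the theorem actually lives.
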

The functor $\mathcal{F}^\bL$ realizes homological mirror symmetry between the orbifold $[M_W/G_W]$ and the Landau-Ginzburg model $W_\bL$. The functor $\mathcal{G}$ does not give an equivalence, and
for this, we need a symplectic counterpart, which is a mirror to the restriction to the hypersurface $\{g=0\}$.
See Theorem \ref{thm:c1} and Proposition \ref{prop:isos} below.

As an application, we find a relation between Auslander--Reiten theory \cite{Auslander1975} of matrix factorizations of $W^T$ and symplectic geometry of $W$.
Recall that the category $ \mathcal{MF}(W^T)$   is equivalent to a category of \textit{maximal Cohen--Macaulay modules} of a Cohen--Macaulay ring $R:=\C[x_1,\cdots,x_n]/(W^T)$ \cite{E80}.  It has been studied intensively in the 80s with tremendous success, including the classification problem of indecomposable MCM modules.  Greuel--Kn\"orrer and Kn\"orrer showed that for ADE (simple) singularities, there are only finitely many indecomposable MCM modules \cite {GK}, \cite{KN87}. The Auslander--Reiten quiver records indecomposable objects as well as irreducible morphisms between them and the cases of ADE curve singularities were computed by Dieterich--Wiedemann \cite{DW} and Yoshino \cite{Yo}.
 We investigate the geometry behind such Auslander--Reiten quiver.

  \begin{thm}[Theorem \ref{thm:ADEAR}]
When  $W^T$ is ADE curve singularity, we find explicit Lagrangians in $[M_{W}/G_W]$ which corresponds to indecomposable matrix factorizations in the AR quiver under the $\AI$-functor $\mathcal{G}$. Moreover, Auslander--Reiten almost split exact sequences are realized as Lagrangian surgery exact sequences.
\end{thm}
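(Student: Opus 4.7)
The plan is to argue case-by-case through the ADE list, using the functor $\mathcal{G}$ built in Theorem \ref{thm:intro1} to transport an explicit collection of Lagrangians in $[M_W/G_W]$ to a complete list of indecomposable matrix factorizations of $W^T$. Since the curve case of Kreuzer--Skarke classifies $W$ with $W^T$ of ADE type into a finite list ($A_n$ Fermat, $D_n$ chain, $E_6,E_8$ Fermat, $E_7$ chain), I would first fix, for each type, the coordinates on $[M_W/G_W]$ coming from part (1) of Theorem \ref{thm: summary} and write down the hypersurface $\{g=0\}$ using the formula \eqref{eqn:g}, so that $W_\bL|_{g=0}=W^T$ takes the standard ADE normal form. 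The basic objects are then the immersed Lagrangian $\bL$ itself plus its images under the residual symmetries of the pair of pants partial orbifold compactification; I would take these as the candidate ``generating'' Lagrangians.

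Second, to produce all indecomposables I would perform iterated Lagrangian surgery at the self-intersection points of $\bL$ (and at mutual intersection points of its symmetric translates). Each surgery at a transverse point produces a new weakly unobstructed Lagrangian together with a surgery exact triangle in the wrapped Fukaya category $\mathcal{WF}([M_W/G_W])$. Applying $\mathcal{G}$ to such a triangle yields an exact triangle in $\mathcal{MF}(W^T)$; I would compute the bounding cochains of the surgered Lagrangians so that the resulting matrix factorizations are built by Koszul-type operations from the image of $\bL$. The tally would then be matched against the Dieterich--Wiedemann and Yoshino lists: for $A_n$ the indecomposables are parametrized by $\lfloor (n+1)/2 \rfloor$ pairs and the Lagrangians appear as iterated bands around the two orbifold points; for $D_n$, $E_6, E_7, E_8$ I would produce the correct finite number of Lagrangians type-by-type and verify that their $\mathcal{G}$-images are indecomposable and pairwise non-isomorphic (using the rank of self-Floer cohomology and the fact that $\mathcal{G}$ intertwines morphism spaces).

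Third, for the AR part, I would recall that an AR exact sequence $0\to \tau X \to Y \to X \to 0$ is characterized by non-splitness together with the lifting property against non-split epimorphisms to $X$; equivalently, it is the unique triangle detected by the AR translate $\tau$. Given Lagrangians $L_X, L_{\tau X}$ realizing $X$ and $\tau X$ under $\mathcal{G}$, I would show that there is a unique (up to scale) degree-one intersection class and perform surgery at that point to produce a Lagrangian $L_Y$; the surgery triangle $L_{\tau X}\to L_Y \to L_X$ then maps under $\mathcal{G}$ to a non-split triangle with the correct end-terms, which by the uniqueness characterization of AR sequences must be the AR sequence itself. To match the whole AR quiver, one checks edge-by-edge that the arrows in the quiver, whose multiplicities are known from \cite{DW,Yo}, coincide with the ranks of $CF(L_X,L_Y)$ computed geometrically in the pair of pants.

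The main obstacle I expect is the combinatorial--computational step of identifying the correct Lagrangians for $D_n, E_6, E_7, E_8$ and verifying the surgery/AR match on the nose: the Lagrangians must be given weakly unobstructed bounding cochains whose Floer differentials literally reproduce the matrix entries of the indecomposable factorizations, and the intersection patterns must reproduce the AR quiver including multiplicities of arrows. Computing these bounding cochains (likely via the holomorphic disc model in the pair of pants together with the action of $G_W$) is the most delicate part, whereas the formal surgery/AR triangle identification, once the correct Lagrangians are on the table, is a consequence of the uniqueness of AR sequences and the exactness of the surgery triangle under the $\AI$-functor $\mathcal{G}$.
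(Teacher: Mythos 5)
There is a genuine gap at the step where you identify the image of the surgery triangle with the AR sequence. You argue that a non-split triangle ``with the correct end-terms'' must be the almost split sequence ``by uniqueness.'' That is not a valid characterization: uniqueness of the AR sequence ending at $X$ does not say that every non-split extension $0\to\tau X\to E\to X\to 0$ is almost split, and in general $\mathrm{Ext}^1(X,\tau X)$ is not one-dimensional. What the paper actually needs, and proves via the appendix of Iyama, is the much more delicate statement that an \emph{almost split} sequence is determined by \emph{all three} of its terms (and even this fails for arbitrary exact sequences -- see the $D_4$ example in the appendix, where a one-parameter family of non-isomorphic exact sequences share the same three terms). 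So to invoke this you must verify that the surgered Lagrangian maps under $\mathcal{G}$ to the correct \emph{middle} term $N$, not just that the end terms are right. That verification is where the real work sits: in several cases ($E_6$, $E_7$, $E_8$) the surgered Lagrangian is connected while the AR middle term is a direct sum, and the paper has to exhibit explicit null-homotopies showing the corresponding mapping cone splits. Your proposal skips this entirely, and your claim that there is a ``unique (up to scale) degree-one intersection class'' between $L_X$ and $L_{\tau X}$ is also unsupported; the paper instead realizes $\tau$ concretely as orientation reversal of the Lagrangian (a sign check in the mirror functor) and specifies the surgery point by hand in each case.

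There is a second, more structural divergence that also looks like a gap. Your candidate objects are the Seidel Lagrangian $\bL$, its symmetric translates, and iterated surgeries at their (self-)intersection points. Surgeries of a compact immersed circle remain compact, whereas the Lagrangians that actually realize Yoshino's indecomposables in the paper are \emph{non-compact arcs} connecting punctures/orbifold points in the equivariant tessellation of $M_W$ (e.g.\ for $A_n$ the arc from $v_1$ to $v_{1+k}$ gives $M_k$); $\bL$ itself only gives the stabilized residue field. The heart of the paper's proof of the first assertion is the explicit, case-by-case polygon count computing $-m_1^{0,\bb}$ on $CF(L,\bL)$ and matching it, after the restriction $g=0$ (and in the $D_n$, $E_7$ cases a relation such as $z=x$ or $z=x^2$ coming from the Kodaira--Spencer computation), with the matrices in the Dieterich--Wiedemann/Yoshino lists. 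Without either producing such computations for your compact surgered objects or explaining why they exhaust the AR quiver, the ``complete list'' step of your argument does not go through, while the surgery-triangle and exactness-of-$\mathcal{G}$ ingredients you cite are indeed the same ones the paper uses (its Lemmas on surgery cones and on exactness of the localized mirror functor composed with restriction).
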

Buchweitz--Greuel--Schreyer proved the converse statement of Kn\"orrer by constructing infinitely many non-isomorphic MCM modules for non-simple cases \cite{BGS87}. It would be very interesting to find a geometric reason for such phenomena, which we leave for future investigation.

 \begin{figure}[h]
\begin{subfigure}{0.43\textwidth}
\includegraphics[scale=0.5]{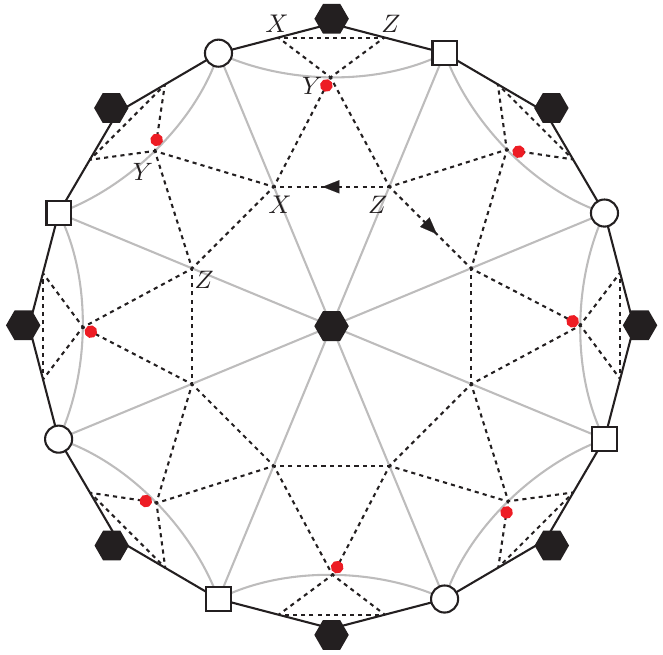}
\centering
\end{subfigure}
\begin{subfigure}{0.43\textwidth}
\includegraphics[scale=0.5]{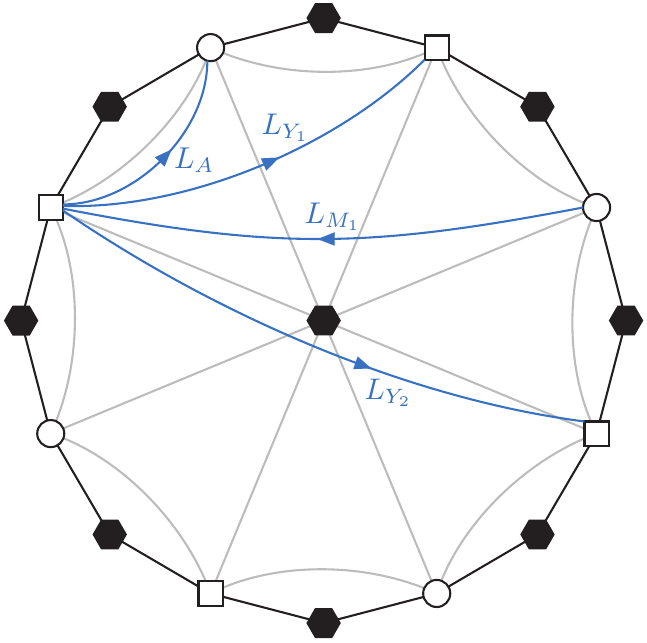}
\centering
\end{subfigure}
\centering
\caption{\label{fig:dncell} Milnor fiber for $D_5^T=C_{2,4}$ and Lagrangians for indecomposable MF's}
\end{figure}

From now on, let us explain our construction in more detail.
\subsection{Description of the Milnor fiber}
Let $M_{W} := W^{-1}(1)$ be the Milnor fiber of $W$. When $W$ is a curve singularity, its Milnor fiber is given by a punctured Riemann surface.   We compactify $M_W$ to $\WH{M}_W$ by adding points for punctures and extend $G_W$ action on it.
We determine the topology of $M_W$. More importantly, we find that the quotient of $M_W$ by $G_W$ is an orbifold partial compactification of a pair of pants. 
\begin{prop}\label{prop:abc}
For invertible curve singularities, the genus $g$ and the number $k$ of punctures of the Milnor fiber $M_W$ are given as follows. Also the quotient $[\WH{M}_W/G_W]$ is
an orbifold projective line $ \mathbb{P}^1_{a,b,c}$: 
\begin{align*}
\textrm{(Fermat)} \;\;\; g &= \frac{\mu_F +1 -d}{2}, & k&=d,& (a,b,c) &= \left(p,q,\textstyle{\dfrac{pq}{d}}\right) \;\textrm{for}\; d= \gcd(p,q). \\
\textrm{(Chain)} \;\;\; g &= \frac{\mu_C -d}{2}, & k&=d+1,& (a,b,c) &= \left(pq,q,\textstyle{\dfrac{pq}{d}}\right) \;\textrm{for}\; d= \gcd(p-1,q). \\
\textrm{(Loop)} \;\;\; g &= \frac{\mu_L -1 -d}{2}, & k&=d+2,& (a,b,c) &= \left(pq-1,pq-1,\textstyle{\dfrac{pq-1}{d}}\right) \;\textrm{for}\; d= \gcd(p-1,q-1). \\[-5mm]
\end{align*}
Here, $c$ vertex for Fermat, $a,c$-vertices for Chain, $a,b,c$-vertices for Loop are punctures of $[M_W/G_W]$.
\end{prop}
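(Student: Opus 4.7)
The plan is to produce for each of the three families an explicit $G_W$-invariant rational function $f$ on $M_W$ whose induced map realizes $[M_W/G_W]$ as an open subscheme of $\mathbb{P}^1$, determine the orbifold structure at $f=0,1,\infty$ by stabilizer and monodromy calculations, and then derive the formulas for $g$ and $k$ from orbifold Riemann--Hurwitz applied to the Galois cover $\widehat M_W\to\mathbb{P}^1_{a,b,c}$.

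First I would compute the symmetry groups to obtain $|G_W|=pq,\,pq,\,pq-1$ in the Fermat, Chain, and Loop cases, with $G_W$ cyclic in the latter two (generated by $(\zeta^{-q},\zeta)\in\mu_{pq}\times\mu_{pq}$ for Chain and $(\zeta,\zeta^{-p})\in\mu_{pq-1}\times\mu_{pq-1}$ for Loop). In each case the invariant
\[
f=x^p\ (\text{Fermat}),\qquad f=xy^q\ (\text{Chain}),\qquad f=x^py\ (\text{Loop})
\]
turns the relation $W=1$ into $y^q=1-f$, $x^p=1-f$, $xy^q=1-f$ respectively. A fiber count shows the generic fiber of $f\colon M_W\to\mathbb{C}$ is a single free $G_W$-orbit of size $|G_W|$, so the induced map on coarse moduli $[M_W/G_W]\to\mathbb{C}$ is an isomorphism onto its image.

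Next I would identify orbifold orders and punctures at $f=0,1,\infty$. At $f=0$ or $1$, one of $x,y$ must vanish; a direct $G_W$-stabilizer computation on such a fiber gives the orbifold order, and when the fiber is empty (which happens at $f=1$ in Chain and at both $f=0,1$ in Loop) the corresponding point is a puncture of $[M_W/G_W]$. For points above $f=\infty$, eliminating one variable yields a univariate equation (e.g.\ $x^{pq-1}=f^q/(1-f)$ in Loop), and following the monodromy of this cover around $\infty$ produces a cyclic permutation of the $|G_W|$ preimages whose orbit length is the ramification index $c$ and whose number of orbits is the number of punctures above $\infty$. This is precisely where the exponents $\gcd(p,q)$, $\gcd(p-1,q)$, $\gcd(p-1,q-1)$ appear: for instance in Loop the leading exponent $q-1$ of $f$ in $x^{pq-1}\sim -f^{q-1}$ together with $\gcd(q-1,pq-1)=\gcd(q-1,p-1)=d$ yields orbit length $(pq-1)/d=c$ and $d$ orbits.

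Finally, with $[\widehat M_W/G_W]=\mathbb{P}^1_{a,b,c}$ established, I would apply orbifold Riemann--Hurwitz: the identities $\chi^{\mathrm{orb}}(\mathbb{P}^1_{a,b,c})=-1+\tfrac1a+\tfrac1b+\tfrac1c$ and $\chi(\widehat M_W)=|G_W|\cdot\chi^{\mathrm{orb}}(\mathbb{P}^1_{a,b,c})$ give
\[
2-2g=|G_W|\Bigl(-1+\tfrac1a+\tfrac1b+\tfrac1c\Bigr),
\]
and substituting the weighted-homogeneous Milnor numbers $\mu_F=(p-1)(q-1)$, $\mu_C=pq-p+1$, $\mu_L=pq$ reproduces the stated formulas for $g$. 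The total puncture count $k=\sum_{v\text{ puncture}}|G_W|/v$ evaluates to $d,\,d+1,\,d+2$ as claimed. The hard part will be the analysis at $f=\infty$: matching the cyclic branch monodromy with the abstract $G_W$-action on the preimages carefully enough that orbit lengths recover the correct $c$ and orbit counts recover the correct $d$. Fermat is the cleanest since $G_W$ splits as a product of cyclic factors acting on $x$ and $y$ separately, while Chain and Loop require more bookkeeping because the cyclic $G_W$ mixes the two coordinates non-trivially.
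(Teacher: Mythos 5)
Your proposal is correct, but it takes a genuinely different route from the paper. The paper gets $k$ from the standard fact that the punctures correspond to the irreducible branches of $W$ (factoring $x^p+xy^q=x(x^{p-1}+y^q)$, etc.), gets $g$ from $\chi(M_W)=1-\mu$ (bouquet of $\mu$ circles) together with $\chi=2-2g-k$, and then establishes the quotient structure by locating the three exceptional $G_W$-orbits on $\WH{M}_W$ directly (fixed points such as $[(1,0)],[(0,1)]$, transitivity on the punctures) and confirming genus zero through the identity $\chi(\WH{M}_W)=|G_W|\cdot\chi^{\mathrm{orb}}(\mathbb{P}^1_{a,b,c})$. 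You instead exhibit the explicit $G_W$-invariant map $f$ ($x^p$, $xy^q$, $x^py$), check that all fibers over $\C\setminus\{0,1\}$ are single free orbits so the coarse quotient is $\mathbb{P}^1$ with branching only over $0,1,\infty$, read off orbifold orders and puncture counts from stabilizers and from the cycle structure of the monodromy (correctly producing the $\gcd$'s, e.g. $\gcd(q-1,pq-1)=\gcd(p-1,q-1)$ in the loop case), and then derive $g$ from orbifold Riemann--Hurwitz and $k$ from the orbit counts, using the Milnor numbers only to rewrite the answer. This buys a self-contained covering-theoretic argument that does not invoke the branch-count or bouquet facts and essentially reconstructs the covering homomorphism of Proposition \ref{prop:homo} as a by-product, at the cost of more local analysis; the paper's argument is shorter given those standard singularity-theoretic inputs. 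One spot where your write-up is thin: at the empty-fiber values ($f=1$ for chain, $f=0,1$ for loop) you only assert these give punctures, but you also need their orbifold/ramification orders (full order $pq$ at $f=1$ in the chain case, $pq-1$ at $f=0,1$ in the loop case) to pin down $(a,b,c)$, to evaluate $k=\sum_v |G_W|/v$, and to apply Riemann--Hurwitz; this follows from exactly the same univariate monodromy analysis you describe at $\infty$ (e.g. $x^p=1-f$ near $f=1$, or $x^{pq-1}=f^q/(1-f)$ at all three points in the loop case), so it is an omission of detail rather than a gap in the method.
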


In fact, we describe an equivariant tessellation of the Milnor fiber explicitly by  investigating the monodromies near branch points as follows:
 The equator of $\mathbb P_{a,b,c}^1$ contains three orbifold points and divides the orbi-sphere into two cells.
From the orbifold covering $\WH{M}_W \to \mathbb P_{a,b,c}^1$ and considering lifts of these two cells, we obtain a tessellation of Milnor fibers of invertible curve singularities. 
Let us give a combinatorial description of the tessellation of $\WH{M}_W $ as well as $G_W$-action on it.

Consider a $2m$-gon whose boundary edges are labelled by  $a_{1}, \ldots, a_{2m}$ ordered and oriented in a counterclockwise way.
We say edges are identified as $\pm(2p-1)$ pattern if $a_{2k}$  and  $(a_{2k+2p-1})^{op}$ are identified, and
$a_{2k-1}$ and $(a_{2k-2p})^{op}$ are identified for any $k$. Here, indices are modulo $2m$, and $a^{op}$ is the orientation reversal of the edge. Note that even and odd numbered edges play different roles.
See Figure \ref{fig3} (B) for $16$-gon identified as $\pm 7$ pattern.

\begin{thm}\label{thm:tess}
The compactified Milnor fiber $\WH{M}_W$ and a tessellation on it are explicitly described as follows.  
\begin{enumerate}
\item(Fermat) $\WH{M}_{F_{p,q}}$ is given by  $(2pq-2p)$-gon with edges identified as $\pm(2q-1)$ pattern.
An odd-numbered edge corresponds to an oriented path from $a$-vertex to $c$-vertex in the quotient.
\item (Chain) $\WH{M}_{C_{p,q}}$ is given by   $(2pq)$-gon with edges identified as $\pm(2p-1)$ pattern.
An odd-numbered edge corresponds to an oriented path from $b$-vertex to $c$-vertex in the quotient.
\item (Loop) $\WH{M}_{L_{p,q}}$ is given by $2(pq-1)$-gon with edges identified as $\pm(2p-1)$ pattern.
An odd-numbered edge corresponds to an oriented path from $b$-vertex to $c$-vertex in the quotient.
\end{enumerate}
\end{thm}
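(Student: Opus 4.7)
To prove Theorem~\ref{thm:tess}, the plan is to lift a natural CW structure on the orbifold base $\mathbb{P}^1_{a,b,c}$ via the covering $\pi:\WH{M}_W\to\mathbb{P}^1_{a,b,c}$ of Proposition~\ref{prop:abc}, and then reorganize the resulting tessellation of $\WH{M}_W$ as a single polygon with the stated boundary identifications.

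I would first pin down the monodromy $\rho:\pi_1^{\mathrm{orb}}(\mathbb{P}^1_{a,b,c})\twoheadrightarrow G_W$ sending the three meridional generators $\alpha,\beta,\gamma$ (with $\alpha\beta\gamma=1$) to generators $g_a,g_b,g_c$ of the corresponding cyclic stabilizer subgroups of $G_W$. For Chain and Loop, $G_W$ is cyclic (of order $pq$ and $pq-1$ respectively), so $g_a,g_b,g_c$ are explicit powers of a single generator; for Fermat, $G_W=\mu_p\times\mu_q$ admits a standard bicyclic description. Next, equip $\mathbb{P}^1_{a,b,c}$ with the CW decomposition cutting along the equator through $a,b,c$: three vertices, three edges $ab,bc,ca$, and two hemispherical 2-cells $F_\pm$. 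Lifting via $\pi$ gives a $G_W$-equivariant tessellation of $\WH{M}_W$ with $2|G_W|$ triangular 2-cells (since the interiors of $F_\pm$ lie in the free locus), $3|G_W|$ edges, and $|G_W|/a+|G_W|/b+|G_W|/c$ vertices, with Euler characteristic matching Proposition~\ref{prop:abc}. The alternation between $F_+$- and $F_-$-triangles along a boundary loop will account for the even/odd alternation of polygon edges in the theorem.

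The key combinatorial step is to present this tessellation as a single polygonal 2-cell. I would select a spanning subgraph $\Gamma\subset\WH{M}_W^{(1)}$ whose complement is an open disk, so that $\WH{M}_W$ is recovered from this disk (the polygon) by identifying its $2E_\Gamma$ boundary segments according to the edges of $\Gamma$. An Euler characteristic count forces $V_\Gamma-E_\Gamma=1-2g$, which together with the count of each vertex type in $\WH{M}_W$ constrains $\Gamma$. For Chain and Loop, the unique $a$-vertex lift plays a distinguished role and $\Gamma$ can be chosen so as to produce a $2|G_W|=2pq$ or $2(pq-1)$-gon. For Fermat, where no vertex has a unique lift, one takes $\Gamma$ in such a way that the $p$ $b$-vertex lifts become interior marked points of the polygon, which yields a $2p(q-1)$-gon. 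The edge type of each boundary segment (odd-numbered $b$-to-$c$ for Chain and Loop, $a$-to-$c$ for Fermat) then follows from the types of edges in $\Gamma$.

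The most technical step is to extract the exact $\pm(2p-1)$ or $\pm(2q-1)$ shift in the boundary identification. My plan is to index the $2E_\Gamma$ boundary segments by consecutive powers of a chosen cyclic generator of (the relevant part of) $G_W$, so that the two segments on either side of any edge $e\in\Gamma$ are related by a specific deck transformation $\delta_e\in G_W$. Using the monodromy relation $g_ag_bg_c=1$ together with the ramification orders from Proposition~\ref{prop:abc}, one computes $\delta_e$ as a shift by $2p-1$ or $2q-1$ positions, with the odd offset reflecting the one-position alternation between $F_+$- and $F_-$-triangles along the boundary. This case-by-case verification for Fermat, Chain, and Loop is what I expect to be the main obstacle; once it is in hand, the theorem follows.
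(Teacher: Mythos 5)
Your plan follows the paper's own route: lift the equatorial two-cell decomposition through the covering determined by the explicit homomorphism of Proposition \ref{prop:homo}, assemble the $|G_W|$ tiles into a single polygon around the distinguished vertex (the unique $a$-lift for chain and loop, with the $p$ $b$-lifts interior in the Fermat case), and extract the $\pm(2q-1)$, resp.\ $\pm(2p-1)$, identification from the relation between $\phi(\gamma_1)$ and $\phi(\gamma_2)$ --- your cut-graph formulation downstairs is just a repackaging of the paper's explicit fundamental polygon $\{\gamma_2^i\gamma_1^jF\}$, resp.\ $\{\gamma_1^iF\}$, in the universal cover. The case-by-case shift verification you defer is precisely the paper's neighboring-tile bookkeeping (e.g.\ $\phi(\gamma_2^{-1}\gamma_1^{k})=\phi(\gamma_1^{k+p})$), so your approach is essentially the same as the paper's and is sound.
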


\subsection{Homological Mirror symmetry for the Milnor fiber quotient}
Recall that given a Lagrangian submanifold $L$ of a symplectic manifold $M$, we can define its disc potential function $W_L$ as a function on the Maurer-Cartan space (deformation space) of $L$
(see \cite{CO}, \cite{FOOO}). This is, by now a standard method of constructing a mirror Landau-Ginzburg model in simple cases.

Also, recall that the Milnor fiber quotient $[M_W/G_W]$ is a partial orbifold compactification of a pair of pants. The mirror symmetry of full orbifold compactification of a pair of pants is now well understood.
Seidel  considered an immersed Lagrangian submanifold $\bL$ on it, called Seidel Lagrangian,
which was used to prove the mirror symmetry of the genus two curve (see \cite{Se} and \cite{Efimov}).

The first author with Hong and Lau \cite{CHL} showed that for $\mathbb{P}_{a,b,c}^1$, Seidel Lagrangian $\bL$ is unobstructed and has a disc potential function $W_\bL$ such that the canonically defined $\AI$-functor, called localized mirror functor from Fukaya category of  $\mathbb{P}_{a,b,c}^1$ to the matrix factorization category of $W_\bL$ becomes an equivalence, realizing the homological mirror symmetry. 
(see also \cite{ACHL} for the mirror symmetry between big quantum cohomology and Jacobian rings).

The method of \cite{CHL} extends easily in our case of partial orbifold compactification of a pair of pants.
The potential $W_\bL$ in Theorem \ref{thm: summary} (1) is obtained as a disc potential function of the Seidel Lagrangian $\bL$ in $[M_W/G_W]$ (see Figure \ref{fig:fabc}). Furthermore, we have a localized mirror $\AI$-functor, and
hence the homological mirror symmetry for quotients of Milnor fibers in Theorem \ref{thm:intro1} (1)
can be proved  by showing that
a split-generator of the wrapped Fukaya category of the quotient is mapped to a split-generator of the matrix factorization category of a mirror potential under the localized mirror functor $\mathcal{F}^\bL$.

\begin{figure}[h]
\includegraphics[scale=1]{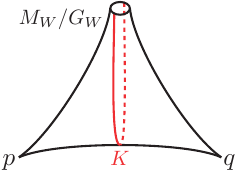}
\centering
\caption{\label{fig:splitgen} Split generator $K$ in Fermat cases $F_{p,q}$}
\end{figure}

\subsection{Floer theory for the monodromy of an isolated singularity}
Note that the Milnor fiber itself cannot encode all the data of a given singularity. The other information that we use is a monodromy of singularity (instead of a Morsification and associated vanishing cycles).
An invertible singularity has an isolated singularity at the origin, and a geometric monodromy is defined by using a symplectic parallel transport on fibers as we travel around the value 0. There are two natural choices for a geometric monodromy on the boundary $\partial M_W$. One may choose it to be the identity map near $\partial M_W$. When $W$ is a weighted homogeneous polynomial (including invertible singularities), there is a circle action on the link of the singularity given by the weight action.

Now, let us explain how we define another disc potential function $g(x,y,z)$ in Theorem \ref{thm: summary} (2) from a geometric monodromy.  For this, let us recall a construction from our previous work \cite{CCJ20}.
The boundary $\partial M_W$, which can be regarded as a link of the singularity $W$, has a $S^1$ action whose orbits are Reeb orbits of the boundary contact structure. This  $\partial M_W$ family of Reeb orbits defines a symplectic cohomology class (in Morse-Bott setting) of the quotient orbifold $[M_W/G_W]$. We denote it by $\Gamma_W$ and call it the monodromy class (see Subsection \ref{subsec:Gamma}).

In singularity, there is a variation operator that relates homology cycles relative to the boundary with compact homology cycles. In \cite{CCJ20}, we considered a quantum cap action (see \cite{auroux07}, \cite{Ga}) by $\Gamma_W$,
\begin{equation}\label{eq:capi}
\cap \Gamma_{W} :  \mathcal{WF}([M_W/G_W]) \to  \mathcal{WF}([M_W/G_W])
\end{equation}
as a quantum analogue of the variation operator. Furthermore, allowing arbitrary many insertions of $\Gamma_W$ on popsicle discs, we have defined a new Fukaya $\AI$-category associated with $(W,G_W)$
when $W$ is log Fano or log Calabi-Yau type.  Invertible curve singularities are log general type, but
we will see that we still have an $A_3$-algebra structure in Section 8.

In our case at hand, we will consider the closed-open map image of the monodromy class $\Gamma_W$
on $\bL$. More precisely, we can decorate the boundary of the closed-open disc with Maurer-Cartan elements of the Seidel Lagrangian $\bL$ to define so-called the Kodaira--Spence map (see \cite{ACHL} for more details).
Its image becomes a multiple of a unit, and its coefficient is the polynomial $g(x,y,z)$ in Theorem \ref{thm: summary} (2).
  \begin{thm}[Theorem \ref{thm:ksg}] \label{thm:intro2}
  The  Kodaira--Spencer map 
  \[\mathrm{KS}^\bold b: SH^\bullet\big([M_W/G_W]\big)  \to H^\bullet(CF(\bL, \bL)\otimes \C [x,y,z], m_1^{\bb,\bb}) \]
  sends $\Gamma_{W}$ to $g(x,y,z) \cdot 1_\bL$.
  \end{thm}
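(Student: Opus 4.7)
My plan is to compute $\mathrm{KS}^{\bb}(\Gamma_W)$ by counting pseudo-holomorphic orbi-discs in $[M_W/G_W]$ with one interior marked point asymptotic to a chosen representative of the monodromy class $\Gamma_W$ and with boundary on the Seidel Lagrangian $\bL$ decorated by the bounding cochain $\bb$. A degree/grading argument for the closed-open map shows that the output of any degree-zero symplectic cohomology class lands in the unit component $1_\bL$ of $HF(\bL,\bL)$, which reduces the problem to identifying the scalar coefficient $g(x,y,z) \in \C[x,y,z]$.

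The core idea is to represent $\Gamma_W$ by the Morse--Bott family of Reeb orbits arising from the weighted $S^1$-action on the link $\partial M_W$, and to observe that in the quotient orbifold these orbits concentrate near the punctures of $[M_W/G_W]$. By Proposition \ref{prop:abc}, the punctures are: only the $c$-vertex for Fermat; the $a$- and $c$-vertices for Chain; and all three $a,b,c$-vertices for Loop. I would therefore decompose the total count into local contributions at each puncture. Using the explicit tessellation of Theorem \ref{thm:tess} together with the local picture of $\bL$ near each end (a standard pair-of-pants neighborhood), the relevant orbi-discs can be enumerated one type at a time. At the $c$-puncture, which is present in all three cases, a single basic orbi-disc contributes $z$. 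At the $a$-puncture (for Chain and Loop), a family analogous to the ``$x^p$-disc'' that produces the monomial $x^p$ in the Fermat disc potential of $W_\bL$ contributes $-x^{p-1}$: the exponent drops by one because the interior marked point carrying $\Gamma_W$ absorbs one wrap around the puncture, and the sign is reversed because the positive Reeb orientation at the end is opposite to the boundary orientation inherited from the disc potential computation. At the $b$-puncture (for Loop), the symmetric computation contributes $-y^{q-1}$. Summing these contributions reproduces the three formulas in \eqref{eqn:g}.

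The hard part will be rigorously justifying both the localization of the disc count near the punctures and the precise sign conventions. For localization, one needs either a neck-stretching/SFT-type degeneration along $\partial M_W$ showing that any contributing orbi-disc breaks off a piece escaping through exactly one puncture, or an explicit autonomous Hamiltonian perturbation whose $1$-orbits in the class $\Gamma_W$ are supported in disjoint neighborhoods of the ends. One must also rule out ``long'' discs whose boundary winds multiple times around the core of the pair of pants or whose image visits more than one puncture; this should follow from an area/energy bound given by the Liouville structure combined with the index constraint that the output lies in $1_\bL$. With localization in hand, the signs are then read off by comparing the positive Reeb direction on the link with the outward orientation of the corona of discs enumerated in Theorem \ref{thm: summary}(1), and this comparison flips the sign precisely at the $a$- and $b$-punctures, producing the minus signs in front of $x^{p-1}$ and $y^{q-1}$.
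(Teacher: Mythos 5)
Your outline (reduce to a scalar coefficient, localize the computation at the ends, count one basic configuration per puncture) does resemble the paper's strategy, but the step that actually produces the exponents is wrong as you state it, and the step you defer as ``the hard part'' is precisely where the paper's proof lives. The exponents $1$, $p-1$, $q-1$ come from Proposition \ref{prop:Gamma computation}: the component of $\Gamma_W$ at each end is the negative power $\gamma_i^{k}$ of minimal $|k|$ with $\phi(\gamma_i^{k})$ equal to the monodromy element of $G_W$ (via the covering data of Proposition \ref{prop:homo}), namely $\gamma_3^{-1}$ for Fermat, $\gamma_1^{1-p}+\gamma_3^{-1}$ for chain, $\gamma_1^{1-p}+\gamma_2^{1-q}+\gamma_3^{-1}$ for loop; the disc contributing at the $a$-end then has $p-1$ corners labelled $X$ because the orbit itself winds $p-1$ times around that puncture, as forced by the $H_1$-grading. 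Your substitute mechanism --- a family ``analogous to the $x^p$-disc'' with one wrap absorbed by the insertion --- does not make sense in the chain and loop cases: there the $a$-vertex is a puncture, no $x^p$-polygon exists and $x^p$ does not appear in $W_\bL$, so there is nothing to lose a corner from; the number $p-1$ is the winding of the monodromy on the branch $x=0$ (weight $(p-1)/pq$), not ``$p$ minus one''. Likewise your sign rule (Reeb orientation versus boundary orientation flips the sign at $a,b$ but not at $c$) is asserted without any reason that distinguishes the three ends; in the paper the signs are read off from the signed count of the explicit polygons with the nontrivial spin structure on $\bL$. Finally, a pure degree argument does not confine the output to the $1_\bL$-component ($\bar X,\bar Y$ have the same $\Omega$-degree as $1_\bL$); the multiple-of-unit statement uses the reflection symmetry of $\bL$ as in \cite{ACHL}.

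On the analytic side, the identification of the $\Gamma_W$-insertion with a concrete disc count is not a routine localization. The paper compactifies $[M_W/G_W]$ to $\mathbb{P}^1_{a,b,c}$, interprets $\gamma_i^{k}$ as a twisted-sector insertion counted by (Fredholm regular) discs with a tangency condition at the corresponding special point, quotes \cite{ACHL} for that count, and then compares it with the symplectic-cohomology class $\Gamma_W$ via Tonkonog's domain-stretching \cite{Tong} together with Seidel's continuation argument, the work being to exclude breaking along the wrong types of orbits. Your alternative (neck-stretching along $\partial M_W$, an autonomous perturbation with orbits near the ends, and energy bounds excluding ``long'' discs) is left entirely unexecuted, and the localization picture behind it is misleading: the contributing configurations are not small discs near a puncture but polygons wrapping the $a$-end $p-1$ times with boundary sweeping across the quotient (Figure \ref{fig:KS}), so an argument is needed that keeps exactly these while excluding everything else; nothing in your sketch does this.
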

      \begin{figure}[h]
    \centering
    \includegraphics[scale=0.5, trim=0 550 0 20, clip]{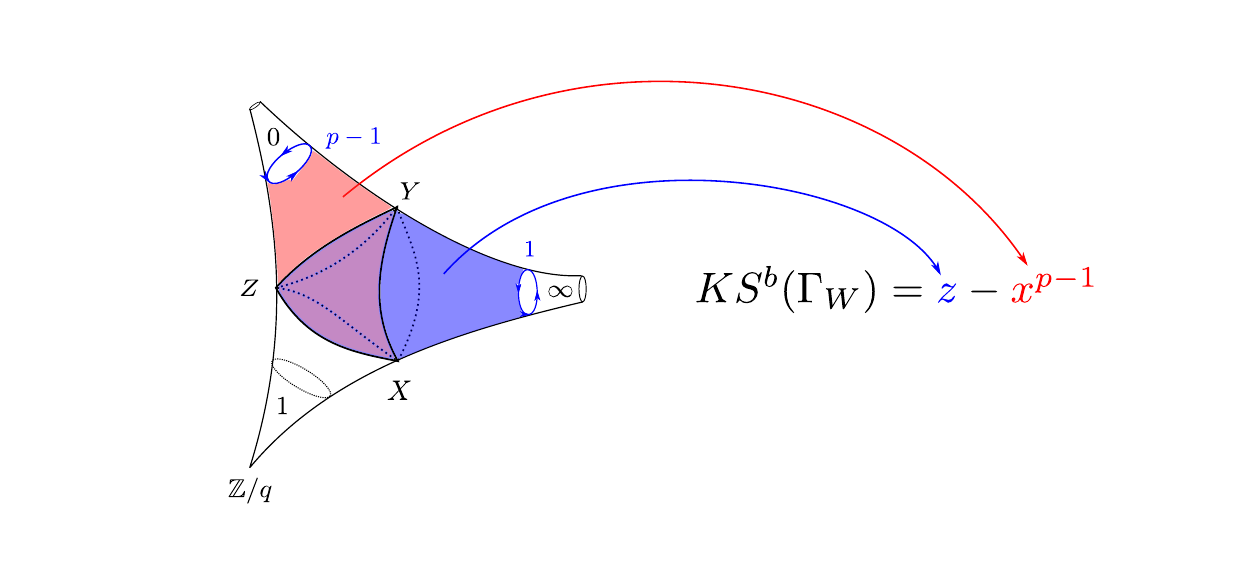}
    \caption{\label{fig:KS}$\mathrm{KS}^\bold b(\Gamma_W)$ for a chain type singularity}
    \end{figure}
Surprisingly, the transpose polynomial $W^T$ and $W_\bL$ are related by 
$$W^T =W_\bL- xy \cdot g(x,y,z).$$
  
\subsection{Quantum cap action of $\Gamma_W$ is mirror to the hypersurface restriction}
From the previous identity, we can obtain $W^T$ by restricting $W_\bL$ on the hypersurface $\{g=0\}$.
We show that the mirror counterpart of this restriction is taking the cone of the quantum cap action by $\Gamma_W$. 
First, $\cap \Gamma_W$-action in \ref{eq:capi} is a map of $\AI$-bimodules over $\WF([M_W/G_W])$.
Similarly, multiplication by $g(x,y,z)$ defines an $\AI$-module map between   $\mathcal{MF}(W_\bL)$ over itself. Localized mirror functor $\mathcal{F}^\bL$ intertwines them up to homotopy.
  \begin{thm}\label{thm:c1}
  There is a homotopy commuting diagram of $\AI$-bimodules:
  \[\begin{tikzcd}
    \WF([M_W/G_W]) \arrow[d, "\cF^\bL"] \arrow[r, "\cap \Gamma_W"] & \WF([M_W/G_W]) \arrow[d, "\cF^\bL"] \arrow[r]& \cF(W, G_W) \arrow[d, "\widetilde{\cF}^\bL"] \arrow[r]& \phantom a \\
    \mathcal{MF}(W_\bL) \arrow[r, " \cdot g"] & \mathcal{MF}(W_\bL) \arrow[r] & \mathcal{MF}(W_\bL)|_g \arrow[r] & .
    \end{tikzcd}\]
    Here, the third one in each row denotes the mapping cone of the first morphism, and all vertical lines are quasi-isomorphisms. Furthermore, $\mathcal{MF}(W_\bL)|_g \simeq \mathcal{MF}(W^T)$.
  \end{thm}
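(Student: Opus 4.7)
The plan is to establish homotopy-commutativity of the leftmost square first---this is the analytic heart of the argument---and then to obtain the middle square by functoriality of mapping cones together with the definition of $\cF(W,G_W)$ as the $\AI$-bimodule cone of $\cap\Gamma_W$. The rightmost equivalence $\mathcal{MF}(W_\bL)|_g\simeq\mathcal{MF}(W^T)$ is purely algebraic and will follow from the identity $W_\bL = W^T + xy\cdot g$ combined with the linearity of $g$ in $z$.

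For the left square, both vertical arrows equal $\cF^\bL$, so the task reduces to producing an $\AI$-bimodule homotopy between $\cF^\bL\circ(\cap\Gamma_W)$ and $(\cdot g)\circ\cF^\bL$. The crucial input is Theorem \ref{thm:ksg}: the Kodaira--Spencer image of $\Gamma_W$ in $(CF(\bL,\bL),m_1^{\bb,\bb})$ equals $g(x,y,z)\cdot 1_\bL$. Geometrically, $\cF^\bL\circ(\cap\Gamma_W)$ is realized by counting discs with boundary on $\bL\cup L$, a $\bb$-decoration on the $\bL$-arc, and one interior marked point mapping to $\Gamma_W$ that sits on the $L$-side, whereas $(\cdot g)\circ\cF^\bL$ is realized by analogous configurations in which the $\Gamma_W$-insertion bubbles off as a closed--open disc attached along the $\bL$-arc. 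I would build the homotopy from a one-parameter moduli of discs with a single interior $\Gamma_W$-puncture whose radial position is allowed to sweep between the two boundary components; the codimension-one boundary of this parametrized family contributes, modulo standard strip-breaking and $\bb$-insertion terms, precisely the difference of the two compositions, yielding the desired $\AI$-bimodule homotopy.

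With the left square homotopy-commutative, the middle vertical map $\widetilde{\cF}^\bL$ is defined as the canonical map induced on mapping cones, and the right square commutes tautologically. Because the two outer vertical maps are quasi-isomorphisms by Theorem \ref{thm:intro1}, the two-out-of-three property for cones (equivalently, the long exact sequence of an exact triangle of $\AI$-bimodules) implies that $\widetilde{\cF}^\bL$ is also a quasi-isomorphism.

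Finally, since each $g$ in \eqref{eqn:g} is linear in $z$ with leading coefficient $1$, the substitution $z'=g(x,y,z)$ defines a change of coordinates on $\C[x,y,z]$; under it, the identity $W_\bL = W^T + xy\cdot g$ becomes $W_\bL(x,y,z') = W^T(x,y) + xy\cdot z'$. The restriction $\mathcal{MF}(W_\bL)|_g$ is then identified with $\mathcal{MF}(W^T)$ via a Koszul/Kn\"orrer-type argument: tensoring a matrix factorization of $W^T + xy z'$ with the Koszul complex $\C[x,y,z']\xrightarrow{\cdot z'}\C[x,y,z']$ yields a $2$-periodic complex that descends to a matrix factorization over $\C[x,y]=\C[x,y,z']/(z')$ of the restricted potential $W^T$. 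The principal obstacle is the second paragraph: coherently orienting and sign-checking the parametrized moduli so that the resulting homotopy respects all Hochschild-type structure maps on both sides of the square; the remaining steps are essentially formal.
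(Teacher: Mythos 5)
Your treatment of the left square and of the cone formalism is essentially the paper's own route: the paper first proves an object-level statement (a homotopy $H^\bL$ with $(\cap\Gamma_W)^{\bb}-\times g=[m_1^{0,\bb},H^\bL]$, Proposition \ref{prop:KS}), exactly via the moduli you describe in which the interior $\Gamma_W$-insertion slides along a geodesic and, upon reaching the $\bL$-side, bubbles off as the Kodaira--Spencer disc computing $g\cdot 1_\bL$; it then upgrades this to a bimodule homotopy by allowing wrapped inputs on the boundary (operations $(\cap\Gamma_W)^{\bb}_k$, $H^\bL$ with inputs, and an auxiliary $\WT H^\bL$, combined into $K^\bL=\WT H^\bL+H^\bL\circ\cF^\bL_{\mathrm{bimod}}$), and the induced map on cones together with the five lemma gives the middle and right columns, just as you argue. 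Your sketch compresses the two-step structure ($H^\bL$ versus $\WT H^\bL$) into one parametrized family, but the geometric mechanism and the codimension-one analysis are the same.

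There is, however, a genuine gap in your last step. The coordinate change $z'=g(x,y,z)$ and the Koszul-complex/base-change argument only produce the comparison: they show that the cone of $\cdot g$ on morphism complexes of $\mathcal{MF}(W_\bL)$ computes morphisms between the \emph{restricted} matrix factorizations over $\C[x,y]$, i.e.\ they give a functor $\mathcal{MF}(W_\bL)|_g\to\mathcal{MF}(W^T)$ that is quasi-fully faithful. They do not show this functor is essentially surjective (even up to split generation), and this is not automatic: an arbitrary matrix factorization of $W^T$ over $\C[x,y]$ need not lift to one of $W^T+xyz'$ over $\C[x,y,z']$, and there is no Kn\"orrer periodicity here since only one new variable is added ($xyz'$ is not of the form $uv$ in two fresh variables). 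The paper closes exactly this gap with Lemma \ref{lem:es}: because $W^T$ has an isolated singularity, $\mathcal{MF}(W^T)$ is split-generated by the stabilized residue field $k^{\mathrm{stab}}_{W^T}$ (Dyckerhoff), and one exhibits explicit matrix factorizations of $W_\bL$ (in fact $\cF^\bL(L)$ for explicit Lagrangians $L$) whose restriction at $g=0$ is $k^{\mathrm{stab}}_{W^T}$; only then does $\mathcal{MF}(W_\bL)|_g$ become a model for $\mathcal{MF}(W^T)$ rather than merely a full subcategory. You should add this generation/essential-surjectivity argument (or an equivalent lifting statement) to complete the proof of the final claim.
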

Hence, this theorem proves an $\AI$-bimodule version of Berglund--H\"ubsch homological mirror symmetry for invertible polynomials of two variables.

\subsection{Homology category for $(W,G)$}
In \cite{CCJ20}, this $\AI$-bimodule  $\cF(W, G_W)$ in fact has a new $\AI$-category structure, whose $\AI$-maps are defined by popsicle maps with $\Gamma_W$ insertions, when $W$ is log Fano or log Calabi-Yau. 
Invertible curve singularities  are log general type, and hence there may exist non-trivial obstructions for $\AI$-structure.  We show that the construction of \cite{CCJ20} defines at least an $A_3$-structure. Here $A_{3}$-structure means that we have three operators $M_{1}$, $M_{2}$ and $M_{3}$ of the usual $\AI$-category setup so that they satisfy the first three $\AI$-identities. This implies that $M_1$ gives a differential, $M_2$ gives a product with Leibniz rule, and $M_3$ gives the homotopy for the associativity of the $M_2$ product on cohomology.   In particular, we can define at least the homology category $\mathcal{H}(W,G)$ for invertible curve singularities.

\begin{thm}[Theorem \ref{thm:A3}]
For given invertible curve singularity $W$ except three cases $F_{3,3}$, $C_{3,2}$, and $L_{2,2}$, there exists an $A_{3}$-category structure on $\cF(W, G_W)$. In particular, its homology category $\cH(W,G_W)$ is defined. 
\end{thm}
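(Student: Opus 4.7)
The plan is to construct the operators $M_1$, $M_2$, $M_3$ on $\cF(W, G_W)$ by counting pseudo-holomorphic popsicle discs in the orbifold quotient $[M_W/G_W]$ with interior punctures asymptotic to the monodromy Reeb family $\Gamma_W$, following the recipe of \cite{CCJ20} but now restricted to moduli with at most three boundary inputs. Concretely, I would set $M_k = \sum_{n \geq 0} M_k^{(n)}$, where $M_k^{(n)}$ counts popsicle maps with $k$ inputs, one output, and $n$ sprinkles, each weighted by $\Gamma_W$. With this setup, the first three $\AI$-identities become assertions about the signed count of the codimension-one boundary of the moduli spaces $\overline{\CM}_k$ for $k = 1, 2, 3$.

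The boundary of $\overline{\CM}_k$ decomposes into three families: Floer strip breaking at the boundary punctures, popsicle-degeneration strata in which an interior sprinkle merges with a boundary marked point or with another sprinkle, and disc or sphere bubble strata carrying some number of additional $\Gamma_W$-insertions. The first two families reproduce exactly the algebraic terms $\sum_{i+j = k+1} M_i \circ M_j$ appearing in the first three $\AI$-relations, so the theorem reduces to controlling the third family when $k \leq 3$.

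The main obstacle, and the technical heart of the argument, is that invertible curve singularities are of log general type, so the a priori energy bound used in \cite{CCJ20} to rule out bubbles with many $\Gamma_W$-insertions is no longer available. My plan is to replace it with an explicit virtual dimension count: each $\Gamma_W$-sprinkle shifts the expected dimension of a bubble by a quantity determined by the monodromy weight $\sum w_i - h < 0$, together with a Maslov-type contribution from $\bL$. Enumerating the combinatorial types of bubbles that can appear in codimension one of a moduli of virtual dimension at most one, and plugging in the weights $(w_1,w_2;h)$ of $F_{p,q}$, $C_{p,q}$, $L_{p,q}$ directly, one verifies case by case that the potentially obstructing bubbles have strictly negative virtual dimension and hence do not contribute generically, with the sole exception of the three listed cases. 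In each of those three cases a short direct computation exhibits a concrete bubble type of virtual dimension zero whose contribution obstructs the relation $M_2 \circ (M_2 \otimes 1 - 1 \otimes M_2) = \partial M_3$, so that the $A_3$-identity genuinely fails there and cannot be repaired within the popsicle framework.

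Once these dimension counts are complete, the remaining algebraic content is standard: $M_1^2 = 0$, $M_1$ is a graded derivation for $M_2$, and $M_3$ witnesses homotopy associativity of $M_2$. Passing to $M_1$-cohomology therefore yields a well-defined associative product, and the homology category $\cH(W,G_W)$ is obtained. I expect the dimension-count step to absorb essentially all of the work, while the construction of $M_1, M_2, M_3$ and the verification of the remaining identities amount to specializing the general popsicle formalism of \cite{CCJ20}.
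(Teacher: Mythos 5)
Your overall skeleton coincides with the paper's: take $M_1,M_2,M_3$ to be the popsicle operations with $\Gamma_W$-insertions from \cite{CCJ20}, observe that strip breaking and popsicle degenerations account for the algebraic terms of the first three $\AI$-identities, and reduce the theorem to excluding codimension-one popsicle \emph{sphere} bubbles carrying two or three $\Gamma_W$-insertions. The gap is in how you propose to exclude them. First, the obstructing bubbles are closed popsicle spheres attached to the disc component along a single orbit; they have no boundary on $\bL$ (or on any Lagrangian), so a ``Maslov-type contribution from $\bL$'' does not enter, and the objects of $\cF(W,G_W)$ are arbitrary Lagrangians anyway. Second, and more seriously, a pure virtual-dimension count in terms of $\sum w_i-h$ is not what decides the question. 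In the paper the exclusion rests on the $H_1^{\mathrm{orb}}$-grading: the output of a sphere bubble is a single Reeb orbit winding around a single puncture, so its class must be of the form $-K_i\gamma_i$ for one $i$ with $K_i\geq 0$, while homologically it must equal $N[\Gamma_W]$, which is a sum of contributions at \emph{all} punctures (e.g.\ $(1-p)\gamma_1-\gamma_3$ for chain type); this already kills most configurations with no index computation at all. The index enters only through the negative-degree statement of \cite[Proposition 5.4]{CCJ20}, which bounds the allowed winding $K_i$ below the winding of the $N$-th iterate of $\Gamma_W$, and the whole argument fails exactly when torsion relations in $H_1^{\mathrm{orb}}$ allow a small-winding representative --- which is what singles out $F_{3,3}$, $C_{3,2}$, $L_{2,2}$. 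Your sketch never sets up this homological bookkeeping, and without it the ``plug in the weights'' dimension count is not sufficient; indeed even with it, one subcase (chain type with $q=2$, $p\geq 4$, output class $(3-p)\gamma_1$) requires an explicit Robbin--Salamon index computation for the relevant Morse--Bott family, not just the sign of $\sum w_i-h$.

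Separately, you overclaim in the exceptional cases: you assert that a bubble of virtual dimension zero exists there and that the $A_3$-identity ``genuinely fails.'' Producing a bubble type of the right dimension does not show that its signed count is nonzero, so failure is not established; the paper itself only remarks that the topological argument cannot exclude such bubbles in $F_{3,3}$, $C_{3,2}$, $L_{2,2}$ and leaves their status open. This overclaim is not needed for the theorem as stated, but as written it is unsupported.
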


Now, we can check whether  Berglund--H\"ubsch homological mirror symmetry holds on the level of homology categories. 
Especially, we can choose a split-generator $K$ of $\WF([M_W/G_W])$ as in Figure \ref{fig:splitgen} for any invertible curve singularity and consider it as the object of $\cH(W, G_{W})$. The functor $\mathcal{G}$ in Theorem \ref{thm:intro1} defines a corresponding matrix factorization $\mathcal{G}(K)$ of $W^T$.
 Then, we have the following quasi-isomorphism of morphism spaces.
\begin{prop}[Theorem \ref{thm:qiso}]\label{prop:isos}
The following isomorphism holds for a given split-generator $K$:
$$\Hom_{\cH(W, G_W)}(K,K) \cong \Hom_{\mathcal{MF}(W^T)}(\mathcal{G}(K),\mathcal{G}(K)).$$
\end{prop}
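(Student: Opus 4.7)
The plan is to deduce the isomorphism from Theorem \ref{thm:c1} by tracing what happens to the object $K$ through the $\AI$-bimodule diagram. At the chain level, the morphism complex $\Hom_{\cF(W,G_W)}(K,K)$ in the $A_3$-category of Section 8 is identified with the value at $(K,K)$ of the third column of the diagram in Theorem \ref{thm:c1}; concretely, the differential $M_1$ produced by popsicle discs with $\Gamma_W$-insertions splits as the ordinary Floer differential on $CF(K,K)$ together with a term coming from a single $\Gamma_W$-insertion, which is by definition the chain-level cap product $\cap\Gamma_W$. Hence $\Hom_{\cF(W,G_W)}(K,K)$ is chain-level quasi-isomorphic to the mapping cone
\[
\Cone\!\left(\cap\Gamma_W\colon CF(K,K)\longrightarrow CF(K,K)\right).
\]

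Next, I would apply the localized mirror functor $\cF^\bL$ from Theorem \ref{thm:intro1}(1), which is a derived equivalence $\WF([M_W/G_W])\simeq \mathcal{MF}(W_\bL)$, sending $K$ to $\cF^\bL(K)$. By Theorem \ref{thm:ksg} together with the homotopy commutativity of the left square in Theorem \ref{thm:c1}, the cap action $\cap\Gamma_W$ on $CF(K,K)$ corresponds, up to $\AI$-homotopy, to multiplication by $g(x,y,z)$ on $\Hom_{\mathcal{MF}(W_\bL)}(\cF^\bL(K),\cF^\bL(K))$. Since $\cF^\bL$ is a quasi-equivalence, it preserves mapping cones up to quasi-isomorphism, so the cone on the symplectic side is quasi-isomorphic to the cone of multiplication by $g$ on the matrix factorization side, which by construction is the morphism space in $\mathcal{MF}(W_\bL)|_g$ between $\cF^\bL(K)$ and itself.

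Finally, the equivalence $\mathcal{MF}(W_\bL)|_g\simeq \mathcal{MF}(W^T)$ from Theorem \ref{thm:c1}, together with the definition of $\mathcal{G}$ in Theorem \ref{thm:intro1}(2) as the composition of $\cF^\bL$ with the hypersurface restriction, identifies this morphism space with $\Hom_{\mathcal{MF}(W^T)}(\mathcal{G}(K),\mathcal{G}(K))$. Passing to cohomology with respect to $M_1$ on the $A$-side yields the claimed isomorphism.

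The main obstacle is the first step: rigorously identifying $\Hom_{\cF(W,G_W)}(K,K)$ with the cone of $\cap\Gamma_W$. One must unwind the popsicle moduli spaces carefully (distinguishing discs with zero, one, or more $\Gamma_W$-insertions) and verify, using only the $A_3$-relations available for log general type singularities, that the resulting complex agrees with the mapping cone up to an explicit chain homotopy compatible with $\cF^\bL$. A secondary subtlety is to check that the homotopy commutativity in Theorem \ref{thm:c1} descends to an honest quasi-isomorphism on morphism complexes evaluated at $(K,K)$, not merely a derived statement; this should follow from $K$ being a split-generator of $\WF([M_W/G_W])$ and $\mathcal{G}(K)$ being the corresponding split-generator of $\mathcal{MF}(W^T)$, combined with the fact that both $\cF^\bL$ and $\widetilde{\cF}^\bL$ are already quasi-equivalences by Theorem \ref{thm:c1}.
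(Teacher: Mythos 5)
Your argument is correct, but it takes a genuinely different route from the paper's. You deduce the statement formally from the bimodule diagram: since $\hom_{\cF(W,G_W)}(K,K)$ is \emph{by construction} the cone of the chain-level cap action $\cap\Gamma_W$ on $CW^\bullet(K,K)$ (so the step you flag as the ``main obstacle'' is a matter of definition in Section \ref{sec:A3}, not something to be re-proved from the popsicle moduli spaces), and since $\widetilde{\cF}^\bL$ in Corollary \ref{cor:triangle} is a quasi-isomorphism of $\AI$-bimodules, evaluating at $(K,K)$ immediately identifies the cohomology with that of $\Cone\bigl(\cdot g\colon \Hom_{\MF(W_\bL)}(\cF^\bL(K),\cF^\bL(K))\to\Hom_{\MF(W_\bL)}(\cF^\bL(K),\cF^\bL(K))\bigr)$, which in turn is $\Hom_{\MF(W^T)}(\mathcal{G}(K),\mathcal{G}(K))$ by the identification $\MF(W_\bL)|_g\simeq\MF(W^T)$ (valid because $g=z-f(x,y)$, via \cite[Corollary 6.7]{CCJ20}); note also that no split-generation is needed at the end, since a bimodule quasi-isomorphism is objectwise by definition, so your last worry is unnecessary. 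The paper instead argues by direct computation: it shows $\mathrm{CO}_K(\Gamma_W)=\gamma_{W,K}=ab+ba$ inside $CW^\bullet(K,K)\simeq T(a,b)/\mathcal{R}$, observes that multiplication by $ab+ba$ is injective so the cone is its cokernel $\langle 1,a,b,a\otimes b\rangle$, computes $\mathcal{G}(K)$ explicitly and identifies it with $k^{\mathrm{stab}}_{W^T}$, and then matches with Dyckerhoff's computation of $\mathrm{End}_{\MF(W^T)}(k^{\mathrm{stab}}_{W^T})$. Your route is shorter and works verbatim for any object, but it leans entirely on Theorem \ref{thm:c1}; the paper's computation produces the explicit four-dimensional answer and the identification of $\mathcal{G}(K)$ with the stabilized residue field (hence a generator of $\MF(W^T)$), which is of independent use. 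One caveat common to both: the bimodule argument only sees the differential, not the $M_2$ involving two $\epsilon$-inputs, so what you obtain is an isomorphism of $\Z/2$-graded vector spaces, which is indeed all the proposition claims.
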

We conjecture that $\cF(W, G)$ becomes indeed an $\AI$-category, and the above quasi-isomorphism extends to the $\AI$-equivalence of categories. 


\subsection{Structure of the paper}
In Section \ref{sec:eqtop}, we investigate an equivariant topology of Milnor fibers of invertible curve singularities and give a combinatorial description of
the equivariant tessellations of the Milnor fibers with respect to the maximal diagonal symmetry group $G_W$-action. In particular, it is shown that the quotient of Milnor fiber $M_W$ by $G_W$-action is an orbifold sphere with three special points which are either orbifold points are punctures. In Section \ref{sec:eqFloer}, we recall the setup of relevant Floer theory  and localized mirror functor.
We prove homological mirror symmetry for quotients of Milnor fibers in Section \ref{sec:HMSMilnor}.
This is before we take monodromy into consideration.

In Section \ref{sec:KS}, we briefly recall some constructions given in \cite{CCJ20}.
After, we investigate how the HMS for Milnor fiber intertwines monodromy information in Section \ref{sec:BHMS}.
For this purpose, we show that the cap action by $\Gamma_W$ in the symplectic side is homotopic to the multiplication by a polynomial $g(x,y,z)$ in the matrix factorizations under localized mirror functor.
Next, we find in Section \ref{sec:AR} explicit Lagrangians and surgery exact triangles which correspond to indecomposable matrix factorizations and Auslander--Reiten almost split exact sequences.
Lastly, we construct an $A_{3}$-category structure on new Fukaya category $\cF(W, G_W)$ in Section \ref{sec:A3}. We see that its cohomology category is closely related to the matrix factorization category of $W^{T}$.

In Appendix \ref{sec:ac}  provided by Osamu Iyama, it is proved that any short exact sequence between indecomposable matrix factorizations that involves same modules as  one of the AR exact sequences
should be isomorphic to it. 


\subsection{Acknowledgement}
We would like to thank  Osamu Iyama for the discussions on matrix factorizaton and providing us the
appendix. We would like to thank Otto van Koert, Hanwool Bae for the discussion on symplectic cohomology theory on orbifolds, and popsicle compactifications. We would like to thank Atsushi Takahashi, Philsang Yoo, Kaoru Ono for helpful discussions. The second author was partially supported by the T.J.Park science fellowship grant. 

 \section{Equivariant topology of Milnor fibers for invertible curve singularities}\label{sec:eqtop}

Let $W$ be an invertible polynomial of two variables. In this section, we first describe the topology of its Milnor fiber $M_W=W^{-1}(1)$ and their maximal symmetry group $G_W$. We show in Proposition \ref{prop:abc} that the quotient $[M_W/G_W]$ is homeomorphic to an orbifold sphere with
three special points, which are either orbifold points or (orbifold) punctures. For later calculations, we give an explicit description of the tessellation on $M_W$ induced from the associated orbifold covering $M_W \to [M_W/G_W]$.


\subsection{Topology of Milnor fiber}
Recall that Milnor fiber is homotopy equivalent to the bouquet of $\mu$-circles where $\mu$ is the Milnor number of the singularity. 
\begin{lemma}
The weights (up to $\gcd$) and Milnor numbers of curve singularities are as follows.
\begin{enumerate}
\item Weights of $F_{p,q}$ are $(q,p;pq)$. Its Milnor number is $\mu_F = (p-1)(q-1)$.
\item Weights of $C_{p,q}$ are  $(q,p-1;pq)$. Its Milnor number is  $\mu_C = pq - p+1$.
\item Weights of $L_{p,q}$ are  $(q-1,p-1;pq-1)$. Its Milnor number is $\mu_L = pq$.
\end{enumerate}
\end{lemma}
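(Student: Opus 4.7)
The plan is to verify both the weights and the Milnor numbers by direct computation, applying the standard formula for weighted homogeneous isolated hypersurface singularities.

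For the weights, I would set up the defining condition: a polynomial $W(x,y)$ is weighted homogeneous of type $(w_1, w_2; h)$ precisely when every monomial $x^a y^b$ appearing in $W$ satisfies $a w_1 + b w_2 = h$. For $F_{p,q} = x^p + y^q$, this gives the two equations $p w_1 = h$ and $q w_2 = h$, which are solved (up to $\gcd$) by $(w_1, w_2; h) = (q, p; pq)$. For $C_{p,q} = x^p + xy^q$, the equations are $p w_1 = h$ and $w_1 + q w_2 = h$, yielding $(q, p-1; pq)$ after clearing common factors. For $L_{p,q} = x^p y + xy^q$, the equations $p w_1 + w_2 = h = w_1 + q w_2$ force $(p-1) w_1 = (q-1) w_2$, and after normalizing the weights one arrives at $(q-1, p-1; pq-1)$. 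These are routine linear-algebra verifications and the only subtlety is the normalization convention (dividing by the $\gcd$).

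For the Milnor numbers I would invoke the classical Milnor--Orlik formula: for a weighted homogeneous polynomial with an isolated singularity at the origin and weights $(w_1, \ldots, w_n; h)$, one has
\[
\mu = \prod_{i=1}^{n} \frac{h - w_i}{w_i}.
\]
Applying this with the weights computed above gives $\mu_F = \frac{pq-q}{q} \cdot \frac{pq-p}{p} = (p-1)(q-1)$ for Fermat, $\mu_C = \frac{pq-q}{q} \cdot \frac{pq-(p-1)}{p-1} = (p-1) \cdot \frac{pq - p + 1}{p-1} = pq - p + 1$ for chain, and $\mu_L = \frac{pq-q}{q-1} \cdot \frac{pq-p}{p-1} = \frac{q(p-1)}{q-1} \cdot \frac{p(q-1)}{p-1} = pq$ for loop. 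Each product simplifies cleanly after cancellation.

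The only real point of care is to confirm that the Milnor--Orlik formula applies, which requires the singularity at the origin to be isolated; for invertible polynomials this follows from the exponent matrix being invertible (equivalently, no $w_i$ divides $h$ trivially in a way that creates a non-isolated critical locus). Since both the weight computation and the product formula are entirely mechanical, I do not anticipate any genuine obstacle; the lemma is essentially a bookkeeping statement that sets up constants used throughout the rest of Section \ref{sec:eqtop}.
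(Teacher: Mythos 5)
Your proposal is correct and follows essentially the same route as the paper: read off the weights from the weighted-homogeneity equations and apply the Milnor--Orlik formula $\mu=\prod_i\bigl(\tfrac{h}{w_i}-1\bigr)$, which is exactly the formula the paper cites. Your explicit cancellations in each of the three cases match the stated values, so nothing further is needed.
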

\begin{proof}
Milnor numbers can be easily computed by the following formula.
 \begin{thm}[\cite{MO}]
 Let $f(x_1,\ldots,x_{n})$ be the weighted homogeneous polynomial of
 weights $(w_1,\ldots,w_{n},h)$. Then, it has an isolated singularity at the origin whose Milnor number is given by
$$\mu = \left(\frac{h}{w_1} -1\right) \cdots \left(\frac{h}{w_{n}} -1\right).$$
\end{thm}
\end{proof}

The maximal symmetry group $G_{W}$ has another description:
$$G_{W} = \left\{ (\lambda_{1}, \lambda_{2}) \in \mathbb{C}^{*} | \;W(\lambda_{1}x_{1}, \lambda_{2}x_{2}) = W(x_{1},x_{2})\right\}.$$
Then, it is easy to check the following.

\begin{lemma}
$G_{F_{p,q}} \simeq \mathbb{Z} / p  \oplus \mathbb{Z} / q, \hskip 0.2cm G_{C_{p,q}} \simeq \mathbb{Z} / pq $ and $G_{L_{p,q}} \simeq \mathbb{Z} / (pq-1)$.
\end{lemma}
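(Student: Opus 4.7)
The plan is a direct case-by-case verification. For each of the three families $F_{p,q}$, $C_{p,q}$, $L_{p,q}$, I will substitute $(\lambda_1 x_1, \lambda_2 x_2)$ into $W$ and, using that the defining monomials of $W$ are linearly independent, compare coefficients monomial by monomial to extract the constraints on $(\lambda_1, \lambda_2) \in (\mathbb{C}^*)^2$. The exponent matrix $A$ of each invertible $W$ is nonsingular with integer entries, so the system $A \log \lambda = 0 \pmod{2\pi \consti \mathbb{Z}}$ has a finite solution set, which will give $G_W$ as a finite abelian group whose order can be read off from $\det A$.

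Concretely, first, for the Fermat case $W = x^p + y^q$, invariance forces $\lambda_1^p = 1$ and $\lambda_2^q = 1$ independently, so $G_W \simeq \mu_p \times \mu_q \simeq \mathbb{Z}/p \oplus \mathbb{Z}/q$. Second, for the chain case $W = x^p + xy^q$, invariance requires $\lambda_1^p = 1$ and $\lambda_1 \lambda_2^q = 1$. I would then parameterize by $\lambda_2$: the second equation gives $\lambda_1 = \lambda_2^{-q}$, and plugging into the first yields $\lambda_2^{pq} = 1$; conversely any $pq$-th root of unity $\lambda_2$ determines a unique $\lambda_1$, so the map $G_{C_{p,q}} \to \mu_{pq}$ sending $(\lambda_1, \lambda_2)$ to $\lambda_2$ is an isomorphism, i.e.\ $G_{C_{p,q}} \simeq \mathbb{Z}/pq$. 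Third, for the loop case $W = x^p y + x y^q$, invariance requires $\lambda_1^p \lambda_2 = 1$ and $\lambda_1 \lambda_2^q = 1$; substituting $\lambda_2 = \lambda_1^{-p}$ from the first into the second gives $\lambda_1^{1-pq} = 1$, so $\lambda_1 \in \mu_{pq-1}$ and $\lambda_2$ is determined, yielding $G_{L_{p,q}} \simeq \mathbb{Z}/(pq-1)$.

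There is essentially no obstacle: the only subtle point is checking that the parameterizing map is indeed an isomorphism (injectivity is automatic once $\lambda_1$ determines $\lambda_2$, and surjectivity follows from the consistency of the two equations for any chosen root of unity), and that the orders $pq$ and $pq-1$ match $|\det A|$ for the chain and loop exponent matrices, which serves as a useful sanity check via the general fact $|G_W| = |\det A|$.
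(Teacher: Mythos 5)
Your proposal is correct and follows essentially the same route as the paper: the paper also verifies invariance directly, listing the elements explicitly in the Fermat case and exhibiting single generators $(\xi^{q},\xi^{-1})$ with $\xi=e^{2\pi i/(pq)}$ (chain) and $(\eta^{q},\eta^{-1})$ with $\eta=e^{2\pi i/(pq-1)}$ (loop), which is the same elimination $\lambda_1=\lambda_2^{-q}$ (resp.\ $\lambda_2=\lambda_1^{-p}$) that you carry out. Your determinant sanity check $|G_W|=|\det A|$ is a harmless extra, not needed for the argument.
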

\begin{proof}
$G_{F_{p,q}} = \left \{ \left ( \text{exp} \left ( \frac{2 k \pi i}{p} \right ),  \text{exp} \left ( \frac{2 l \pi i}{q} \right ) \right ) \Big | \, 0 \leq k \leq p-1, 0 \leq l \leq q-1 \right \}$. We can take generators of $G_{C_{p,q}}$  and $G_{L_{p,q}}$ as 
$(\xi^{q}, \xi^{-1}) $ and $(\eta^{q}, \eta^{-1})$ respectively 
for $\xi = \text{exp} \left ( \frac{2 \pi i}{pq} \right ), \eta =   \text{exp} \left (  \frac{2 \pi i}{pq-1} \right )$.

\end{proof}

Now, we give a proof of Proposition \ref{prop:abc}, which determines the topological type of $M_W$ and $[M_W/G_W]$.
\begin{proof}
It is well-known that the number of punctures is the same as the number of irreducible factors of $W$.
Recall that for sufficiently small $r$ and $0< \epsilon \ll r$, the link $W^{-1}(0)\cap S^{2n-1}_r$ and
$W^{-1}(\epsilon)\cap S^{2n-1}_r$ are diffeomorphic, and each factor of $W$
gives a boundary component of $W^{-1}(0) \cap B^{2n}_{r}$. For Fermat type, 
$x^p+y^q$ factors into $d$ factors for $d=\gcd(p,q)$.
For chain type, since $x^p+xy^q= x(x^{p-1}+y^q)$, $C_{p,q}$ has  $d+1$ factors
with $d =\gcd(p-1,q)$. For loop type, since $x^py+xy^q=xy(x^{p-1}+y^{q-1})$,
$L_{p,q}$ has  $d+2$ factors
with $d =\gcd(p-1,q-1)$.

To compute the genus, note that  $M_W$ is obtained by removing $k$ points from  $\WH{M}_W$.
Hence, Euler characteristic is $\mathcal{E}(M_W) = \mathcal{E}(\WH{M}_W) - k$.
But $M_W$ has the homotopy type of bouquet of $\mu$-circles for the Milnor number $\mu$,
and its Euler characteristic $\mathcal{E}(M_W) = 1 -\mu$.
Therefore, the equality $2-2g -k = 1-\mu$ holds, and the genus of $\WH{M}_W$ (and hence $M_W$) is $g= (\mu+1-k)/2$.

Now, to get the quotient orbifold $[\WH{M}_W/G_W]$, we first observe that there are exactly three orbits of $G_W$ with non-trivial stabilizer in $\WH{M}_W$ and show that the quotient has genus zero using
orbifold Euler characteristic. We will use the fact that $\mathcal{E}(\WH{M}_W)/|G_W|$
equal the orbifold Euler characteristic of $[\WH{M}_W/G_W]$.

Let us consider the Fermat case. Orbits of $[(0,1)]$ and $[(1,0)]$ give two singular orbits 
of $\Z/p \oplus \Z/q$-action on $M_W$. They have stabilizers $\Z/p, \Z/q$ respectively.
For $d=\gcd(p,q)$, since $\Z/p \oplus \Z/q$ acts transitively on the irreducible components, it also acts transitively on $d$ punctures (or compactification points in $\WH{M}_W$).
So the quotient has three orbifold points $(a,b,c)=(\Z/p,\Z/q, \Z/(pq/d))$.
To see that the quotient is $ \mathbb{P}^1_{a,b,c}$, 
$$\mathcal{E}(\WH{M}_W) = 2-2g = k+1-\mu = d +1 - (p-1)(q-1)$$
Note that it equals $|G| \cdot \mathcal{E}_{orb}( \mathbb{P}^1_{a,b,c})$
which is
$$ (pq) \cdot \left(\frac{1}{a} + \frac{1}{b} + \frac{1}{c} -1\right) = pq \cdot \left(\frac{1}{p} +\frac{1}{q} + \frac{d}{pq} -1\right)$$
This proves the claim for the Fermat case.

The other cases are similar.
For the chain case, the orbit of $[(1,0)]$ has stabilizer $\Z/q$. The other two orbifold points come from punctures. Note that $C_{p,q}$ is a product of $x$ and $x^{p-1}+y^q$. It is easy to see that
$G_W$ action preserves each branches $x=0$ as well as $x^{p-1}+y^q=0$. Hence, the puncture corresponding to the branch $x=0$ has the full group $G_f$ as a stabilizer, and
the other $d$ punctures (for the factors of $ x^{p-1}+y^q=0$ with $d =\gcd(p-1,q)$) are acted by $G_f$ in a transitive way. Therefore, the orbifold point has stabilizer $\Z/(pq/d)$.
For the loop type, $M_W$ has no fixed point of $G_W$-action, and the punctures for
factors $x,y, x^{p-1}+y^{q-1}$ form three orbits with stabilizer $\Z/(pq-1), \Z/(pq-1), \Z/((pq-1)/d)$.
This finishes the proof.
\end{proof}


\subsection{Orbifold covering}\label{subsec: orbicovering}
We observed that $G_W$ acts on the Milnor fiber $M_W$ to produce the regular orbifold covering
$\WH M_W \to \mathbb P_{a,b,c}^1$.
Given a Riemann surface, there may be two non-equivalent group actions with isomorphic
quotient space (see Broughton \cite{Br} for example). Hence, to determine the $G_W$-action on $M_W$
explicitly,  we find an explicit group homomorphism 
 \begin{equation}\label{eq:orb1}
\phi: \pi_1^{\mathrm{orb}}\left( \mathbb{P}^1_{a,b,c}\right)  \to G_W.
\end{equation}
For the kernel $\Gamma =\textrm{Ker}(\phi)$, $\WH{M}_W$ is an orbifold covering of  $\mathbb{P}^1_{a,b,c}$   corresponding to the kernel $\Gamma$ with deck transformation group $G_W$.

We use the following presentation of the orbifold fundamental group of $\mathbb{P}^1_{a,b,c}$
\begin{equation}\label{pi1orb}
 \pi_1^{\mathrm{orb}}\left( \mathbb{P}^1_{a,b,c}\right)=\left\langle\gamma_1,\gamma_2,\gamma_3 \mid
\gamma_1^a = \gamma_2^b = \gamma_3^c = \gamma_1\gamma_2\gamma_3=1\right\rangle. 
\end{equation}
Here, $\gamma_1$ is a small loop going counter-clockwise around $0\in \mathbb P^1$, $\gamma_2$ is for $1\in \mathbb P^1$ and $\gamma_3$ is for $\infty \in \mathbb P^1$. Later on, this presentation will serve as an additional grading on a Floer theory.
\begin{prop}\label{prop:homo}
The homomorphism \eqref{eq:orb1} is given as follows.
\begin{enumerate}
\item  (Fermat) For the covering $M_{F_{p,q}} \to \mathbb P^1_{p,q,\frac{pq}{\gcd(p,q)}}$, we have $$\phi(\gamma_1) = (1,0), \;  \phi(\gamma_2) =(0,1), \;  \phi(\gamma_3) = (-1,-1) \in \Z/p \times \Z/q$$
identified with $G_W$ by $(k,l) \to \left(e^\frac{2\pi k i}{p}, e^\frac{2\pi l i}{q}\right)$.
\item (Chain) \; For the covering $M_{C_{p,q}} \to \mathbb P^1_{pq,q,\frac{pq}{\gcd(p-1,q)}}$, we have
 $$\phi(\gamma_1) = 1, \; \phi(\gamma_2) = -p,\;  \phi(\gamma_3) = p-1 \in \Z/pq$$
identified with $G_W$ by $k \to \left(e^\frac{2\pi k i}{p}, e^\frac{-2\pi k i}{pq}\right)$.
\item (Loop) \;\; For the covering $M_{L_{p,q}} \to \mathbb P^1_{pq-1,pq-1,\frac{pq-1}{\gcd(p-1,q-1)}}$, we have
$$\phi(\gamma_1) = 1,\; \phi(\gamma_2) =-p,\;  \phi(\gamma_3) = p-1 \in \Z/(pq-1)$$
identified with $G_W$ by $k \to \left(e^\frac{2\pi kq i}{pq-1}, e^\frac{-2\pi k i}{pq-1}\right)$.
\end{enumerate}

\end{prop}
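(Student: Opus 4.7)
The plan is to construct the orbifold covering $\WH{M}_W \to \mathbb{P}^1_{a,b,c}$ explicitly using $G_W$-invariant monomials and to read off $\phi(\gamma_i)$ by analytically continuing lifts of small loops around the three special points. For each type I choose a pair of $G_W$-invariants $u, t$ on $\C^2$ satisfying $u + t = W$, namely $(u, t) = (x^p, y^q)$ for Fermat, $(u, t) = (x^p, xy^q)$ for Chain, and $(u, t) = (x^p y, xy^q)$ for Loop. Invariance under the generators of $G_W$ computed in the previous lemma is immediate, and the generic degree of the map $(x, y) \mapsto (u, t)$ equals the determinant of the exponent matrix, which is $|G_W|$ in each case, so the induced map $M_W \to \{u + t = 1\} \cong \C$ realizes the $G_W$-quotient on the open locus. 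The fibres over the three points $u = 0$, $t = 0$, $u = \infty$ recover exactly the three orbits with nontrivial stabilizers identified in Proposition~\ref{prop:abc}, and after the natural compactification one obtains the orbifold covering $\WH{M}_W \to \mathbb{P}^1_{a,b,c}$ in which $\gamma_1, \gamma_2, \gamma_3$ are small loops around these three points respectively.

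To compute $\phi(\gamma_1)$ I analytically continue the loop $(u, t) = (\epsilon e^{\consti\theta}, 1 - \epsilon e^{\consti\theta})$, $\theta \in [0, 2\pi]$, to a path in $M_W$ and read off the endpoint discrepancy. In the Fermat case, $x^p = u$ forces $x \mapsto e^{2\pi \consti/p} x$ while $y^q = 1 - u$ stays close to $1$ and returns to itself, giving $\phi(\gamma_1) = (1, 0)$. In the Chain case, $x^p = u$ together with $xy^q = 1 - u$ yields $x \mapsto e^{2\pi \consti/p} x$ and $y \mapsto e^{-2\pi \consti/(pq)} y$, identified with the generator $k = 1$. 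In the Loop case, eliminating $y$ from the system $x^p y = u$, $xy^q = t$ produces $x^{pq-1} = u^q/t$; since $u^q/t$ winds $q$ times around the origin along the loop while $t$ itself stays close to $1$, $x$ picks up a factor of $e^{2\pi \consti q/(pq-1)}$, and continuity of $y^q = t/x$ then forces $y \mapsto e^{-2\pi \consti/(pq-1)} y$, again corresponding to $k = 1$. The computation of $\phi(\gamma_2)$ is entirely symmetric, exchanging the roles of $u$ and $t$, and $\phi(\gamma_3)$ is then forced by the relation $\gamma_1\gamma_2\gamma_3 = 1$ in $\pi_1^{\mathrm{orb}}(\mathbb{P}^1_{a,b,c})$.

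The main technical subtlety will be in the Loop case, where $u$ and $t$ both mix the two variables, so the lift requires simultaneously tracking continuous branches of $x$ and $y$ from a coupled algebraic system. A convenient safeguard is provided by the orbifold fundamental group relations themselves: $\phi(\gamma_i)$ must have order dividing $a_i$, and the three images must sum to zero in $G_W$; any sign or branch-of-root-of-unity mistake is immediately flagged by one of these two consistency checks, so in practice the monodromy computation in each type reduces to a short, bookkept calculation whose correctness can be verified by a direct algebraic test.
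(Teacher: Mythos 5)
Your argument is correct, and it reaches the stated values of $\phi(\gamma_i)$ by a genuinely different route than the paper. The paper never writes the quotient map explicitly: it presents $M_W$ as a $q$-sheeted (resp.\ $(pq-1)$-sheeted) branched cover of the $x$-plane (resp.\ $z$-plane) via a multivalued function with chosen branch cuts, takes a ``pizza-slice'' fundamental domain, and records the monodromy of $\gamma_1,\gamma_2,\gamma_3$ as explicit permutations of labelled sheets before matching them with the diagonal $G_W$-action. You instead build the quotient map intrinsically from the invariant monomials $u,t$ with $u+t=W$ (the degree count $\deg(u,t)=|\det A|=|G_W|$ does legitimately identify generic fibres with $G_W$-orbits), so the base is literally the orbifold $\mathbb P^1$ with special points $u=0,1,\infty$, and you read off each $\phi(\gamma_i)$ by continuing the coupled equations $x^p=u$, $xy^q=t$, etc., around a small loop; I checked your loop-type continuation ($x^{pq-1}=u^q/t$ gives $x\mapsto e^{2\pi i q/(pq-1)}x$ and then $y=u/x^p$ gives $y\mapsto e^{-2\pi i/(pq-1)}y$) and the chain-type $\gamma_2$ computation, and both agree with the proposition. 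What your approach buys is uniformity across the three types and freedom from branch-cut and sheet bookkeeping; what the paper's approach buys is the explicit sheet/fundamental-domain data that is reused immediately afterwards (the tessellation of Theorem \ref{thm:tess} and the later polygon counts), which your invariant-monomial picture does not produce. Two small cautions: you should say explicitly that your counterclockwise loops around $u=0,1,\infty$ are the generators of the presentation \eqref{pi1orb} with the same orientation convention (they are, since $u=x^p$ maps the paper's loops to counterclockwise loops in $u$); and your proposed ``consistency checks'' (order dividing $a_i$, images summing to zero) do not detect a global inversion $\phi\mapsto-\phi$, so they cannot replace the explicit continuation of at least one generator per type --- but since you do carry out that continuation for $\gamma_1$ in each case, the signs are pinned down and the proof stands.
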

Let us give the proof in each case separately.
\subsubsection{Fermat type $F_{p,q}$}
$M_{F_{p,q}}$ is a locus of an equation $x^p+y^q=1$. We regard them as a Riemann surface of a multivalued function 
\[y= (1-x^p)^{\frac{1}{q}},  \]
with $q$ branch  points  $x_k=e^\frac{2k\pi i}{q}$ (for $k=0,\ldots, q-1$).
We connect each branch points $x_k$  with $\infty$ by a ray $\{r e^\frac{2k\pi i}{q} \mid r \geq 1\}$. With these branch cuts, $M_{F_{p,q}}$ is a $q$ sheeted covering of a complex plane $\C$.

A fundamental domain of the quotient is the following "pizza" shape domain. 
\begin{equation}\label{fdf}
\left\{x=re^\theta \; \bigg| \; 0 \leq r, \hskip 0.1cm 0\leq\theta\leq\frac{2\pi }{p}\right\}
\end{equation}
There are three distinguished paths $\gamma_i:[0,1] \to \C$.
\begin{itemize}
\item $\gamma_1(t) = \epsilon\cdot e^\frac{2\pi i t}{p}, \hskip 0.2cm (0<\epsilon \ll 1)$, a small path around the origin.
\item $\gamma_2(t) = 1+ \epsilon \cdot e^{2\pi it}, \hskip 0.2cm (0<\epsilon \ll1)$, a small circle around the branch points.
\item $\gamma_3(t) = Re^\frac{-2\pi it}{p}, \hskip0.2cm (R \gg 1)$, a boundary circle with opposite orientation. 
\end{itemize}
These are orbifold loops that correspond to generators of $\pi_1^{\mathrm{orb}}\left(\mathbb P^1_{p,q,\frac{pq}{\gcd(p,q)}}\right)$ in \eqref{pi1orb}.

As we realize $F_{p,q}$ as a $q$ sheeted covering of $\C$, we label those sheets from $1$ to $q$ so that the crossing branch cuts increase the label number by $+1$. Each sheet has $p$ copies of the fundamental domain. We put the label $i_j$ on the following copy;
\[\left\{x=re^\theta \; \bigg| \; 0 \leq r, \hskip 0.1cm \frac{2(j-1)\pi}{p}\leq\theta\leq\frac{2j\pi }{p}\right\} \subset \textrm{$i$-th sheet}.\]
In this setup, we can write down the $\phi$ from a representation of the fundamental group to the group of permutation of the set of labels $\{i_j \; | \; 1\leq i\leq q, \; 1\leq j\leq p\}.$ 

\begin{align*}
\phi:\pi_1^{\mathrm{orb}}\bigg(\mathbb P^1_{p,q,\frac{pq}{\gcd(p,q)}}\bigg) &\to S_{pq}\\
\gamma_1 &\mapsto (1_1, 1_2, \ldots, 1_p)(2_1, 2_2, \ldots, 2_p)\cdots(q_1, q_2, \ldots, q_p)\\
\gamma_2 &\mapsto (1_1, 2_1, \ldots, q_1)(1_2, 2_2, \ldots, q_2)\cdots(1_p, 2_p, \ldots, q_p)\\
\gamma_3 &\mapsto (\gamma_1\circ\gamma_2)^{-1}
\end{align*} 
The image of this representation is isomorphic to $\Z/p\times \Z/q$, generated by $\gamma_1$ and $\gamma_2$. It is compatible with the diagonal symmetry group action in the following way:
\begin{itemize}
\item $\gamma_1$ is a rotation of each sheets by $\frac{2\pi}{p}$. It corresponds to a diagonal action $x\to e^\frac{2\pi i}{p}\cdot x, \hskip 0.2cm y\to y$. 
\item $\gamma_2$ is a rotation of each sheets by $\frac{2\pi}{q}$ so it corresponds to a diagonal action $x\to x, \hskip 0.2cm y\to e^\frac{2\pi i}{q}\cdot y$.
\item $\gamma_3$ corresponds to a diagonal action $x\to e^{-\frac{2\pi i}{p}}\cdot x, \hskip 0.2cm y\to e^{-\frac{2\pi i}{q}}\cdot y$.
\end{itemize}

\subsubsection{Chain type $C_{p,q}$}
$M_{C_{p,q}}$ is a locus of an equation $x^p+xy^q=1$. We regard them as a Riemann surface of a multivalued meromorphic function 
\[y=   \bigg(\frac{1-x^p}{x}\bigg)^{\frac{1}{q}}. \]
This function has $q$ zero branch points $x_k=e^\frac{2k\pi i}{q}$ 
and a single pole branch point $x=0$.

We connect each branch point with $\infty$ by rays as before. Also, we overlap a ray from a pole $x=0$ and a ray from a zero $x=1$. Because they are coming out of different sources, they cancel each other on the overlap. With this choice of branch cuts, $M_{C_{p,q}}$ is a $q$ sheeted covering of $\C^*$. 

A fundamental domain of the quotient $M_{C_{p,q}}/G_{C_{p,q}}$ 
can be taken as the same domain \eqref{fdf} with the puncture at the origin, and orbifold loops $\gamma_1, \gamma_2, \gamma_3$ are the same as in the Fermat case. 

Due to the branch cut along the line segment $[0,1]$ on the real axis, the monodromy representation is different from the Fermat cases. 
It is not hard to see that we get the following symmetric group representation
\begin{align*}
\phi:\pi_1^{\mathrm{orb}}\bigg(\mathbb P^1_{pq,q,\frac{pq}{\gcd(p-1,q)}}\bigg) &\to S_{pq}\\
\gamma_1 &\mapsto (1_1, 1_2, \ldots, 1_p, q_1, q_2, \ldots,3_p, 2_1, 2_2, \ldots, 2_p)\\
\gamma_2 &\mapsto (1_1, 2_1, \ldots, q_1)(1_2, 2_2, \ldots, q_2)\cdots(1_p, 2_p, \ldots, q_p)\\
\gamma_3 &\mapsto (\gamma_1\circ\gamma_2)^{-1}
\end{align*} 
Unlike the Fermat case, $\phi(\gamma_1)$ generates $\phi(\gamma_2)$ by the relation $\phi(\gamma_2) = \phi(\gamma_1)^{-p}.$ Hence the image of $\phi$ is generated by $\gamma_1$, and isomorphic to $\Z/pq$. note that $\phi(\gamma_1)$ rotates each sheet by $\frac{2\pi}{p}$, and change the label of a sheet by $+1$ after you apply it by $-p$ times. Therefore, 
\begin{itemize}  
  \item $\gamma_1$ corresponds to $x\to e^\frac{2\pi i}{p} \cdot x, \hskip 0.2cm y \to e^\frac{-2\pi i}{pq} \cdot y$.
 \end{itemize}

\subsubsection{Loop type $L_{p,q}$}
$M_{L_{p,q}}$ is a locus of an equation $x^py+xy^q=1$. As we cannot realize $L_{p,q}$ as a Riemann surface of a single function, we work with the following parametrization by $z \in \C$
\[x=   \bigg(\frac{z^q}{1-z}\bigg)^{\frac{1}{pq-1}}, \hskip 0.2cm y=  \bigg(\frac{(1-z)^p}{z}\bigg)^\frac{1}{pq-1},\]
with two branch points  $z=0,1$. 
 We connect each two with $\infty$ by half lines and let the one from the origin overlap the one from $z=1$. Then $M_{L_{p,q}}$ is a $pq-1$ sheeted covering of a $z$-plane $\C \setminus \{0,1\}$ as follows. 

A fundamental domain is the whole $z$-plane minus two points $z=0, 1$. The three distinguished paths are
\begin{itemize}
\item $\gamma_1(t) = \epsilon\cdot e^{2\pi i t}, \hskip 0.2cm (0<\epsilon \ll1)$, a small circle around $z=0$.
\item $\gamma_2(t) = 1+ \epsilon \cdot e^{2\pi i t}, \hskip 0.2cm (0<\epsilon \ll 1)$, a small circle around $z=1$.
\item $\gamma_3(t) = Re^{-2\pi it}, \hskip0.2cm (R \gg 1)$, a boundary circle with opposite orientation. 
\end{itemize}
Let us compute the monodromy representation. Whenever we cross a branch cut inside $z$-plane, we change a covering sheet for $x$ and $y$ both. Each of them has $pq-1$ possibilities, so there are $(pq-1)\times(pq-1)$ different sheets. Let's label them by $(i,j), \hskip 0.1cm i,j = 1, \ldots, pq-1$.  But we don't need all of them because  $\pi_1$-orbit of $(1,1)$ consists of only $pq-1$ sheets among them. The monodromy representation of $\pi_1^{\mathrm{orb}}$ is written as  
\begin{align*}
\phi:\pi_1^{\mathrm{orb}}\bigg(\mathbb P^1_{pq-1,pq-1,\frac{pq-1}{\gcd(p-1,q-1)}}\bigg) &\to S_{pq-1}\times S_{pq-1}\\
\gamma_1 &\mapsto (q,-1): (a,b) \to (a+q,b-1)\\
\gamma_2 &\mapsto (-1,p) : (a,b) \to (a-1, b+p)\\
\gamma_3 &\mapsto (\gamma_1\circ\gamma_2)^{-1}
\end{align*} 
Since $\phi(\gamma_2) = \phi(\gamma_1)^{-p}$, the image of this representation is generated by $\gamma_1$ and isomorphic to $\Z/pq-1$. Furthermore, it is easy to see that 
\begin{itemize}
\item $\gamma_1$ corresponds to a diagonal action $x\to e^\frac{2\pi q i}{pq-1}\cdot x, \hskip 0.2cm y\to e^\frac{-2\pi i}{pq-1}y$.
\end{itemize}

 
\subsection{Equivariant tessellation of Milnor fibers}
Now, we give a proof of Theorem \ref{thm:tess}, which describes the equivariant tessellation of the Milnor fibers.
\begin{proof}
Recall that we have $[\WH{M}_W/ G_W]= \mathbb{P}^1_{a,b,c}$ from Proposition \ref{prop:abc}.
Let $H$ be the universal cover of  $\WH{M}_f$ or equivalently that of $\mathbb{P}^1_{a,b,c}$. We have
$\pi_1^{\mathrm{orb}}(\mathbb P_{a,b,c})$ action on $H$. Let $F$ be a fundamental domain in $H$ for this action as in the Figure \ref{fig:f}
where the angle is measured in $S^2$, $\R^2$ or $\mathbb{H}$ depending on the universal cover.
Here $x_1,x_2,x_3$ project down to $a,b,c$ orbifold points. 
We have the full cone angle at two of them, but the cone angle for the other is divided in half.
Also, we will use Proposition \ref{prop:homo} which describes the relation between generators $\gamma_1,\gamma_2,\gamma_3$ of
$\pi_1^{\mathrm{orb}}(\mathbb P_{a,b,c})$ and $G_f$.

\begin{figure}[h]
\includegraphics[scale=0.6]{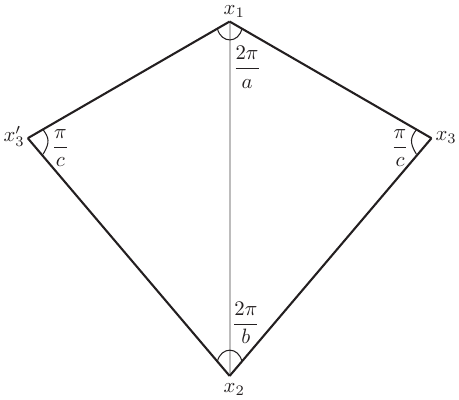}
\centering
\caption{\label{fig:f} Fundamental domain of $\mathbb{P}^{1}_{a,b,c}$ in $\mathbb{H}$}
\end{figure}

For the Fermat case, consider $\gamma_1,\gamma_2 \in \pi_1^{\mathrm{orb}}(\mathbb P_{a,b,c})$
and collect the following $p \times q$ copies of $F$ to define a polygon
$$P:=\left \{ \gamma_{2}^{i} \gamma_{1}^{j} F \, \bigg| \, 0 \leq i \leq p-1, 0 \leq j \leq q-1 \right\}.$$
 First, $P$ is a fundamental domain of $\WH{M}_W$ since $G_W= \Z/p \times \Z/q$ and
$\phi(\gamma_1) = (1,0)$, $\phi(\gamma_2) = (0,1)$ are the generators of $G_W$ by Proposition \ref{prop:homo}.
 
  Also, one can check that $P$ is a $(2pq-2p)$-gon in the following way. 
First, $\{\gamma_{2}^{j} F\}$ for $j=0,\ldots, q-1$ can be glued counter-clockwise way around the vertex $x_2$ of Figure \ref{fig:f}
to form a $2q$-gon, say $Q$. Then, by applying $\{\gamma_1^i\}$ for $i=0,1,\ldots,p-1$ to $Q$, we get $p$-copies of $Q$ glued around the vertex $x_1$
to form a $(2pq-2p)$-gon and this is exactly $P$. Because $2$ edges of $Q$ meeting at the vertex $x_1$ become interior edges, the number of boundary edges decreases by $2p$ from $2pq$.
See Figure \ref{fig:2} (A) for the case of $\WH{M}_{F_{5,2}}$, where $Q$ is given by the union of $F$ and $\gamma_2 F$ and
$P$ is the $10$-gon.

\begin{figure}[h]
\begin{subfigure}{0.43\textwidth}
\includegraphics[scale=0.62]{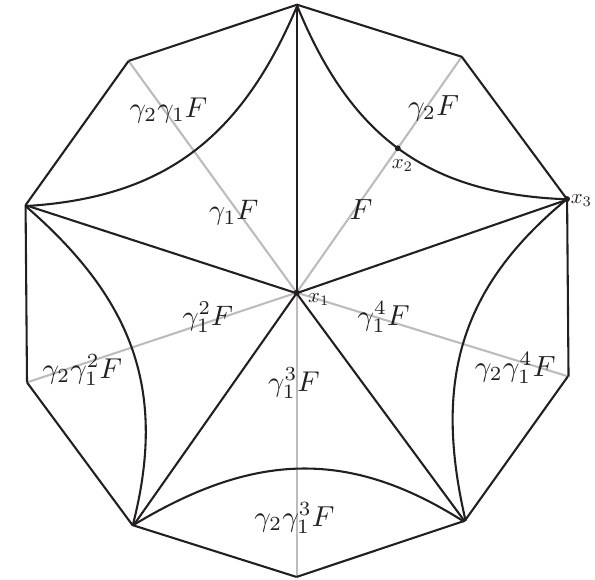}
\centering
\caption{$\WH{M}_{F_{5,2}}$ case}
\end{subfigure}
\begin{subfigure}{0.43\textwidth}
\includegraphics[scale=0.62]{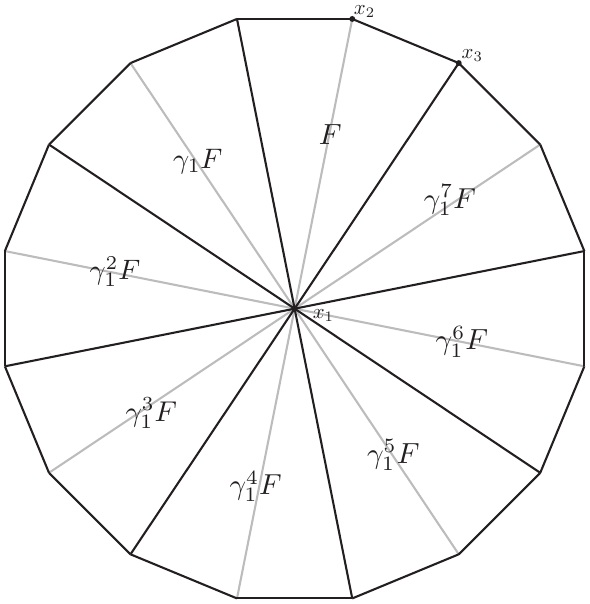}
\centering
\caption{$\WH{M}_{C_{4,2}}$ case}
\end{subfigure}
\centering
\caption{\label{fig:2} Tessellations of $A_{4}$ and $D_{5}$ singularities}
\end{figure}

To find the boundary identification, we consider additional tiles next to $P$.
To see how two boundary edges from $\gamma_{2}^{i} \gamma_{1}^{j} F$ are identified to the remaining edges, we consider the additional rotation action around vertex for $x_1$. 
Consider two tiles $\gamma_{1} \gamma_{2}^{i} \gamma_{1}^{j} F$, $\gamma_{1}^{-1} \gamma_{2}^{i} \gamma_{1}^{j} F$ which are not in $P$. Each tile shares one boundary edge with $\gamma_{2}^{i} \gamma_{1}^{j} F$. Note that $\gamma_{1} \gamma_{2}^{i} \gamma_{1}^{j} F$ and $\gamma_{1}^{-1} \gamma_{2}^{i} \gamma_{1}^{j} F$ are identified with $\gamma_{2}^{i} \gamma_{1}^{j+1} F$ and $\gamma_{2}^{i} \gamma_{1}^{j-1} F$ respectively because of $\phi(\gamma_{1} \gamma_{2}^{i} \gamma_{1}^{j}) = \phi(\gamma_{2}^{i} \gamma_{1}^{j+1})$, $\phi(\gamma_{1}^{-1} \gamma_{2}^{i} \gamma_{1}^{j}) = \phi(\gamma_{2}^{i} \gamma_{1}^{j-1})$. This observation implies a boundary edge $e = (\gamma_{2}^{i} \gamma_{1}^{j})F \cap (\gamma_{1} \gamma_{2}^{i} \gamma_{1}^{j})F$ should be identified with corresponding edge of $\gamma_{2}^{i} \gamma_{1}^{j+1} F$. Similarly, boundary edge $e^{\prime} = (\gamma_{2}^{i} \gamma_{1}^{j})F \cap (\gamma_{-1} \gamma_{2}^{i} \gamma_{1}^{j})F$ is identified with corresponding edge of $\gamma_{2}^{i} \gamma_{1}^{j-1} F$. One can check  that it gives $\pm (2q-1)$ identification on $\partial P$. This proves the proposition for the Fermat case.

\begin{figure}[h] 
\begin{subfigure}{0.4\textwidth}
\includegraphics[scale=0.65]{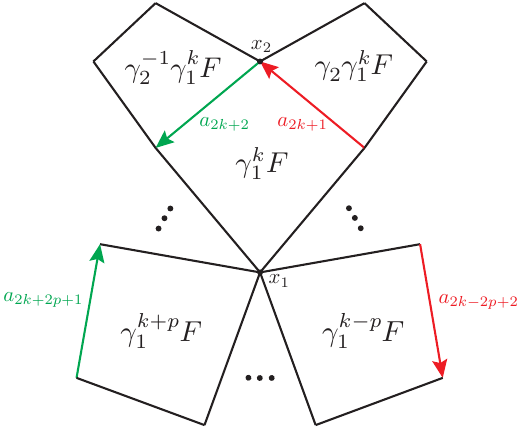}
\centering
\caption{Boundary identification}
\end{subfigure}
\begin{subfigure}{0.4\textwidth}
\includegraphics[scale=0.5]{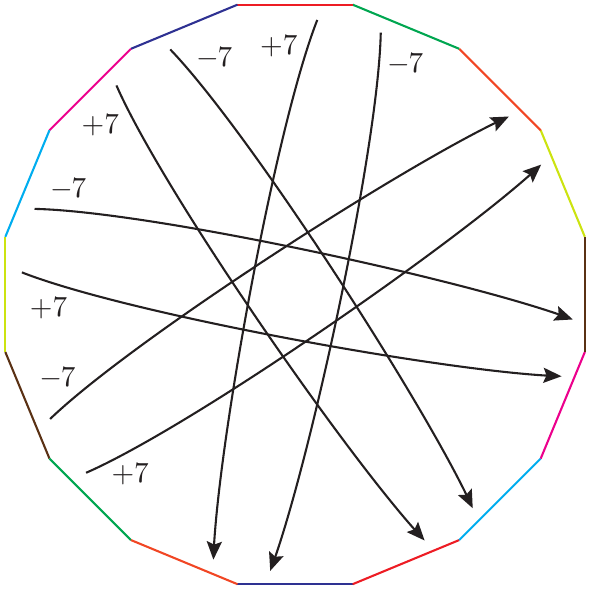}
\centering
\caption{$\pm 7$ pattern}
\end{subfigure}
\centering
\caption{\label{fig3}Chain cases}
\end{figure}

Next, let us discuss the chain type. Recall that we have $G_{C_{p,q}} = \Z/pq$, and $\phi(\gamma_1)=1 \in \Z/pq$ is the generator.  We take the following $pq$-copies of $F$ to define a $2pq$-gon:
$$P:=  \left \{ \gamma_{1}^{i} F \, \mid \, 0 \leq i \leq pq-1 \right \},$$
which is a fundamental domain for the Milnor fiber.
Consider $\gamma_2^{-1} \gamma_1^kF$ and $\gamma_2 \gamma_1^k F$. Since $\phi(\gamma_2) = -p$, we have $\phi(\gamma_2^{-1} \gamma_1^k) = \phi(\gamma_1^{p+k})$. Therefore, $\gamma_2^{-1} \gamma_1^kF$ can be identified with $
\gamma_1^{p+k}F$ as a tile in the Milnor fiber. Hence $x_2x_3$ edge of $\gamma_1^{k}F$ should be identified with $x_2x_3'$ edge of $\gamma_1^{k+p}F$. 
See Figure \ref{fig3} (A). In terms of edges of $P$, this is $+(2p-1)$ identification. From the same argument for $\gamma_2 \gamma_1^k F$, we find that $x_2x_3'$ edge of $\gamma_1^{k}F$ should be
identified with $-(2p-1)$ pattern. This proves the chain case.

For the loop type, we can proceed similarly as in the chain case. We take
$$P:=\left\{ \gamma_{1}^{i} F \, | \, 0 \leq i \leq pq-2 \right\}$$
and we get the same identification as in the chain case from Proposition \ref{prop:homo}.
 \end{proof}

\begin{remark}
A description of $M_W$ is not unique. For example, we will use another shape of the domain in $F_{3,4}$, $F_{3,5}$ and $C_{3,3}$ case (see Section \ref{sec:AR}). A gluing rule for $C_{3,3}$ is given as in the Figure \ref{fig:E7gluing}.
\begin{figure}[h]
\includegraphics[scale=0.5]{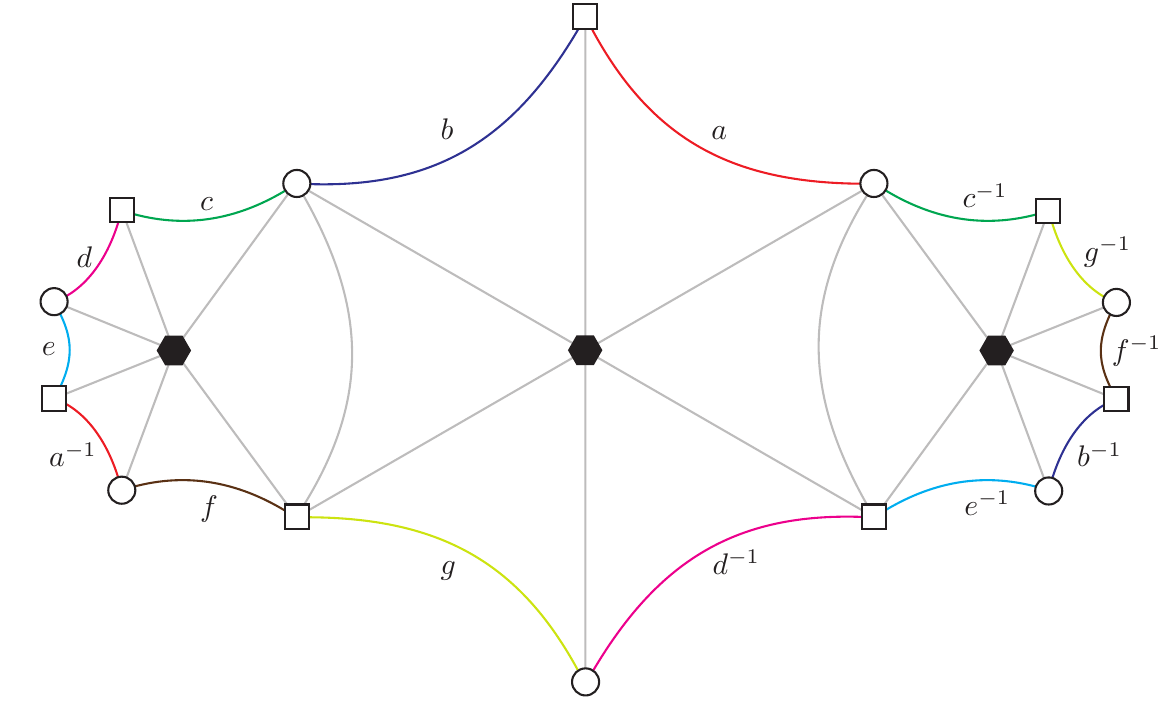}
\centering
\caption{Fundamental domain in $C_{3,3}$ case and gluing}
\label{fig:E7gluing}
\end{figure}
\end{remark}
 
\begin{remark}
We remark that $\WH{M}_W$ is a sphere for $F_{2,2}$, a torus 
for  $F_{3,2}, F_{4,2}, C_{2,2}, C_{3,2}, L_{2,2}$ and higher genus surface for the rest of the cases.
For the last case, the universal cover can be taken as the hyperbolic plane $\mathbb{H}$, and there exists a Fuchsian group $\Gamma$ 
such that $\mathbb{H} / \Gamma \simeq \WH{M}_W$. Furthermore,
 a finite group $G$ acts on $M_W$ if and only if there exist a Fuchsian group $\Gamma'$ and a surjective homomorphism $\phi : \Gamma' \to G_W$ with kernel $\Gamma$ such that $M \simeq \mathbb{H} / \Gamma$ and $M / G \simeq \mathbb{H} / \Gamma'$. It is exactly a homomorphism given in Proposition \ref{prop:homo}. We refer readers to \cite{Katok92} about Fuchsian group and related facts. 
\end{remark}  

From the universal cover of $\WH{M}_W$, we replace compactified points by punctures and get a cover of $M_W$. We will use this cover of $M_W$ in the next section.


 \section{Floer theory for Milnor fiber quotients and localized mirror functor}\label{sec:eqFloer}


  \subsection{$\boldsymbol{\Omega}$- and $\boldsymbol{H_1}$-grading}
We consider additional gradings on $\mathcal{WF}([M_W/G_W])$ following \cite{Se} (with small variation) in addition to $\Z/2$-grading.
We use a holomorphic volume form $\Omega$ on $\mathbb P^1$ with poles of order one at $0,1 \in \mathbb P^1$. This choice provides a trivialization of a tangent bundle $T_{[\Mil_W/G]}$ away from $0,1$ and $\infty$. Time-1 orbits of $H$ are always disjoint from $0, 1, \infty\in \mathbb P^1$ after adding a small time-dependent perturbation term. Therefore, each Hamiltonian orbit still carries an honest cohomological Conley-Zehnder index. We use this integer as a degree.
  
   We can put a grading on Lagrangians and Hamiltonian chords between them in a similar fashion. For a Lagrangian $L$ which is oriented and away from $0, 1, \infty \in \mathbb P^1$, we get a phase map $\overline \phi_L : L \to S^1$ defined by
$$     \overline \phi_L(x) =\frac{\Omega(X)}{|\Omega(X)|} $$
     where $X$ is a nonvanishing vector field on $TL$ pointing positive direction.
   
   \begin{defn}\label{Omega-grading}
   \textit{An $\Omega$-grading} on $L$ is a choice of lift  $\phi_L: L \to \R$
    of a phase map $\overline \phi_{L}$. An \textit{$\Omega$-graded Lagrangian} $L$ is a Lagrangian submanifold with a specific choice of $\Omega$-grading $\phi_L$.
   \end{defn}
   For any time-1 Hamiltonian chord $a\in\chi(L_0, L_1)$ between graded Lagrangian submanifolds, there is the unique homotopy class of Lagrangian path from $T_{L_0, a(0)}$ to $T_{L_1, a(1)}$ compatible to the grading. The absolute Maslov index $\mu_M(a)$ is now well-defined, and we use it as a degree of $a$. 
   
   A discrepancy occurs when we consider a moduli space of discs. A standard index formula starts to read an intersection number of holomorphic maps and pole divisor of $\Omega$. Let 
   \[\overline {\mathcal M}_{m; n,1; [u]}(\gamma_1, \ldots, \gamma_m; a_1, \ldots, a_n, a_0)\]
   be a sub-moduli space of $\overline {\mathcal M}_{m; n,1; [u]}(\gamma_1, \ldots, \gamma_m; a_1, \ldots, a_n, a_0)$ whose relative class is $[u]$. Then, a standard index formula is given by 
   \begin{align*}
   \mathrm{dim}_\R\overline {\mathcal M}_{m; n,1; [u]}(\gamma_1, \ldots, \gamma_m; a_1, \ldots, a_n, a_0) &= (2m+n-2)+\deg a_0 -\sum_{i=1}^n\deg a_i -\sum_{j=1}^n \deg \gamma_j \\
   &+ 2\left(\deg(u, 0) +\deg(u, 1)\right) 
   \end{align*}
   The dimensions of our moduli spaces may differ in even numbers. It breaks a $\Z$-grading into $\Z/2$-grading. 
   
   Meanwhile, there is a topological grading coming from an orbifold cohomology. Recall we use a notation $\gamma_1, \gamma_2, \gamma_3$ to denote a homotopy class of loops winding orbifold point $0, 1$ or $\infty$ respectively (see Figure \ref{fig:fabc} for orientations).  We denote by $H_1^{\mathrm{orb}}$
   the abelianization of $\pi_1^{\mathrm{orb}}$. We get
\begin{equation} \label{eqn:1sthlgy}
   H_1^{\mathrm{orb}} \left( [\Mil_W/G_W] \right) \simeq \left\{ 
   \begin{array}{lc}  \Z\langle\gamma_1, \gamma_2, \gamma_3\rangle/\{p\gamma_1=q\gamma_2=\gamma_1+\gamma_2+\gamma_3=0\} & \mathrm{(Fermat)} \\
    \Z\langle\gamma_1, \gamma_2, \gamma_3\rangle/\{q\gamma_2=\gamma_1+\gamma_2+\gamma_3=0\} & \mathrm{(Chain)}\\
     \Z\langle\gamma_1, \gamma_2, \gamma_3\rangle/\{\gamma_1+\gamma_2+\gamma_3=0\} & \mathrm{(Loop)}   
   \end{array}\right.
\end{equation}
   note that any symplectic cochains, including Morse critical points, can be considered as an element of $H_1^{\mathrm{orb}}$. Moreover, if the Lagrangian submanifold $L$ is simply connected, elements of $CW^\bullet(L, L)$ can also be labeled by $H_1^{\mathrm{orb}}$. Let us call it an \textit{$H_1$-grading}. The Floer theoretic operation uses pseudo-holomorphic curves whose homological boundary is a difference of the homology class of output and inputs. Therefore, we have
   \begin{lemma}\label{topological grading}
     A pseudo-holomorphic curve operation is homogeneous with respect to an $H_1$-grading. 
   \end{lemma}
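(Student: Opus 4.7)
The plan is to interpret any pseudo-holomorphic curve contributing to a Floer-theoretic operation as a relative $2$-chain in $[M_W/G_W]$ and then to apply $\partial^{2}=0$ to read off a homogeneity relation in orbifold first homology. Concretely, fix a pseudo-holomorphic map $u:S\to [M_W/G_W]$ from a punctured Riemann surface $S$ with boundary mapped to the Lagrangian $\bL$, asymptotic at the $m$ interior punctures to time-$1$ Hamiltonian orbits $\gamma_{j_1},\ldots,\gamma_{j_m}$ and at the $n+1$ boundary punctures to chords $a_{1},\ldots,a_{n}$ (inputs) and $a_{0}$ (output) between branches of $\bL$. After excising small neighborhoods of each puncture, the image $C = u_{*}[S]$ is a singular $2$-chain in $[M_W/G_W]$.

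Next I would analyze $\partial C$. It decomposes into (i) the asymptotic orbit loops at the interior punctures, each of which is already a closed $1$-cycle representing its assigned class in $H_{1}^{\mathrm{orb}}$; (ii) the asymptotic chord arcs at the boundary punctures; and (iii) the remaining boundary arcs on $\bL$. Because $\bL$ is simply connected (its branches are contractible immersed discs), each chord arc can be closed up by a path inside $\bL$ in a way that is unambiguous up to homotopy in $\bL$, hence well-defined in $H_{1}^{\mathrm{orb}}([M_W/G_W])$, and the resulting class is exactly the one the statement assigns to the chord. After this closing procedure, $\partial C$ is a disjoint union of closed $1$-cycles, and since $C$ is a chain, its boundary vanishes in $H_{1}^{\mathrm{orb}}$. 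Reading off the resulting relation gives
\[
[a_{0}] \;=\; \sum_{i=1}^{n}[a_{i}] \;+\; \sum_{k=1}^{m}[\gamma_{j_{k}}] \quad \in\; H_{1}^{\mathrm{orb}}\bigl([M_W/G_W]\bigr),
\]
which is precisely the homogeneity of the operation with respect to the $H_{1}$-grading.

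The main thing to handle with care is the interaction between $u$ and the orbifold points: if $C$ covers an order-$n$ orbifold point lying under $\gamma_{i}$ with local multiplicity $k$, the cap removed near that point contributes an additional $k\gamma_{i}$ to $\partial C$. The torsion relations of $H_{1}^{\mathrm{orb}}$ recorded in \eqref{eqn:1sthlgy}, namely the vanishings $p\gamma_{1}=q\gamma_{2}=0$ (Fermat), $q\gamma_{2}=0$ (Chain), and so on, are exactly what is needed to ensure this contribution is well-defined modulo the choice of local trivialization. Once the orbifold contributions are checked, the lemma follows at once from $\partial\circ\partial=0$, and Lemma~\ref{topological grading} is established.
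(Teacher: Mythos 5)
Your argument follows the same route as the paper, whose proof is essentially the single observation that the homological boundary of any contributing pseudo-holomorphic curve expresses the class of the output in terms of the classes of the inputs; your chain-level bookkeeping with $\partial^2=0$ just spells this out, and in outline it is correct. Two justifications need repair, though neither changes the approach. First, your claim that the closing paths are unambiguous ``because $\bL$ is simply connected (its branches are contractible immersed discs)'' is false as stated: the Seidel Lagrangian $\bL$ is an immersed \emph{circle}, hence not simply connected. What actually makes the chord classes well defined is that two closing paths inside $\bL$ differ by a multiple of $[\bL]$, which is null-homotopic in $[M_W/G_W]$ -- this is exactly the point made in the proof of Proposition \ref{grading computation} -- while for the noncompact Lagrangians $L$ the $H_1$-grading is only defined under the simple-connectedness hypothesis, so there the ambiguity is absent by assumption. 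Second, in your discussion of orbifold points the conclusion should be that the excised caps contribute \emph{zero} in $H_1^{\mathrm{orb}}$, not merely that their contribution is ``well defined'': a curve that lifts to the Milnor fiber covers an order-$n$ orbifold point with local multiplicity divisible by $n$, so the cap contributes a multiple of $n\gamma_i$, which is killed by the torsion relations in \eqref{eqn:1sthlgy} (equivalently, one may work with orbifold $2$-chains, whose boundaries vanish in the abelianization of $\pi_1^{\mathrm{orb}}$); twisted-sector insertions, when present, are genuine inputs and are already counted in your sum. With these two corrections your proof is complete and coincides with the paper's.
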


\subsection{Seidel's Lagrangian $\boldsymbol{\bL}$.}

Since $[M_W/G_W]$ is an orbifold sphere with three special points, we consider an immersed circle, called Seidel Lagrangian $\bL$
and its $\AI$-algebra following Seidel (see Figure \ref{fig:fabc} and \cite{Se}).
$\bL$ is oriented so that the edges of the front triangle in Figure \ref{fig:fabc} are oriented counter-clockwise. We briefly recall the algebra structure of $CF^\bullet(\bL, \bL)$.  It has three immersed generators $X,Y,Z$ of odd degree,  $\bar{X}=Y\wedge Z, \bar{Y}=Z\wedge X, \bar{Z}=X\wedge Y$ of even degree. 

\begin{figure}[h]
\includegraphics[scale=0.6]{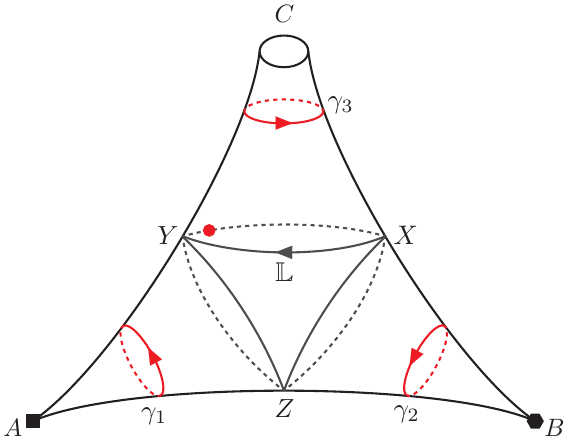}
\centering
\caption{\label{fig:fabc}  Orbifold sphere $\mathbb{P}^{1}_{a,b,c}$ in the Fermat case with one puncture $C$}
\end{figure}
\begin{prop}\label{grading computation}
An $\Omega$ and $H_1$-grading of $CF^\bullet(\bL, \bL)$ is given by the following table (see \ref{Omega-grading} and \ref{topological grading}).

\begin{center}
\begin{tabular}{ccccccccc}
&$1_\bL$ & $X$ & $Y$ & $Z$ & $\bar{X}$ & $\bar{Y}$ & $\bar{Z}$ & $[pt]=X\wedge Y \wedge Z$ \\
\hline
\hline
$\Omega$-grading & $0$ & $1$ & $1$ & $-1$ & $0$ & $0$ & $2$ & $1$ \\
$H_1$-grading & $0$ & $\gamma_1$ & $\gamma_2$ & $\gamma_3$ & $-\gamma_1$ & $-\gamma_2$ & $-\gamma_3$ & $0$\\
\hline
\hline
\end{tabular}
\end{center}

\end{prop}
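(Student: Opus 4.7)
The proposition has two independent parts, so I would handle the $H_1$-grading and the $\Omega$-grading separately.

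For the $H_1$-grading, the plan is to read it off directly from Figure \ref{fig:fabc}. At each self-intersection point of the immersed circle $\bL$, the short arc of $\bL$ connecting the two local branches at that point, concatenated with a constant path, projects to a loop in $[M_W/G_W]$, and its class in $H_1^{\mathrm{orb}}$ is by definition the $H_1$-grading of that generator. Inspection of the picture shows that the short arc at $X$ bounds a digon around the $a$-vertex orbifold point and hence represents $\gamma_1$; symmetrically the short arcs at $Y$ and $Z$ bound digons around the $b$- and $c$-vertices, giving $\gamma_2$ and $\gamma_3$. The dual generators $\bar{X}, \bar{Y}, \bar{Z}$ use the complementary short arc at the same intersection, which traverses the same digon in the opposite direction, so their classes are $-\gamma_1, -\gamma_2, -\gamma_3$. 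The unit $1_\bL$ and the volume class $[pt] = X \wedge Y \wedge Z$ correspond to global Morse-type generators on $\bL$ which are represented by constant loops, so their $H_1$-classes vanish (note $\gamma_1 + \gamma_2 + \gamma_3 = 0$ is compatible with the total class of $[pt]$).

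For the $\Omega$-grading, the strategy is to pick an explicit lift $\phi_\bL : \bL \to \R$ of the phase map $\overline{\phi}_\bL$ induced by the chosen meromorphic volume form $\Omega$, compute its values on the two branches at each self-intersection, and take the difference. I would first compute the total winding of $\overline{\phi}_\bL$ along the three arcs of $\bL$ using the residue formula: since $\Omega$ has a simple pole at $0$ (the $a$-vertex) and at $1$ (the $b$-vertex) and is holomorphic at $\infty$ (the $c$-vertex/puncture), the two arcs of $\bL$ sweeping past the $a$- and $b$-vertices each contribute an extra half-turn to $\phi_\bL$ compared with the flat Euclidean count, while the arcs passing near the $c$-vertex contribute only the ordinary turning. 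Evaluating the resulting discrete jumps of $\phi_\bL$ across the three self-intersections yields $\mu_M(X) = \mu_M(Y) = 1$ and $\mu_M(Z) = -1$. Poincaré duality in $CF^\bullet(\bL, \bL)$ then forces $\bar{X}, \bar{Y}$ to be of degree $0$ and $\bar{Z}$ to be of degree $2$, and the standard convention $\deg 1_\bL = 0$, $\deg [pt] = 1$ on a $1$-dimensional Lagrangian completes the table.

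The main obstacle, and the step that needs explicit care, is fixing the normalization of the lift $\phi_\bL$ and matching its discontinuity at each corner to the Maslov index convention used elsewhere in the paper. Once that normalization is pinned down, everything else is a short local model calculation near each of the three special points, using the local description of $\Omega$ as $d\zeta/\zeta$ at the pole points and as $d\zeta$ at the regular point; the asymmetric values $-1$ and $2$ at $Z, \bar{Z}$ appear precisely because $\Omega$ has no pole at the $c$-vertex, which is the only conceptual content of the computation. As a sanity check, one verifies the sum $\deg X + \deg \bar{X} = \deg Y + \deg \bar{Y} = \deg Z + \deg \bar{Z} = 1$ expected from Poincaré duality, and that all $H_1$-gradings of composable pairs are additive, consistent with Lemma \ref{topological grading}.
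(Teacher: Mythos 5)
Your proposal is correct and follows essentially the same route as the paper's (two-line) proof: the $H_1$-grading is the class of the loop traced inside $\bL$ between the two branches at each immersed point (well defined because $\bL$ is null-homotopic), and the $\Omega$-grading is the Maslov index, i.e.\ the phase winding of $\Omega$ along that loop, governed by the simple poles of $\Omega$ at $0,1$ and its regularity at $\infty$, giving $\deg X=\deg Y=1$, $\deg Z=-1$. The only cosmetic difference is that you obtain the even generators via the duality $\deg p+\deg\bar p=1$ and the standard conventions for $1_\bL$ and $[pt]$, which the paper leaves implicit.
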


\begin{proof} (See \cite{Se}, \cite{Sheridan11}) $H_1$-grading is still well-defined even though $\bL$ is not simply-connected. Because the class of $\bL$ is null-homotopic, any path inside $\bL$ starting from an immersed point to its another lift defines the unique homotopy class of loop in $H_1$. An $\Omega$-grading can be computed as a Maslov index of that class. Because $\Omega$ has  order $1$ poles at $0$ and $1$, Maslov index of $X$ and $Y$ is $1$. The index of $Z$ differs by $2$ because our $\Omega$ has no poles nor zeros at $\infty$.
\end{proof}
In the surface case, we will use the following Seidel's convention \cite{Se} of signs for $\AI$-operation $\{m_k\}$,
which are defined as counting convex polygons. Each polygon contributes $\pm 1$ to the coefficient of output and the sign is determined as follow. Let $L_{0}, \ldots, L_{k}$ be Lagrangian submanifolds which intersect transversally and $x_{i}$ be an intersection point of $L_{i-1}$ and $L_{i}$ for $0 \leq i \leq k$ (modulo $k+1$). Suppose that there is a polygon $P$ bounded by $L_{0}, \ldots, L_{k}$ which has $k$ inputs $x_{1}, \ldots, x_{k}$ and one output $x_{0}$. For each $1 \leq i \leq k-1$, if the orientation of $L_{i}$ does not match to the orientation of $\partial P$, there is a sign $(-1)^{|x_{i}|}$. If the orientation of $L_{k}$ does not match to the orientation of $\partial P$, it gives $(-1)^{(|x_{k}| + |x_{0}|)}$. If $L_{i}$ is equipped with a non-trivial spin structure, we represent it by a red dot on $\bL$. Each time when an edge of a polygon contains the red dot, the sign is multiplied by $(-1)$. Multiplying all these sign contributions determines the sign of polygon.
 
Using the reflection symmetry (take $\bL$ to be invariant under the involution), we can follow \cite{CHL} to prove that $\bL$ is weakly unobstructed
and compute its potential function  $W_\bL$.
    \begin{lemma}
    Equip $\mathbb L$ with a nontrivial spin structure (marked as red dot in Figure \ref{fig:fabc}). Then 
      \begin{enumerate}
      \item $\mathbb L$ is weakly unobstructed. 
      \item $\bb=xX+yY+zZ$ is a weak bounding cochain with potential $W_\bL$ where 
\[
  W_\bL = \left.
  \begin{cases}
   x^p+y^q+xyz,   & \text{for } \;\;F_{p,q}  \\
   y^q+xyz,  & \text{for } \;\;C_{p,q} \\
    xyz , & \text{for } \;\;L_{p,q}
  \end{cases}
   \right. 
\] 
      \end{enumerate}  
    \end{lemma}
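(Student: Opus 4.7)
The plan is to adapt the argument of \cite{CHL}, where the Seidel Lagrangian $\bL$ in the fully orbifold sphere $\mathbb{P}^1_{a,b,c}$ is shown to be weakly unobstructed with potential determined by a short list of rigid polygons. The new ingredient here is that some of the three special points of $[M_W/G_W]$ are punctures rather than honest orbifold points, as classified in Proposition \ref{prop:abc}, and the key qualitative change is that a puncture contributes no monomial to $W_\bL$.

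First I would establish weak unobstructedness via the anti-symplectic involution $\sigma$ that fixes $\bL$ setwise, namely reflection across the equator in Figure \ref{fig:fabc}. A standard $\sigma$-equivariant argument on the moduli of discs, combined with the $H_1$-grading from Proposition \ref{grading computation} (the unit $1_\bL$ is the only generator of trivial $\Omega$- and $H_1$-grading that can appear as an output of $m_k(\bb,\ldots,\bb)$ for $\bb = xX+yY+zZ$), forces $m_k^{\bb,\ldots,\bb} \in \C \cdot 1_\bL$ for each $k$. This defines $W_\bL$ by $\sum_k m_k(\bb,\ldots,\bb) = W_\bL \cdot 1_\bL$ and proves part (1).

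Next I would enumerate the rigid holomorphic polygons that contribute. Two classes appear. The central counter-clockwise triangle of Figure \ref{fig:fabc} with corners at $X, Y, Z$ contributes $xyz \cdot 1_\bL$. For each of the three special points that is an honest orbifold point of order $n$, there is a small polygon winding once around it whose $n$ corners all sit at the same immersed generator $X$, $Y$, or $Z$; it contributes $x^n$, $y^n$, or $z^n$ respectively. At a puncture the corresponding would-be small polygon is non-compact and does not contribute. Reading off the orbifold orders from Proposition \ref{prop:abc} — orbifold points of orders $p,q$ at $a,b$ with $c$ a puncture for Fermat; an orbifold point of order $q$ at $b$ with $a,c$ punctures for Chain; and all three special points punctures for Loop — one obtains exactly $x^p + y^q + xyz$, $y^q + xyz$, and $xyz$ respectively.

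The main obstacle is twofold. First, one must rule out additional rigid polygons beyond the two classes above; for this I would use the $H_1$-grading constraint together with the equivariant tessellation of $\WH{M}_W$ from Theorem \ref{thm:tess} to lift candidate polygons to the universal cover of $[M_W/G_W]$ and enumerate them combinatorially, as in \cite{CHL, Se}. Second, one must fix signs via Seidel's convention recalled before the lemma together with the red-dot spin structure; since the local geometry of each contributing polygon is identical to its counterpart in \cite{CHL}, the same bookkeeping produces the coefficient $+1$ in front of each monomial $x^p$, $y^q$, $xyz$, and no new sign issues arise from the presence of punctures.
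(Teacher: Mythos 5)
Your proposal follows essentially the same route as the paper: weak unobstructedness is obtained from the reflection ($\sigma$-equivariant) argument of Theorem 7.5 of \cite{CHL}, and the potential is read off from the finitely many contributing polygons --- the $XYZ$-triangle plus one minimal $n$-gon around each genuine orbifold point of order $n$ --- with the punctures eliminating both the corresponding power and all larger wrapping polygons, the count being performed after lifting to the tessellated cover. One small caveat: your parenthetical suggestion that the $\Omega$- and $H_1$-gradings already single out $1_\bL$ as the only possible output of $m_k(\bb,\ldots,\bb)$ is not accurate (for instance a polygon with $p-1$ corners at $X$ could a priori output $\bar X$ consistently with both gradings); it is the $\sigma$-equivariant sign cancellation, which you also invoke and which is the paper's actual mechanism, that rules out such terms.
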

    \begin{remark}
Note that $W_\bL$ is independent of $p$ for the chain, $p,q$ for the loop case.
This is not a contradiction because the quotient space $[M_W/G_W]$ is also independent of those indices as well.
\end{remark}
    \begin{remark}
    Since Milnor fibers are exact and $\bL$ is an exact Lagrangian (since it is homologically trivial), there
exist a change of coordinate such that $W$ does not have any area $T$-coefficient. Therefore, 
we will omit them in the paper.
\end{remark}

\begin{proof}
Weakly unobstructedness can be proved exactly the same way as Theorem 7.5 of \cite{CHL}.
To compute $W_\bL$, we fix a generic point and count all polygons whose corners are given by $X,Y,Z$'s.
Because of punctures, there are finitely many polygons contributing to $W_\bL$. Also, we are only counting 
smooth discs, which have lifts to the Milnor fiber. So we can count them on the cover.

In the Fermat case, recall that the Milnor fiber can be obtained by first taking $2p$-gon and taking $q$-copies of these $2p$-gon's by rotation around the $\Z/q$ fixed point. 
Then, we have one $p$-gon and one $q$-gon and $XYZ$-triangle passing through a generic point.
See Figure \ref{fig:po} (A).
Therefore, we have 
$$W_\bL = x^p + y^q +xyz$$

\begin{figure}[h]
\begin{subfigure}{0.43\textwidth}
\includegraphics[scale=0.5]{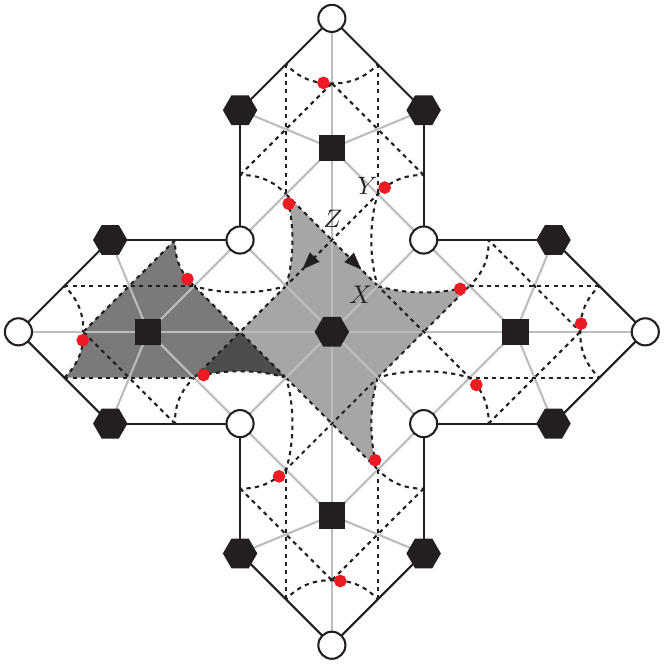}
\centering
\caption{Fermat case $F_{3,4}$}
\end{subfigure}
\begin{subfigure}{0.43\textwidth}
\includegraphics[scale=0.5]{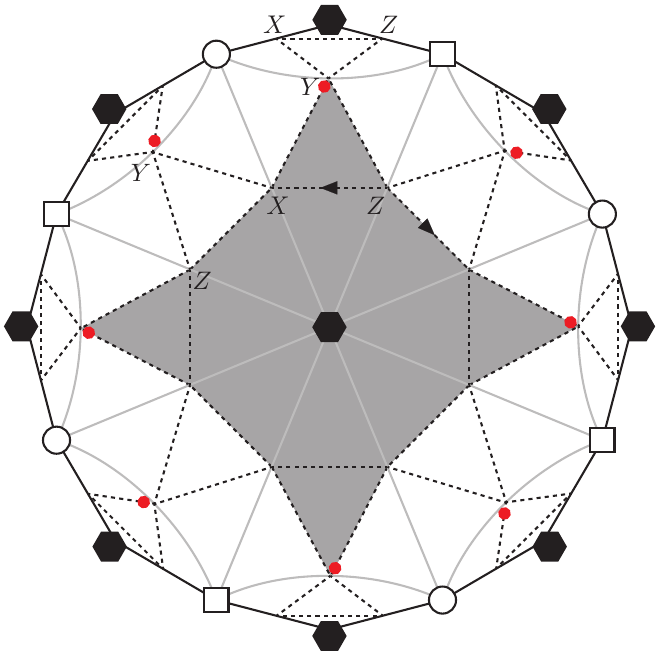}
\centering
\caption{Chain case $C_{2,4}$}
\end{subfigure}
\centering
\caption{Holomorphic polygons for the potentials}
\label{fig:po}
\end{figure}

In the chain case, its Milnor fiber is given by a $2pq$-gon with $A$-puncture at the center and
$B$-vertex and $C$-puncture as vertices of $2pq$-gon (with $\Z/pq$-action around $A$).
To see the discs, it is more convenient to describe the Milnor fiber with the orbifold point $B$ at the center. Note that there is only $\Z/q$-action around $B$. We can easily find a $q$-gon for the potential.
Hence, we obtain
$$W_\bL = y^q +xyz$$

In the loop case, all vertices are punctures and the only nontrivial disc is $XYZ$-triangle. Hence we have $$W_\bL= xyz$$
\end{proof}


\subsection{Localized mirror functor to Matrix factorization category}
For weakly unobstructed $\bL$,  localized mirror functor formalism \cite{CHL} provides
a canonical $\AI$-functor from Fukaya category of  $[M_W/G_W]$ to the matrix factorization
category of $W_\bL$. We use the version that appeared in \cite{CHLnc}.
\begin{defn}
Let $W^{\mathbb{L}}$ be the disc potential of $\mathbb{L}$. The localized mirror functor $\mathcal{F}^{\mathbb{L}} : \mathrm{Fuk}(X) \to \mathcal{MF}(W_{\mathbb{L}})$ is defined as follows.
\begin{itemize}
\item For given Lagrangian $L$, $\mathcal{F}^{\mathbb{L}}(L) := (CF(L,\mathbb{L}), -m_{1}^{0,\bb}) =: M_{L}$.
\item Higher component
$$\mathcal{F}^{\mathbb{L}}_{k} : CF(L_{1},L_{2}) \otimes \cdots \otimes CF(L_{k},L_{k+1}) \to \mathcal{MF}(M_{L_{1}},M_{L_{k+1}})$$
is given by
$$\mathcal{F}^{\mathbb{L}}_{k}(a_{1}, \ldots,a_{k}) := \sum_{l=0}^{\infty} m_{k+1+l}(a_{1},\ldots,a_{k},\bullet,\underbrace{\bb, \ldots, \bb}_{l}).$$
Here the input $\bullet$ is an element in $M_{L_{k+1}} = CF(L_{k+1},\mathbb{L})$.
\end{itemize}
\end{defn}

\begin{thm}[ \cite{CHL}]\label{thm:lmf}
$\mathcal{F}^{\mathbb{L}}$ defines an $A_{\infty}$-functor, which is cohomologically injective on $\bL$.
\end{thm}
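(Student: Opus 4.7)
The plan is to verify three things: (i) that each $M_L=(CF(L,\bL),-m_1^{0,\bb})$ really is a matrix factorization of $W_\bL$, (ii) that the higher components $\mathcal F^\bL_k$ satisfy the $A_\infty$-functor equations, and (iii) that $\mathcal F^\bL_1|_\bL$ is injective on cohomology. All three will be consequences of the curved $A_\infty$-relations of the ambient wrapped Fukaya algebra of $[M_W/G_W]$ together with the weak Maurer--Cartan equation $\sum_l m_l(\bb,\ldots,\bb)=W_\bL\cdot 1_\bL$ proved in the preceding lemma.

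For (i), I would compute $(m_1^{0,\bb})^2$ by applying the $A_\infty$-relation of the Fukaya category to the string $(\bullet,\bb,\bb,\ldots,\bb)$ with $\bullet\in CF(L,\bL)$, and sum over all arities. The resulting terms split into two families: (a) those whose inner operation contains $\bullet$, which assemble into $(m_1^{0,\bb})^2(\bullet)$, and (b) those whose inner operation is supported entirely on the $\bb$-tail. Using the weak Maurer--Cartan equation, family (b) produces $W_\bL\cdot 1_\bL$ inserted back into an outer operation; the strict unit axiom then forces the only surviving outer operation to be $m_2(\bullet,1_\bL)=\bullet$, yielding $W_\bL\cdot\bullet$. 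This gives $(m_1^{0,\bb})^2=W_\bL\cdot\mathrm{id}$. Statement (ii) is proved by exactly the same bookkeeping applied one slot to the left: reinterpreting $\mathcal F^\bL_k(a_1,\ldots,a_k)(\bullet)$ as the right-$\bb$-deformed operation $m^{0,\bb}_{k+1}(a_1,\ldots,a_k,\bullet)$, the $A_\infty$-relations applied to $(a_1,\ldots,a_k,\bullet,\bb,\ldots,\bb)$ unfold into the $A_\infty$-functor identities for $\mathcal F^\bL$, with the curvature anomaly $W_\bL\cdot 1_\bL$ matching precisely the matrix factorization differential used on the target.

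For (iii), I would compute both sides explicitly using Seidel's basis $\{1_\bL,X,Y,Z,\bar X,\bar Y,\bar Z,[pt]\}$ of $CF(\bL,\bL)$. The source $H^\bullet(CF(\bL,\bL),m_1^{\bb,\bb})$ can be identified, via the Kodaira--Spencer-type polygon counts on $\mathbb{P}^1_{a,b,c}$ already used in \cite{ACHL,CHL}, with a deformation of the Jacobian ring $\mathrm{Jac}(W_\bL)$. On the target side, $\mathcal F^\bL_1$ sends each generator to an explicit Koszul-type endomorphism of the matrix factorization $M_\bL=\mathcal F^\bL(\bL)$ whose cohomology again contains the same Jacobian algebra, and a direct disc count shows that the map intertwines the two presentations. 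The main technical obstacle I expect is matching signs and orbifold disc contributions between the two pictures; however, since the potentials $W_\bL$ for Fermat, chain, and loop types have the same local form as the pair-of-pants potential treated in \cite{CHL}, with some monomials simply absent because the corresponding special points are now punctures rather than orbifold points, the calculation of \cite{CHL} adapts with only minor modification and produces injectivity (in fact an isomorphism onto $\mathrm{Jac}(W_\bL)$) on cohomology.
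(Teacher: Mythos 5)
You should first note that the paper itself gives no argument for this statement: Theorem \ref{thm:lmf} is quoted verbatim from \cite{CHL}, so the comparison is really with the proof there. For your parts (i) and (ii) your bookkeeping is exactly the cited argument: applying the $A_\infty$-relations to strings $(a_1,\ldots,a_k,\bullet,\bb,\ldots,\bb)$, splitting off the terms where the inner operation sits entirely on the $\bb$-tail, and using the weak Maurer--Cartan equation $\sum_l m_l(\bb,\ldots,\bb)=W_\bL\cdot 1_\bL$ together with (homotopy) unitality to produce the curvature term $W_\bL\cdot\mathrm{id}$; this yields $(m_1^{0,\bb})^2=W_\bL\cdot\mathrm{id}$ and the $A_\infty$-functor equations. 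One small caution: $1_\bL$ is only a homotopy unit in the immersed/Morse--Bott model, so the step ``the strict unit axiom forces the only surviving outer operation to be $m_2(\bullet,1_\bL)$'' needs the homotopy-unit corrections handled as in \cite{CHL}, but this is routine.

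For (iii) you take a genuinely different, and riskier, route than the cited one. The proof in \cite{CHL} is structural and does not compute either endomorphism algebra: one observes that evaluating the morphism-level functor at the unit, $\mathcal F^\bL_1(a)(1_\bL)=\pm a$ up to homotopy-unit corrections, so a class $a$ with $\mathcal F^\bL_1(a)$ null-homotopic is itself exact; injectivity on $H^\bullet(CF(\bL,\bL),m_1^{\bb,\bb})$ follows formally. Your plan instead identifies the source with a deformation of $\Jac(W_\bL)$ and the target with the endomorphisms of $M_\bL\simeq k^{\mathrm{stab}}_{W_\bL}$ and matches the two by disc counts. This can probably be pushed through, but as stated it has a soft spot: $W_\bL$ has non-isolated singularities (the critical locus contains coordinate axes), so Dyckerhoff's Clifford-algebra description of $\mathrm{End}(k^{\mathrm{stab}})$ and the ``same Jacobian algebra on both sides'' identification do not apply verbatim and would have to be redone by hand (as the paper in fact does for the Lagrangians $L$, $L_i$ via explicit Koszul computations), and the phrase ``contains the same Jacobian algebra'' is not yet an injectivity statement. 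The evaluation-at-unit argument buys you the result uniformly for all three types with no computation; your computational route, if completed, buys more (an explicit description of the image), but it is strictly more work than the theorem requires.
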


\begin{example}
Consider a polynomial $x^{2} + xy^{4}$ and its Milnor fiber ($D_5^T$ singularity). Its fundamental domain and lifts of Seidel Lagrangian are as in Figure \ref{fig:exam} with $W_\bL = y^{4} + xyz$. We calculate $\mathcal{F}^{\mathbb{L}}(L) = (CF(L,\mathbb{L}), -m_{1}^{0,\bb})$ for the $L$ in the Figure. It is generated by two intersection points $o_{1}$ and $e_{1}$.

\begin{figure}[h]
\begin{subfigure}{0.33\textwidth}
\includegraphics[scale=0.4]{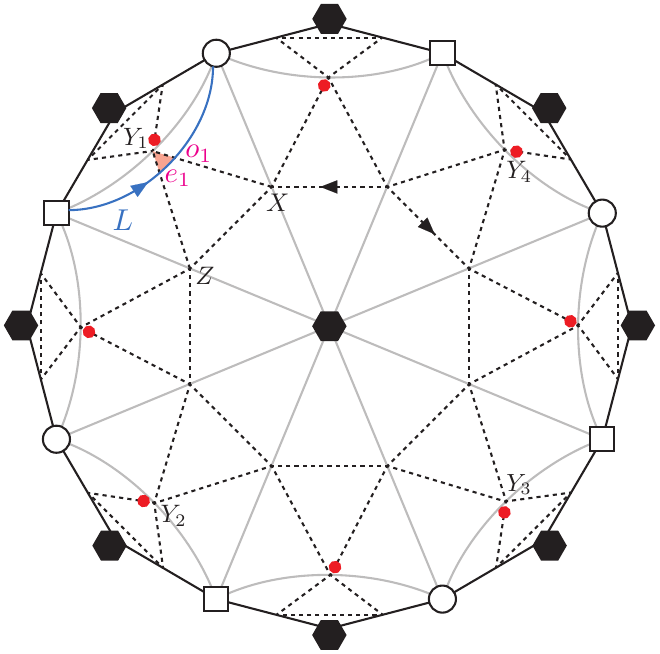}
\centering
\end{subfigure}
\begin{subfigure}{0.33\textwidth}
\includegraphics[scale=0.4]{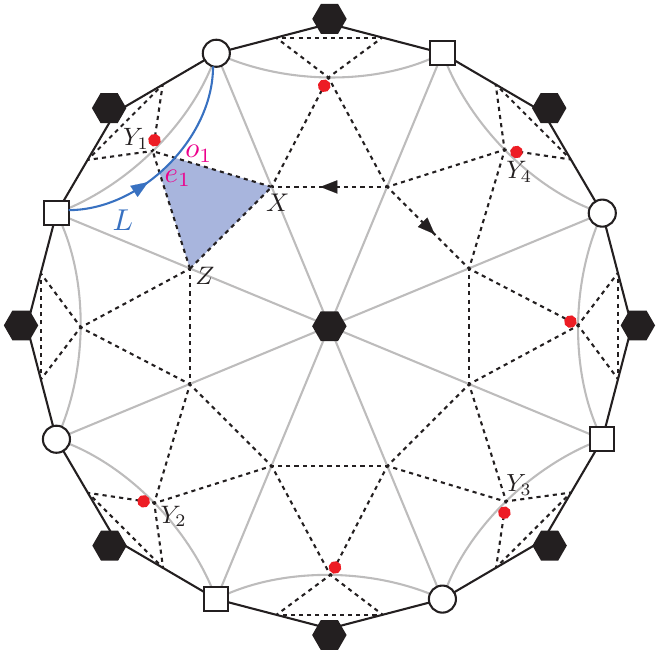}
\centering
\end{subfigure}
\begin{subfigure}{0.33\textwidth}
\includegraphics[scale=0.4]{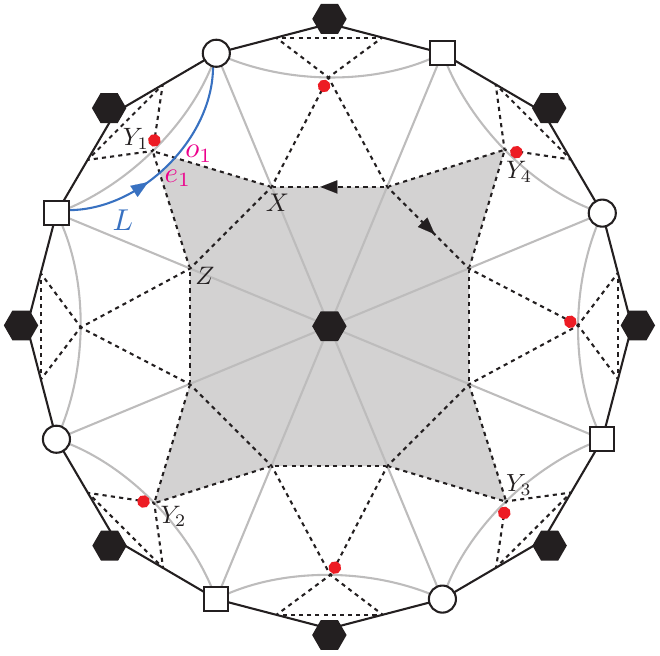}
\centering
\end{subfigure}
\centering
\caption{Polygons contributing to $m_{1}^{0,\bb}$}
\label{fig:exam}
\end{figure}

There is one triangle $o_{1} Y_{1} e_{1}$ which contributes to $m_{1}^{0,\bb}(o_{1})$ and it gives $y$. There are two polygon $e_{1} Z X o_{1}$ and $e_{1} Y_{2} Y_{3} Y_{4} o_{1}$. They contribute to $m_{1}^{0,\bb}(e_{1})$ and give $y^{3} + xz$. For all three polygons, since the orientations of discs and those of Lagrangians are matched, all signs of discs are $+$. Therefore, $(CF(L,\mathbb{L}), -m_{1}^{0,\bb})$ is a (matrix) factorization
$$-y \cdot (-y^{3} - xz)$$
of the potential $y^{4} + xyz$. Later, we will consider a restriction $z=x$, which turns $W_\bL$ into $D_5$ singularity $y^4+yx^2$.
\end{example}
Another important example is when $L$ is $\bL$ itself. $\mathcal{F}^\bL(\bL)$ can be computed following \cite{CHL}.
\begin{thm}(c.f. Theorem 7.6 \cite{CHL})
 \[M_{\bL} \simeq k^\textrm{stab}_{W_\bL}\]
where $k^\textrm{stab}_{W_\bL}$ is a matrix factorization of stabilized residue field at the origin. 
\end{thm}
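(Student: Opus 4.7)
The plan is to adapt the computation of Theorem~7.6 of \cite{CHL} to the present partial decompactification. Since $[M_W/G_W]$ differs from a closed orbifold sphere $\mathbb{P}^1_{a,b,c}$ only in that some of the three special points have been opened up into punctures, the list of pseudo-holomorphic polygons with boundary on $\bL$ is precisely the sublist of the closed-orbifold list consisting of those polygons that do not escape through a puncture. This is the same sublist that produced the truncated disc potential $W_\bL$, so I expect $M_\bL$ to have the same Koszul-type shape as in \cite{CHL} with the contributions from the missing corners set to zero.

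First, I would identify $M_\bL = \cF^\bL(\bL)$ as a $\Z/2$-graded free module over $R := \C[x,y,z]$ of rank $8$, with even part freely generated by $\{1_\bL, \bar X, \bar Y, \bar Z\}$ and odd part by $\{X, Y, Z, [\mathrm{pt}]\}$. The differential $d := -m_1^{0,\bb}$ with $\bb = xX + yY + zZ$ is homogeneous for both the $\Omega$-grading and the $H_1$-grading, by Proposition~\ref{grading computation} together with the homogeneity of all $\AI$-operations (Lemma~\ref{topological grading}). These two gradings alone pin down every nonzero entry of $d$ up to a scalar, so what remains is a finite enumeration.

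Next I would carry out that enumeration. The key observation is that each disc contributing a monomial to $W_\bL$ also contributes a matrix entry of $d$ once one of its corners is replaced by a basis generator of $CF(\bL, \bL)$ rather than by a copy of $\bb$. Concretely, the central $XYZ$-triangle produces linear maps in $x,y,z$ between the pairs $(1_\bL, [\mathrm{pt}])$ and $(X, \bar X)$, $(Y, \bar Y)$, $(Z, \bar Z)$; the $p$-gon at the $a$-vertex (present only in the Fermat case) contributes $x^{p-1}$ between $X$ and $\bar X$; the $q$-gon at the $b$-vertex (present in the Fermat and chain cases) contributes $y^{q-1}$ between $Y$ and $\bar Y$; in the loop case only the triangle contributes. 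Seidel's sign convention together with the nontrivial spin structure marked in Figure~\ref{fig:fabc} fixes all signs. That $d^{2} = W_\bL \cdot \mathrm{id}$ is then automatic from weak unobstructedness of $\bL$.

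The last step is to identify $(M_\bL, d)$ with the stabilized Koszul matrix factorization of the residue field. Writing $W_\bL = x g_1 + y g_2 + z g_3$ with the cofactors read off from the polygon count (so $g_1 = x^{p-1}, g_2 = y^{q-1}, g_3 = xy$ in the Fermat case, and analogously in the chain and loop cases), the Koszul tensor product
\[
\bigotimes_{i=1}^{3} \bigl( R \xrightarrow{\, x_i \,} R \xrightarrow{\, g_i \,} R \bigr)
\]
is a rank~$8$ matrix factorization of $W_\bL$ whose bigrading matches that of $M_\bL$, and matching the two differentials entry by entry is a finite check. I expect the principal obstacle to be sign bookkeeping between Seidel's convention for surface Fukaya categories and the Koszul sign convention; a secondary subtlety is that in the chain and loop cases $W_\bL$ has a non-isolated critical locus, so $k^{\mathrm{stab}}_{W_\bL}$ must be understood as the Koszul model associated to the specific factorization $W_\bL = \sum_i x_i g_i$ produced by the Floer count, rather than as a canonical object of $\MF(W_\bL)$.
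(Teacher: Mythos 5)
This is essentially the paper's own route: the paper offers no independent argument for this statement and simply adapts the polygon count of Theorem~7.6 of \cite{CHL} to the punctured quotient, exactly as you propose (finitely many polygons --- the $XYZ$-triangle, the $p$-gon and $q$-gon when present, plus the constant corners giving the wedge-by-$\bb$ part --- assembled into a rank-$8$ Koszul-type factorization of $W_\bL=\sum_i x_ig_i$). Your sketch is correct in substance; the only caveats are details your promised enumeration would itself correct --- the $p$-gon and $q$-gon contributions land as $x^{p-1}\colon X\to 1_\bL$ and $[\mathrm{pt}]\to\bar X$ (and similarly $y^{q-1}$ for $Y$), not ``between $X$ and $\bar X$,'' and the triangle's linear entries connect distinct immersed sectors ($X\to\bar Y,\bar Z$, etc.) while the entries touching $1_\bL,[\mathrm{pt}]$ linearly come from constant corners --- and your worry about decomposition-dependence of $k^{\mathrm{stab}}_{W_\bL}$ in the chain and loop cases is unnecessary, since Koszul factorizations attached to different splittings $W_\bL=\sum_i x_ig_i$ over the regular sequence $(x,y,z)$ are homotopy equivalent irrespective of whether the critical locus is isolated.
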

\begin{remark}
The singularity of $W_\bL$ is not isolated, and in fact contains some of the coordinate axes. 
Hence $k^\textrm{stab}_{W_\bL}$ alone does not generate $\mathcal{MF}(W_\bL)$.
In the next section, we will find non-compact Lagrangians $L$ whose associated matrix factorization $M_L$
corresponds to the desired coordinate axes.
\end{remark}

The following lemma tells us that homotopic Lagrangians correspond to quasi-isomorphic matrix factorizations,
which will be useful in our computations.
\begin{lemma}
Suppose two non-compact Lagrangian $L_{1}$ and $L_{2}$ connect the same punctures and represent the same homotopy class with respect to their boundary in the cover of Milnor fiber. Then they are isomorphic.
\end{lemma}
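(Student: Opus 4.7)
The plan is to reduce the claim to the standard fact that Hamiltonian-isotopic exact Lagrangians represent isomorphic objects in the wrapped Fukaya category. The two pieces of data I need to combine are (i) the topological input that $L_1$ and $L_2$ are homotopic rel boundary as properly embedded arcs on the covering surface, and (ii) the symplectic input that on a surface every such isotopy can be realized by a compactly supported Hamiltonian isotopy.

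First, I would lift $L_1$ and $L_2$ to the cover $\WH{M}_W$ (or the universal cover) described in Section~\ref{sec:eqtop} and use the hypothesis that their lifts are homotopic rel boundary. For properly embedded arcs between two punctures on an oriented surface, a rel-boundary homotopy between embedded representatives can be promoted to a smooth ambient isotopy; away from the punctures this is classical, and near each puncture both arcs enter radially in the same asymptotic direction, so the isotopy can be taken to be the identity outside a compact set. Pushing this isotopy down via the orbifold covering gives a compactly supported smooth isotopy $\{L_t\}_{t\in[0,1]}$ between $L_1$ and $L_2$ inside $[M_W/G_W]$.

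Second, I upgrade this to an exact Lagrangian isotopy. Since $M_W$ is exact and the $L_i$ are non-compact arcs with cylindrical ends at punctures, each $L_t$ is automatically exact; the ambient isotopy can then be generated by a time-dependent Hamiltonian vector field because on a surface the relation $\iota_{X_t}\omega = \der H_t$ can be solved (the obstruction is the closed $1$-form $\iota_{X_t}\omega$, which is exact since $L_t$ is an arc with fixed endpoints at punctures and the isotopy is compactly supported). Having obtained a Hamiltonian isotopy with compact support, the standard continuation-map argument in wrapped Floer theory produces morphisms $c:L_1\to L_2$ and $c':L_2\to L_1$ whose compositions are homotopic to the identity; concretely, after making $L_1$ and $L_2$ transverse by a small Hamiltonian perturbation the isomorphism is witnessed by counting the small bigon at the unique generator of lowest action, exactly as in Abouzaid--Seidel.

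The main obstacle, and the place where the hypothesis ``same homotopy class in the cover'' really enters, is making the isotopy compactly supported near the punctures in a way compatible with the wrapping Hamiltonian. Two arcs that are homotopic in the punctured surface but that wind differently around a puncture, or exit along different cylindrical rays, would not give isomorphic wrapped objects; the assumption that the homotopy takes place \emph{in the cover} (and that the endpoints are the same punctures) is precisely what rules out such winding discrepancies and ensures the asymptotic ends match. Once that geometric point is carefully handled, the Floer-theoretic half of the argument is the routine Hamiltonian-invariance statement for wrapped Fukaya categories.
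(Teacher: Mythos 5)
Your proposal is correct in spirit but follows a genuinely different route from the paper. The paper never invokes Hamiltonian-isotopy invariance or continuation maps: it first treats the case where $L_1$ and $L_2$ are disjoint, exhibiting the isomorphism explicitly by the two even wrapped generators $\alpha\in\Hom(L_2,L_1)$ and $\beta\in\Hom(L_1,L_2)$ living near the two common punctures, and checking by direct strip counts that $m_1(\alpha)=m_1(\beta)=0$, $m_2(\beta,\alpha)=e_1$ and $m_2(\alpha,\beta)=e_2$; the general case of arbitrarily intersecting arcs is then reduced to this one by interposing an auxiliary arc $L'$ disjoint from both, giving a zig-zag $L_1\simeq L'\simeq L_2$. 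That version has the advantage of producing explicit Floer cocycles realizing the isomorphism, which is what gets used later when transporting identifications through the functor $\cF^{\bL}$ to matrix factorizations. Your argument instead pushes the work into the standard invariance statement (homotopic embedded arcs are ambient isotopic; a compactly supported isotopy of an exact, simply connected Lagrangian is generated by a Hamiltonian; compactly supported Hamiltonian isotopies act trivially on the wrapped category), which is cleaner in that it needs no case split on intersections.

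Two points in your write-up deserve more care, though neither is fatal under the natural reading of the hypothesis. First, the compact-support step genuinely requires that the two arcs have matching asymptotics (same rays into each puncture, same winding at the ends): a non-compactly supported isotopy that rotates an end around a puncture does \emph{not} in general preserve the wrapped isomorphism class (the continuation element is a wrapping generator such as $a$ with $a b=0$, not a unit), so your reading of ``same homotopy class with respect to their boundary'' as fixing the ends is essential and should be stated as such rather than asserted as automatic. You flag this, which is good, but it is exactly the point the paper's explicit-generator argument sidesteps. Second, an isotopy constructed in the cover $\WH{M}_W$ only descends to $[M_W/G_W]$ if it is taken $G_W$-equivariantly (equivalently, one should isotope the whole orbit of lifts coherently, or build the isotopy directly downstairs away from the orbifold points); as written, isotoping a single lift and ``pushing down'' is not quite meaningful.
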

\begin{proof}
Assume that there is no intersection between them. We have two even wrapped generators $\alpha \in \Hom(L_{2},L_{1})$ and $\beta \in \Hom(L_{1},L_{2})$. They satisfy $m_{1}(\alpha) = m_{1}(\beta) = 0$, $m_{2}(\beta, \alpha) = e_{1}$ and $m_{2}(\alpha, \beta) = e_{2}$. Hence $\alpha$ and $\beta$ give an isomorphism between them.

\begin{figure}[h]
\includegraphics[scale=0.7]{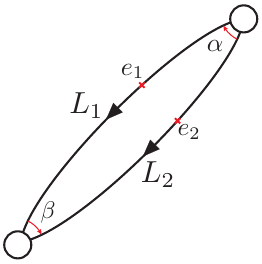}
\centering
\caption{Isomorphism between two Lagrangians}
\label{fig:Lagisom}
\end{figure}

In general case when they intersect arbitrarily, we choose an open set containing two Lagrangians and another Lagrangian $L^{\prime}$ in the complement. Then there is a sequence of isomorphisms $L_{1} \simeq L^{\prime} \simeq L_{2}$.
\end{proof}


\section{Homological mirror symmetry for Milnor fibers (without monodromy action)}\label{sec:HMSMilnor}
In this section, we consider homological mirror symmetry for Milnor fiber as a symplectic manifold.
We will find that $G_W$-equivariant mirror of $M_W$ is a Landau-Ginzburg model $W_\bL$.
By applying Theorem \ref{thm:lmf} to the wrapped Fukaya category of $[M_W/G_W]$, 
we obtain an $\AI$-functor which gives a derived equivalence.
\begin{thm}\label{thm:um}
We have an $\AI$-functor
$$\mathcal{F}^\bL:  \mathcal{WF}([M_W/G_W]) \to \mathcal{MF}(W_\bL)$$
where $W_\bL$ for Fermat $F_{p,q}$, chain $C_{p,q}$ and loop $L_{p,q}$ cases are given as
$$W_\bL = x^p+y^q+xyz, \;\;\; y^q+xyz, \;\;\; xyz$$
This functor is fully faithful and gives a derived equivalence between two categories.
\end{thm}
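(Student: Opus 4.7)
The $A_\infty$-functor $\mathcal{F}^\bL$ itself is obtained directly from Theorem \ref{thm:lmf}: once we have checked that $\bL$ is weakly unobstructed with weak bounding cochain $\bb = xX + yY + zZ$ and potential $W_\bL$ (done above), the localized mirror functor formalism of \cite{CHL, CHLnc} produces the functor to $\mathcal{MF}(W_\bL)$ and guarantees cohomological injectivity on $\bL$. What remains is to upgrade this to a derived equivalence. The strategy is to locate a split-generator $K$ of $\mathcal{WF}([M_W/G_W])$, show that $\{\bL, \mathcal{F}^\bL(K)\}$ split-generates $\mathcal{MF}(W_\bL)$, and verify that $\mathcal{F}^\bL$ is cohomologically fully faithful on the subcategory $\{\bL, K\}$.

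\textbf{Step 1: split-generators.} On the symplectic side, since $[M_W/G_W]$ is an orbifold sphere with three distinguished points (some orbifold, some punctures), the wrapped Fukaya category is split-generated by a small collection of Lagrangians. I would take the object $K$ pictured in Figure \ref{fig:splitgen} — a union of non-compact arcs ending at punctures together with $\bL$ — and appeal to an Abouzaid-style generation criterion (in the orbifold setting, cf.\ work of Ganatra and of Gammage) to see that this suffices. On the mirror side, because $W_\bL$ has non-isolated singular locus (the $z$-axis for Fermat, the $\{y=0\}$ plane and the $z$-axis for chain, and the coordinate planes for loop), the category $\mathcal{MF}(W_\bL)$ is split-generated by the stabilized residue field $k^{\mathrm{stab}}_{W_\bL}$ at the origin together with matrix factorizations supported along each irreducible component of the singular locus.

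\textbf{Step 2: image of the split-generator.} By the theorem recalled after Theorem \ref{thm:lmf}, we already have $\mathcal{F}^\bL(\bL) \simeq k^{\mathrm{stab}}_{W_\bL}$. For each non-compact arc of $K$, I would compute $\mathcal{F}^\bL$ of the arc directly from its transverse intersections with $\bL$ and the polygons bounded by $\bL$ and the arc (as illustrated in the $D_5^T$ example in the paper). The resulting $2$-periodic complex $(CF(L,\bL), -m_1^{0,\bb})$ is a rank-one matrix factorization whose two entries are the products of coordinates cutting out the corresponding coordinate stratum of the singular locus of $W_\bL$, thereby realizing precisely the required "coordinate axis" matrix factorizations. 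Together with $k^{\mathrm{stab}}_{W_\bL}$, these yield a split-generating set of $\mathcal{MF}(W_\bL)$.

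\textbf{Step 3: fully faithfulness.} The functor $\mathcal{F}^\bL$ is already cohomologically faithful on $\bL$ by Theorem \ref{thm:lmf}. For the remaining morphism spaces $\Hom(\bL,K)$, $\Hom(K,\bL)$, and $\Hom(K,K)$, I would compute both sides and match them directly. The wrapped Floer cohomology is computed by counting polygons in the equivariant tessellation described in Theorem \ref{thm:tess}, whereas the $\Hom$ in $\mathcal{MF}(W_\bL)$ is computed algebraically from the explicit matrix factorizations obtained in Step 2. The $\Omega$-grading and $H_1$-grading from Proposition \ref{grading computation} refine both computations and help bookkeeping. Once the induced maps on cohomology are shown to be bijections on generators, combined with Step 2 and the fact that a fully faithful pre-triangulated functor whose image split-generates is a derived equivalence, we conclude.

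\textbf{Main obstacle.} The delicate part is Step 3, specifically matching Floer products involving wrapped generators on $K$ with the multiplicative structure on Hom-spaces between the matrix factorizations. The chain and loop cases are subtler than Fermat because $W_\bL$ has larger (and more degenerate) singular locus, so there are more components of $K$ to handle and the polygon counts include contributions from the large polygonal fundamental domains described in Theorem \ref{thm:tess}. Organizing the count systematically using the $H_1$-grading of Proposition \ref{grading computation}, which by Lemma \ref{topological grading} constrains which polygons can contribute, is what makes this finite computation tractable.
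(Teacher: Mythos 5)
Your plan is correct in outline and follows the same overall architecture as the paper (localized mirror functor from Theorem \ref{thm:lmf}, split-generation on both sides, full faithfulness on a generating object, essential surjectivity via objects supported on the singular locus), but the division of labor differs enough to compare. The paper does not use the Lagrangian $K$ of Figure \ref{fig:splitgen} here, nor does it put $\bL$ into the symplectic generating set: for each type it takes a single non-compact arc $L$ joining punctures (three arcs in the loop case), and proves that the $G_W$-orbit of $L$ split-generates the wrapped category of the Milnor fiber by exhibiting an explicit Hochschild cycle (e.g.\ $\tilde a^{\otimes p}/p-\tilde b^{\otimes q}/q$ in the Fermat case, built from the generators of Lemma \ref{lem:fermatww}) whose open-closed image is the unit --- Abouzaid's criterion applied upstairs as in \cite{Lee}, rather than an orbifold generation criterion invoked by citation. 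This buys a real economy: full faithfulness then only needs to be checked on $CW^\bullet(L,L)$, and that single check is where the paper's technical content lies and is exactly what your Step 3 leaves open --- $M_L=\cF^\bL(L)$ is identified as a Koszul-type factorization, $\Hom_{\MF}(M_L,M_L)$ is computed as a DG algebra of polynomial differential operators whose cohomology is extracted by a spectral sequence following \cite{Dy}, and $\cF^\bL_1$ is shown to be a quasi-isomorphism by a filtration argument matching powers of $ab+ba$ with powers of $z$; since these endomorphism algebras are infinite-dimensional over $\C$, ``bijections on generators'' is not by itself a complete argument. Essential surjectivity is then your Steps 1--2: $M_\bL\simeq k^{\mathrm{stab}}_{W_\bL}$ together with $M_L$ (structure sheaf of a component of the critical locus under Orlov's equivalence) generate $\MF(W_\bL)$ by \cite{Stev13}. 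Two factual slips in your write-up, which the explicit computations would correct: the singular locus of $W_\bL$ is a union of coordinate axes (the $z$-axis for Fermat, the $x$- and $z$-axes for chain, the three axes for loop), not planes; and the image of the arc is a rank-one factorization only in the chain and loop cases --- in the Fermat case $\cF^\bL(L)$ is the $2\times 2$ Koszul factorization with entries $x$, $y$, $y^{q-1}$, $x^{p-1}+yz$. Your route of verifying full faithfulness on all Homs among $\{\bL,K\}$ would also work in principle, but it requires more polygon counts and an extra argument that the orbifold generation criterion applies, whereas the paper's choice of generator makes both the generation statement and the endomorphism comparison as small as possible.
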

\begin{remark}
$W_\bL$ is related to the transposed potential $W^T$ as follows.
If we set $$g(x,y,z) =
  \begin{cases}
   z,   & \text{for } \;\;F_{p,q}  \\
   z-x^{p-1},  & \text{for } \;\;C_{p,q} \\
    z-x^{p-1} -y^{q-1} , & \text{for } \;\;L_{p,q}
  \end{cases}$$
then we have
$$W_\bL  = W^T(x,y) +xyg.$$
\end{remark}
As we will explain later, if we add monodromy information and take our newly defined $\AI$-category, the mirror  will be obtained by setting $g=0$, hence 
we obtain the matrix factorization of $W^T(x,y)$.

We prove the above theorem in the rest of the section. Although we treat each case separately, the underlying strategies are basically the same.


\subsection{Fermat cases}
Recall that  $G_W=\Z /p \times \Z/q $ is the maximal diagonal symmetry of $W=x^p+y^q$ and
the quotient space $[M_{F_{p,q}}/G_W]$ has a single puncture $C$ of orbifold order $\frac{pq}{\gcd(p,q)}$.
Then, for a preimage $\WT{C}$ in $M_{F_{p,q}}$, we connect $\WT{C}$ and $(1,0) \cdot \WT{C}$ by the shortest path  $\WT{L}_1$ as in the Figure \ref{fig:f4}, which we take as a non-compact Lagrangian.  
We denote by $L$ the embedded Lagrangian in $[M_{F_{p,q}}/G_W]$ given by its projection.
Denote by  $\WT{L}$ the set of lifts of $L$ in $M_{F_{p,q}}$, which is exactly $G_W \cdot \WT{L}_1$.

\begin{figure}[h]
\begin{subfigure}{0.43\textwidth}
\includegraphics[scale=0.62]{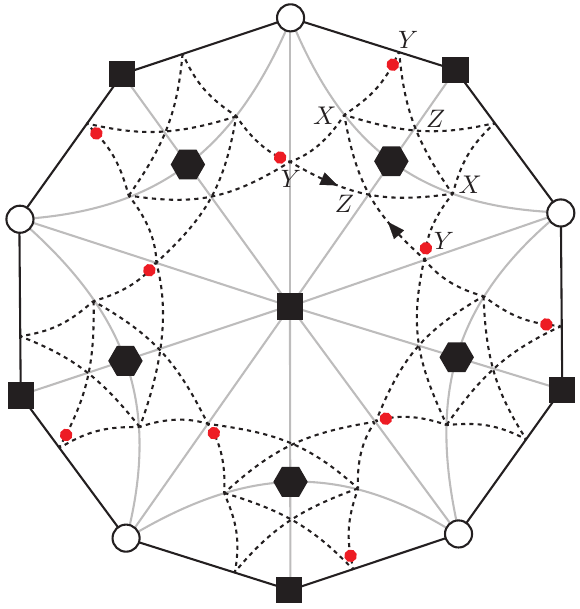}
\centering
\end{subfigure}
\begin{subfigure}{0.43\textwidth}
\includegraphics[scale=0.62]{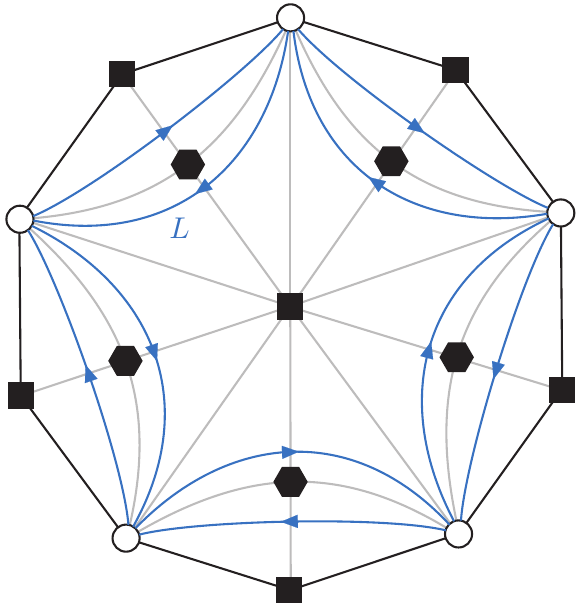}
\centering
\end{subfigure}
\centering
\caption{\label{fig:f4} Milnor fiber of $F_{4,2}$ and a choice of Lagrangian $L$ and its lifts}
\end{figure}

To prove the theorem in Fermat case, we show that $G_W$ copies of $L$ split-generate $\mathcal{WF}(M_{F_{p,q}})$. Also, we compute the mirror matrix factorization $\mathcal{F}^\bL(L)$,
and prove that the functor is fully faithful.
Finally, we show that $\mathcal{MF}(W_\bL)$ is split generated by $\mathcal{F}^\bL(L)$, and this proves the theorem \ref{thm:um}.

Recall that $CW^\bullet(L,L)$ is defined as  
$(CW^{\bullet}(\WT{L},\WT{L}))^{G_W}$.
 \begin{lemma}\label{lem:fermatww}
    Wrapped Floer complex $CW^\bullet(L,L)$ satisfies the following:
      \begin{enumerate}
        \item as a vector space,
          \[CW^\bullet(L,L) \simeq T(a,b)/\mathcal R_{F_{p,q}}\] 
          Here, $T(a,b)$ is a tensor algebra generated by two alphabets $a,b$.  The ideal $\mathcal R_{F_{p,q}}$ is defined as
          \[\mathcal R_{F_{p,q}} = <a\otimes a=\delta_{2,p}, b\otimes b=\delta_{2,q}>\]
          $\Z/2$-gradings of $a,b$ are odd and this induces $\Z/2$-grading on $T(a,b)/\mathcal R_{F_{p,q}}$.
        \item $m_1$ vanishes and $m_2$ coincides with the tensor product. 
        \item $m_k(a,\ldots,a)$ is zero for $1\leq k<p$ and it is equal to $1$ for $k=p$. Likewise, $m_k(b,\ldots, b)$ is zero for $1\leq k<q$ and equal to $1$ for $k=q$.
      \end{enumerate}
      
Its $\Omega$ and $H_1$-grading is given by the following table.
      \begin{center}
\begin{tabular}{ccccccccc}
&$1_L$ & $a$ & $b$ \\
\hline
\hline
$\Omega$-grading & $0$ & $1$ & $1$ \\
$H_1$-grading & $0$ & $-\frac{\gamma_3}{2}$ & $-\frac{\gamma_3}{2}$ \\
\hline
\hline
\end{tabular}
\end{center}
    \end{lemma}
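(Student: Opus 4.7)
The plan is to reduce to the cover $M_{F_{p,q}}$ via the identification $CW^\bullet(L,L) = CW^\bullet(\WT L, \WT L)^{G_W}$ and to carry out both the enumeration of chords and the polygon count directly in the $(2pq{-}2p)$-gon model of Theorem \ref{thm:tess}. In the quotient, $L$ is a non-compact arc from $C$ to $C$ whose two local sides at the puncture face the two remaining orbifold vertices --- one faces the $a$-vertex and the other faces $b$. A wrapping Hamiltonian supported near $C$ then produces one wrapped chord per positive wrap in each sector; I label the single-wrap chord through the $a$-sector by $a$ and that through the $b$-sector by $b$.

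For the vector space and $A_\infty$-structure I would argue as follows. Concatenation of two composable chords is realized by a thin bigon in the tessellation, so $m_2$ is tensor product and $m_1 = 0$ for lack of monogons. A direct homotopy check in the cover shows that two successive wraps in the $a$-sector (resp.\ $b$-sector) are null-homotopic precisely when $p=2$ (resp.\ $q=2$), and otherwise represent a non-trivial homotopy class; this forces the relations $a \otimes a = \delta_{2,p}$ and $b \otimes b = \delta_{2,q}$, proving (1) and (2). For the higher operations in (3), the key disk is the unique orbifold $p$-gon in the quotient centered at the $a$-orbifold point: its $p$ corners are successive $a$-wraps and its output is $1_L$. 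This is precisely the disk that contributes $x^p$ to the potential $W_\bL$ of the Seidel Lagrangian, so the analogous spin/orientation computation from \cite{CHL,Se} yields the coefficient $+1$. Vanishing of $m_k(a,\ldots,a)$ for $3 \le k < p$ follows from $H_1$-homogeneity (Lemma \ref{topological grading}) together with the description of $H_1^{\mathrm{orb}}([M_W/G_W])$ in \eqref{eqn:1sthlgy}: the output class would have to be a positive proper multiple of $\gamma_1$, which is incompatible with an output of grading $0$. The symmetric analysis near the $b$-vertex accounts for $m_q(b,\ldots,b) = 1_L$.

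The gradings are routine local computations. The $\Omega$-grading of $a,b$ is the Maslov index of the corresponding Lagrangian path near $C$, which yields odd degree $1$; the dimension formula for $m_p(a,\ldots,a)=1_L$ provides a useful consistency check, since the disk wraps the pole of $\Omega$ at the $a$-vertex once and $0+p-2+0-p+2 = 0$. The $H_1$-grading is obtained by closing each chord with an arc along $L$ and reading off the resulting loop class in \eqref{eqn:1sthlgy}; because $\WT L_1$ joins two lifts of $C$ related by $(1,0) \in G_W$, the closed loop realizes half of a small loop around $C$, giving $-\gamma_3/2$. The hardest part of the proof will be ruling out spurious higher holomorphic polygons with mixed $a$- and $b$-corners or with non-minimal winding around the orbifold vertices; I expect this to follow from the combination of $H_1$-homogeneity and the finite combinatorial enumeration of boundary words afforded by the tessellation.
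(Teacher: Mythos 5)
Your overall route is the same as the paper's: identify the two primitive wrapped chords at the puncture $C$ (one through each of the two sectors cut out by the ends of $L$, facing the $a$- and $b$-vertices respectively), observe that all other generators are concatenations so that $m_1=0$ and $m_2$ is the tensor product, and obtain $m_p(a,\dots,a)=m_q(b,\dots,b)=1_L$ from the two global polygons which are the lifts of the two hemispheres of $([M_W/G_W])\setminus L$ (a $p$-gon around the $\Z/p$-point and a $q$-gon around the $\Z/q$-point), with the gradings computed locally as for the Seidel Lagrangian.

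Two of your justifications, however, do not work as written. First, the relation $a\otimes a=\delta_{2,p}$ is not "forced" by a null-homotopy check: null-homotopy of a concatenated path neither produces the coefficient $1$ (for $p=2$ you need the actual rigid global bigon, i.e.\ the $k=p=2$ instance of item (3)), nor does non-triviality of a homotopy class by itself kill $m_2(a,a)$ for $p\ge 3$. The mechanism the paper uses is that $a$ is not composable with itself at the puncture (the words at infinity alternate, $a,b,ab,ba,\dots$), together with the fact that the only global polygons with length-one corners are the two hemisphere lifts. Second, your vanishing argument for $m_k(a,\dots,a)$, $3\le k<p$, via $H_1$-homogeneity is garbled: all inputs have class $-\gamma_3/2$, so homogeneity only pins the output class to $-k\gamma_3/2$; it is never "a positive proper multiple of $\gamma_1$", and the relation $p\gamma_1=0$ enters only through covering considerations. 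What actually does the work is again geometric: any polygon with only $a$-corners must be a (branched) cover of the $a$-hemisphere, hence has a multiple of $p$ corners, so nothing contributes below $m_p$ --- this is exactly the enumeration your tessellation already provides, and it also disposes of the "spurious mixed-corner polygons" you defer at the end. Finally, a small inaccuracy: the $p$-gon here is not literally the disk contributing $x^p$ to $W_\bL$ (its boundary lies on $L$, not on the Seidel Lagrangian $\bL$); the sign computation is only analogous, as the paper treats it.
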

    \begin{proof}
 We can choose $\WT{L}_1$ so that ${G_W}$-orbits of $\WT{L}_1$ are disjoint. Therefore  $CW^\bullet(L,L)$ consists only of Hamiltonian chords at infinity.  Among such chords, we choose the following two generators.
    \begin{itemize}
      \item $a$, the shortest chord $\in CW^\bullet(\WT{L}_1,(1,0)\cdot \WT{L}_1), \hskip 0.2cm (1,0)\in \Z/p \times \Z/q$.
      \item $b$, the shortest chord $\in CW^\bullet(\WT{L}_1,(0,1)\cdot \WT{L}_1), \hskip 0.2cm (0,1)\in \Z/p \times \Z/q$. 
    \end{itemize}
For example, take a rotation of $\WT{L}_1$ around the $\Z/p$ fixed point and there is the unique wrapped generator $a$ between these two branches.
  By abuse of notation, we also denote by $a,b$ the generators in $(CW^{\bullet}(\WT{L},\WT{L}))^{G_W}$   given by the sum of $G_W$-copies of the above generators.
     
    We can also concatenate them to create new Hamiltonian chords ($m_2$-products near the puncture), denoted by $\{a, b, ab, ba, aba, bab, \ldots \}$. One can check that $m_1$ vanishes. Note that if we consider $m_2$-operation near the puncture,  $m_2(a,a)$ and $m_2(b,b)$ vanish for $p, q \geq 3$ as they are not composable. If $p=2$ or $q=2$, we could have an $m_2$-product coming from a global holomorphic polygon which contributes $m_2(a,a)$ or $m_2(b,b)$ respectively. In general, there are two global $J$-holomorphic polygons with all of its corners are of word length $1$. They are $p$-gon and $q$-gon and come from lifts of upper/lower hemisphere of $(M_{F_{p,q}}/{G_W})\setminus L)$.  Their corners are Hamiltonian chords $a$ or $b$ at infinity. They cannot contribute to $m_{p-1}$ or $m_{q-1}$, only contribute to $m_p$ or $m_q$ respectively. The boundaries of these polygons are whole ${G_W}$-orbits of $L$ so they represent the unit element of $CW^{ \bullet}(L,L)$. The computation of grading is entirely analogous to Proposition \ref{grading computation}. We leave it as an exercise. 
    \end{proof}
    
 \begin{lemma}
 $\WT{L}$ split-generates the wrapped Fukaya category of $M_{F_{p,q}}$.
 \end{lemma}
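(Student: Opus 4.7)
The plan is to invoke a standard split-generation criterion for wrapped Fukaya categories of punctured Riemann surfaces: a finite collection of pairwise disjoint, properly embedded arcs with endpoints at the punctures split-generates $\mathcal{WF}(S)$ whenever cutting $S$ along those arcs yields a disjoint union of open topological disks with no interior punctures (as in, e.g., Heather Lee's thesis or the Haiden--Katzarkov--Kontsevich surface-dissection framework). First, I would note that $\WT{L}$ is a collection of $|G_W|=pq$ pairwise disjoint, properly embedded arcs in $M_{F_{p,q}}$: each is a $G_W$-translate of the short path $\WT{L}_1$ from $\WT{C}$ to $(1,0)\cdot\WT{C}$, and in particular every arc has both endpoints at a puncture.

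The core step is to verify that the complement $M_{F_{p,q}}\setminus \WT{L}$ is a disjoint union of open topological disks. In the quotient $[M_{F_{p,q}}/G_W]\cong \mathbb{P}^1_{p,q,pq/d}$ the arc $L$ is a simple loop based at the puncture $C$ which separates the orbi-sphere into two orbifold disks, one containing the $\Z/p$-orbifold point $A$ and the other containing the $\Z/q$-orbifold point $B$. Under the $G_W$-cover these lift respectively to $q$ copies of a $p$-gon centered at a preimage of $A$ and $p$ copies of a $q$-gon centered at a preimage of $B$; these are precisely the two global polygons identified in Lemma \ref{lem:fermatww} as the sources of the monomials $x^p$ and $y^q$ in $W_\bL$. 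An Euler-characteristic check
\[
V - E + F \;=\; d - pq + (p+q) \;=\; d + 1 - \mu \;=\; 2 - 2g \;=\; \chi\bigl(\WH{M}_{F_{p,q}}\bigr)
\]
confirms that the $d$ punctures (as vertices), the $pq$ arcs of $\WT{L}$ (as edges), and the $p+q$ polygons (as faces) form a CW-decomposition of $\WH{M}_{F_{p,q}}$. Removing the punctures then shows that the complementary regions of $\WT{L}$ in $M_{F_{p,q}}$ are open topological disks with no puncture in their interiors.

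Applying the surface split-generation criterion to this arc dissection then delivers the lemma. The main subtlety I expect is ruling out hidden topology (extra genus or additional punctures) inside a complementary region when $g\geq 1$; this is ensured by the explicit equivariant tessellation of Theorem \ref{thm:tess}, which realizes each face as an honest convex polygon in a fundamental domain for $\pi_1^{\mathrm{orb}}\bigl(\mathbb{P}^1_{p,q,pq/d}\bigr)$. If one wished to avoid appealing to the combinatorial criterion, an alternative route would be Abouzaid's generation criterion: one checks that the open-closed map $\mathcal{OC}\colon HH_\bullet(\mathcal{A})\to SH^\bullet(M_{F_{p,q}})$ on the full subcategory $\mathcal{A}$ generated by $\WT{L}$ hits the unit, which for a surface reduces to seeing that some polygon with boundary on $\WT{L}$ sweeps out a neighborhood of each puncture --- a condition visibly met by the $p$-gons and $q$-gons above.
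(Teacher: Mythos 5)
Your argument is essentially correct, but it takes a genuinely different route from the paper. The paper proves the lemma by verifying Abouzaid's split-generation criterion directly: using the computation of $CW^\bullet(L,L)$ in Lemma \ref{lem:fermatww} (in particular $m_p(a,\dots,a)=m_q(b,\dots,b)=1$, with matching units because the boundaries of the $p$-gons and $q$-gons are the whole $G_W$-orbit of $L$), it exhibits the explicit Hochschild cycle $\tfrac{1}{p}\tilde a^{\otimes p}-\tfrac{1}{q}\tilde b^{\otimes q}$ and checks that its open--closed image is the cocycle covering the bounded region of $M_{F_{p,q}}$ with weight one, i.e.\ a representative of the unit in $SH^\bullet$. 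You instead invoke a surface-dissection generation criterion (arcs cutting the punctured surface into disks with no interior punctures generate the wrapped category) and verify its hypothesis by observing that $L$ is a loop at the puncture $C$ separating the orbisphere into two orbifold disks around the $\Z/p$- and $\Z/q$-points, whose preimages under the $G_W$-cover are $q$ $p$-gons and $p$ $q$-gons; this lifting argument (rather than the Euler-characteristic count, which is only a consistency check, or Theorem \ref{thm:tess}, which describes a different tessellation) is what actually rules out hidden topology, and it reproduces exactly the tessellation the paper asserts. The trade-off: your route outsources the Floer-theoretic content to a black-box theorem about arc systems on surfaces --- which is a genuine result in the literature (Haiden--Katzarkov--Kontsevich, or Ganatra--Pardon--Shende-type generation by arcs), though attributing the dissection criterion to Lee's thesis is slightly off, since Lee, like the paper, goes through Abouzaid's criterion --- whereas the paper's argument is self-contained given Lemma \ref{lem:fermatww} and, importantly, the same Hochschild-cycle technique is reused verbatim in the chain case. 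One small caution about your closing aside: verifying Abouzaid's criterion is not just a matter of a polygon sweeping a neighborhood of each puncture; one needs the Hochschild \emph{cycle} condition (which uses the $m_p$ and $m_q$ relations and the orientation/sign bookkeeping so that the two polygon families combine correctly) and the identification of the open--closed image with the unit, which is exactly the content of the paper's proof.
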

    \begin{proof}
 We proceed as in the work of Heather Lee \cite{Lee}.
To avoid confusion, let us denote by $\tilde{a}$ the sum over $G_W$ orbit of $a$ in this proof. 
 From Abouzaid's generating criterion, it is enough to show that the following open-closed
 map hits the unit.
 \[\mathcal{OC}:CC_\bullet(CW^\bullet(\WT{L},\WT{L}))\to SH^\bullet(M_{F_{p,q}})).\]
We take the following Hochschild cycle 
\[ \frac{{\tilde{a}}^{\otimes p}}{p} - \frac{{\tilde{b}}^{\otimes q}}{q} \in CC_\bullet(CW^\bullet(\WT{L},\WT{L}))\] 
It is not hard to see that $\WT{L}$ provides a tessellation of $M_{F_{p,q}}$, which consists of $q$ distinct $p$-gons and $p$ distinct $q$-gons. 
We first check that it is ${G_W}$-equivariant Hochschild cycle. From Lemma \ref{lem:fermatww}, it is enough to check $m_p, m_q$ operations respectively.
 $$\partial_{Hoch}( {\tilde{a}}^{\otimes p}/p-\tilde{b}^{\otimes q}/q) = 
   (m_p(\tilde{a},\ldots,\tilde{a})-m_q(\tilde{b}, \ldots, \tilde{b}))= 1_{\WT{L}}-1_{\WT{L}} =0.$$

      On the other hand, the image of the open-closed map of this Hochschild cycle 
    is  a cocycle represented by the bounded area of $M_{F_{p,q}}$ covering each region with weight one.
      Note that the orientation of the boundary Lagrangians of $p$-gon and $q$-gon are opposite to each other, and thus  $p$-gons and $q$-gons in the image add up despite the negative sign in the expression  $-\tilde{b}^{\otimes q}/q$.
    \end{proof}

  Let us discuss the mirror matrix factorization. Using localized mirror functor, we can explicitly compute
  the mirror matrix factorization. Since $W_\bL$ has non-isolated singularity (singularity along $z$-axis), we need to be a little bit careful in the discussion.
 By counting appropriate polygons from the picture with a sign, we can prove the following
 lemma, whose proof is left as an exercise. Let $S = \C[x,y,z]$.
    \begin{lemma}
    The localized mirror functor $\cF^{\mathbb L}: \mathcal{WF}([M_{F_{p,q}}/G_{F_{p,q}}]) \to \mathcal{MF}(W_\bL)$ sends $L$ to a matrix factorization $M_L= (\begin{tikzcd}[ampersand replacement=\&]S^{\oplus 2}\arrow[r, "\delta_0", shift left =1] \& S^{\oplus 2}\arrow[l, "\delta_1", shift left =1]\end{tikzcd})$ where
      \begin{equation*}
      \delta_0 = 
        \begin{pmatrix}
        x&y\\
        -y^{q-1}&x^{p-1}+yz
        \end{pmatrix}, \;\;
        \delta_1=
        \begin{pmatrix}
        x^{p-1}+yz&-y\\
        y^{q-1}&x
        \end{pmatrix}
      \end{equation*}  
    \end{lemma}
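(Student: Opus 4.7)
The plan is to unpack the definition $\mathcal{F}^{\mathbb{L}}(L) = (CF(L,\mathbb{L}), -m_1^{0,\mathbf{b}})$ and read off both modules and the differential from a polygon count on the equivariant tessellation described in Section~\ref{sec:eqtop}. First I would identify the underlying module: $L$ is a short arc connecting the puncture $\tilde C$ to $(1,0)\cdot\tilde C$, which in the quotient $[M_{F_{p,q}}/G_W]$ is an embedded arc meeting Seidel's Lagrangian $\mathbb{L}$ at exactly two transverse points. Call them $o$ and $e$ with opposite $\mathbb{Z}/2$-parity, as in the example with generators $o_1, e_1$ shown in Figure~\ref{fig:exam}. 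Tensoring with $S = \mathbb{C}[x,y,z]$ via the bounding cochain $\mathbf{b} = xX+yY+zZ$ yields the rank-two free $S$-module $M_L = S\cdot o \oplus S\cdot e$, which gives the $S^{\oplus 2}$ in the statement.

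Next I would enumerate all rigid holomorphic polygons with one boundary arc on $L$, the remaining boundary arcs on $\mathbb{L}$, with convex corners labeled by $X, Y$ or $Z$, lifting everything to the $(2pq-2p)$-gon description of $M_{F_{p,q}}$ from Theorem~\ref{thm:tess}. There are exactly four families to find. Two short triangles, of the same shape as $o_1 Y_1 e_1$ in the example, contribute the degree-one entries $x$ and $y$ that appear in the first column of $\delta_0$ and second column of $\delta_1$. A bigon with a single $Z$-corner together with an $X$-corner, analogous to $e_1 Z X o_1$, contributes the $yz$ term. Finally, the global $p$-gon based at the $\mathbb{Z}/p$-orbifold vertex (with $p-1$ consecutive $X$-corners) contributes the $x^{p-1}$ term, and the global $q$-gon based at the $\mathbb{Z}/q$-orbifold vertex (with $q-1$ consecutive $Y$-corners) contributes $y^{q-1}$. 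Convexity at the orbifold points and the absence of further punctures in the Fermat case show that this list is exhaustive: any additional polygon would have to contain one of the enumerated shapes as a subdisc, contradicting rigidity in dimension two.

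The signs are then determined by Seidel's convention recalled before the lemma, together with the red dot marking the nontrivial spin structure on $\mathbb{L}$. Orientation of $L$ versus the boundary orientation of each polygon and the parity of red-dot crossings determine exactly one sign flip on the $y^{q-1}$-contribution, which is the origin of the minus sign in the $(2,1)$-entry of $\delta_0$ and the $(1,2)$-entry of $\delta_1$ (with all other contributions positive). Assembling these contributions into matrices yields precisely the $\delta_0, \delta_1$ stated.

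As a final consistency check, and to confirm that the labeling of $o$ and $e$ is correct, I would verify directly that
\[
\delta_0\delta_1 = \delta_1\delta_0 = (x^p + y^q + xyz)\cdot I_2 = W_{\mathbb{L}}\cdot I_2,
\]
which follows from an elementary $2\times 2$ multiplication. The principal obstacle I expect is not the algebra but the polygon enumeration: justifying that no further global disc on the cover contributes. This should be controlled by the $\Omega$- and $H_1$-gradings established in Proposition~\ref{grading computation} together with Lemma~\ref{topological grading}, since each matrix entry has a prescribed homological class and each candidate disc is pinned down to one of the four families above.
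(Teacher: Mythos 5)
Your overall strategy---unpacking $\mathcal{F}^{\bL}(L)=(CF(L,\bL),-m_1^{0,\bb})$ and enumerating decorated polygons with Seidel's sign rule---is exactly what the paper intends (it leaves this as an exercise, with the detailed model computation appearing later for $L_{M_1}$ in the $A_n$ case). However, there is a genuine structural error at the start: you take $L\cap\bL$ to consist of \emph{two} points $o,e$, modeled on the chain-type example of Figure \ref{fig:exam}, and set $M_L=S\cdot o\oplus S\cdot e$. With one even and one odd generator, $-m_1^{0,\bb}$ is forced to be a \emph{rank-one} factorization $\delta_0\cdot\delta_1=W_\bL$ with $\delta_0,\delta_1\in S$; your four families of polygons would then have to be summed into a single polynomial in each direction, and no assembly into the stated $2\times 2$ matrices is possible. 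Indeed no nontrivial rank-one factorization can exist here, since $W_\bL=x^p+y^q+xyz$ is irreducible (any factor of $z$-degree $0$ would divide $xy$ and also $x^p+y^q$). The correct geometric input is that the Fermat-case arc $L$ (from $\WT{C}$ to $(1,0)\cdot\WT{C}$, projecting to an arc from the puncture $C$ back to itself around the $\Z/p$-point) meets $\bL$ in \emph{four} points, two even and two odd---consistent with the Koszul description $M_L\cong S[\theta_x,\theta_y]$ in the corollary following the lemma, and with the explicit $A_4$ computation where $o_1,o_2,e_1,e_2$ and eight polygons produce a $2\times 2$ factorization.

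Once the four generators are in place, your polygon list is essentially the right one but must be redistributed among the matrix entries: the two small triangles give the entries $x$ and $y$, the $YZ$-cornered polygon gives $yz$, the cut $p$-gon with $p-1$ consecutive $X$-corners gives $x^{p-1}$ (landing in the same entry as $yz$), and the cut $q$-gon with $q-1$ $Y$-corners gives $\pm y^{q-1}$; exhaustiveness and the single sign flip can then be argued via the $\Omega$- and $H_1$-gradings as you propose, and your closing check $\delta_0\delta_1=\delta_1\delta_0=W_\bL\cdot I_2$ is correct and worth keeping. As written, though, the proof does not establish the lemma because the module identification underlying the whole computation is wrong.
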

    \begin{remark}\label{rmk:residuefield}
    If we set $z=0$, this matrix factorization become a compact generator of $\mathcal{MF}(W^T)$
    corresponding to skyscraper sheaf at the singular point.
    \end{remark}
 
 \begin{cor}
The matrix factorization $M_L$ is of Koszul type. Namely, we have an isomorphism
       \[M_L \cong \big( S[\theta_x, \theta_y] , \partial_K + \partial_K' \big).\]
Here, $\theta_x, \theta_y$ are odd degree generators (hence anti-commute) and
      \[       \partial_K = x\cdot\iota_{\theta_x}+y\cdot\iota_{\theta_y}, \;\;\; 
 \partial_K' := W_x \theta_x\wedge \cdot + W_y \theta_y\wedge \cdot \]
          where $W_x = (x^{p-1}+yz), \hskip 0.2cm W_y = y^{q-1}.$
 \end{cor}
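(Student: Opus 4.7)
The plan is to unpack the Koszul model as an explicit $\Z/2$-graded free $S$-module with a differential, then match matrices directly against $\delta_0,\delta_1$. Concretely, I would write $S[\theta_x,\theta_y]$ as the free $S$-module with ordered even basis $(1,\theta_x\theta_y)$ and odd basis $(\theta_x,\theta_y)$, using the convention $\iota_{\theta_x}(\theta_x\theta_y)=\theta_y$ and $\iota_{\theta_y}(\theta_x\theta_y)=-\theta_x$.

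First, I would verify that $(\partial_K+\partial_K')^2$ equals multiplication by $W_\bL$. Since $\partial_K^2=0$ and $(\partial_K')^2=0$, this reduces to the standard Koszul identity $\partial_K\partial_K' + \partial_K'\partial_K = xW_x + yW_y$, and with $W_x=x^{p-1}+yz$, $W_y=y^{q-1}$ the right-hand side equals $x^p+y^q+xyz = W_\bL$, confirming we have a matrix factorization of $W_\bL$.

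Next, I would compute $\partial=\partial_K+\partial_K'$ on basis elements directly:
\begin{align*}
\partial(1) &= (x^{p-1}+yz)\theta_x + y^{q-1}\theta_y,\\
\partial(\theta_x\theta_y) &= x\theta_y - y\theta_x,\\
\partial(\theta_x) &= x - y^{q-1}\theta_x\theta_y,\\
\partial(\theta_y) &= y + (x^{p-1}+yz)\theta_x\theta_y.
\end{align*}
Reading off the coefficient matrices with respect to the ordered bases gives the even-to-odd map $\begin{pmatrix} x^{p-1}+yz & -y \\ y^{q-1} & x \end{pmatrix}$ and the odd-to-even map $\begin{pmatrix} x & y \\ -y^{q-1} & x^{p-1}+yz \end{pmatrix}$, which match $\delta_1$ and $\delta_0$ on the nose.

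The desired isomorphism is then the identification of $S^{\oplus 2}$ (even part of $M_L$) with $S\oplus S\theta_x\theta_y$ and $S^{\oplus 2}$ (odd part of $M_L$) with $S\theta_x\oplus S\theta_y$ via the chosen ordered bases. There is essentially no obstacle: the only delicate point is keeping track of the sign in the contraction $\iota_{\theta_y}(\theta_x\theta_y)=-\theta_x$ (equivalently, the anti-commuting convention for $\theta_x,\theta_y$), which is what produces the sign pattern $-y^{q-1}$ and $-y$ in the off-diagonal entries of $\delta_0$ and $\delta_1$. Once that convention is fixed, the identification is literally basis-by-basis.
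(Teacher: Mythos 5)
Your verification is correct: the Koszul differential computed on the basis $(1,\theta_x\theta_y;\theta_x,\theta_y)$ with the stated contraction signs reproduces exactly the matrices $\delta_0,\delta_1$ of the preceding lemma, and the identity $\partial_K\partial_K'+\partial_K'\partial_K=xW_x+yW_y=W_\bL$ confirms it is a factorization of $W_\bL$. This is the same direct basis-by-basis comparison the paper intends (it states the corollary without proof as an immediate consequence of the explicit form of $M_L$), so nothing further is needed.
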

 \begin{remark}
 The following Koszul complex has cohomology $\C[z]$ concentrated on the right end.
  \[K(x,y) := 0 \to S(\theta_1 \wedge \theta_2) \xrightarrow{\partial_K} S\theta_1 \oplus S\theta_2 \xrightarrow{\partial_K} S \to 0 \]
  
  \end{remark}

Following Dyckerhoff \cite{Dy}, we compute its endomorphism algebra $\mathrm{End}(M_L)$.
    \begin{lemma} $\mathrm{End}_{\MF}(M_L)$ is quasi-isomorphic to a DG algebra of polynomial differential operators 
      \begin{gather*}
      \Hom_{\MF}(M_L, M_L) \simeq \bigg( S[\partial_{\theta_x},\partial_{\theta_y}, (\theta_ x\wedge), (\theta_ y\wedge)], D \bigg)\\
      D(\partial_{\theta_x})=W_x, \;D(\partial_{\theta_y})=W_y, \;D(\theta_x\wedge)= x, \;D(\theta_y\wedge) = y.
      \end{gather*}
    Its cohomology is 
      \begin{gather*}
      H^\bullet(\Hom_{\MF}(M_L, M_L)) \simeq \C[z][\Gamma_x, \Gamma_y]\\
      \Gamma_x = [\partial_{\theta_x}-x^{p-2}(\theta_x\wedge) -z(\theta_y\wedge)], \;\Gamma_y = [\partial_{\theta_y} - y^{q-2}(\theta_y\wedge)].
      \end{gather*}
    \end{lemma}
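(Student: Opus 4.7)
The plan is to identify $\mathrm{End}_S(M_L)$ as an explicit Clifford-type superalgebra over $S=\C[x,y,z]$, compute the induced differential directly from the Clifford relations, and then extract the cohomology via a spectral sequence attached to the natural bicomplex structure.

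Because the preceding corollary presents $M_L\cong S[\theta_x,\theta_y]$ as a free rank-four $S$-module, its graded endomorphism algebra is generated over $S$ by the odd operators $\theta_x\wedge,\theta_y\wedge,\partial_{\theta_x},\partial_{\theta_y}$ subject to the Clifford relations $\{\partial_{\theta_i},\theta_j\wedge\}=\delta_{ij}$ and $\{\theta_i\wedge,\theta_j\wedge\}=\{\partial_{\theta_i},\partial_{\theta_j}\}=0$, matching the algebra in the statement. The induced differential is the super-commutator $D(\phi)=[\partial,\phi]$ with $\partial=\partial_K+\partial_K'=x\partial_{\theta_x}+y\partial_{\theta_y}+W_x(\theta_x\wedge)+W_y(\theta_y\wedge)$. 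A direct application of the Clifford relations yields
\[
D(\partial_{\theta_x})=W_x,\quad D(\partial_{\theta_y})=W_y,\quad D(\theta_x\wedge)=x,\quad D(\theta_y\wedge)=y,
\]
establishing the first half of the lemma.

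For the cohomology I would introduce the bigrading by (wedge-degree, $\partial_\theta$-degree) read off from the normal-ordered basis. The differential splits as $D=D_u+D_v$, where $D_u$ is the Koszul differential on $\bigwedge[\theta_x\wedge,\theta_y\wedge]$ relative to the pair $(x,y)$ and $D_v$ is the Koszul differential on $\bigwedge[\partial_{\theta_x},\partial_{\theta_y}]$ relative to $(W_x,W_y)$. The Clifford relations together with $D^2=0$ force $D_u^2=D_v^2=0$ and $D_uD_v+D_vD_u=0$, so we get a genuine double complex. Filtering by $\partial_\theta$-degree, the $E_1$-page is computed by $D_u$; since $(x,y)$ is a regular sequence in $S$ with quotient $\C[z]$, this Koszul cohomology collapses onto wedge-degree $0$ and equals $\C[z]\otimes\bigwedge[\partial_{\theta_x},\partial_{\theta_y}]$. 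The induced $d_1$ is $D_v$ reduced modulo $(x,y)$, but $W_x=x^{p-1}+yz$ and $W_y=y^{q-1}$ both lie in $(x,y)$, so $d_1=0$, the sequence degenerates at $E_2$, and the cohomology is a free $\C[z]$-module of rank four.

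Honest cocycle representatives for the $E_\infty$-classes of $\partial_{\theta_x}$ and $\partial_{\theta_y}$ are then obtained by correcting with exact terms: since $D(\theta_x\wedge)=x$ and $D(\theta_y\wedge)=y$, the coboundaries $D(\partial_{\theta_x})=x^{p-1}+yz$ and $D(\partial_{\theta_y})=y^{q-1}$ are killed by the corrections $-x^{p-2}(\theta_x\wedge)-z(\theta_y\wedge)$ and $-y^{q-2}(\theta_y\wedge)$, producing the advertised $\Gamma_x$ and $\Gamma_y$. The hardest step is sign bookkeeping in the Clifford calculus, namely verifying in normal-ordered coordinates that $D=D_u+D_v$ really defines a bicomplex; a direct check on the sixteen basis monomials $\theta_I\cdot\partial_\theta^J$ makes this rigorous. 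A complete description of the $\C[z]$-algebra structure on cohomology (for instance the relation $\{\Gamma_x,\Gamma_y\}=-z$ one obtains by directly multiplying the chosen representatives in the Clifford algebra) is an additional computation that goes beyond what the lemma strictly asserts.
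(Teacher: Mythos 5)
Your proposal is correct and takes essentially the same route as the paper: identify $\mathrm{End}_{\MF}(M_L)$ with the Clifford/differential-operator algebra acting on $S[\theta_x,\theta_y]$, compute $D$ as the supercommutator with $\partial_K+\partial_K'$, and run the spectral sequence of the filtration by $\partial_\theta$-order, whose first page is the Koszul complex of the regular sequence $(x,y)$ and yields a free $\C[z]$-module of rank four with $\Gamma_x,\Gamma_y$ as explicit representatives. The only (minor) difference is the degeneration step: you check $d_1=0$ directly from $W_x,W_y\in(x,y)$ and the concentration of $E_1$ in wedge-degree zero, whereas the paper deduces degeneration from the $\C[z]$-linear independence of the four explicit cocycles $1,\Gamma_x,\Gamma_y,\Gamma_x\Gamma_y$ against the rank-$\le 4$ bound from $E_1$.
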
  
    \begin{proof}
    The first part of the lemma is obvious because morphisms of a matrix factorization of Koszul type are those of exterior algebras. It is easy to check that the differential satisfies given equations. For example, 
    \begin{equation*}
    D(\partial_{\theta_x}) = [\partial_K+\partial'_K, \partial_{\theta_x}] = [\partial'_K, \partial_{\theta_x}] = [(W_x\theta_x\wedge), \iota_{\theta_x}]= W_x.
    \end{equation*}
    To each differential operator, we can assign an order of its symbols. It provides a decreasing filtration $\{F^i\}$ on the complex. The first page of the spectral sequence associated to the filtration is a dual Koszul complex associated to a regular sequence $(x,y)$;
      \[E_1 = H^\bullet \bigg(K^\vee(x,y)\otimes_\C \C[\partial_{\theta_x}, \partial_{\theta_y}], \partial_K^\vee\otimes 1 \bigg)\simeq \C[z][\partial_{\theta_x}, \partial_{\theta_y}]\]
    In particular we know that the cohomology algebra is a $\C[z]$- modules of rank less or equal to $4$. On the other hand, the cycles generated by $\Gamma_x, \Gamma_y$ in the lemma have already provided four $\C[z]$-linear independent element. Therefore the spectral sequence degenerates at $E_1$ page. This finishes the proof. 
    \end{proof}
    We can show that our mirror functor is fully-faithful. 
    \begin{lemma}
      The first-order part of the mirror functor is 
      \begin{gather*}
      \cF^{\mathbb L}_1: CW^\bullet(L,L) \to \Hom_{\mathcal{MF}}(M_L, M_L)\\
      a \to \Gamma_x, \;b \to \Gamma_y.
      \end{gather*}
      It is a quasi-isomorphism. Therefore $\cF^{\mathbb L}$ embeds ${\mathcal{WF}}(M_{F_{p,q}})$ as a full subcategory of $\mathcal{MF}(W_\bL)$.
    \end{lemma}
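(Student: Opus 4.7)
My plan is to (i) compute $\cF^\bL_1(a)$ and $\cF^\bL_1(b)$ by explicit polygon counts, verifying they equal $\Gamma_x$ and $\Gamma_y$ respectively, and (ii) upgrade this chain-level map to a quasi-isomorphism by a ring-theoretic comparison organized by the $H_1^{\mathrm{orb}}$-grading.

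For step (i), unwinding the definition of the localized mirror functor gives
\[
\cF^\bL_1(a)(\bullet) \;=\; \sum_{l\ge 0} m_{2+l}\bigl(a,\bullet,\bb,\ldots,\bb\bigr), \qquad \bb = xX+yY+zZ,
\]
for $\bullet \in CF(L,\bL)$, so one enumerates rigid holomorphic polygons with one input at the short wrapped chord $a$ near the puncture $C$, one input and one output in $CF(L,\bL)$, and $l$ corners on the $\bL$-portion of the boundary decorated by $\bb$. The explicit tessellation of $\WH M_{F_{p,q}}$ from Theorem \ref{thm:tess}, together with the placement of $\bL$ relative to the puncture and to the orbifold points of orders $p, q$, confines the contributing polygons to three families matching the three terms of $\Gamma_x = \partial_{\theta_x} - x^{p-2}(\theta_x\wedge) - z(\theta_y\wedge)$: a basic triangle without $\bb$-insertions giving $\partial_{\theta_x}$, a polygon with $p-2$ insertions of $X$ from $\bb$ near the order-$p$ orbifold vertex of $\bL$ giving $-x^{p-2}(\theta_x\wedge)$, and a triangle with a single $Z$-insertion near the puncture giving $-z(\theta_y\wedge)$. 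Signs follow from Seidel's convention recalled earlier, using the non-trivial spin structure on $\bL$ marked by the red dot. The enumeration for $b$ is entirely symmetric (at the order-$q$ orbifold vertex, and with no $Z$-insertion polygon available), yielding $\cF^\bL_1(b) = \Gamma_y$.

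For step (ii), the chain-map property $D\circ\cF^\bL_1=0$ is automatic from the $\AI$-functor axiom since $m_1=0$ on $CW^\bullet(L,L)$ by the wrapped algebra lemma. To promote $H^\bullet(\cF^\bL_1)$ to an isomorphism I would exploit the $H_1^{\mathrm{orb}}$-grading from Proposition \ref{grading computation}, preserved by $\cF^\bL_1$ thanks to Lemma \ref{topological grading}, and match ring structures on both sides. A direct chain-level computation gives $\Gamma_x^2 = -x^{p-2}$, which is a $D$-coboundary when $p\ge 3$ (hitting $-x^{p-3}\theta_x\wedge$) and equals the nonzero constant $-1$ when $p=2$; the analogous statement holds for $\Gamma_y^2$. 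These exactly match the source relations $a^{\otimes 2}=\delta_{2,p}$ and $b^{\otimes 2}=\delta_{2,q}$ up to an overall sign. Moreover $\{\Gamma_x,\Gamma_y\}=-z$ at the chain level, so the central element $a\otimes b+b\otimes a$ of the source corresponds to $-z$ on the target. Both sides then organize as rank-$4$ free modules over a polynomial ring in one variable, with bases $\{1,a,b,a\otimes b\}$ and $\{1,\Gamma_x,\Gamma_y,\Gamma_x\Gamma_y\}$ respectively, so $H^\bullet(\cF^\bL_1)$ is a bijection in each $H_1$-graded piece. Full faithfulness on all of $\mathcal{WF}(M_{F_{p,q}})$ then follows from the split-generation by $L$ established earlier.

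The main obstacle I expect is the polygon enumeration and sign bookkeeping in step (i): in particular, confirming that the family with $p-2$ insertions of $X$ contributes precisely the coefficient $-x^{p-2}$ with the correct sign, and that the single-$Z$-insertion triangle contributes exactly $-z$ rather than a more complicated expression. The $H_1$-grading provides strong control here, since each of the three summands of $\Gamma_x$ lies in a distinct $H_1$-degree, ruling out spurious cancellations and pinning down each coefficient unambiguously.
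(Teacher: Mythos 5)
Your proposal is correct and follows essentially the same route as the paper: read off $\cF^{\bL}_1(a)=\Gamma_x$, $\cF^{\bL}_1(b)=\Gamma_y$ from the polygon geometry, note that $ab+ba$ maps to $\pm z$ via the (anti)commutator $[\Gamma_x,\Gamma_y]$, and then compare both sides as rank-$4$ modules over the polynomial ring in the central element with bases $\{1,a,b,ab\}$ and $\{1,\Gamma_x,\Gamma_y,\Gamma_x\Gamma_y\}$ — the paper packages this last step as a filtration by powers of $ab+ba$ and $z$ with a comparison of associated gradeds, which is the same computation. The only discrepancies (your $\Gamma_x^2=-x^{p-2}$ and $\{\Gamma_x,\Gamma_y\}=-z$ versus $a^{\otimes 2}=\delta_{2,p}$ and $+z$ in the paper) are convention-level sign issues that do not affect the filtered/graded comparison or the conclusion.
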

    \begin{proof}
 From the Figure \ref{fig:f4}, we see that $\cF^\mathbb L_1$ sends $a$ to $\Gamma_x$ and $b$ to $\Gamma_y$. Moreover,   
        \[[\Gamma_x, \Gamma_y] = [-z(\theta_y\wedge), \partial_{\theta_y}] = z.\]
      Therefore $ab+ba$ hits $z$ and $\cF^\mathbb L_1$ is surjective.
      
      note that $CW^\bullet(L,L)$ and $H^\bullet(\Hom_{\mathcal{MF}}(M_L, M_L))$ are filtered by 
        \[F^k := (ab+ba)^k \cdot CW^\bullet(L,L), \hskip 0.2cm G^l := z^l \cdot H^\bullet(\Hom_{\mathcal{MF}}(M_L, M_L))\] 
      It is easy to check that $\cF^\mathbb L_1$ is a filtered map with respect to $F^\bullet$ and $G^\bullet$. 
      
      The graded piece $F^0/F^1$ is a $4$ dimensional vector space spanned by four words $<1,a,b,ab>$. This is because
        \[aba = (ab+ba)\cdot a - \delta_{2,p} b, \hskip 0.2cm bab= (ab+ba)\cdot b-\delta_{2,q}a.\]
      An element $ab+ba$ is in the center of the algebra. Therefore
      \[F^k/F^{k+1} \simeq (ab+ba)^k\cdot F^0/F^1 = (ab+ba)^k\cdot<1,a,b,ab>.\] 
      By a similar reason, we have 
      \[G^k/G^{k+1}\simeq z^k \cdot G^0/G^1 = z^k \cdot <1, \Gamma_x, \Gamma_y, (\Gamma_x\circ\Gamma_y)>\] 
      The induced morphism of associated graded $Gr \cF^\mathbb L_1$ is an isomorphism of vector spaces at every level. By the comparison theorem, so is $\cF^\mathbb L_1$. 
    \end{proof}
    \begin{cor}
      $\cF^{\mathbb L}:\WF([M_{F_{p,q}}/G_{F_{p,q}}])\to \mathcal{MF}(W^T+xyz)$ is a quasi-equivalence. 
    \end{cor}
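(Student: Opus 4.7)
The corollary follows once we assemble three ingredients: (a) the preceding lemma, which shows $\cF^\bL_1:\Hom(L,L)\to \Hom_{\MF}(M_L,M_L)$ is a quasi-isomorphism and therefore $\cF^\bL$ is fully faithful on the subcategory generated by $L$; (b) the earlier split-generation lemma, which shows $L$ split-generates $\WF([M_{F_{p,q}}/G_{F_{p,q}}])$; and (c) a split-generation statement on the mirror side, namely that $M_L=\cF^\bL(L)$ split-generates $\mathcal{MF}(W_\bL)$ where $W_\bL=x^p+y^q+xyz=W^T+xyz$. Given (a)--(c), the functor induces an equivalence between triangulated envelopes of split-generators, hence a quasi-equivalence.

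The only step not already in hand is (c). First, I would determine the singular locus of $W_\bL$: solving
\[
\partial_x W_\bL = px^{p-1}+yz=0,\quad \partial_y W_\bL = qy^{q-1}+xz=0,\quad \partial_z W_\bL = xy=0
\]
forces $x=y=0$, so $\mathrm{Sing}(W_\bL)$ is the full $z$-axis. Thus $W_\bL$ has a one-dimensional, non-isolated singular locus, and the classical Dyckerhoff theorem (which treats isolated hypersurface singularities via the stabilized residue field) does not apply directly.

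Second, I would identify $M_L$ with the Koszul matrix factorization for the regular sequence $(x,y)$ in $S=\C[x,y,z]$, so that the associated maximal Cohen--Macaulay module on $\{W_\bL=0\}$ is $S/(x,y)\cong \C[z]$, whose scheme-theoretic support equals the whole singular locus. To promote "support equals singular locus" to split-generation, I would localize at the generic point of the $z$-axis: over the field $\C(z)$, the polynomial $W_\bL$ acquires an isolated singularity at the origin (the term $xyz$ becomes a non-degenerate quadratic-like pairing once $z$ is invertible, with $x^p$ and $y^q$ as higher-order perturbations), and $M_L\otimes_{\C[z]}\C(z)$ becomes the stabilized residue field, which generates $\mathcal{MF}(W_\bL|_{\C(z)})$ by Dyckerhoff \cite{Dy}. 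A Nakayama / base-change argument along the $z$-axis then lifts this generic generation to split-generation of the full category $\mathcal{MF}(W_\bL)$.

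The main obstacle I expect is precisely this non-isolated generation step: one has to treat $W_\bL$ as a family of isolated singularities over $\mathrm{Spec}\,\C[z]$ and verify that fiberwise split-generation by the Koszul factorization assembles into global split-generation. Once (c) is in place, the combination with (a) and (b) gives the quasi-equivalence; the $H_1$-grading of Proposition \ref{grading computation} and Lemma \ref{lem:fermatww} ensures the equivalence respects the additional grading data.
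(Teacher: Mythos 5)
Your steps (a) and (b) coincide with the paper's: full faithfulness comes from the quasi-isomorphism $\cF^{\bL}_1:CW^\bullet(L,L)\to \Hom_{\MF}(M_L,M_L)$ of the preceding lemma, and split-generation of the domain by $L$ is the earlier open-closed/Abouzaid argument. The divergence is in step (c), and that is where you have a genuine gap. Localizing at the generic point of the $z$-axis computes, in effect, the Verdier quotient of $\MF(W_\bL)$ by the thick subcategory of objects supported at the closed points of the singular locus; generation after that localization therefore says nothing about precisely those objects --- for example the stabilized residue fields at points $(0,0,c)$ of the $z$-axis, in particular $M_{\bL}\simeq k^{\mathrm{stab}}_{W_\bL}$ itself, are annihilated by it. There is no Nakayama-type local-to-global principle for membership in a thick subcategory, and the alternative reading of your sketch, ``fiberwise generation over $\mathrm{Spec}\,\C[z]$,'' involves the base changes $z\mapsto c$, which are not localizations and relate $\MF(\C[x,y,z],W_\bL)$ to $\MF(\C[x,y],x^p+y^q+cxy)$ only through nontrivial Kn\"orrer/base-change functors; assembling fiberwise generation into global split-generation is exactly the unproved step, as you yourself acknowledge, so the key assertion (c) is left without proof.

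The paper sidesteps this by using two objects in the image, $M_{\bL}$ and $M_L$: under Orlov's equivalence $\MF(W_\bL)\simeq D_{sg}(W_\bL)$ they go to the skyscraper sheaf at the origin and the structure sheaf of the $z$-axis, and generation is then quoted from the support-theoretic classification of thick subcategories of hypersurface singularity categories by specialization-closed subsets of the singular locus \cite{Stev13}. Your stronger claim that $M_L$ alone split-generates is in fact correct --- the Koszul-type factorization remains reduced at every point of the $z$-axis, so its singularity-category support is the entire singular locus --- but the honest justification is again \cite{Stev13} (or an equivalent support-theoretic generation criterion), not a generic-point-plus-Nakayama argument. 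Replacing your localization step by a direct appeal to that theorem repairs the proof, at which point it essentially reproduces the paper's argument with one generator instead of two.
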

    \begin{proof}
      It is enough to show that $M_L$ and $M_\mathbb L$ generate $\MF(W^T+xyz)$. Orlov's equivalence 
      \[\MF(W^T+xyz) \simeq D_{sg}(W^T+xyz)\]
      \[\left(\begin{tikzcd}M^1\arrow[r, "\phi", shift left =1]& M^0 \arrow[l, "\psi", shift left =1]\end{tikzcd}\right) \mapsto \mathrm{coker}(\psi) \]
      sends $M_\mathbb L$ to a skyscraper sheaf $\mathcal O_o$ at the origin and $M_L$ to a structure sheaf $\mathcal O_z$ of $z$-axis. These are two irreducible components of a critical locus of $W^T+xyz$. Therefore they generates $\MF(W^T+xyz)$ (see \cite{Stev13}).
    \end{proof}

\subsection{Chain cases}
 The polynomial $W =C_{p,q}= x^p +xy^q$ has  maximal symmetry group $G_W=\Z/pq$.
We proceed as in the Fermat case.
Denote by $\xi$ the following generator of $G_W$:
  \[x\to e^\frac{2\pi i}{p}\cdot x, \hskip 0.2cm y \to e^\frac{-2\pi i}{pq} \cdot y.\]
Recall that  the quotient space $[M_{C_{p,q}}/G_W]$ has one orbifold point of order $q$ and two punctures of order $pq$ and $\frac{pq}{\gcd(p-1,q)}$, respectively. 
Let us call them as $B_1, B_2$ respectively. The orbifold action near $B_1$ is generated by $\xi$ while the action near $B_2$ is generated by $\xi^{p-1}$ by Proposition \ref{prop:homo}. 

\begin{figure}[h]
\includegraphics[scale=0.50]{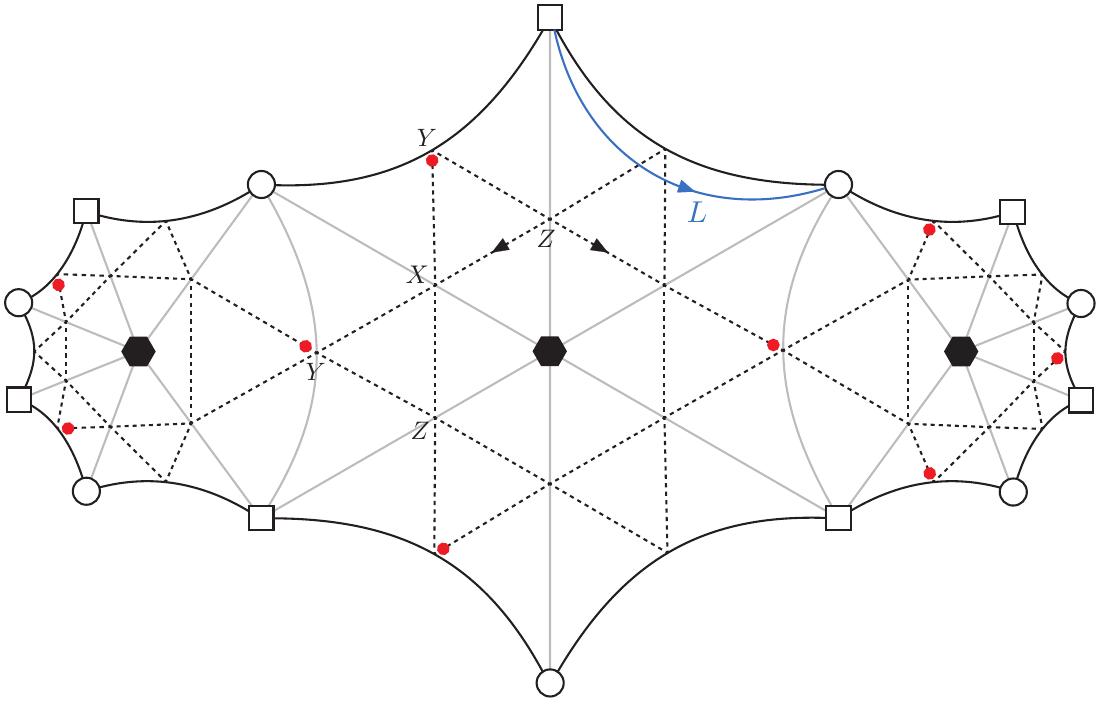}
\centering
\caption{\label{fig:C33} Milnor fiber of $E_7=C_{3,3}$ and a choice of Lagrangian $L$}
\end{figure}

 We take a Lagrangian $L$ connecting $B_1$ and $B_2$ in $\mathbb{P}^1_{pq, q, \frac{pq}{\gcd(p-1,q)}}$ (we may take the part of the equator between $B_1$ and $B_2$), and denote by $\WT{L}$ the sum of all lifts of $L$ in the Milnor fiber.  
    \begin{lemma}
    The wrapped Floer complex $CW^{\bullet}(L,L)$ satisfies the following:
      \begin{enumerate}
        \item as a vector space,
          \[CW^{\bullet}(L,L) \simeq \C[a,b]/(ab=0)\] 
          Here, $a,b$ are even variables. 
        \item $m_1$ vanishes and $m_2$ coincides with a polynomial multiplication. 
        \item $m_{k}(a,b,a,b\ldots)=0$ for $1\leq k \leq 2q-1$ and $m_{2q}(a,b, \ldots, a, b)=1$. Likewise, $m_{k}(b,a,b,a\ldots)=0$ for $1\leq k \leq 2q-1$ and $m_{2q}(b,a, \ldots, b,a)=1$.
      \end{enumerate}
      Its $\Omega$ and $H_1$-grading is given by the following table.
      \begin{center}
      \begin{tabular}{ccccccccc}
      &$1_L$ & $a$ & $b$ \\
      \hline
      \hline
      $\Omega$-grading & $0$ & $0$ & $2$ \\
      $H_1$-grading & $0$ & $-\gamma_1$ & $-\gamma_3$ \\
      \hline
      \hline
      \end{tabular}
      \end{center}
    \end{lemma}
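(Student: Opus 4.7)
The plan is to adapt the argument of Lemma \ref{lem:fermatww} (the Fermat case) to the chain geometry. The arc $L$ joins the two punctures $B_1, B_2$ of $[M_{C_{p,q}}/G_W] = \mathbb{P}^1_{pq,q,pq/d}$ while avoiding the $\Z/q$ orbifold point. First, I identify a generating set of $CW^\bullet(L,L) = CW^\bullet(\WT L,\WT L)^{G_W}$: at each puncture $B_i$, positive wrapping produces a tower of Reeb chords, and I let $a$ (resp.\ $b$) denote the $G_W$-invariant sum of the shortest wrapped chords at $B_1$ (resp.\ $B_2$). Iterated $m_2$-products at a single puncture yield the powers $a^k$ and $b^k$, and together with the unit $1_L$ these exhaust all Reeb chord generators, as can be read off from the $2pq$-gon tessellation of $M_{C_{p,q}}$ in Theorem \ref{thm:tess}.

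The relation $ab = 0$ amounts to the vanishing $m_2(a,b) = m_2(b,a) = 0$: any contributing $J$-holomorphic triangle would have to span from the neighborhood of $B_1$ to that of $B_2$, but the tessellation shows no such triangle exists. The vanishing $m_1 = 0$ is immediate since the Milnor fiber is exact and admits no bigons between exact Lagrangian branches, so the vector space and $m_2$ structure are as asserted.

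For the higher operations, I use that $\mathbb{P}^1_{pq,q,pq/d} \setminus L$ has two connected components, each an orbifold disk whose only orbifold point is the $\Z/q$ one. Lifting each component to the local branched cover around that orbifold point produces a smooth $2q$-gon whose boundary alternates between $a$- and $b$-type chords; summing over $G_W$-orbits, the boundary sweeps $\WT L$ once, giving $m_{2q}(a,b,\ldots,a,b) = 1_L$ and $m_{2q}(b,a,\ldots,b,a) = 1_L$ from the two hemispheres. No intermediate polygon with alternating corners exists, so $m_k$ vanishes on such inputs for $1 < k < 2q$.

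Finally, the gradings: the $H_1$-grading of $a$ (resp.\ $b$) is $-\gamma_1$ (resp.\ $-\gamma_3$), since these chords wrap once around $B_1, B_2$, which correspond to the orbifold loops $\gamma_1, \gamma_3$. A consistency check is that $q(-\gamma_1) + q(-\gamma_3) = q\gamma_2 = 0$ in $H_1^{\mathrm{orb}}$ by \eqref{eqn:1sthlgy}, matching $\deg_{H_1} 1_L = 0$. For the $\Omega$-grading, the chosen volume form has simple poles at the points corresponding to $\gamma_1, \gamma_2$ and none at the $\gamma_3$-point; by the local Maslov computation as in Proposition \ref{grading computation}, the shortest wrap over a simple pole has Maslov index $0$ while a wrap over a regular point has index $2$, giving $\deg a = 0$ and $\deg b = 2$. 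The main obstacle I anticipate is the sign bookkeeping via the Seidel convention together with the spin-structure markings needed to obtain the coefficient $+1$ in the $m_{2q}$ identities; this parallels the Fermat computation, so I do not expect genuinely new difficulties.
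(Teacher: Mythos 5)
Your overall strategy is the same as the paper's: identify the wrapped generators as Reeb chords at the two punctures $B_1,B_2$, obtain the powers $a^k,b^k$ by concatenation ($m_2$ near the punctures), observe that $a$- and $b$-chords cannot be concatenated so $ab=0$, and then locate the first global holomorphic polygon, a smooth $2q$-gon with corners alternating between $a$ and $b$, to produce the $m_{2q}$ identities. The grading discussion (index $0$ for a wrap around the simple pole of $\Omega$ at the $\gamma_1$-puncture, index $2$ for a wrap around the $\gamma_3$-puncture where $\Omega$ is regular, and the $H_1$-classes $-\gamma_1,-\gamma_3$) is also consistent with what the paper leaves as an exercise.

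There is, however, a genuine error in the step producing the $2q$-gon. You claim that $\mathbb{P}^1_{pq,q,pq/d}\setminus L$ has \emph{two} connected components, ``each an orbifold disk whose only orbifold point is the $\Z/q$ one,'' and you derive the two identities $m_{2q}(a,b,\ldots)=1$ and $m_{2q}(b,a,\ldots)=1$ from ``the two hemispheres.'' This is impossible as stated: there is only one $\Z/q$ point, and since $L$ is an arc joining the two \emph{distinct} punctures $B_1$ and $B_2$ (avoiding the orbifold point), cutting the sphere along it leaves a single connected region. The correct picture, which is the one the paper uses, is that the complement is one orbifold bigon containing the $\Z/q$ point; its $q$-fold unfolding is a single smooth $2q$-gon whose $2q$ corners alternate between $a$ and $b$, and \emph{both} identities $m_{2q}(a,b,\ldots,a,b)=1$ and $m_{2q}(b,a,\ldots,b,a)=1$ come from this one polygon, read with the two possible cyclic starting corners (this is exactly what makes the Hochschild cycle $\tfrac1q(\tilde a\otimes\tilde b)^{\otimes q}$ closed in the split-generation argument). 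The situation with two hemispheres, giving a $p$-gon and a $q$-gon, is the Fermat geometry, where $L$ runs from the unique puncture back to itself; it does not carry over to the chain case. The repair is straightforward and does not change your conclusions, but as written the decomposition is wrong. A smaller point: your justification that $m_1=0$ ``since the Milnor fiber is exact'' is not by itself sufficient (exactness does not preclude strips between distinct chords); the vanishing comes from the absence of bigons in the explicit tessellation, which is the argument the paper gives.
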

    \begin{proof}
   Branches of $\WT{L}$ do not intersect with each other in the interior. Therefore $CW^{\bullet}(L,L)$ consists of Hamiltonian chords at infinity near $B_1$ or $B_2$. Among them, we choose two generators between the nearest orbits. Namely, choose one lift $\WT{L}_1$ and take the wrapped generator
    \begin{itemize}
      \item $a$, the shortest chord $\in CW^\bullet(\WT{L}_1, \xi^{-1} \cdot \WT{L}_1)$ near $B_1$
      \item $b$, the shortest chord $\in CW^\bullet(\WT{L}_1, \xi^{1-p} \cdot \WT{L}_1)$ near $B_2$.
    \end{itemize}
Here, $a$ (resp. $b$) is nothing but the chord between $\WT{L}_1$ and its clockwise rotation at $B_1$ (resp. $B_2$). 
Namely, recall that $\xi, \xi^{p-1}$ correspond to $\gamma_1, \gamma_3$ of the orbifold fundamental group in the Proposition \ref{prop:homo}.  And $\gamma_1^{-1}, \gamma_3^{-1}$ are the minimal clockwise
rotations in the uniformizing neighborhood of orbifold points. Therefore $\xi \cdot \WT{L}_1$ is obtained by clockwise rotation of $\WT{L}_1$(centered at $B_1$) sending $B_2$-vertex to the nearest $B_2$-vertex. The same holds for $ \xi^{p-1} \cdot \WT{L}_1$ switching the role of $B_1$ and $B_2$.

    We can also concatenate them to create new Hamiltonian chords, namely $a^2, a^3, \ldots , b^2, b^3, \ldots$. We cannot concatenate different words as their heads and tails are different from each other.
     The rest of the argument is similar to the Fermat case.  $m_1$ vanishes because there are no $J$-holomorphic strip between them. Concatenating two chords corresponds to $m_2$ operation concentrated near the punctures. The first global $J$-holomorphic polygon contributes to a non-trivial $A_\infty$ operation is a $2q$-gon. It is a lift of an orbifold bigon $(M_{C_{p,q}}/G_W) \setminus L$. Its corners consists of $q$ many $a$ and $b$ alternating each other.  
    \end{proof}
    \begin{lemma}
   $\WT{L}$ generates the wrapped Fukaya category of $M_{C_{p,q}}$.
    \end{lemma}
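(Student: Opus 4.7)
The strategy is to mirror the proof of the Fermat case by applying Abouzaid's split generation criterion. It suffices to exhibit a Hochschild cycle in $CC_\bullet(CW^\bullet(\WT{L},\WT{L}))$ whose image under the open-closed map
\[\mathcal{OC}:CC_\bullet(CW^\bullet(\WT{L},\WT{L}))\to SH^\bullet(M_{C_{p,q}})\]
represents the unit. I would organize the proof into three steps, as outlined below.

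First, I would verify that $\WT{L}$ tessellates $M_{C_{p,q}}$. In the orbifold quotient $[M_{C_{p,q}}/G_W] = \mathbb{P}^1_{pq,q,\frac{pq}{\gcd(p-1,q)}}$, the arc $L$ connects the two punctures $B_1,B_2$ and divides the orbifold sphere into two orbifold bigons. Exactly one of them contains the order-$q$ orbifold point $A$ in its interior. Lifting to the cover $M_{C_{p,q}}$, each bigon unfolds into $2q$-gons whose boundary edges alternate between lifts of $L$ and whose $2q$ corners alternate between the Hamiltonian chords $a$ (near $B_1$) and $b$ (near $B_2$). These are precisely the polygons computing $m_{2q}(a,b,\ldots,a,b)=1$ and $m_{2q}(b,a,\ldots,b,a)=1$ in the previous lemma, and together they cover all of $M_{C_{p,q}}$.

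Second, I would propose the Hochschild cycle
\[c \,=\, \frac{(\WT{a}\otimes \WT{b})^{\otimes q}}{2q} \,-\, \frac{(\WT{b}\otimes \WT{a})^{\otimes q}}{2q}\]
where $\WT{a},\WT{b}$ denote the $G_W$-invariant sums of the minimal chords. The key check is that $b_{Hoch}(c)=0$: on the word $(\WT{a}\WT{b})^{\otimes q}$, the only surviving contributions come from full $m_{2q}$ (producing $1_{\WT{L}}$) since all proper $m_2$-products of adjacent entries vanish due to the relation $ab=ba=0$, and all intermediate $m_k$ ($2<k<2q$) vanish by the lemma. The $m_{2q}$ contribution from the first cycle produces $+1_{\WT{L}}$ and the one from the second produces $+1_{\WT{L}}$ in the opposite orientation, which together cancel up to the normalization. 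I would also need to check the $H_1$- and $\Omega$-grading match, using the table of gradings for $a,b$: since these gradings are additive under concatenation and cyclic, the closed word $(ab)^q$ lies in the correct degree to map to the unit $1 \in SH^0$.

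Third, I would compute $\mathcal{OC}(c)$. The image is represented by the cochain that records, for each generic point $p\in M_{C_{p,q}}$, the signed count of $2q$-gons in the tessellation of step one that contain $p$. Every point lies in exactly one polygon from each of the two isotopy classes of bigons, and the orientations of the bounding Lagrangian segments on the two types of polygons are opposite, so the two contributions add (rather than cancel) thanks to the minus sign in $c$; thus $\mathcal{OC}(c)=1_{SH}$. The main obstacle I anticipate is tracking signs carefully, specifically the spin-structure contribution along $\bL$ (and the chosen orientations of the wrapped generators) in the Hochschild differential; a small global scalar error would not destroy the argument since any nonzero multiple of the unit still split-generates. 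Once $\mathcal{OC}$ hits (a nonzero multiple of) the unit, Abouzaid's criterion delivers split generation.
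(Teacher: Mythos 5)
Your overall strategy (Abouzaid's generation criterion via a Hochschild cycle whose open-closed image is the unit) is the same as the paper's, but the chain-specific execution has two genuine problems, both coming from transplanting the Fermat picture too literally.

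First, the geometry in your Step 1 is wrong: an arc $L$ joining the two distinct punctures $B_1,B_2$ does not separate the orbifold sphere, so its complement is a \emph{single} orbifold bigon (containing the order-$q$ orbifold point), and its lifts tessellate $M_{C_{p,q}}$ by $p$ copies of one and the same $2q$-gon --- there are not two isotopy classes of polygons. Consequently your Step 3 mechanism (``every point lies in one polygon from each of the two classes, whose boundary orientations are opposite, so the contributions add against the minus sign'') has no counterpart here; in the chain case the unit is hit because the single family of $2q$-gons already covers the bounded region with weight one. Second, the sign mechanism you invoke for closedness is the Fermat one and is inconsistent with the chain case: here $a,b$ are \emph{even} generators ($\Omega$-degrees $0$ and $2$), not odd as in the Fermat case, and with these parities the full-length wrap-around terms in $\partial_{Hoch}\big((\tilde{a}\otimes\tilde{b})^{\otimes q}\big)$ starting with $\tilde{a}$ and those starting with $\tilde{b}$ cancel against each other \emph{within the boundary of the single word}, i.e.\ $\frac{1}{q}(\tilde{a}\otimes\tilde{b})^{\otimes q}$ is already a Hochschild cycle --- this is exactly the cycle the paper uses. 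If your claimed signs (each word bounding $+1_{\WT{L}}$, cancelling across the two words) were correct, the paper's cycle would fail to be closed, so the two sign analyses are incompatible and yours is the one that conflicts with the parity data. Your antisymmetrized class $\frac{1}{2q}\big[(\tilde{a}\otimes\tilde{b})^{\otimes q}-(\tilde{b}\otimes\tilde{a})^{\otimes q}\big]$ is then at best redundant, and at worst dangerous: the two words are matched by the same $2q$-gons, so without a careful sign comparison of $\mathcal{OC}$ on a cyclic rotation you cannot rule out that their open-closed images coincide and your difference maps to zero. The fix is simply to use the single word $\frac{1}{q}(\tilde{a}\otimes\tilde{b})^{\otimes q}$, verify its closedness by the internal cancellation of the two wrap-around families, and observe that its open-closed image is represented by the bounded region of $M_{C_{p,q}}$ covered once by the $p$ lifts of the bigon.
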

    \begin{proof}
 We proceed as in the Fermat case. Milnor fiber $M_{C_{p,q}}$ is tessellated by $p$ copies of $2q$-gons that are considered in the previous lemma. In Figure \ref{fig:C33}, this is given by 3 copies of hexagons.
 To show that open-closed map hits the unit, we take the following Hochschild cycle;
    \[\frac{1}{q}(\tilde{a}\otimes \tilde{b})^{\otimes q}\in CC_\bullet(CW^\bullet (\WT{L},\WT{L}))\]
    It is indeed a cycle because
    \begin{equation*}
    \partial_{Hoch}\left(\frac{1}{q}(\tilde{a}\otimes \tilde{b})^{\otimes q}\right) =    m_{2q}(\tilde{a},\tilde{b},\ldots, \tilde{a},\tilde{b})-  m_{2q}(\tilde{b},\tilde{a},\ldots, \tilde{b},\tilde{a}) =  1_{\WT{L}} - 1_{\WT{L}} =0.
    \end{equation*}
     On the other hand, the open-closed image of this Hochschild cycle is
      a cocycle represented by the bounded area of $M_{C_{p,q}}$ covering each region with weight one.
     \end{proof}
 
If we solve the weak Maurer-Cartan equation for $\bL$, we get the potential $W_\bL = y^q+xyz$,
which can be also written as $W^T + xyg$ with $g(x,y,z)=z - x^{p-1}$.

   \begin{lemma}
    The localized mirror functor $\cF^{\mathbb L}: \mathcal{WF}([M_{C_{p,q}}/G_{C_{p,q}}]) \to \mathcal{MF}(W_\bL)$ sends $L$ to the matrix factorization $M_L= (\begin{tikzcd}[ampersand replacement=\&]S\arrow[r, "\delta_0", shift left =1] \& S\arrow[l, "\delta_1", shift left =1]\end{tikzcd})$ where
      \begin{equation*}
      \delta_0 =y, \;\;\delta_1 =y^{q-1}+xz.
      \end{equation*}  
    \end{lemma}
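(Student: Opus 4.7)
The plan is to apply the definition $\cF^{\bL}(L)=(CF(L,\bL),-m_1^{0,\bb})$ with bounding cochain $\bb=xX+yY+zZ$ directly, identify $CF(L,\bL)$ as a module, and then enumerate the holomorphic polygons that contribute to $m_1^{0,\bb}$. The entire computation is a mild generalization of the $D_5^T=C_{2,4}$ example worked out in the excerpt.

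First, I would identify $CF(L,\bL)$ as a graded module. Since $L$ is a short arc in $[M_{C_{p,q}}/G_W]$ running between the punctures $B_1$ and $B_2$, and $\bL$ is an immersed circle on the equator through the three special points, a small Hamiltonian isotopy places $L$ transverse to $\bL$ with exactly two intersection points: one odd generator $o_1$ and one even generator $e_1$. This matches the picture in the preceding $D_5^T$ example, so $CF(L,\bL)\simeq S\cdot o_1\oplus S\cdot e_1$ where $S=\C[x,y,z]$.

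Second, I would enumerate the $J$-holomorphic polygons defining $m_1^{0,\bb}$. Such a polygon has one input and one output at $L\cap\bL$, one boundary arc on $L$, and the remaining boundary on $\bL$ with corners decorated by $X,Y,Z$. Lifting to the cover of the Milnor fiber and using the tessellation from Theorem \ref{thm:tess}, the contributing polygons split into three families: a triangle $o_1\,Y\,e_1$ at the $b$-orbifold point (order $q$) contributing $y\cdot e_1$ to $m_1^{0,\bb}(o_1)$; a quadrilateral $e_1\,Z\,X\,o_1$ sweeping around the $c$-vertex (puncture $B_2$) contributing $xz\cdot o_1$; and a $(q+1)$-gon $e_1\,Y\,\cdots\,Y\,o_1$ with $q-1$ copies of $Y$ sweeping around the $b$-vertex in the opposite direction contributing $y^{q-1}\cdot o_1$. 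Completeness of this enumeration follows from an $H_1$-grading argument: the degrees of $o_1,e_1$ and the insertions $X,Y,Z$ constrain any contributing disc to one of these three homotopy classes, and the presence of the punctures at $B_1$ and $B_2$ prevents nontrivial wrapping around them. Seidel's sign convention, applied exactly as in the $D_5^T$ case, yields a positive sign for each polygon because the boundary orientations of $L$ and $\bL$ match that of the disc.

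Assembling the pieces gives $\delta_0=y$ and $\delta_1=y^{q-1}+xz$, and indeed $\delta_0\delta_1=y^q+xyz=W_\bL$, confirming the result. The main obstacle is the exhaustive polygon enumeration: while the $H_1$-grading narrows the candidates immediately, one still has to verify in the universal cover that no unexpected disc from a nontrivial deck translate of $L$ contributes, especially for large $p,q$ where the fundamental domain is more intricate. This is handled by the same area and degree bookkeeping used implicitly in the Fermat case computation of $\cF^{\bL}(L)$ earlier in this section.
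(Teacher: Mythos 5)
Your proposal is correct and is essentially the paper's own argument: the paper simply says the lemma "follows from Figure \ref{fig:C33}," i.e.\ the same direct polygon count you carry out, generalizing the $D_5^T=C_{2,4}$ example (triangle giving $y$, the $ZX$-quadrilateral giving $xz$, and the $(q-1)$-fold $Y$-polygon giving $y^{q-1}$, with $\delta_0\delta_1=y^q+xyz=W_\bL$ as a check). Your extra remarks on completeness via the $H_1$-grading only make explicit what the paper leaves to the figure.
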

    \begin{proof} 
    This follows from the Figure \ref{fig:C33}.
    \end{proof}

    The matrix factorization $M_L$ is again of Koszul type. 
    One can check directly that 
      \[M_L = \big( S[\theta_y] , (y\cdot i_{\theta_y}+W_y \cdot \theta_y \wedge)\big), \hskip 0.3cm W_y = y^{q-1}+xz\]
    Using the same technique,   
    \begin{lemma} The self-hom space of $M_L$ is quasi-isomorphic to a DG algebra of polynomial differential operators 
      \begin{gather*}
      \Hom_{\MF}(M_L, M_L) \simeq \bigg( S[\partial_{\theta_y}, (\theta_ y \wedge)], D \bigg)\\
      D(\partial_{\theta_y})=W_y, \;D(\theta_y\wedge) = y.
      \end{gather*}
    Its cohomology is concentrated to an even degree and isomorphic to 
      \[H^\bullet(\Hom_{\MF}(M_L, M_L)) \simeq \C[x,z]/(xz=0)\]
    \end{lemma}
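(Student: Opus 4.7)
The plan is to mirror the Fermat argument in the simpler single-variable setting. For the first assertion, since $M_L = (S[\theta_y], \delta)$ is a Koszul-type matrix factorization with $\delta = y\,\iota_{\theta_y} + W_y\,\theta_y\wedge$, any $S$-linear endomorphism of the free module $\Lambda_S[\theta_y]$ is an $S$-polynomial in $\partial_{\theta_y} := \iota_{\theta_y}$ and $\theta_y\wedge$. The super-commutator differential $D = [\delta, -]$ acts on generators via the Clifford relations $\{\iota_{\theta_y}, \theta_y\wedge\} = 1$ and $\iota_{\theta_y}^2 = (\theta_y\wedge)^2 = 0$, yielding $D(\partial_{\theta_y}) = W_y$ and $D(\theta_y\wedge) = y$ exactly as stated.

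For the cohomology, because there is only one $\theta$-variable the complex is free of rank four over $S$: the even part has basis $\{1,\ \partial_{\theta_y}\,\theta_y\wedge\}$ and the odd part has basis $\{\partial_{\theta_y},\ \theta_y\wedge\}$. One may either copy the Fermat spectral sequence, filtering by the order of $\partial_{\theta_y}$ so that $E_1$ is the dual Koszul complex $K^\vee(y)\otimes\C[\partial_{\theta_y}]$, or compute directly. Direct computation gives
$$D_{\mathrm{odd}}(c\,\partial_{\theta_y} + d\,\theta_y\wedge) = cW_y + dy,\qquad D_{\mathrm{even}}(a + b\,\partial_{\theta_y}\,\theta_y\wedge) = b\bigl(W_y\,\theta_y\wedge - y\,\partial_{\theta_y}\bigr).$$
The kernel of $D_{\mathrm{even}}$ is $S\cdot 1$, while the image of $D_{\mathrm{odd}}$ in this kernel is the ideal $(y, W_y)\cdot 1 = (y, xz)\cdot 1$ since $W_y = y^{q-1} + xz$. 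Hence $H^{\mathrm{even}}\cong S/(y, xz)\cong \C[x,z]/(xz)$.

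The vanishing of odd cohomology rests on the regular sequence property of $(y, W_y) = (y, y^{q-1} + xz)$ in $S = \C[x,y,z]$: $y$ is clearly a nonzerodivisor, and modulo $y$ one has $W_y \equiv xz$, which is a nonzerodivisor in the domain $\C[x,z]$. This regularity lets one solve $cW_y + dy = 0$ by writing $c = yc'$ and then $d = -c' W_y$, so the kernel of $D_{\mathrm{odd}}$ is the $S$-span of $y\,\partial_{\theta_y} - W_y\,\theta_y\wedge$, which is precisely the image of $D_{\mathrm{even}}$. Hence $H^{\mathrm{odd}} = 0$. The only non-mechanical step is this regularity check; the rest is bookkeeping once the Koszul form of $\Hom_{\MF}(M_L, M_L)$ is in hand.
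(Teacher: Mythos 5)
Your proposal is correct and follows essentially the same route as the paper: the identification of $\Hom_{\MF}(M_L,M_L)$ with the Clifford-type DG algebra is the same transfer of the Fermat argument, and the cohomology is computed by the same direct hand calculation of the rank-four $2$-periodic complex, with $H^{\mathrm{even}}\cong S/(y,W_y)\cong \C[x,z]/(xz)$ and the vanishing of $H^{\mathrm{odd}}$ resting, exactly as in the paper, on $(y,W_y)$ being a regular sequence (your explicit check that $W_y\equiv xz$ is a nonzerodivisor mod $y$ is a small but welcome addition the paper leaves implicit).
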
  
    \begin{proof}
    The first part of the lemma is the same as Fermat case. The cohomology computation can be done in a similar way, but we found that it is much easier to do it by hand. This complex is isomorphic to the 2-periodic complex 
    \begin{equation*}
    \begin{tikzcd}[ampersand replacement=\&]S^{\oplus 2, even}\arrow[r, "D_0", shift left =1] \& S^{\oplus 2, odd}\arrow[l, "D_1", shift left =1]\end{tikzcd} \mbox{ where  } D_0 = \begin{pmatrix} y & -y \\ W_y&-W_y \end{pmatrix}, \;\; D_1 = \begin{pmatrix} y&-W_y \\ y&-W_y \end{pmatrix}.
    \end{equation*}
    Therefore, we have 
    \begin{gather*}
    \mathrm{ker}(D_0)/\mathrm{im} (D_1) = \Bigg\{\begin{pmatrix}a\\b\end{pmatrix}\in S^{\oplus 2, even} \Big| a=b \Bigg\}\Bigg/ S\cdot \begin{pmatrix}y\\y\end{pmatrix} \oplus S \cdot \begin{pmatrix} W_y \\ W_y \end{pmatrix} \simeq \C[x,y,z]/(y=W_y=0) \simeq \C[x,z]/(xz=0),\\
    \mathrm{ker}(D_1)/\mathrm{im}(D_0) = \Bigg\{\begin{pmatrix}a\\b\end{pmatrix}\in S^{\oplus 2, odd} \Big| W_y\cdot a+y\cdot b =0\Bigg\}\Bigg/ S\cdot \begin{pmatrix} y\\-W_y\end{pmatrix} \simeq 0.
    \end{gather*}
    The last equality holds because $(y,W_y)$ is a regular sequence of $S$. 
    \end{proof}
    Now we can show that our mirror functor is an equivalence.
    \begin{lemma}
      The first-order part of the mirror functor is given by 
      \begin{gather*}
      \cF^{\mathbb L}_1: CW^\bullet(L,L) \to \Hom_{\mathcal{MF}}(M_L, M_L)\\
      a \to x, \;b \to z.
      \end{gather*}
      It is a quasi-isomorphism. Moreover $\cF^{\mathbb L}: \mathcal{WF} ([M_{C_{p,q}}/G_{C_{p,q}}]) \to \mathcal{MF}(y^q+xyz)$ is a quasi-equivalence.  
    \end{lemma}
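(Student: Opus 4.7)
The plan parallels the Fermat-case argument, with two adjustments: here $L$ connects two distinct punctures $B_1$ and $B_2$ of the orbifold quotient, and the critical locus of $W_\bL = y(y^{q-1}+xz)$ is a reducible curve rather than the union of a line and an isolated point.

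First I would compute $\cF^\bL_1(a)$ and $\cF^\bL_1(b)$ by local disc counts. The chord $a$ sits as the shortest wrapped generator between $\WT L_1$ and its $B_1$-neighbor $\xi^{-1}\WT L_1$, and in the uniformizing chart at $B_1$ the only polygon contributing to $\cF^\bL_1(a)$ is a single triangle absorbing one $X$ from $\bb = xX + yY + zZ$. This yields $\cF^\bL_1(a) = x$. The symmetric computation at $B_2$, with the $\bL$-corner inserting $Z$, produces $\cF^\bL_1(b) = z$.

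Second, I would verify that the induced map $\C[a,b]/(ab) \to \C[x,z]/(xz)$ is a quasi-isomorphism. Since $a,b$ are even and $m_1 = 0$, the source is a commutative graded algebra, and the target cohomology was already computed to be $\C[x,z]/(xz)$. The map is a well-defined ring homomorphism (the relation $ab = 0$ is sent to $xz = 0$), and both sides have matching $\C$-bases $\{1\} \cup \{a^k\}_{k\geq 1}\cup \{b^k\}_{k\geq 1}$ and $\{1\}\cup\{x^k\}_{k\geq 1}\cup\{z^k\}_{k\geq 1}$, so the map is a degreewise bijection.

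Third, to upgrade to a quasi-equivalence I would invoke Orlov's equivalence $\mathcal{MF}(W_\bL)\simeq D_{sg}(R)$ with $R = S/(y(y^{q-1}+xz))$. The object $M_\bL$ corresponds to the skyscraper $\mathcal O_o$ at the origin (via the $\cF^\bL(\bL) \simeq k^\mathrm{stab}$ calculation), and $M_L$ corresponds to $R/(y) \cong \C[x,z]$, the CM module for the component $\{y = 0\}$ of $\mathrm{Spec}(R)$. The singular locus is the reducible curve $\{y = 0,\; xz = 0\}$ with irreducible components the $x$-axis and the $z$-axis; multiplication by $z$ and by $x$ on $\C[x,z]$ yields distinguished triangles
\[
\C[x,z] \xrightarrow{\;z\;} \C[x,z] \to \C[x] \xrightarrow{+1}, \qquad \C[x,z] \xrightarrow{\;x\;} \C[x,z] \to \C[z] \xrightarrow{+1}
\]
in $D_{sg}(R)$, so both critical components lie in the thick closure of $\{M_L\}$. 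Adjoining $M_\bL$ gives split generation of $D_{sg}(R)$ following the same invocation of \cite{Stev13} as in the Fermat case. Combined with split generation of $\WF([M_{C_{p,q}}/G_{C_{p,q}}])$ by $\{L,\bL\}$ (the preceding lemma for $L$, the orbifold-point coverage by $\bL$) and full faithfulness of $\cF^\bL$ on this pair (Theorem \ref{thm:lmf} for $\bL$ and the present lemma for $L$), this proves the quasi-equivalence.

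The main obstacle is the B-side split-generation step: unlike in the Fermat case, the two components of the singular locus now share the origin, so one must check directly that $M_L$ alone reaches both components rather than relying on a decomposition into disjoint critical pieces. The two distinguished triangles above settle this by exhibiting $\C[x]$ and $\C[z]$ as cones on endomorphisms of $M_L$ in $D_{sg}(R)$; after this, the rest of the argument is mechanical.
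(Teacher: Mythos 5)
Your proposal is correct and reaches the same endpoints as the paper, but since the paper's own proof of this lemma is literally ``Similar to Fermat case,'' it is worth comparing your route with the Fermat-case template it defers to. For the quasi-isomorphism you replace the paper's filtration argument (in the Fermat case: filter $CW$ by powers of the central element $ab+ba$, filter $H^\bullet(\Hom_{\MF})$ by powers of $z$, and compare associated gradeds) by a direct basis comparison: since $a,b$ are even, $m_1=0$, and the target cohomology was already computed to be $\C[x,z]/(xz)$, the $\AI$-functor equations make $H(\cF^\bL_1)$ an algebra map sending $a^k\mapsto x^k$, $b^k\mapsto z^k$, hence a bijection on the monomial bases. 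This is simpler and perfectly adequate here, precisely because the chain-case $CW^\bullet(L,L)$ is commutative with the relation $ab=0$ already matching $xz=0$; the filtration machinery is only really needed in the Fermat case where $z$ appears as the commutator $ab+ba$. One small imprecision: a single triangle at $B_1$ cannot be the whole of $\cF^\bL_1(a)$, since a closed even endomorphism of the rank-one Koszul factorization $(y,\,y^{q-1}+xz)$ must act by the same polynomial on both generators of $CF(L,\bL)$; there is one such polygon for each of the two generators (equivalently, the class is pinned down by closedness and the $H_1$-grading), but the resulting class is indeed $x$, and similarly $z$ at $B_2$. For the quasi-equivalence, your extra step is genuinely useful: unlike the Fermat case, the critical locus of $y^q+xyz$ has two components meeting at the origin, and your two short exact sequences $0\to\C[x,z]\xrightarrow{z}\C[x,z]\to\C[x]\to0$ and $0\to\C[x,z]\xrightarrow{x}\C[x,z]\to\C[z]\to0$ exhibit both component structure sheaves inside $\mathrm{thick}(M_L)$ in $D_{sg}(R)$, after which the appeal to Orlov's equivalence and \cite{Stev13} (together with $M_\bL\mapsto\mathcal O_o$) runs exactly as in the paper's Fermat-case corollary; the generation of the A-side by $\WT L$ is the preceding lemma, as you say. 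So the proposal is sound, and in the two places where it deviates from the Fermat template it does so in ways that are either simpler (the basis comparison) or more explicit (the component-by-component generation) than what the paper records.
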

    \begin{proof}
    Similar to Fermat case.
    \end{proof}

\subsection{Loop cases}
The loop type polynomial $W=x_1^px_2+x_1x_2^q$ has  $G_W =\Z/pq-1$ as the maximal diagonal symmetry group. One notable difference of a loop type from the others is that the action of $G_W$ is free. The quotient $M_{L_{p,q}}/G_W$ is an honest three-punctured sphere. Its wrapped Fukaya category and its homological mirror symmetry was proved in \cite{AAEKO}. The result in this section can be essentially found therein, except that we use a localized mirror functor to define the explicit correspondences.
  
   Let us introduce more notation. For loop type, we use variables $x_i \hskip 0.1cm (i=1,2,3)$ instead of $x,y,z$.  Let $\xi$ denote the following generators of this group. 
    \[x_1\to e^\frac{2q\pi i}{pq-1}\cdot x_1, \hskip 0.2cm x_2\to e^\frac{-2\pi i}{pq-1}\cdot x_2\]
    Also recall three punctures are of order $pq-1, pq-1$ and $\frac{pq-1}{\gcd(p-1,q-1)}$. Let's denote them by $B_1, B_2$, and $B_3$ respectively. A cyclic orbifold action is generated by $\xi$ near $B_1$, by $\xi^{-p}$ near $B_2$ and by $\xi^{p-1}$ near $B_3$ by Proposition \ref{prop:homo}.
 As there are three punctures, we choose three shortest Lagrangians $L_i$ from $B_{i+1}$ to $B_{i+2}$ for $i=1,2,3 \mod 3$ which are part of the equator sphere passing through 3 punctures.
The following can be checked from  \cite{AAEKO}.
    \begin{lemma}
    The  wrapped Floer complexes $CW^{\bullet}(L_i,L_j)$ satisfies the following:
      \begin{enumerate}
        \item as a vector space,
          \[CW^{\bullet}(L_i,L_j) \simeq \left \{
          \begin{array}{ll}
          \C[a_{i+1}, b_{i+2}]/(a_{i+1}b_{i+2}=0) & i=j \\
          \C<a_i^n \cdot c_{i,j}\cdot b_j^m>, \hskip 0.2cm n,m\in \mathbb N & i\neq j\\
          \end{array}\right.\] 
          Here, $a_i,b_i$ are even an and $c_{i,j}$ are odd.
        \item $m_1$ vanishes and $m_2$ coincides with a polynomial multiplication and an obvious bimodule structure. 
        \item $m_3(c_{12}, c_{23}, c_{31})=1$
      \end{enumerate}
       Its $\Omega$ and $H_1$-grading is given by the following table.
      \begin{center}
      \begin{tabular}{ccccccccc}
      &$1_{L_i}$ & $a_i$ & $b_{i+1}$ &$c_{12}$&$c_{23}$&$c_{31}$&\\
      \hline
      \hline
      $\Omega$-grading & $0$ & $2\cdot \delta_{i,3}$ & $2\cdot \delta_{i,3}$ & $-1$ & $1$ & $1$ \\
      $H_1$-grading & $0$ & $-\gamma_i$ & $-\gamma_i$ & & not defined & \\
      \hline
      \hline
      \end{tabular}
      \end{center}
    \end{lemma}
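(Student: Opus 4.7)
My plan is to work entirely in the honest (non-orbifold) three-punctured sphere $[M_{L_{p,q}}/G_W]$, which is available here because $G_W$ acts freely in the loop case. The crucial geometric picture is that the three equator arcs $L_1, L_2, L_3$ cut the pair of pants into exactly two triangular hemispheres, each bounded by all three arcs with vertices at the three punctures $B_1, B_2, B_3$. This picture essentially dictates both the generator list and the $m_3$ computation.

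For the generators, I would proceed puncture by puncture. When $i = j$, the two endpoints $B_{i+1}, B_{i+2}$ of $L_i$ each produce a $\Z_{\geq 0}$-family of wrapping chords, giving the even generators $a_{i+1}^n$ and $b_{i+2}^m$; since $L_i$ is a simple embedded arc with no interior self-intersection, these exhaust $CW^\bullet(L_i, L_i)$. The relation $a_{i+1} b_{i+2} = 0$ is immediate, since the heads and tails of these chords sit at distinct punctures and cannot be concatenated. For $i \neq j$, the arcs $L_i$ and $L_j$ share exactly one puncture, at which they meet in a single minimal odd chord $c_{ij}$; wrapping at the two unshared endpoints produces the polynomial families $a_i^n$ (on the left of $c_{ij}$) and $b_j^m$ (on the right), so every generator has the unique form $a_i^n c_{ij} b_j^m$.

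The vanishing $m_1 = 0$ and the identification of $m_2$ with polynomial (and bimodule) multiplication are purely local statements near the punctures, valid in the standard cylindrical wrapping model. For the key identity $m_3(c_{12}, c_{23}, c_{31}) = 1$, one of the two hemispheres cut out by $L_1 \cup L_2 \cup L_3$ is an embedded holomorphic triangle with corners at $c_{12}, c_{23}, c_{31}$ in the correct cyclic order; this triangle is rigid and contributes $+1$ after applying Seidel's sign convention from Section \ref{sec:eqFloer}. The other hemisphere gives the analogous product for the reversed cyclic ordering. No higher-word inputs contribute to the triple product on these particular generators because all additional polygons would have to swallow at least one wrapping chord and thereby raise the total index.

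Finally, the grading table follows as in Proposition \ref{grading computation}. The volume form $\Omega$ has poles of order one at $0, 1 \in \mathbb{P}^1$ but is regular at $\infty$, so chords wrapping the puncture identified with $\infty$ (namely $B_3$) pick up an extra Maslov shift of $+2$, which accounts for the factor $2\delta_{i,3}$ in the $\Omega$-grading of $a_i$ and $b_{i+1}$. The $H_1$-grading of $a_i$ and $b_{i+1}$ is read off from Proposition \ref{prop:homo} by recording which puncture each chord wraps, while the $c_{ij}$ admit no well-defined $H_1$-grading because their lift to the universal cover depends on a homotopy choice of path between the two Lagrangian branches. The main obstacle I expect is the sign bookkeeping for $m_3$ together with a consistent choice of orientations on the $L_i$ compatible with the equator; once the correct hemisphere is selected, the remaining verifications are direct specializations of the pair-of-pants computation in \cite{AAEKO}.
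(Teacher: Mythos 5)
The paper itself offers no argument for this lemma beyond the sentence ``The following can be checked from \cite{AAEKO}'', so your attempt to verify it directly in the three-punctured sphere is a genuinely different (and more self-contained) route. Several pieces of your argument are fine: the $i=j$ case, the rigid hemisphere triangle giving $m_3(c_{12},c_{23},c_{31})=1_{L_1}$ with all three corners as inputs and the unit as output (exactly parallel to the $p$-gon and $q$-gon counts in the Fermat case), and the degree shift $2\delta_{i,3}$ coming from the absence of a pole of $\Omega$ at $\infty$.

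However, your treatment of $CW^\bullet(L_i,L_j)$ for $i\neq j$ contains a genuine error. Morphisms from $L_i$ to $L_j$ are Hamiltonian chords, and these occur only at punctures where \emph{both} arcs have an end, i.e.\ only at the single shared puncture $B_k$; no generators are produced by ``wrapping at the two unshared endpoints''. The $\Z_{\geq 0}$-family of generators beyond the minimal chord $c_{ij}$ comes from further wrapping at $B_k$, realized by the module action of the shared-puncture wrapping generators of $CW^\bullet(L_i,L_i)$ and $CW^\bullet(L_j,L_j)$; moreover, left and right wrapping at $B_k$ yield the \emph{same} chord, so products with equal total winding $n+m$ coincide and the family is singly indexed, not freely doubly indexed as your ``unique form $a^n c_{ij} b^m$'' asserts, while the wrapping generators at the two unshared punctures act on $c_{ij}$ by zero. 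If your picture were correct it would contradict the mirror computation later in the same subsection: $H^\bullet\Hom_{\MF}(M_{L_i},M_{L_j})\cong \C[x_k]\cdot \frac{x_1x_2x_3}{x_ix_j}$ with $x_k$ the variable of the shared puncture, whereas the unshared-puncture generators map to $x_i$ and $x_j$, which annihilate this module, so the quasi-isomorphism claim for $\cF^\bL_1$ would fail. (The $\Omega$-degree $-1$ of $c_{12}$ in the table corroborates this: it is the chord at the shared puncture $B_3=\infty$, where $\Omega$ has no pole.) A smaller gap: the identification of $m_2$ with concatenation is not purely local near the punctures — one must also rule out contributions of the two hemisphere triangles to $m_2$, which holds because their third corner points in the wrong direction to serve as an output; this is the same argument the paper makes for the global polygons in the Fermat and chain cases.
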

Consider the direct sum of lifts $\WT{L}_i$ of $L_i$ in $M_{L_{p,q}}$. Then the following
is well-known.
    \begin{lemma}
    $\{\WT{L}_i \}_{i=1,2,3}$ split-generates the wrapped Fukaya category of $M_{L_{p,q}}$.
    \end{lemma}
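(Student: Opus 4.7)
The plan is to imitate the split-generation argument given in the Fermat and Chain cases, invoking Abouzaid's generating criterion: it suffices to exhibit a Hochschild cycle in $CC_\bullet\!\left(CW^\bullet(\bigoplus_i\WT L_i,\bigoplus_i\WT L_i)\right)$ whose image under the open-closed map $\mathcal{OC}$ equals the unit in $SH^\bullet(M_{L_{p,q}})$. The key geometric observation, already used implicitly in the preceding cases, is that the three equatorial arcs $L_1,L_2,L_3$ split the pair-of-pants $[M_{L_{p,q}}/G_W]$ into two triangles whose lifts tessellate the whole Milnor fiber.

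First I would identify the two odd-degree triangles that witness the only non-trivial higher product, namely the two triangles with corners $(c_{12},c_{23},c_{31})$ and $(c_{13},c_{32},c_{21})$, each giving $m_3 = 1_{L_i}$. Denoting by $\tilde c_{ij}$ the $G_W$-equivariant sum of lifts of $c_{ij}$, I would form the Hochschild chain
\[
\alpha \;:=\; \tilde c_{12}\otimes \tilde c_{23}\otimes \tilde c_{31}\;\pm\;\tilde c_{13}\otimes \tilde c_{32}\otimes \tilde c_{21},
\]
with a sign chosen so that the two unit contributions from $m_3$ cancel in the Hochschild differential. Checking $\partial_{\mathrm{Hoch}}\alpha=0$ reduces to the lemma's enumeration of $A_\infty$ operations: $m_1$ vanishes, $m_2$ among the $c_{ij}$'s gives no output of the relevant bidegree (the product of two distinct $c$'s lies in the sector $\Hom(L_i,L_{i+2})$ where no such odd generator of the right shape exists), and the only surviving term comes from $m_3(c_{12},c_{23},c_{31})-m_3(c_{13},c_{32},c_{21})=1_{\WT L}-1_{\WT L}=0$.

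Second, I would compute $\mathcal{OC}(\alpha)$. Because the two triangles are precisely the two connected components of $[M_{L_{p,q}}/G_W]\setminus\bigcup_iL_i$, and because the open-closed moduli for an odd-insertion Hochschild cycle of length three is modeled on a disc with three boundary marked points and one interior output, the map $\mathcal{OC}$ on $\alpha$ counts these triangular regions lifted to $M_{L_{p,q}}$. Summing over the $G_W$-orbit, these lifts cover every fundamental triangle of the tessellation described in Theorem \ref{thm:tess} with multiplicity one, so $\mathcal{OC}(\alpha)$ is represented by a cocycle supported on the whole Milnor fiber with constant weight, i.e. the unit of $SH^\bullet(M_{L_{p,q}})$. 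Abouzaid's criterion then yields split-generation.

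The main obstacle I anticipate is verifying that $\alpha$ is actually a Hochschild cycle; in particular making sure there are no stray $m_2$ or $m_3$ contributions coming from the deformations $m_k(a_{i+1},b_{i+2},\ldots)$ that could leak into the $c$-sector, and pinning down the correct signs from the orientation conventions used earlier. Once cyclicity is secured, the open-closed computation is essentially geometric and parallels the Fermat and Chain arguments.
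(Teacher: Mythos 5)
Your proposal is essentially correct, but it takes a different route from the paper: for the loop case the paper does not run the generation argument at all — it records the lemma as well known, citing \cite{AAEKO}, where the wrapped Fukaya category of the three-punctured sphere and generation by these three arcs are established (and here the $G_W$-action is free, so $M_{L_{p,q}}$ is just an honest unramified cover of the pair of pants). What you do instead is transplant the paper's own Fermat/Chain argument: Abouzaid's criterion applied upstairs to the total lifts $\WT{L}_i$, using the tessellation of $M_{L_{p,q}}$ by lifts of the two ideal triangles, and the Hochschild cycle $\alpha=\tilde c_{12}\otimes\tilde c_{23}\otimes\tilde c_{31}\pm\tilde c_{13}\otimes\tilde c_{32}\otimes\tilde c_{21}$ whose open-closed image covers the surface with weight one. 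This works: $m_1=0$, every $m_2$-term of the Hochschild differential lands in a sector $CW^\bullet(\WT{L}_i,\WT{L}_j)$, $i\neq j$, which is concentrated in odd degree while the product of two odd generators is even, so those terms vanish, and only the full $m_3$-terms from the two triangles survive. One bookkeeping point to fix: the differential of $\tilde c_{12}\otimes\tilde c_{23}\otimes\tilde c_{31}$ does not produce a single unit but the sum of units over the three composable cyclic rotations (one in each $\Hom(\WT{L}_i,\WT{L}_i)$), and likewise for the second summand; with the correct relative sign these cancel termwise, which is the same (slightly informal) cancellation the paper performs for $\tilde a^{\otimes p}/p-\tilde b^{\otimes q}/q$ in the Fermat case, so your anticipated sign check is the only real work left. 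The trade-off is clear: your argument gives a uniform, self-contained proof for all three families at the cost of these sign/convention verifications, while the paper's citation to \cite{AAEKO} avoids redoing them but leaves the loop case less explicit than the Fermat and chain cases it treats in detail.
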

    
Next, we move on to the mirror computation. For the Seidel Lagrangian $\bL$ in the quotient
$[M_{L_{p,q}}/G_W]$, the potential function can be computed as 
$$W_\bL = xyz.$$
Recall that this is related to $W^T$ as 
 $xyz = x^py +xy^q  + xy(z - x^{p-1}-y^{q-1}) = W^T+ xy\cdot g(x,y,z)$.
 
From the picture, it is easy to check the following.
    \begin{lemma}
    The localized mirror functor $\cF^{\mathbb L}: \mathcal{WF}([M_{L_{p,q}}/G_{L_{p,q}}]) \to \mathcal{MF}(x_1x_2x_3)$ sends $L_i$ to the matrix factorization $M_{L_i} = (\begin{tikzcd}[ampersand replacement=\&]S\arrow[r, "\delta_{i,0}", shift left =1] \& S\arrow[l, "\delta_{i,1}", shift left =1]\end{tikzcd})$ where
      \begin{equation*}
      \delta_0 =x_i, \;\; \delta_1 =\frac{x_1 x_2 x_3}{x_i}
      \end{equation*}
    \end{lemma}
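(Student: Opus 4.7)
The plan is to compute $M_{L_i} = \cF^{\mathbb L}(L_i) = (CF(L_i, \bL), -m_1^{0,\bb})$ directly from the geometric picture in the three-punctured sphere $[M_{L_{p,q}}/G_W]$ by identifying the generators of $CF(L_i, \bL)$ and enumerating the pseudo-holomorphic polygons that contribute to the twisted differential $-m_1^{0,\bb}$ with $\bb = x_1 X_1 + x_2 X_2 + x_3 X_3$.

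First I would set up the geometry. Since $G_W$ acts freely in the loop case, $[M_{L_{p,q}}/G_W]$ is an honest three-punctured sphere, and $\bL$ sits inside it as an immersed triangle with three self-intersections $X_1, X_2, X_3$, one near each puncture $B_1, B_2, B_3$ (as in Figure \ref{fig:fabc}). The Lagrangian $L_i$ is the shortest arc from $B_{i+1}$ to $B_{i+2}$ along the equator. A local analysis at the two points where $L_i$ crosses the boundary of the triangle $\bL$ shows that $L_i$ meets $\bL$ transversely in exactly two points $u_i, v_i$; the $\Omega$-grading table for $\bL$ combined with the computation of $\overline\phi_{L_i}$ forces these to have opposite $\Z/2$-parity, so $CF(L_i, \bL) \simeq S \oplus S[1]$ as a $\Z/2$-graded free $S$-module of rank one in each degree.

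Next I would enumerate the polygons. There are exactly two rigid immersed polygons with boundary on $L_i \cup \bL$, distinguished by which side of $L_i$ they lie on. The first is a triangle lying in the component of $(\text{sphere}) \setminus (L_i \cup \bL)$ that abuts the puncture $B_i$; its corners are $u_i$, $X_i$, $v_i$, contributing $\pm x_i$ to one component of the differential after pairing the $X_i$-corner with $\bb$. The second is an immersed polygon on the opposite side of $L_i$ with four corners $v_i, X_{i+1}, X_{i+2}, u_i$ (the two $\bb$-corners occurring at the self-intersections of $\bL$ near $B_{i+1}$ and $B_{i+2}$), contributing $\pm x_{i+1} x_{i+2}$. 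The $H_1$-grading of Lemma \ref{topological grading} is preserved by these polygons, which is a useful consistency check. Signs are fixed by Seidel's convention together with the red-dot spin structure on $\bL$; up to an overall sign (which can be absorbed into the choice of basis of $CF(L_i,\bL)$), this produces
\[
\delta_0 = x_i, \qquad \delta_1 = x_{i+1} x_{i+2} = \frac{x_1 x_2 x_3}{x_i},
\]
and one immediately checks $\delta_0 \delta_1 = x_1 x_2 x_3 = W_\bL$, confirming we have a genuine matrix factorization.

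The main obstacle is ruling out higher-order contributions: a priori, polygons with additional $\bb$-insertions at any of $X_1, X_2, X_3$ could appear. Here the three-punctured sphere geometry is decisive. Any further polygon would have to wrap around at least one puncture, but the punctures are genuine (non-orbifold) boundary components, so no compact disc can enclose them. Equivalently, since the universal cover of $[M_{L_{p,q}}/G_W]$ is the upper half-plane and $L_i$, $\bL$ lift to embedded arcs and a tree-like collection of arcs respectively, the immersed polygons between them are classified by their combinatorial type, and only the two listed above are rigid. This finishes the identification of $\cF^{\mathbb L}(L_i)$.
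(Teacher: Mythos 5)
Your proposal is correct and follows essentially the same route as the paper, which simply asserts the lemma "from the picture": you identify the two intersection points of $L_i$ with $\bL$ near the corner $X_i$ and count the two rigid polygons (the corner triangle giving $x_i$ and the complementary quadrilateral through $X_{i+1},X_{i+2}$ giving $x_{i+1}x_{i+2}$), with the punctures ruling out any further contributions. The only quibbles are cosmetic (e.g.\ which region "abuts" which puncture depends on the drawing convention), and the check $\delta_0\delta_1 = x_1x_2x_3$ pins the answer down as you note.
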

As before, we can also write them as
      \[M_{L_i} = \big( S[\theta_{x_i}] , (x_i\cdot i_{\theta_{x_i}}+W_{x_i} \cdot \theta_{x_i} \wedge)\big), \hskip 0.3cm W_{x_i} = \frac{x_1x_2x_3}{x_i}.\] 
     
    For later purposes, we calculate $\hom$ complex by hand 
   (in \cite{AAEKO} it was proved using Orlov's equivalence ).
       \begin{lemma} The self-hom space of $M_{L_i}$ is quasi-isomorphic to a DG algebra of polynomial differential operators 
      \begin{gather*}
      \Hom_{\MF}(M_{L_i}, M_{L_i}) \simeq \bigg( S[\partial_{\theta_{x_i}}, (\theta_ {x_i} \wedge)], D \bigg)\\
      D(\partial_{\theta_x})=W_{x_i}, \;D(\theta_x\wedge) = x_i.
      \end{gather*}
    The cohomologies of Floer complexes are given as follows; 
      \[H^\bullet(\Hom_{\MF}(M_{L_i}, M_{L_i})) \simeq \left \{ 
        \begin{array}{ll}
          \C[x_1, x_2, x_3]/(x_i=W_{x_i}=0) & i=j\\
          \C[x_1, x_2, x_3]\cdot(\frac{x_1x_2x_3}{x_ix_j})/(x_i=x_j=0)& i\neq j
        \end{array} \right . 
      \]               
    \end{lemma}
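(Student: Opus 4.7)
The plan mirrors the strategies in the Fermat and chain-case lemmas, exploiting the two-term Koszul form of $M_{L_i}$. First, the underlying graded $S$-module of $\Hom_{\MF}(M_{L_i}, M_{L_j})$ is $\Hom_S(S \oplus S\theta_{x_i},\, S \oplus S\theta_{x_j})$, and this is freely generated over $S$ by $1$, $\partial_{\theta_{x_i}}$, $(\theta_{x_j}\wedge)$, and $(\theta_{x_j}\wedge)\circ \partial_{\theta_{x_i}}$. For $i=j$ this is exactly the algebra $S[\partial_{\theta_{x_i}}, (\theta_{x_i}\wedge)]$ of the statement, and the differential $D=[d_i,-]$ (graded commutator with $d_i = x_i\,\iota_{\theta_{x_i}} + W_{x_i}\,(\theta_{x_i}\wedge)$) yields $D(\partial_{\theta_{x_i}})=W_{x_i}$ and $D(\theta_{x_i}\wedge)=x_i$ by direct computation, which establishes the first assertion.

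For the diagonal cohomology computation ($i=j$) I would follow the chain-case method verbatim: rewrite the four-generator DG algebra as a $2$-periodic $2\times 2$ matrix complex whose entries are drawn from the pair $\{x_i, W_{x_i}\}$. The key arithmetic input is that $(x_i, W_{x_i})$ is a regular sequence in $S = \C[x_1,x_2,x_3]$; for instance with $i=1$ we have $W_{x_1}=x_2 x_3$, which is a non-zero-divisor in $S/(x_1)=\C[x_2,x_3]$. Regularity then forces the odd-degree cohomology to vanish and identifies the even-degree cohomology with $S/(x_i, W_{x_i})$, matching the stated answer.

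For the off-diagonal case, where $\Hom_{\MF}(M_{L_i}, M_{L_i})$ in the statement should be read as $\Hom_{\MF}(M_{L_i}, M_{L_j})$ with $i\neq j$, the same matrix-computation approach applies but with input matrices whose columns carry the pairs $\{x_i, W_{x_i}\}$ and $\{x_j, W_{x_j}\}$. Taking $i=1, j=2$ for concreteness, the even-degree differential sends $(f,g)\in S^{\oplus 2}$ to the odd pair $(x_2 f - x_1 g,\; x_3(x_1 g - x_2 f))$, while the odd-degree differential sends $(h,k)$ to $(x_1(x_3 h + k),\; x_2(x_3 h + k))$. Using that $(x_1,x_2)$ is a regular sequence, one checks that the even cohomology vanishes and that the odd cohomology is generated by the cocycle $(1,-x_3)$ modulo the image ideal $(x_1,x_2)\cdot(1,-x_3)$. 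This gives $\C[x_3]$, which upon translating the generator back into the differential-operator basis is exactly the claimed module $\C[x_1,x_2,x_3]\cdot \tfrac{x_1 x_2 x_3}{x_i x_j}/(x_i, x_j)$.

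The main obstacle I expect is bookkeeping for parity and sign conventions between $\theta_{x_i}$ and $\theta_{x_j}$: the precise identification of the generator of the odd cohomology with the monomial $x_1x_2x_3/(x_i x_j)$ (not just some unit multiple of it) matters because it is what ensures that $\cF^{\mathbb L}_1$ intertwines the wrapped generator $c_{ij}$ with this class. Everything beyond that is a routine Koszul computation once the regular-sequence input $(x_i, W_{x_i})$ or $(x_i, x_j)$ is invoked.
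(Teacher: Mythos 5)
Your proposal is correct and follows essentially the same route as the paper: the first part by direct computation with the Koszul presentation, and the cohomology by rewriting $\Hom_{\MF}(M_{L_i},M_{L_j})$ as a $2$-periodic $2\times 2$ matrix complex over $S$ and invoking regularity of $(x_i,W_{x_i})$ (diagonal) and $(x_i,x_j)$ (off-diagonal). Your explicit matrices differ from the paper's only by basis ordering and signs, and your identification of the odd generator with the class of $x_1x_2x_3/(x_ix_j)$ matches the paper's answer, so there is no gap.
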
  
    \begin{proof}
    A computation of the self-Floer complex is almost identical to that of chain type. A complex of morphism $\Hom_\MF(M_{L_i}, M_{L_j})$ is isomorphic to 
    \begin{equation*}
    \begin{tikzcd}[ampersand replacement=\&]S^{\oplus 2, even}\arrow[r, "D_0", shift left =1] \& S^{\oplus 2, odd}\arrow[l, "D_1", shift left =1]\end{tikzcd} \mbox{ where  }
    D_0 = \begin{pmatrix} x _i& -x_j \\ W_{x_j}&-W_{x_i} \end{pmatrix},\; D_1 = \begin{pmatrix} W_{x_i} & -x_j \\ W_{x_j}& -x_i \end{pmatrix}.
    \end{equation*}
    Therefore, we have 
    \begin{align*}
    \mathrm{ker}(D_0)/\mathrm{im}(D_1) & = \left\{ 
      \begin{array}{ll} 
        \Bigg\{\begin{pmatrix}a\\b\end{pmatrix}\in S^{2, even} \Big| a=b \Bigg\}\Bigg/ S\cdot \begin{pmatrix}x_i \\ x_i\end{pmatrix} + S \cdot \begin{pmatrix} W_{x_i} \\ W_{x_i} \end{pmatrix} & (i=j)\\
        \Bigg\{\begin{pmatrix}a\\b\end{pmatrix}\in S^{2, even} \Big| x_i\cdot a=x_j\cdot b \Bigg\}\Bigg/ S\cdot \begin{pmatrix}x_j \\ x_i\end{pmatrix} + S \cdot \begin{pmatrix} W_{x_i} \\ W_{x_j} \end{pmatrix} & (i\neq j)
      \end{array} \right. \\
    &\simeq \left\{ 
      \begin{array}{ll}
        \C[x_1, x_2, x_3]/(x_i=W_{x_i}=0) & (i=j)\\
        0 & (i\neq j)
      \end{array} \right. \\  
    \mathrm{ker}(D_1)/\mathrm{im}(D_0) & = \left\{ 
      \begin{array}{ll} 
        \Bigg\{\begin{pmatrix}c\\d\end{pmatrix}\in S^{2, odd} \Big| W_{x_i}\cdot c=x_i\cdot d \Bigg\}\Bigg/ S\cdot \begin{pmatrix}x_i \\ W_{x_i}\end{pmatrix} & (i=j)\\
        \Bigg\{\begin{pmatrix}c\\d\end{pmatrix}\in S^{2, odd} \Big| W_{x_i} \cdot c=x_j\cdot d \Bigg\}\Bigg/ S\cdot \begin{pmatrix}x_i \\ W_{x_j} \end{pmatrix} + S \cdot \begin{pmatrix} x_j \\ W_{x_i} \end{pmatrix} & (i\neq j)
      \end{array} \right. \\
    &\simeq \left\{ 
      \begin{array}{ll}
        0  & (i=j)\\
        \C[x_1, x_2, x_3]\cdot (\frac{x_1 x_2 x_3}{x_i x_j})/(x_i=x_j=0) & (i\neq j)
      \end{array} \right.
    \end{align*}
    \end{proof}
    Now we can show that our mirror functor is an equivalence as before.
    \begin{lemma}
      The first-order part of the mirror functor is 
      \begin{gather*}
      \cF^{\mathbb L}_1: CW^{ \bullet}(L_i,L_j) \to \Hom_{\mathcal{MF}}(M_{L_i}, M_{L_j})\\
      a_{i} \to x_i,\;
      b_{i} \to x_i,\;
      c_{i,j} \to \frac{x_1x_2x_3}{x_ix_j}.
      \end{gather*}
      It is a quasi-isomorphism. Moreover, $\cF^{\mathbb L}: \mathcal{WF}([M_{C_{p,q}}/G_{L_{p,q}}]) \to \mathcal{MF}(x_1x_2x_3)$ is a quasi-equivalence.
    \end{lemma}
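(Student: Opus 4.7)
The plan is to follow the same three-stage template used for the Fermat and chain cases: first identify the image of the generators under $\cF^{\mathbb L}_1$ by a direct disc count, then upgrade this to a quasi-isomorphism of cohomology rings/bimodules by a filtration argument, and finally invoke an Orlov-type split-generation argument to promote the cohomological embedding to a quasi-equivalence.

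First I would compute $\cF^{\mathbb L}_1$ on the generators $a_i, b_{i+1}, c_{i,j}$ by reading off the relevant holomorphic triangles on $[M_{L_{p,q}}/G_W]$ bounded by pairs among $L_i, L_j, \bL$, exactly as in the Fermat and chain cases. The picture should produce a single triangle with Maurer--Cartan input $x_i$ giving the identifications $a_i \mapsto x_i$ and $b_i \mapsto x_i$ in $\Hom_\MF(M_{L_i}, M_{L_i})$ (using the explicit identification of cohomology classes as Koszul-type cycles $x_i \leftrightarrow x_i\cdot\partial_{\theta_{x_i}}$), and a triangle with two Maurer--Cartan inputs giving $c_{i,j} \mapsto \tfrac{x_1x_2x_3}{x_ix_j}$ for $i\neq j$. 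The $H_1$-grading assignment together with the requirement that $W_{\bL}=x_1x_2x_3$ and the sign conventions fix the coefficients without ambiguity.

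Next, to show $\cF^{\mathbb L}_1$ is a quasi-isomorphism, I would introduce a filtration by powers of $x_i$ (on the MF side) matching the word-length filtration on $CW^\bullet(L_i, L_j)$ (the wrapping number at each puncture). Using the previous lemma's explicit description of $H^\bullet(\Hom_\MF(M_{L_i}, M_{L_j}))$, the associated graded pieces are free modules of matching rank in each degree, so a comparison theorem for spectral sequences upgrades the evident surjectivity of $\cF^{\mathbb L}_1$ on associated graded to a quasi-isomorphism. Homogeneity with respect to the $H_1$-grading of Lemma~\ref{topological grading} makes the bookkeeping automatic and in particular handles the $m_3(c_{12},c_{23},c_{31})=1$ compatibility.

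Finally, for the quasi-equivalence statement, the already established split-generation of $\WF([M_{L_{p,q}}/G_W])$ by $\{\WT{L}_i\}$ reduces the problem to showing that $\{M_{L_1}, M_{L_2}, M_{L_3}\}$ split-generates $\MF(x_1x_2x_3)$. Under Orlov's equivalence $\MF(x_1x_2x_3)\simeq D_{sg}(\{x_1x_2x_3=0\})$, the matrix factorization $M_{L_i}$ corresponds to the structure sheaf of the coordinate hyperplane $\{x_i=0\}$, and these three hyperplanes exhaust the irreducible components of the singular locus, so they generate $D_{sg}$ by a standard argument (cf.\ \cite{Stev13}). Combined with the cohomological embedding from the previous step, this gives the asserted quasi-equivalence. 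The main obstacle I expect is not conceptual but bookkeeping: getting all signs and the precise boundary identification of the polygons contributing to $c_{i,j}$ correct, since these involve Maurer--Cartan insertions on $\bL$ passing over its non-trivial spin structure.
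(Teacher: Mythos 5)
Your proposal is correct, and it is essentially the paper's template (the Fermat-case argument) transplanted to the loop case; the paper itself actually states this lemma without proof, because in the loop subsection both sides have already been computed explicitly by hand in the two preceding lemmas ($CW^\bullet(L_i,L_j)$ as a vector space with $m_1=0$ and a known product, and $H^\bullet(\Hom_{\MF}(M_{L_i},M_{L_j}))$ via the $2$-periodic complex), and the subsection opens by noting the result is essentially contained in \cite{AAEKO}. Compared with that implicit argument, your route is slightly heavier than necessary: since $m_1$ vanishes on the wrapped side and the target cohomology is already identified as $\C[x_{i+1},x_{i+2}]/(x_{i+1}x_{i+2})$ (for $i=j$) and the cyclic module generated by $x_1x_2x_3/(x_ix_j)$ (for $i\neq j$), no filtration or spectral-sequence comparison is needed — the induced map on cohomology is a ring/bimodule map determined by the images of the generators, and a direct matching of monomial bases gives the quasi-isomorphism; the Fermat case genuinely needed the $(ab+ba)\mapsto z$ filtration, but here it buys nothing. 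Your generation step is also the same style as the paper's Fermat corollary (Orlov plus \cite{Stev13}), with one small inaccuracy: the irreducible components of the singular locus of $\{x_1x_2x_3=0\}$ are the three coordinate axes $\{x_i=x_j=0\}$, not the coordinate hyperplanes; the correct statement is that the objects $\mathrm{coker}$ of the $M_{L_i}$ (structure sheaves of coordinate hyperplanes, or of unions of two of them, depending on which factor you take) have singularity-category supports whose union is the whole singular locus, whence they split-generate by the support classification. With that rephrasing your argument goes through and matches the paper's intended proof.
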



\section{Berglund--H\"ubsch mirror construction via Floer theory}\label{sec:KS}

In this section, we will prove Theorem \ref{thm:intro2}. First, we briefly recall a construction of monodromy Reeb orbit $\Gamma_{W}$ for $W$. Next, we compute $\Gamma_W$ explicitly and show that the localized mirror functor $\cF^\bL$ intertwines $\Gamma_W$ and $g(x,y,z)$.
   
\subsection{Construction of monodromy Reeb orbit $\Gamma_{W}$} \label{subsec:Gamma}

More details about construction can be found in \cite{CCJ20}. Let $W \in \mathbb{C}[x_{1},\dots,x_{n}]$ be a weighted homogeneous polynomial of weight $(w_{1},\dots,w_{n};h)$. Its monodromy transformation $\Phi_{W} : M_{W} \to M_{W}, x_{i} \mapsto e^{\frac{2\pi i w_{i}}{h}}x_{i}$ can be described as a time-1 Hamiltonian flow of a quadratic Hamiltonian $H = \frac{1}{2} \sum_{i=1}^{n} |x_{i}|^{2}$ (with respect to some rescaled symplectic form by weights). Its flow is given by
$$\phi_{H}^{s}(x_{1},\dots,x_{n}) = \left( e^{\frac{2\pi i w_{1}s}{h}}x_{1}, \dots, e^{\frac{2\pi i w_{n}s}{h}}x_{n} \right)$$
and we call it a monodromy flow.

Let us consider a link of $W$, $L_{W,\delta} \coloneqq W^{-1}(0) \cap S^{2n-1}_{\delta}$ for small $\delta >0$. Then, the monodromy flow coincides with a Reeb flow $Fl^{\mathcal{R}}_{t}$ on $L_{W,\delta}$. There exists a Reeb chord $\gamma : [0,1] \to L_{W,\delta}$ starting at any point $x = \gamma(0)$ and ending at its monodromy image $\Phi_{W}(x) = \gamma(1)$. In the quotient of link $\left[ L_{W,\delta} / G_{W} \right]$, those Reeb chords become Reeb orbits. Note that a set of time-1 Reeb orbits in $\left[ L_{W,\delta} / G_{W} \right]$ is identified $\left[ L_{W,\delta} / G_{W} \right]$ itself.

In \cite{Sei00}, it was shown that one can identify symplectically the link $L_{W,\delta}$ with $W^{-1}(0) \cap S^{2n-1}_{\delta}$. Hence, one can also regard $\left[ L_{W,\delta} / G_{W} \right]$ as the contact type boundary of $\left[ M_{W} / G_{W} \right]$. Roughly, $\Gamma_{W}$ is the Reeb orbit corresponding to the fundamental class of $\left[ L_{W,\delta} / G_{W} \right]$.

More precisely, we need an $G_{W}$-equivariant Morse function $h$ on $L_{W,\delta}$ to perturbations of Morse-Bott components. A construction of such $h$ (with nice properties) is given in \cite[Lemma 9.1]{CCJ20}. Using this function $h$, we can define an equivariant time-dependent perturbation on a small normal neighborhood $\nu\left(L_{W,\delta} / G_{W} \right) \cong \left[ L_{W,\delta} / G_{W} \right] \times (-\epsilon,\epsilon)$ by
\begin{align*}
\overline{h} : \nu\left(L_{W,\delta} / G_{W} \right) \times S^{1} &\to \mathbb{R} \\
(x,s,t) &\mapsto h(Fl^{\mathcal{R}}_{t}(x))\rho(s)
\end{align*}
where $\epsilon$ is a small positive number and $\rho$ is a nonnegative small bump function supported in $(-\epsilon,\epsilon)$.

We consider a local Floer complex $CF^{\bullet}_{loc}\left(L_{W,\delta} / G_{W}, H_{S^{1}} \right)$ which is generated by Hamiltonian orbits of $H_{S^{1}} = H + \overline{h}$. A differential is defined by counting pseudo-holomorphic cylinders contained in $\nu\left(L_{W,\delta} / G_{W} \right)$. Its cohomology is denoted by $HF^{\bullet}_{loc}\left(L_{W,\delta} / G_{W}, H_{S^{1}} \right)$. It is known that as a $\mathbb{Z}/2$-graded vector space, there is an isomorphism (cf. \cite{KvK16})
$$HF^{\bullet}_{loc}\left(L_{W,\delta} / G_{W}, H_{S^{1}} \right) \cong H^{\bullet}_{\mathrm{Morse}}\left(L_{W,\delta} / G_{W}, h; \mathbb{Z} \right).$$

\begin{defn}
The monodromy Reeb orbit $\Gamma_{W} \in CF^{\bullet}_{loc}\left(L_{W,\delta} / G_{W}, H_{S^{1}} \right)$ is a cochain which corresponds to a fundamental class $\left[ L_{W,\delta} / G_{W} \right] \in H^{\bullet}_{\mathrm{Morse}}\left(L_{W,\delta} / G_{W}, h; \mathbb{Z} \right)$.
\end{defn}

Moreover, it was showed that $\Gamma_{W}$ is a cocycle in \cite[Lemma 9.7]{CCJ20}. Even though we did not define an orbifold symplectic cohomology theory in full generality, we regard $\Gamma_{W}$ as a symplectic cohomology class according to the above Floer theoretic discussion. Later, it will be used as an input of a closed-open map.

In our curve case, $\Gamma_W$ is represented by a union of loops around punctures. 
  \begin{prop}\label{prop:Gamma computation}
  A class $\Gamma_W$ is given by a sum of Hamiltonian orbits, geometrically represented by the following element of $\pi_1^{\mathrm{orb}}\left(\mathbb P^1_{a,b,c} \right)$:
  \begin{enumerate}
    \item \textbf{Fermat type} \bigg( $\simeq \mathbb P^1_{p,q,\frac{pq}{gcd(p,q)}} $\bigg): $\Gamma_W \leftrightarrow \gamma_3^{-1}$,
    \item \textbf{chain type} \bigg($\simeq \mathbb P^1_{pq,q, \frac{pq}{gcd(p-1,q)}}$\bigg): $\Gamma_W \leftrightarrow \gamma_1^{1-p}+\gamma_3^{-1}$,
    \item \textbf{loop type} \bigg($\simeq \mathbb P^1_{pq-1, pq-1, \frac{pq}{gcd(p-1, q-1)}}$\bigg): $\Gamma_W \leftrightarrow \gamma_1^{1-p}+\gamma_2^{1-q}+\gamma_3^{-1}$.  
  \end{enumerate}
  \end{prop}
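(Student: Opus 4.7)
The plan is to decompose $\Gamma_W$ as a sum of contributions from the connected components of the link quotient $[L_{W,\delta}/G_W]$, each of which corresponds to a puncture of $[M_W/G_W]$, and then identify the Reeb orbit attached to each puncture with an element of $\pi_1^{\mathrm{orb}}$.

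First I would observe that for a curve singularity, the link $L_{W,\delta}$ is a disjoint union of circles, one for each irreducible factor of $W$. The quotient $[L_{W,\delta}/G_W]$ is therefore a disjoint union of circles parametrizing $G_W$-orbits of these factors, and by the proof of Proposition \ref{prop:abc} these orbits are exactly the punctures of $[M_W/G_W]$: the $c$-vertex in the Fermat case, the $a$- and $c$-vertices in the chain case, and all three vertices in the loop case. Since $\Gamma_W$ represents the fundamental class of $[L_{W,\delta}/G_W]$ in local Floer cohomology, and since the fundamental class of each quotient circle is represented by one nondegenerate Reeb orbit winding around it once, $\Gamma_W$ splits as a sum $\sum_{p} \Gamma_W^{p}$ indexed by punctures, where each summand is a single small Reeb orbit supported in a neighborhood of $p$.

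Next, for each puncture $p$ I would compute which power of the corresponding generator $\gamma_i \in \pi_1^{\mathrm{orb}}(\mathbb P^1_{a,b,c})$ the orbit $\Gamma_W^{p}$ realizes. The Reeb flow on the link is the monodromy flow $\phi^{s}_{H}\colon x_i \mapsto e^{2\pi i w_i s/h} x_i$; restricted to each boundary circle it is a rotation whose time-$1$ angular advance is read off directly from the weights. Dividing by the angular size of the quotient circle (determined by the order of the local isotropy group of $p$) gives the winding number of $\Gamma_W^{p}$, and Proposition \ref{prop:homo} then translates this winding into an explicit power of $\gamma_i$. For example, in the chain case the $a$-vertex comes from the branch $\{x=0\}$ on which the monodromy acts as $y\mapsto e^{2\pi i(p-1)/(pq)}y$; the full stabilizer $\Z/pq$ acts by $y\mapsto e^{-2\pi i/(pq)}y$, so the quotient orbit winds $-(p-1)$ times relative to the generator $\phi(\gamma_1)=1$, yielding $\gamma_1^{1-p}$. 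The remaining punctures (the $c$-vertex in Fermat and chain, and the $b$- and $c$-vertices in loop) are handled by the same weight-versus-stabilizer computation on each branch of $W^{-1}(0)$.

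The main technical obstacle is orientation bookkeeping: I must reconcile the counterclockwise convention for the loops $\gamma_i$ set up in Subsection \ref{subsec: orbicovering}, the direction of the Reeb/monodromy flow, and the sign of the chosen generator of each stabilizer under the identification of Proposition \ref{prop:homo}. A related subtlety appears at punctures whose $G_W$-orbit consists of several boundary circles (the $c$-vertex in all three families, when $d>1$): one must check that the $d$ circles, each carrying a fractional-winding Reeb flow, assemble after the quotient into a \emph{single} Reeb orbit of winding $-1$ around the quotient circle, so that the contribution is $\gamma_3^{-1}$ rather than some other multiple. Once these conventions are fixed, the three formulas follow mechanically from the weights $(q,p;pq)$, $(q,p-1;pq)$, and $(q-1,p-1;pq-1)$ together with the generator assignments in Proposition \ref{prop:homo}.
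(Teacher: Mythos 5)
Your proposal is correct and takes essentially the same route as the paper: decompose $\Gamma_W$ into one orbit per puncture (component of the quotient link) and use the covering homomorphism of Proposition \ref{prop:homo} together with the monodromy weights to pin down each winding number. Where the paper characterizes the winding as the negative integer of minimal absolute value solving $\phi(\gamma_i^k)=g_W$, you compute it directly as the ratio of the time-$1$ angular advance to the rotation angle of the branch stabilizer --- the same computation made slightly more explicit, and your flagged check for the $d>1$ branches at the $c$-vertex is precisely the verification that this ratio equals $-1$ there.
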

  \begin{proof}
    Recall the covering homomorphisms $\phi: \pi_1^{\mathrm{orb}}\left(\mathbb P^1_{a,b,c} \right) \to G_W$ in Proposition \ref{prop:homo}. If a winding number of $\Gamma_W$ around a puncture corresponds to $\gamma_i$ is $m$, then the image $\phi(\gamma_i^m)$ is equal to the monodromy $g_W$.  It is enough to find a negative integer $k$ (because of the orientation) with the minimal absolute value among those who satisfying 
    \[\phi(\gamma_i ^k)=g_W \in G_W.\]
    \begin{enumerate}
    \item \textbf{Fermat type $x^p+y^q$} \\
    We have 
      \[g_W=\phi(\gamma_3^{-1}) = (1,1) \in \Z/p\times \Z/q.\]
    
    \item \textbf{Chain type $x^p+xy^q$}\\
    We have 
    \[g_W = \phi(\gamma_1^{1-p}) = \phi(\gamma_3^{-1}) = 1-p \in \Z/pq.\]
    
    \item \textbf{Loop type $x^py+xy^q$}\\
    We have 
    \[g_W = \phi(\gamma_1^{1-p})=\phi(\gamma_2^{1-q})=\phi(\gamma_3^{-1}) = 1-p \in \Z/pq-1.\] 
    \end{enumerate}
  \end{proof}
  

\subsection{$\boldsymbol{\Gamma_W}$ and the polynomial $\boldsymbol{g(x,y,z)}$}
We show that the closed-open map image of orbit $\Gamma_W$ (with boundary $\bb$-deformations) provides the polynomial $g(x,y,z)$.
For more precise formulation, let us first recall the notion of  \textit{Kodaira--Spencer map} defined by
Fukaya--Oh--Ohta--Ono \cite{FOOO_MS}.  For a Lagrangian torus $\bL$ inside a compact toric manifold $M$,  they constructed a map  $\mathrm{KS}: QH^\bullet (M) \to \Jac(W)$ from a closed-open map with boundary $\bL$. $T^n$-equivariant perturbation guarantees that the output is a multiple of the fundamental class $[\bL]$, and its coefficient gives an element $\Jac(W)$. In \cite{ACHL}, 
the case of $\mathbb{P}^1_{a,b,c}$ with $QH^\bullet_{\mathrm{orb}}(\mathbb{P}^1_{a,b,c})$ has been constructed, which we will use in our case.

Since our Milnor fiber is non-compact, we consider symplectic cohomology instead of quantum cohomology \cite{Se06}.
Following the general definition proposed in \cite{CL_ks}, it is natural to define Kodaira--Spencer map in our case as
 \[\mathrm{KS}^\bold b: SH^\bullet\big([M_W/G_W]\big)  \to H^\bullet(CF(\bL, \bL)\otimes \C [x,y,z], m_1^{\bb,\bb}) \]
 where we consider closed-open map with boundary on $(\bL,\bb)$. Here, the target of the map is the Koszul complex of $W_\bL$. We omit its definition
 since we would need a proper definition of orbifold symplectic cohomology. Instead, we just consider the image of $\Gamma_W$.
 \begin{defn}
  Kodaira--Spencer invariant of $\Gamma_W$ is defined as 
    \[\mathrm{KS}^\bold b (\Gamma_W) := \sum_l \mathrm{CO}_{\mathbb{L},l} (\Gamma_W)(\underbrace{\bb, \ldots, \bb}_{l}). \]
 \end{defn}
From \cite{ACHL} and \cite{CL_ks}, $\mathrm{KS}^\bold b (\Gamma_W)$ is a multiple of $[\bL]$, and its coefficient can be regarded as an element of $\Jac(W_\bL)$. Let us explain how to compute this invariant. To relate the insertion of orbifold quantum cohomology generators and symplectic cohomology generators, we will use the work of Tonkonog \cite{Tong}. We may assume that $\Gamma_W$ is
defined by autonomous Hamiltonian (see \cite{BO}). Then, the following is the main theorem of this paper.

  \begin{thm}\label{thm:ksg}
  The Kodaira--Spencer map sends $\Gamma_{W}$ to the polynomial $g(x,y,z)$ such that 
$$W_\bL(x,y,z) = W^T(x,y) +xy \cdot g(x,y,z).$$
More precisely, we have
$$\mathrm{KS}^\bold b(\Gamma_W)= 1_\bL \otimes g(x,y,z),$$
where
  $$g(x,y,z) =
  \begin{cases}
   z   & \text{for } \;\;F_{p,q},  \\
   z-x^{p-1}  & \text{for } \;\;C_{p,q}, \\
    z-x^{p-1} -y^{q-1} & \text{for } \;\;L_{p,q}.
  \end{cases}$$
  \end{thm}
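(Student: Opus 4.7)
The plan is to compute $\mathrm{KS}^\bb(\Gamma_W)$ component by component, using Proposition \ref{prop:Gamma computation} to write $\Gamma_W$ as a sum of small Reeb orbits winding around the punctures of $[\WH M_W/G_W]$. For each summand $\gamma_i^{-k}$, the strategy is to reformulate the closed--open invariant via a Tonkonog-style identification \cite{Tong}: counting holomorphic discs with one interior asymptotic to a Hamiltonian orbit wrapping $k$ times clockwise around the puncture attached to $\gamma_i$ is the same as counting holomorphic discs in the partial compactification $[\WH M_W/G_W]$ with boundary on $\bL$, $\bb$-decorated corners, a boundary generic point (for the unit output $1_\bL$), and a single interior marked point at the puncture with winding $k$.

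The first step is to pin down the admissible boundary labelings via the $H_1$- and $\Omega$-gradings (Lemma \ref{topological grading} and Proposition \ref{grading computation}). Since the output $1_\bL$ carries $H_1$-grading $0$ while $\gamma_i^{-k}$ contributes $-k\gamma_i$, the $\bb$-corners must collectively have $H_1$-grading $k\gamma_i$. Combined with the $\Omega$-grading, this forces the word of $X,Y,Z$ corners on a contributing disc to be one of a very small set, in each case corresponding exactly to the monomial expected in $g(x,y,z)$.

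The second step is the explicit enumeration, carried out on the tessellation of $\WH M_W$ from Theorem \ref{thm:tess}, lifted so that the relevant disc is embedded in the universal cover. For Fermat $F_{p,q}$ only $\gamma_3^{-1}$ appears, forcing one $Z$-corner; the contributing disc is the bigon bounded by an arc of $\bL$ and a small circle around the puncture $C$, and it contributes $z\cdot 1_\bL$. For Chain $C_{p,q}$, the summand $\gamma_3^{-1}$ again gives $z\cdot 1_\bL$; the summand $\gamma_1^{1-p}$ forces $p-1$ $X$-corners, and the corresponding disc is a $p$-gon whose interior mark hits the $a$-puncture with winding $p-1$, contributing $-x^{p-1}\cdot 1_\bL$ after the sign computation. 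For Loop $L_{p,q}$, all three punctures contribute by the same mechanism and yield $z-x^{p-1}-y^{q-1}$. Signs are fixed by the Seidel convention together with the non-trivial spin structure (red dot) on $\bL$ and the orientation of the interior Reeb orbit; in particular, the minus sign in the chain and loop cases comes from the interior orbit wrapping in the direction opposite to $\gamma_i$.

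The main obstacle I foresee is twofold. First, one must rigorously justify the Tonkonog reformulation in the Morse--Bott orbifold setting used here, since a full orbifold symplectic cohomology has not been developed; in practice this reduces to verifying that the relevant closed--open moduli space is cobordant to the moduli of discs with a puncture-insertion at the corresponding orbifold/puncture point. Second, one must show no higher-order discs contribute: extra $\bb$-insertions would have to come in balanced packets (the $H_1$-relation $p\gamma_1 = q\gamma_2 = \gamma_1+\gamma_2+\gamma_3=0$ in \eqref{eqn:1sthlgy} permits, in principle, exchanges like $p$ copies of $X$ for a trivial class), but a direct inspection of the tessellation, together with the $\Omega$-degree bookkeeping, rules these out by positivity of area and by matching the Conley--Zehnder index of $\Gamma_W$ against the Maslov index of the boundary data.
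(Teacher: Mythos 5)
Your proposal follows essentially the same route as the paper's proof: decompose $\Gamma_W$ into orbits around the punctures via Proposition \ref{prop:Gamma computation}, use the $H_1$- and $\Omega$-grading homogeneity to constrain the $\bb$-corner words, identify the closed--open count with a count of discs in the compactified orbifold with a tangency (winding) condition at the corresponding special point, enumerate these explicitly on the tessellation, and invoke Tonkonog's domain-stretching to compare the $\Gamma_W$-insertion with the compactified/orbifold insertion. The two technical obstacles you flag (justifying the comparison in the Morse--Bott orbifold setting and excluding extra contributions permitted by the $H_1$-relations) are exactly the points the paper handles, via regularity of tangency discs, positivity of intersection with the orbifold vertices to rule out unwanted breakings and bubbles, and Seidel's continuation compatibility for the choice of Hamiltonian.
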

  \begin{proof}
First, note that closed-open map preserves $\Omega$- and $H_1$-grading of $SH^\bullet$ and $CF^\bullet(\bL, \bL)$.
Next,  if we consider a compactification of $[M_W/G_W]$ into $\mathbb{P}^1_{a,b,c}$,  holomorphic discs that
contribute to the  closed-open map from $QH^\bullet(\mathbb{P}^1_{a,b,c}) \to \Jac(W_\bL)$ has been worked out in \cite{ACHL}.
Proceeding similarly, we can show that the image is a multiple of $[\bL]$ (from $\Z/2$-symmetry), and can be regarded as an element of $\Jac(W)$. From the grading consideration, the only possible contribution of $\mathrm{KS}^\bold b (\gamma_i^k)$ is a $k$-th power of a variable associated with the immersed corner of $\bL$ opposite to the puncture  (multiplied by the fundamental class $[\bL]$). 

We can see such a polygon in a picture explicitly as in Figure \ref{fig:KS}.
     \begin{enumerate}
      \item Fermat type $F_{p,q}$: $\mathrm{KS}^\bold b(\Gamma_W)=z\cdot 1.$
      \item Chain type $C_{p,q}$: $\mathrm{KS}^\bold b(\Gamma_W)=(z-x^{p-1})\cdot1.$ 
      \item Loop type $L_{p,q}$: $\mathrm{KS}^\bold b(\Gamma_W)=(z-x^{p-1}-y^{q-1})\cdot 1.$  
    \end{enumerate}

 These polynomials are exactly $g(x,y,z)$ we want.  More precisely, let $C$ be one of the punctures of $[M_W/G_W]$, and we replace a puncture $C$ (of branching order $N_C$) with a $\Z/N_C$ orbifold point.  Then $\gamma_C^k$ corresponds to the orbifold cohomology class $[k/N_C]$(coming from the twisted sector).
The above (signed) computation indeed follows from the counting of orbifold holomorphic polygons with $[k/N_C]$ insertion in the interior
according to Proposition \ref{prop:Gamma computation}.  The orbifold $\mathbb{P}^1_{a,b,c}$ has a Riemann sphere as an underlying space, and an orbifold holomorphic disc near the $[k/N_C]$ insertion is nothing but a holomorphic map from a disc to the Riemann sphere with tangency order $k$, which is Fredholm regular. Then we use Tonkonog's domain stretching technique to compare the orbifold and $\Gamma_W$ invariants. In our case, the hypersurface $\Sigma$ in \cite{Tong} is just a point (or points) playing the role of Donaldson hypersurface.
Technically speaking, Tonkonog used a variant of linear Hamiltonian, whereas we use quadratic Hamiltonian
but since we are only concerned about the orbit $\Gamma_W$, we can use the arguments by Seidel \cite{Se06} that the continuation map is compatible with a closed-open map to relate these two settings. 

Let us explain more details assuming the familiarity with \cite{Tong}. By clever choice of a sequence of Hamiltonians (called $S$-shaped ) for the domain stretching, the standard $J$-holomorphic discs break into parts which share Reeb orbits as same asymptotics.  Reeb orbits for $S$-shaped Hamiltonian are divided into types $I, II, III, IV_a, IV_b$,
depending on their positions with respect to Liouville collar. A key part of the proof is to show that only type $II$ Reeb orbit appears in the breaking.
In our case, if breaking occurs at type $I, IV_a, IV_b$ Reeb orbits (which are constant orbits), then collecting the parts from this constant orbit to $\Sigma$ 
we get a non-trivial sphere that maps to $X$. The starting polygon in $X$ does not intersect other vertices of $\mathbb{P}^1_{a,b,c}$, and
this intersection number with perturbed $J$-holomorphic curve is positive, so the sphere should not intersect other vertices. Therefore, such a sphere cannot exist.
This excludes these types of Reeb orbits as breaking orbits. The argument about type $III$ orbit using no escape lemma still applies to our case.
One can see that any disc bubble would increase the intersection with vertices of the orbisphere, hence do not occur. 
The rest of the argument is the same as the reference and we obtain the desired comparison results. We refer readers to \cite{Tong} for more details.
  \end{proof}


\section{Quantum cap action and hypersurface restriction}\label{sec:BHMS}

In this section, we prove that the mirror functor $\cF^\bL$ intertwines the quantum cap action $\cap \Gamma_W$ (see \cite[Definition 4.1]{CCJ20} for the definition) and the multiplication $\times g(x,y,z)$. 

Let us start with the simplest but essential part of the whole theorem. 
Observe that for any Lagrangian $L$, its mirror matrix factorization $M_L = \mathcal{F}^\bL(L)$ carries two canonical endomorphisms. 
One is just a multiplication $\times g(x,y,z):M_L \to M_L$, while the other one is given as follows:
  
  \begin{defn} An endomorphism $(\cap \Gamma_W)^\bold b$ of $M_L$ is defined by
\[(\cap \Gamma_W)^\bold b : M_L \to M_L, \hskip 0.3cm \alpha \to \sum_l(\cap \Gamma_W)(\alpha, \underbrace{\bb, \ldots, \bb}_l).\]
  \end{defn}
  It is easy to check that $(\cap \Gamma_W)^\bold b$ is a closed endomorphism. Note that this is a version of a localized mirror functor adapted to the quantum cap action. 
  
  \begin{prop} \label{prop:KS}
There is a homotopy  $H^\bL : M_L \to M_L$ such that
 $$(\cap \Gamma_W)^{\bb} - \times g(x,y,z) = [m_1^{0,\bb}, H^\bL].$$
  \end{prop}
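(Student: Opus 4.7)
The plan is to realize $H^\bL$ as the count of rigid elements in a one-parameter moduli space of pseudo-holomorphic popsicle discs whose interior marked point carrying $\Gamma_W$ slides from a generic interior position of the domain toward a generic boundary point on an arc mapping to $\bL$. The two endpoints of this sliding interpolate between the two operations in the proposition: at one end, the configuration computes $(\cap \Gamma_W)^\bb(\alpha)$ by definition; at the other end, the interior insertion bubbles off into a disc with boundary entirely on $\bL$ whose closed-open output is $\mathrm{KS}^\bb(\Gamma_W) = g(x,y,z) \cdot 1_\bL$ by Theorem \ref{thm:ksg}, and this output is absorbed via the $\AI$-module action of $CF(\bL,\bL)$ on $M_L=CF(L,\bL)$, giving multiplication by $g(x,y,z)$ after using the (homotopy) unit property of $1_\bL$.

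More precisely, fix a reference arc in the universal disc domain from a generic interior point to a generic point on an arc destined to map to $\bL$, and for each $s \in [0,1]$ let $\mathcal{M}_s^\bb(\alpha,\alpha')$ be the moduli space of pseudo-holomorphic discs with boundary input $\alpha$, boundary output $\alpha'\in CF(L,\bL)$, boundary alternating between $L$ and $\bL$ with arbitrarily many $\bb$-insertions along the $\bL$-arcs, and one interior marked point at parameter $s$ labeled $\Gamma_W$. The expected dimension of $\mathcal{M}_s^\bb$ at fixed $s$ is zero, so the union over $s \in [0,1]$ has expected dimension one, and I set
\[
H^\bL(\alpha) := \sum_{\alpha'} \#\Bigl(\bigcup_{s\in[0,1]}\mathcal{M}_s^\bb(\alpha,\alpha')\Bigr)\cdot \alpha'.
\]
The codimension-one boundary of this parameterized moduli space splits into four strata: $s=0$ recovers $(\cap \Gamma_W)^\bb(\alpha)$; $s=1$ produces the bubble whose output is $g(x,y,z)\cdot 1_\bL$ attached to a residual disc with no interior insertion, contributing $g(x,y,z)\cdot\alpha$; and the two strip-breaking strata at the input and output contribute $H^\bL\circ m_1^{0,\bb}(\alpha)$ and $m_1^{0,\bb}\circ H^\bL(\alpha)$ respectively. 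Since the signed count of the boundary of a compact one-manifold vanishes, these contributions sum to zero, which after a sign bookkeeping gives the desired identity $(\cap \Gamma_W)^\bb - \times g(x,y,z) = [m_1^{0,\bb}, H^\bL]$.

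The main obstacle is ruling out every other degeneration of the one-parameter family. I expect to handle three remaining possibilities: sphere bubbling through the interior marked point, excluded using exactness of the Liouville part of $[M_W/G_W]$ together with the fact that the specific Reeb orbit $\Gamma_W$ is represented geometrically (by Proposition \ref{prop:Gamma computation}) by small loops near the punctures, so any putative sphere bubble must have positive intersection with the polygon, a contradiction in the exact setting; disc bubbling with boundary entirely on $L$ or entirely on $\bL$ not containing the marked point, which is excluded by the exactness of $L$ and by the weak unobstructedness of $(\bL,\bb)$ with potential $W_\bL$; and escape of the $\Gamma_W$-asymptotic orbit to spurious Reeb orbits along the Liouville collar, controlled by the same no-escape argument from Tonkonog used in the proof of Theorem \ref{thm:ksg}. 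Signs follow Seidel's polygon convention fixed in Section \ref{sec:eqFloer}, and a direct but careful verification converts the vanishing of boundary contributions into the graded commutator appearing on the right-hand side of the proposition.
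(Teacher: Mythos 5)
Your proposal is correct and is essentially the paper's own argument: the paper defines $H^\bL$ by letting the $\Gamma_W$ interior marked point move along a vertical geodesic into the lower ($\bL$-side) part of the disc, and identifies the same four codimension-one strata — the cap-action end, the end where the $\Gamma_W$ insertion bubbles off with output $\mathrm{KS}^{\bb}(\Gamma_W)=g\cdot 1_\bL$ (so the remaining component is constant and yields $\times g$), and the two strip-breaking strata giving $[m_1^{0,\bb},H^\bL]$. Your extra discussion of excluding sphere/disc bubbling and orbit escape is consistent with, and only elaborates on, what the paper leaves implicit by reference to the moduli space of \cite[Proposition 4.2]{CCJ20}.
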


  \begin{proof}
  Consider a moduli space similar to that of \cite[Proposition 4.2]{CCJ20} such that an interior marked point is allowed to be in the lower part of the disc along a vertical geodesic as in Figure \ref{fig:HtoKS}. We allow arbitrary insertions of $\bb$ in the lower part of the boundary of the disc.
 The figure illustrates two of the boundary components of $\mathcal{M}$, which correspond to
 $(\cap \Gamma)^{\bb}$ and $ \times g(x,y,z)$. For the latter, the disc bubble gives $\mathrm{KS}^\bold b(\Gamma_W)=1_\bL \otimes g(x,y,z)$,
 and hence the main disc component has to be a constant disc, and it corresponds to the multiplication by $g(x,y,z)$. 
 There are two additional boundary components which contribute to $[m_1^{0,\bb}, H^\bL]$. This proves the proposition.
  \end{proof}
  
  \begin{figure}[h]
  \centering
  \includegraphics[scale=0.6, trim=0 600 0 30, clip]{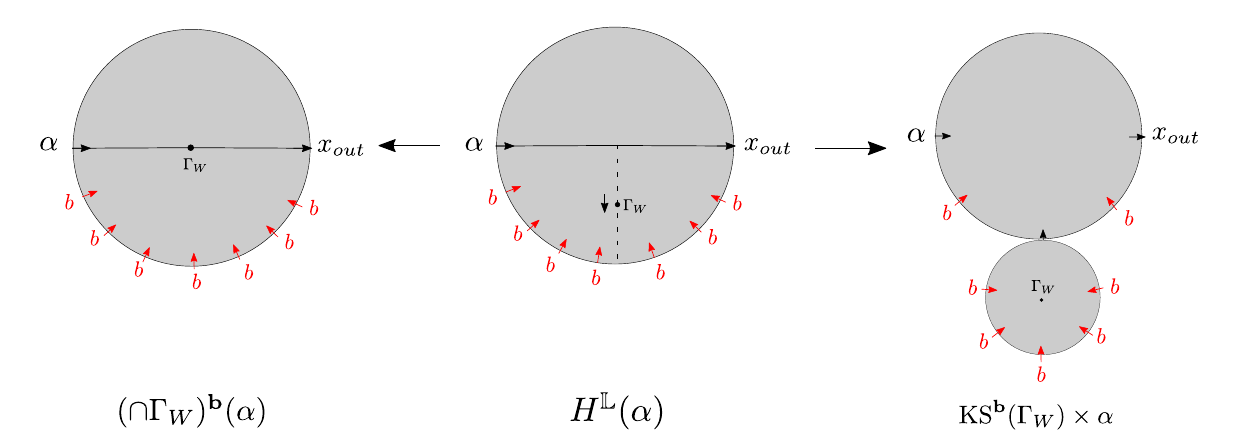}
  \caption{\label{fig:HtoKS} Deformed cap action and Kodaira--Spencer invariant $g(x,y,z)$}
  \end{figure}

Proposition \ref{prop:KS} can be generalized to a bimodule homomorphism. First, note that an $\AI$-functor $\cF^\bL$ makes $\mathcal{MF}(W^T+xyg)$  a $\mathcal{WF} -\mathcal{WF}$ bimodule with respect to which $\cF^\bL$ is a bimodule homomorphism. We view a multiplication $\times g$ as a bimodule homomorphism from $\mathcal{MF}$ to itself. 

  \begin{prop}\label{first square commutes}
  The following diagram of bimodule homomorphisms commutes up to homotopy. 
    \[
      \begin{tikzcd}
      \WF([M_W/G_W]) \arrow[r,"\cap \Gamma_W"] \arrow[d, "\cF^\bL"]& \WF([M_W/G_W]) \arrow[d, "\cF^\bL"]\\
      \mathcal {MF}(W^T+xyg) \arrow[r, "\times g"] &\mathcal {MF}(W^T+xyg)
      \end{tikzcd}
    \] 
  \end{prop}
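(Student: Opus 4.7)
The plan is to upgrade the single-object homotopy produced in Proposition \ref{prop:KS} to a genuine homotopy of $\AI$-bimodule homomorphisms. Both sides of the square are, by construction, first-order maps coming from pseudo-holomorphic discs with one interior marked point carrying $\Gamma_W$: for $\cF^\bL \circ (\cap\Gamma_W)$ the interior point lies on the main disc, while for $(\times g)\circ\cF^\bL$ the interior point has been absorbed, through Kodaira--Spencer, into a $\bL$-bubble contributing $g(x,y,z)\cdot 1_\bL$. The idea is to build a single parametrized moduli space whose two ends recover these two operators and whose remaining codimension-one boundary strata furnish the bimodule homotopy.

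Concretely, I would consider moduli spaces $\mathcal{M}(\gamma; L_0,\ldots,L_n; L_0',\ldots,L_m')$ of discs with: inputs $a_1,\ldots,a_n$ along the boundary arcs on $L_0,\ldots,L_n$, outputs $b_1,\ldots,b_m$ along $L_0',\ldots,L_m'$, a distinguished boundary arc lying on $\bL$ carrying arbitrarily many $\bb$-insertions (so the arc represents the image under $\cF^\bL$), and one interior marked point decorated with $\Gamma_W$. The interior marked point is constrained to move along a vertical geodesic inside the disc as in Proposition \ref{prop:KS}, and this position provides the interpolating parameter. Summing signed counts over all such configurations defines the candidate homotopy $H$ as a bimodule pre-morphism.

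The key step is then the boundary analysis of the one-dimensional strata of this parametrized moduli. There are three families of codimension-one boundary. First, when the interior marked point collides with the $\bL$-arc, a disc bubble carrying $\Gamma_W$ splits off; by Theorem \ref{thm:ksg} this bubble evaluates to $g(x,y,z)\cdot 1_\bL$, so the remaining configuration computes $\cF^\bL$ composed with multiplication by $g$ on the output. Second, when the interior point reaches the opposite end of the geodesic (or equivalently escapes to the neck where the cap action is defined), the configuration factors through $\cap\Gamma_W$ followed by $\cF^\bL$. Third, boundary strata where $\mathrm{WF}$-strips break off at the inputs/outputs, or where a subset of the $\bb$-insertions bubbles off the $\bL$-arc, contribute precisely the terms $\mu^{1,1|1}_{\mathcal{MF}}(H,-) \pm H\circ \mu^{1|1}_{\mathcal{WF}}$ required to identify $H$ with a bimodule homotopy between $(\times g)\circ \cF^\bL$ and $\cF^\bL \circ (\cap \Gamma_W)$.

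The main obstacle I expect is the same analytic issue that was dealt with in Proposition \ref{prop:KS} via Tonkonog's domain-stretching argument, now in the parametric setting with multiple boundary marked points: one must exclude sphere and disc bubbling at the interior marked point other than the intended $\bL$-bubble, and one must control Reeb orbit breakings coming from the noncompactness of $[M_W/G_W]$ and from the fact that $\Gamma_W$ is represented by a nonconstant orbit near the punctures. As in the proof of Theorem \ref{thm:ksg}, one uses an $S$-shaped stretch together with positivity of intersection with the vertices of $\mathbb{P}^1_{a,b,c}$ (Donaldson hypersurface) to rule out all breakings except the ones of type II, and Seidel's compatibility of the continuation map with the closed-open map to pass between the quadratic Hamiltonian used here and the linear model of \cite{Tong}. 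Once these exclusions are in place, the signed count above is finite, the moduli space is compact modulo the described boundary, and the identity $(\cF^\bL\circ \cap \Gamma_W) - (\times g \circ \cF^\bL) = \partial_{\mathrm{bimod}} H$ follows from the standard gluing/orientation package, proving the proposition.
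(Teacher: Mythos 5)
Your proposal is correct and follows essentially the same route as the paper: a parametrized family of popsicle-type discs with $\bb$-decorated $\bL$-boundary in which the $\Gamma_W$-decorated interior insertion moves, one end recovering $\cF^\bL\circ(\cap\Gamma_W)$, the other end recovering $(\times g)\circ\cF^\bL$ via $\mathrm{KS}^{\bb}(\Gamma_W)=g\cdot 1_\bL$, and the remaining codimension-one strata assembling into the bimodule homotopy terms, with the analytic exclusions inherited from Theorem \ref{thm:ksg}. The only difference is organizational: the paper realizes your single interpolation as the sum of two explicit homotopies --- $\WT{H}^\bL$, which moves the popsicle line from the underlined $\mathcal{WF}$ input to the module input (the intermediate deformed cap action $(\cap\Gamma_W)^{\bb}$), and the generalized $H^\bL$ of Proposition \ref{prop:KS}, which slides the insertion onto the $\bL$-arc --- combined as $K^\bL=\WT{H}^\bL+H^\bL\circ\cF^\bL_{\mathrm{bimod}}$.
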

  
\begin{proof}
In the course of the proof, we use the notation
\begin{align*} 
  m_{\mathrm{bimod}}(a_1, \ldots, a_{k_1}, \underline b, \ldots, a_{k_1+k_2+1}) = (-1)^{\diamond_{k_1}}m_{k_1+k_2+1}(a_1, \ldots, a_{k_1}, \underline b, \ldots, a_{k_1+k_2+1})\\
  \cF^\bL_{\mathrm{bimod}}(a_1, \ldots, a_{k_1}, \underline b, \ldots, a_{k_1+k_2+1}) = (-1)^{\diamond_{k_1}}\cF^\bL_{k_1+k_2+1}(a_1, \ldots, a_{k_1}, \underline b, \ldots, a_{k_1+k_2+1})
\end{align*}  
 to indicate that $m$ or $\cF^\bL$ are viewed as a bimodule structure and homomorphism between them. Here, $\diamond_j =\sum_{i=1}^j(\deg a_i-1)$. As an intermediate step, we introduce the following operations (See Figure \ref{fig:HandHtilde}):

\begin{enumerate}

  \item generalize $(\cap \Gamma_W)^\bold b$ as follows:
  \begin{align*}
    (\cap \Gamma_W)^\bold b : CW^\bullet(L_0, L_1)\otimes &\cdots \otimes CW^\bullet(L_{k-1}, L_k) \to \Hom (M_{L_0}, M_{L_k}),\\
    (\cap \Gamma_W)^\bold b_k (a_1, \ldots, a_k)(\alpha) &:= \sum_l(\cap \Gamma_W)_{k+l+1} (a_1, \ldots, a_k, \underline \alpha, \bb, \ldots, \bb).
  \end{align*}
  
  \item also generalizes $H^\bL$ by allowing $\mathcal{WF}$ inputs in the upper-boundary of the popsicle maps as in the Figure \ref{fig:HandHtilde}.
    
  \item define a pre-homomorphism of bimodules $\WT H^\bL$ as follows:
    \begin{align*}
    \WT H^\bL : \mathcal{WF}^{\otimes k_1} \otimes \underline{\mathcal{WF}}\otimes \mathcal{WF}^{\otimes k_2} &\to \mathcal{MF}(W^T+xyg),\\
    \WT H^\bL(a_1,\ldots, a_{k_1}, \underline b, \ldots, a_{k_1+k_2+1})(\alpha) &:= \sum_{l} (\cap \Gamma_W)_{k_1+k_2+2+l}( a_1, \ldots, \underline b, \ldots, a_{k_1+k_2+1}, \alpha, \underbrace{\bold b,\ldots, \bold b}_l)
    \end{align*}
  \end{enumerate}  
  
  The result follows from codimension one boundary configurations associated to (2) and (3), which are also illustrated in Figure \ref{fig:HandHtilde}.
  note that  $\mathrm{KS}^\bold b(\Gamma_W)$ and a disc potential is a multiple of unit. If degenerations involve such configurations, then it contributes to zero \textit{except} a single important case;  $k=0$ for (2). That is Proposition \ref{prop:KS} with Figure \ref{fig:HtoKS}. Otherwise, we can skip them. 
  
  Analyzing the case $k>0$ for (2), we get
  \begin{align*} 
  (\cap \Gamma_W)^\bold b (a_1, \ldots, a_k) & = \sum (-1)^{\diamond_j}H^\bL(a_1,\ldots, a_j, m_l(a_{j+1}, \ldots, a_{j+l}), \ldots, a_k)\\
  &+\sum m_2 \left(H^\bL(a_1,\ldots, a_j), \cF^\bL(a_{j+1}, \ldots, a_k) \right) \\
  &+ \sum m_2\left(\cF^\bL(a_1,\ldots, a_j), H^\bL(a_{j+1}, \ldots, a_k) \right).
  \end{align*}
  This relation says that $H^\bL_{k>0}$ provides a null-homotopy for $(\cap \Gamma_W)^\bold b_{k>0}$. Combined with Proposition \ref{prop:KS}, we conclude that $H^\bL$ provides a homotopy between $(\cap \Gamma_W)^\bold b$ and $\times g(x,y,z)$. Similarly,
  \small
    \begin{align*}
    \sum& m_2  \left( (\cap \Gamma_W)^\bold b (a_1, \ldots, a_j), \cF^\bL_{\mathrm{bimod}}(a_{j+1}, \ldots, \underline b, \ldots, a_{k_1+k_2+1})\right)\\
    + \sum &(-1)^{\diamond_j}\cF^\bL_{\mathrm{bimod}}  \left(a_1, \ldots, \underline{(\cap \Gamma_W)(a_{j+1}, \ldots, \underline b, \ldots, a_{j+l})}, \ldots, a_{k_1+k_2+1}\right) \\
    = \sum & (-1)^{\diamond_j} m_2\left( \cF^\bL(a_1, \ldots, a_j),  \WT H^\bL(a_{j+1}, \ldots, \underline b, \ldots, a_{k_1+k_2+1} ) \right) + \sum m_2\left( \WT H^\bL(a_{1}, \ldots, \underline b, \ldots, a_{j}) , \cF^\bL(a_{j+1}, \ldots, a_{k_1+k_2+1}) \right) \\
     + \sum &(-1)^{\diamond_j}\WT H^\bL(a_1, \ldots, m_{k} (a_{j+1}, \ldots, a_{j+k}), \ldots,  \underline b, \ldots, a_{k_1+k_2+1}) + \sum (-1)^{\diamond_j + \deg b}\WT H^\bL(a_1, \ldots , \underline b, \ldots, , m_{k} (a_{j+1}, \ldots, a_{j+k}), \ldots, a_{k_1+k_2+1})\\
     + \sum & (-1)^{\diamond_j} \WT H^\bL(a_1, \ldots, m_{\mathrm{bimod},k+1} (a_{j+1}, \ldots,  \underline b, \ldots, a_{j+k}), \ldots a_{k_1+k_2+1}).
   \end{align*}
   \normalsize
   
   This relation tells us that $\WT H^\bL$ provides a homotopy between $\cF^\bL_{\mathrm{bimod}} \circ (\cap \Gamma_W)$ and $(\cap \Gamma_W)^\bold b \circ \cF^\bL_{\mathrm{bimod}}$.  
   
  \begin{figure}[h]
  \centering
  \includegraphics[scale=0.75, trim=0 400 0 30, clip]{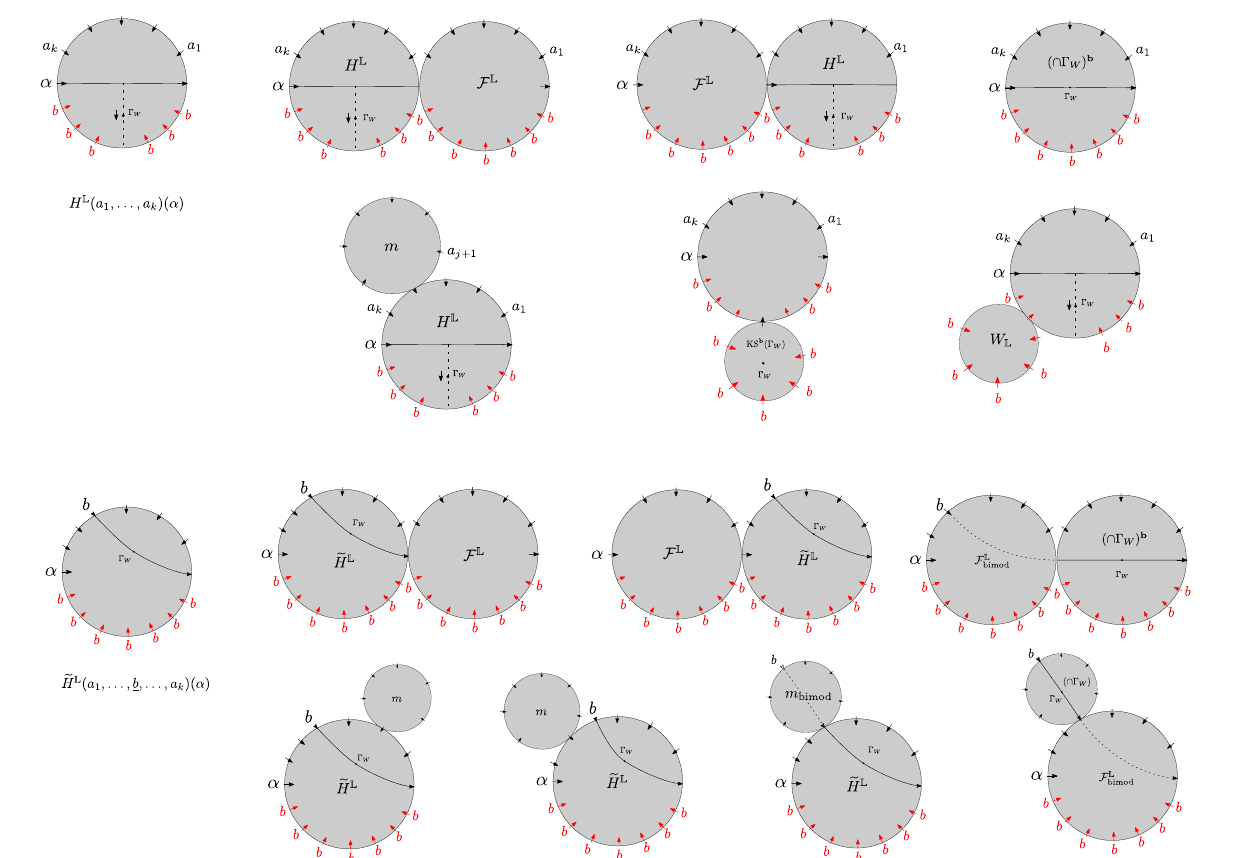}
  \caption{\label{fig:HandHtilde}$H^\bL$, $\WT H^\bL$ and their boundary configurations}
  \end{figure}

  Combine these two and define $K^\bL := \WT H^\bL + H^\bL\circ \cF^\bL_{\mathrm{bimod}}$. It provides a desired homotopy between $\cF^\bL_{\mathrm{bimod}} \circ (\cap \Gamma_W)$ and $(\times g)\circ \cF^\bL_{\mathrm{bimod}}$, which proves the theorem. 
  \end{proof}
  
From the algebraic non-sense, we obtain Theorem \ref{thm:c1} as a corollary of the Proposition \ref{first square commutes}.
(note that our polynomial $g$ is of the form $z-f(x,y)$ regardless of a type of $W$.  Therefore, we can apply \cite[Corollary 6.7]{CCJ20} to  $\mathcal{MF}(W^T+xyg)$.)
  
  \begin{cor} \label{cor:triangle}
  The following diagram commutes up to homotopy;
  \[\begin{tikzcd}
    \WF([M_W/G_W]) \arrow[d, "\cF^\bL"] \arrow[r, "\cap \Gamma_W"] & \WF([M_W/G_W]) \arrow[d, "\cF^\bL"] \arrow[r]& \cF(W, G_W) \arrow[d, "\widetilde{\cF}^\bL"] \arrow[r]& \phantom a \\
    \mathcal{MF}(W^T+xyg) \arrow[r, " \cdot g"] & \mathcal{MF}(W^T+xyg) \arrow[r] & \mathcal{MF}(W^T+xyg)|_g \arrow[r] & \phantom a 
    \end{tikzcd}\]
  Here the third one in each row denotes the mapping cone of the first morphism and we define the bimodule map $\widetilde\cF^\bL$  as $$\widetilde\cF^\bL( \ldots, \underline {a+\epsilon b}, \ldots) := (\cF^\bL(\ldots, \underline a, \ldots)+ K^\bL(\ldots, \underline b, \ldots)) +\epsilon \cF^\bL(\ldots, \underline b, \ldots).$$ Each row is a distinguished triangle of bimodules. All vertical lines induce quasi-isomorphisms. 
  \end{cor}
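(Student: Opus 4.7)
The commutativity of the left square, together with the explicit homotopy $K^\bL = \widetilde H^\bL + H^\bL\circ \cF^\bL_{\mathrm{bimod}}$ between $\cF^\bL\circ(\cap\Gamma_W)$ and $(\times g)\circ\cF^\bL$, is exactly the content of Proposition~\ref{first square commutes}. The remainder of the corollary is a formal consequence of $A_\infty$-homological algebra applied to this data. My strategy is to invoke the standard mapping-cone functor: a homotopy-commutative square of bimodule maps, equipped with a specified homotopy, canonically induces a bimodule map between the mapping cones of its rows, and this induced map is by construction compatible with the canonical inclusions and projections.

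First, I would unfold the prescription
\[
\widetilde{\cF}^\bL(\ldots,\underline{a+\epsilon b},\ldots) = \cF^\bL(\ldots,\underline a,\ldots) + K^\bL(\ldots,\underline b,\ldots) + \epsilon\,\cF^\bL(\ldots,\underline b,\ldots)
\]
and verify that it satisfies the $A_\infty$ bimodule morphism equations. Splitting the output according to the $\epsilon$-grading, the $\epsilon$-part of each relation reduces to the bimodule $A_\infty$-morphism identity for $\cF^\bL$ itself (Theorem~\ref{thm:um}), while the non-$\epsilon$ part reduces to the bimodule-level homotopy identity for $K^\bL$ established in the second half of the proof of Proposition~\ref{first square commutes}, combined again with the relations for $\cF^\bL$. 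Once $\widetilde{\cF}^\bL$ is a bimodule morphism, its definition automatically intertwines the canonical maps appearing in the mapping-cone triangles, so the middle and right squares commute strictly; by the definition of the mapping cone, each row is then a distinguished triangle of $A_\infty$-bimodules.

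Next, for the quasi-isomorphism claim, the two left vertical arrows are derived equivalences by Theorem~\ref{thm:um}. Taking cohomology of the two triangle rows and applying the five lemma to the resulting long exact sequences forces the induced bimodule map $\widetilde{\cF}^\bL$ between cones to be a quasi-isomorphism as well. The identification $\mathcal{MF}(W^T+xyg)|_g \simeq \mathcal{MF}(W^T)$ follows from \cite[Corollary~6.7]{CCJ20}: by Theorem~\ref{thm:ksg}, in each of the Fermat, chain, and loop cases the polynomial $g$ has the form $z-f(x,y)$, so the hypothesis of that corollary is met and eliminating $z$ along $\{g=0\}$ recovers $W^T$ from $W_\bL = W^T+xy\cdot g$.

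The main technical obstacle I anticipate is purely bookkeeping: tracking the signs and $\bb$-insertion combinatorics of the popsicle boundary degenerations appearing in the $A_\infty$ relation for $\widetilde{\cF}^\bL$, and matching the sign conventions used in the two pieces $\widetilde H^\bL$ and $H^\bL\circ \cF^\bL_{\mathrm{bimod}}$ of $K^\bL$ with those used to define the cone differential. Beyond this sign check, no further geometric input is required: everything is formal $A_\infty$-homological algebra built on Proposition~\ref{first square commutes} and Theorem~\ref{thm:um}.
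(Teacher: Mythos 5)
Your proposal is correct and follows essentially the same route as the paper, which deduces the corollary from Proposition \ref{first square commutes} by what it calls ``algebraic non-sense'' together with the remark that $g$ is of the form $z-f(x,y)$ so that \cite[Corollary 6.7]{CCJ20} applies. Your $\epsilon$-grading check that $\widetilde{\cF}^\bL$ is a bimodule morphism, the strict commutativity of the middle and right squares, and the five-lemma argument for the quasi-isomorphism of cones are exactly the formal content the paper leaves implicit.
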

  
  Moreover, we also have the following lemma.
  
  \begin{lemma}\label{lem:es}
    The restriction $i^*: \MF(W^T+xyg) \to \MF(W^T)$ is essentially surjective. 
  \end{lemma}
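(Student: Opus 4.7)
The plan is to reduce to the case $g=z$ via a coordinate change, then exhibit for every $(N,\delta_0,\delta_1)\in \MF(W^T)$ an explicit lift in $\MF(W_\bL)$ obtained from a tensor product with a rank-one matrix factorization, and finally use the triangulated/Karoubian structure of $\MF(W^T)$ to conclude.

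First, I apply the substitution $z\mapsto z-f(x,y)$ (permissible since $g=z-f(x,y)$ is linear in $z$). After this change of variables, $g$ becomes $z$ and $W_\bL=W^T+xyz$, so the restriction functor $i^*$ is identified with base-change along the surjection $\tilde R=\C[x,y,z]\twoheadrightarrow R=\C[x,y]$, $z\mapsto 0$.

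Next, for any $(N,\delta_0,\delta_1)\in\MF(W^T)$, I form the tensor product (in matrix factorizations over $\tilde R$) of $(N,\delta_0,\delta_1)\otimes_R\tilde R$ with the rank-one factorization $(\tilde R,z,xy)$ of $xyz$. This yields an explicit MF $(\tilde N,\tilde d)\in \MF(W_\bL)$ with block differential
\[\tilde d \;=\; \begin{pmatrix} \delta_0 & xy\cdot\mathrm{id} \\ z\cdot\mathrm{id} & -\delta_1 \end{pmatrix},\]
and a direct matrix computation verifies $\tilde d \cdot \tilde d^{\vee} = (W^T+xyz)\,\mathrm{id}$. Setting $z=0$, the restriction $i^*(\tilde N,\tilde d)$ becomes the block-triangular MF of $W^T$ with diagonal $(\delta_0,-\delta_1)$ and upper-triangular entry $xy\cdot\mathrm{id}$, which is the mapping cone of the self-morphism $xy\cdot\mathrm{id}_{(N,\delta)}$ in $\MF(W^T)$.

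Finally, the distinguished triangle
\[(N,\delta)[1] \xrightarrow{\;xy\;} (N,\delta) \longrightarrow i^*(\tilde N,\tilde d) \xrightarrow{+1}\]
shows that $(N,\delta)$ lies in the triangulated subcategory generated by the essential image of $i^*$. Since the shift $(N,\delta)[1]$ is itself realized in the essential image of $i^*$ (for instance by the analogous construction tensoring with $(\tilde R,xy,z)$, or by applying the shift autoequivalence of $\MF(W_\bL)$ before restricting), and since $\MF(W^T)$ is triangulated and Karoubi complete, $(N,\delta_0,\delta_1)$ itself lies in the essential image.

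The main obstacle is precisely this last extraction: the naive lift produces only a mapping cone rather than $(N,\delta_0,\delta_1)$ itself, so an idempotent-splitting argument is required to isolate $(N,\delta_0,\delta_1)$ as a direct summand. A cleaner conceptual alternative uses the closed embedding of principal Cartier divisors $\mathrm{Spec}(R/(W^T))\hookrightarrow\mathrm{Spec}(\tilde R/(W_\bL))$ defined by the regular element $g=z$, together with the singularity-category identity $Li^{\ast}(i_{\ast}N)\cong N\oplus N[1]$, which immediately exhibits $N$ as a direct summand of $i^*$ applied to the pushforward $i_{\ast}N\in\MF(W_\bL)$; invoking Karoubi completeness of $\MF(W^T)$ then completes the proof.
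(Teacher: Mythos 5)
Your construction is fine as far as it goes: after the change of variables the tensor product with the Koszul factorization $(z,xy)$ of $xyz$ is indeed a matrix factorization of $W_\bL$, and its restriction to $z=0$ is $\mathrm{Cone}\bigl(xy\cdot\mathrm{id}_{(N,\delta)}\bigr)$. The gap is in the two ways you try to extract $N$ itself from this. First, knowing that $N$ lies in the triangulated (even thick) subcategory generated by the essential image of $i^*$ does not place $N$ in the essential image: the essential image of an exact functor is in general not closed under cones or direct summands, and Karoubi completeness of $\MF(W^T)$ is a property of the target category, not of the image of $i^*$. Second, the ``cleaner alternative'' rests on the identity $Li^*(i_*N)\cong N\oplus N[1]$, which is not a formal fact. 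The pushforward $i_*N$ is (up to shift) exactly your tensor-product lift, and the triangle $N[1]\to Li^*(i_*N)\to N\to N[2]$ has connecting map given by $xy$ acting on $N$ in the stable category (the Eisenbud-type degree-two operator attached to the regular element $g$); it splits only when $xy=0$ in $\underline{\Hom}_{\MF(W^T)}(N,N)$. Since $W^T$ is an isolated singularity, $xy$ acts nilpotently on these finite-length Hom spaces, but not as zero in general, so $N$ need not be a direct summand of $i^*(i_*N)$, and no vanishing argument is offered. (A toy example showing the identity is not formal: $A=\C[z]$, $g=z^2$, $B=\C[z]/(z^2)$, $N=\C$, where $Li^*(i_*N)=0$ in the singularity category.)

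For comparison, the paper argues quite differently and more modestly: since $W^T$ has an isolated singularity, $\MF(W^T)$ is split-generated by $k^{\mathrm{stab}}_{W^T}$ (Dyckerhoff), and one exhibits, case by case, an explicit rank-two matrix factorization of $W^T+xyg$ (in fact $\cF^\bL(L)$ for an explicit Lagrangian) whose restriction is exactly $k^{\mathrm{stab}}_{W^T}$; this is what feeds into Corollary \ref{cor:triangle}. Your tensor trick does yield, uniformly in $N$, that the essential image split-generates $\MF(W^T)$ (because $xy$ is nilpotent on $\underline{\Hom}$, $N$ lies in the thick closure of $\mathrm{Cone}(xy\cdot\mathrm{id}_N)$), which is the same strength of conclusion the paper actually verifies; but as written your argument does not prove the literal statement of the lemma, and the two proposed upgrades to genuine essential surjectivity do not work without an additional argument producing an honest lift of each object (or at least of $k^{\mathrm{stab}}_{W^T}$).
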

  \begin{proof}
  $W^T$ is an isolated singularity. Therefore $\MF(W^T)$ is generated by the matrix factorization of stabilized residue field $k^{\mathrm{stab}}_{W^T}$ (see \cite{Dy}). It is enough to find matrix factorization $M \in \MF(W^T+xyg)$ such that $M|_{g=0} =k^{\mathrm{stab}}_{W^T}$. 
  \begin{align*}
    \textrm{Fermat type} \hskip 0.2cm &M = \begin{pmatrix} x&y\\ -y^{q-1} & x^{p-1}+yz\end{pmatrix} \times \begin{pmatrix} x^{p-1}+yz &-y\\ y^{q-1} & x\end{pmatrix}\\
    \textrm{Chain type} \hskip 0.2cm &M= \begin{pmatrix} x&y\\ -y^{q-1} & yz\end{pmatrix} \times \begin{pmatrix} yz &-y\\ y^{q-1} & x\end{pmatrix}\\
    \textrm{Loop type} \hskip 0.2cm  &M= \begin{pmatrix} x&y\\ 0 & yz\end{pmatrix} \times \begin{pmatrix} yz &-y\\ 0 & x\end{pmatrix}
  \end{align*}
  In fact, these matrix factorizations are $M_L = \cF^\bL(L)$ for explicit Lagrangians $L$ (for example, see Remark \ref{rmk:residuefield} for the Fermat case).
  \end{proof}
  
  Lemma \ref{lem:es} implies $\mathcal{MF}(W^T+xyg)|_g$ is another $\AI$-model for $\mathcal {MF}(W^T)$. This establishes an equivalence between $\cF(W, G_W)$ and $\mathcal {MF}(W^T)$ as $\AI$-bimodules over $\mathcal{WF}([M_W/G_W])$.


\section{Relation to Auslander--Reiten theory of Cohen--Macaulay modules} \label{sec:AR}
For  ADE singularity $W^T$, its matrix factorization category is of finite type, which means that there are only finitely many indecomposable ones.
Its Auslander--Reiten quiver has been described by Yoshino \cite{Yo}, and we find a corresponding Lagrangian Floer theory under
Berglund--H\"ubsch duality. More precisely, we find non-compact Lagrangians in the Milnor fiber of $W$ for each indecomposable matrix factorizations and realize all of Auslander--Reiten almost split exact sequences as Lagrangian surgery exact sequences.


\subsection{Auslander--Reiten theory}
Recall that a ring $R$ is Cohen--Macaulay (CM) if its Krull dimension equals its depth. For example, complete intersections give CM rings.  CM rings and their modules play a central role in the theory of commutative algebra. $R$ module $M$ whose depth (minimal length of projective resolution) equals the dimension of $R$ is called a maximal CM (in short MCM) module.

  Auslander and Reiten developed a classification theory of indecomposable objects and in particular defined an associated quiver, which is called Auslander--Reiten (AR) quiver $Q$. Vertices of $Q$ are indecomposable MCM modules and its arrows are given by irreducible morphisms (roughly the minimal morphisms that do not factor nontrivially). Also, there are dotted arrows, called AR translation $\tau$. Given an indecomposable module $M$ and its AR translation $\tau(M)$, there is
an associated AR exact sequence:
$$ 0 \to \tau(M) \to N \to M \to 0$$
where $N$ is the direct sum of MCM modules that are sources of arrows to $M$ (or
targets of arrows from $\tau(M)$). Therefore, one can read off all AR exact sequences whenever an AR quiver is given.
 
On the other hand, Eisenbud proved that MCM modules of $R$ correspond to $\Z/2$-graded matrix factorizations of the defining function via periodic resolution, and hence the above classification results can be translated into those of matrix factorizations \cite{E80}.
We refer readers to the excellent book by Yoshino \cite{Yo} for more details.

\subsection{Localized mirror functor and AR exact sequence}
In this section, we remind readers that we work with $\Z/2$-graded ($\AI$ or DG) categories.
Given an AR sequence,
\begin{equation}\label{eq:ar}
 0 \to \tau(M) \to N \to M \to 0
 \end{equation}
there exist a corresponding extension element $\alpha \in \mathrm{Ext}^1(M,\tau(M))$.
Given a $\Z/2$-graded matrix factorization $(d_{10} : P^{1} \to P^{0}, d_{01} : P^{0} \to P^{1})$ of $W$,
we may write $(d_{10}, d_{01})$ as a  pair of polynomial matrices $(\phi,\psi)$  satisfying  $\phi \cdot \psi = \psi \cdot \phi = W \cdot id$. 
We recall the following fact.
\begin{prop}[\cite{Yo} Proposition 3.11, Proposition 7.7]
For a matrix factorization $M=(\phi,\psi)$ of $W$, its AR translation $\tau(M)$ is given by $(\psi, \phi)$.
\end{prop}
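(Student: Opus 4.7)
The plan is to derive this from Auslander's general formula for the AR translation combined with Eisenbud's matrix-factorization dictionary, specialized to the $1$-dimensional hypersurface setting. Since $R=\C[x,y]/(W)$ is a Gorenstein local ring of Krull dimension $1$, the canonical module satisfies $\omega_R\cong R$, and Auslander's formula takes the clean form
\[
\tau M \;\cong\; \mathrm{Hom}_R\!\bigl(\Omega\,\mathrm{Tr}\,M,\,R\bigr),
\]
where $\mathrm{Tr}\,M$ is the Auslander transpose and $\Omega$ is the syzygy. The strategy is to compute each of the three operations $\mathrm{Tr}$, $\Omega$, and $\mathrm{Hom}_R(-,R)$ on the level of matrix factorizations.

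First, I would unpack Eisenbud's periodic resolution. For $M=\mathrm{coker}(\phi)$ with companion map $\psi$ satisfying $\phi\psi=\psi\phi=W\cdot\mathrm{id}$, the sequence
\[
\cdots\xrightarrow{\phi}R^{r}\xrightarrow{\psi}R^{r}\xrightarrow{\phi}R^{r}\longrightarrow M\longrightarrow 0
\]
is exact and $2$-periodic, so $\Omega M=\mathrm{coker}(\psi)$ and $\Omega^2 M\cong M$ in the stable MCM category. Truncating at length two and dualizing gives $\mathrm{Tr}\,M=\mathrm{coker}(\phi^t)$; this MCM module is represented by the matrix factorization $(\phi^t,\psi^t)$.

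Next, taking the syzygy once more yields $\Omega\,\mathrm{Tr}\,M$ as $\mathrm{coker}(\psi^t)$, represented by $(\psi^t,\phi^t)$. Applying $\mathrm{Hom}_R(-,R)$ to a matrix factorization $(A,B)$ of $W$ produces a new matrix factorization whose maps are the transposes $(A^t,B^t)$ (with an appropriate convention on the $\Z/2$-grading), since dualizing the periodic complex flips the direction of every arrow. Applied to $(\psi^t,\phi^t)$, the double transpose cancels and one obtains $(\psi,\phi)$, which is exactly the claim.

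The main obstacle is bookkeeping rather than substance: the transpose, the syzygy, and the Hom-dualization each interact with the two components of a $\Z/2$-graded matrix factorization in a way that is easy to confuse, and the identification $M\cong M^{**}$ for MCM modules over a Gorenstein ring has to be invoked to close the loop. I would therefore fix conventions carefully at the outset and verify the computation on a small test case (e.g.\ the residue field viewed as the Koszul matrix factorization of an $A_n$ singularity, where $\tau$ is known explicitly) before asserting the general formula. Alternatively, as cited, the full argument is carried out for general Gorenstein hypersurfaces in \cite{Yo}.
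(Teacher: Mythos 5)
Your derivation is correct. The paper itself gives no proof of this proposition---it is quoted verbatim from Yoshino (Propositions 3.11 and 7.7)---so there is no internal argument to compare against; what you wrote is essentially the standard argument underlying that citation. Concretely: Auslander's formula $\tau(M)\cong\mathrm{Hom}_R\bigl(\Omega^{d}\,\mathrm{Tr}\,M,\omega_R\bigr)$ with $d=1$ and $\omega_R\cong R$ (the relevant $W^T$ is a curve singularity, so a single syzygy is the correct exponent), combined with the facts that $\mathrm{Tr}$ of $\mathrm{coker}(\phi)$ is $\mathrm{coker}(\phi^t)$, that the syzygy swaps the two factors of a matrix factorization, and that $R$-dualization sends the module of $(A,B)$ to the module of $(A^t,B^t)$, gives $(\psi^t,\phi^t)$ and then, after transposing back, $(\psi,\phi)$, exactly as you computed. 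Two points of hygiene are worth making explicit: the whole computation should be read in the stable category $\underline{\mathrm{CM}}\,R$ (equivalently, with reduced matrix factorizations having no unit entries), since $\mathrm{Tr}$ and $\Omega$ are only defined up to free summands; and the dualization step is cleanest if justified by dualizing the $2$-periodic resolution and using $\mathrm{Ext}^{>0}_R(N,R)=0$ for $N$ maximal Cohen--Macaulay, which shows $\mathrm{Hom}_R(\mathrm{coker}\,A,R)\cong\mathrm{coker}(A^t)$ directly, rather than routing through $M\cong M^{**}$. Finally, note the dimension sensitivity you implicitly rely on: for an even-dimensional hypersurface the same chain of identifications returns $(\phi,\psi)$ instead, so the statement as used in the paper really is the $d=1$ (curve) case, which is exactly the setting in which the authors apply it.
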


We observe that a Lagrangian with opposite orientation corresponds to the Auslander translation.
\begin{lemma}\label{lem:lagtr}
If a Lagrangian $L$ maps to $M=(\phi,\psi)$ under localized mirror functor, then its orientation reversal $L[1]$ maps to a matrix factorization $\tau(M).$  
\end{lemma}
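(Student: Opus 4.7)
The plan is to compare the Floer cochain complexes $CF(L,\bL)$ and $CF(L[1],\bL)$ directly under the localized mirror functor. First I would unwind the definition: $\mathcal{F}^\bL(L)=(CF(L,\bL),\,-m_1^{0,\bb})$ is a $\mathbb{Z}/2$-graded complex, and writing the underlying module as $CF(L,\bL)=C^0\oplus C^1$ according to the Maslov/intersection parity, the differential $-m_1^{0,\bb}$ decomposes as a pair of matrices $(\phi\colon C^1\to C^0,\,\psi\colon C^0\to C^1)$ with $\phi\psi=\psi\phi=W_\bL\cdot\mathrm{id}$, i.e.\ the matrix factorization $M=(\phi,\psi)$.

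Next I would observe that passing from $L$ to its orientation reversal $L[1]$ does not change the set of intersection points $L\cap\bL$, nor the collection of $J$-holomorphic polygons with boundary on $L\cup\bL$; it only modifies the grading by a global shift, so that each generator of $C^0$ is now reinterpreted as a generator of $C^1$ and vice versa. Concretely, as an ungraded module $CF(L[1],\bL)=CF(L,\bL)$, but with the even and odd summands swapped. Since $m_1^{0,\bb}$ counts exactly the same polygons, the component that previously realized $\phi\colon C^1\to C^0$ now plays the role of the map from the even part to the odd part of $CF(L[1],\bL)$, and similarly $\psi$ becomes the map from odd to even. Therefore $\mathcal{F}^\bL(L[1])=(\psi,\phi)$, which is $\tau(M)$ by the cited proposition of Yoshino.

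The main technical point to check is that no stray signs appear when reversing orientation: the Seidel sign convention recalled just before Proposition \ref{grading computation} assigns a factor of $(-1)$ to each polygon edge whose orientation disagrees with $\partial P$. Reversing the orientation of $L$ flips this disagreement on every polygon edge lying on $L$, producing a uniform sign that depends only on how many $L$-edges a polygon has. I would argue that these signs either cancel in pairs (an $L$-polygon always has both an incoming and an outgoing $L$-edge at each corner contributing to $m_1^{0,\bb}$) or else yield an overall sign change $(\phi,\psi)\mapsto(-\phi,-\psi)$, which produces an isomorphic matrix factorization via the automorphism $(\mathrm{id},-\mathrm{id})$ on $C^0\oplus C^1$.

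The hard part will be bookkeeping the signs cleanly, especially because $\bL$ itself carries a nontrivial spin structure marked by a red dot and because the Maurer--Cartan element $\bb=xX+yY+zZ$ sits on the $\bL$-boundary rather than the $L$-boundary (so $\bb$-insertions are untouched by reversing $L$). Once the sign bookkeeping is verified, the identification $\mathcal{F}^\bL(L[1])=\tau(\mathcal{F}^\bL(L))$ is a formal consequence of the grading swap, and no new moduli-space analysis is required.
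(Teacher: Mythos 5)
Your first two steps (same intersection points, same polygons, only the $\Z/2$-parities of the generators flip, so the two components of the differential swap roles and one expects $(\phi,\psi)\mapsto(\psi,\phi)=\tau(M)$) agree with the paper's proof. The gap is in the sign analysis, which is the actual content of the lemma and which you leave as a plan whose sketch does not match the convention in force. In Seidel's convention as recalled before Proposition \ref{grading computation}, the orientation of $L$ itself never contributes a sign: for a polygon computing $m_1^{0,\bb}$ the boundary Lagrangians are $L_0=L$ and $L_1=\cdots=L_k=\bL$, and orientation-mismatch signs are attached only to the $\bL$-edges, weighted by the degrees of the corners, namely $(-1)^{|x_1|}$, $(-1)^{|x_i|}$ for the $\bb$-corners, and $(-1)^{|x_k|+|x_0|}$. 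So reversing $L$ does not "flip the disagreement on every polygon edge lying on $L$" in a sign-relevant way; what it does change is the $\Z/2$-degrees $|x_1|$ and $|x_0|$ of the input and output, which enter those weights. Your claim that the signs "cancel in pairs" because a polygon has both an incoming and an outgoing $L$-edge is also false: each polygon contributing to $m_1^{0,\bb}$ has exactly one edge on $L$, joining the input corner to the output corner.

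Consequently your fallback, that any residual discrepancy is a uniform overall sign $(\phi,\psi)\mapsto(-\phi,-\psi)$, is unjustified: a priori the sign change could depend on the individual polygon (through the degrees of its corners and the match status of the adjacent $\bL$-edges), and then one would still have to argue it is implemented by a change of basis. The paper closes exactly this gap: since the orientation of $\bL$ does not switch its matching with $\partial P$ at the $X,Y,Z$-corners, the $\bL$-part of the boundary is uniformly matched or uniformly mismatched (four cases in total), and in the mismatched case the input and output degrees appear only through the combination $|x_1|+|x_0|$, which equals $1$ both before and after reversing $L$ because the differential is odd. Hence the count of each polygon is literally unchanged, with no extra sign at all, and the matrices are exactly $(\psi,\phi)$. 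Without an argument of this kind (degree bookkeeping in the stated convention, plus the corner statement about $\bL$), your proposal does not establish the lemma.
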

\begin{proof}
Localized mirror functor takes $L$ to a matrix factorization $P^{0} =  CF^0(L,\bL), P^1 = CF^1(L,\bL)$ with $d= - m_1^{0,\bb}$.
For the orientation reversal $L[1]$, $CF^\bullet(L,\bL)$ and $CF^\bullet(L[1],\bL)$ have the same set of generators, but with opposite $\Z/2$-grading, and
$m_1^{0,\bb}$ is modified accordingly (this is given by the same polygon). It remains to check the related signs. 
Since the orientation of $\bL$ does not change at $X,Y,Z$-corner of $\bL$, there are only 4 possible orientation choices for such a polygon,
and we can check each case.
 From the sign convention of Fukaya category of surface, the orientation of $L$ does not contribute to a sign. The only difference after orientation reversing is that the degrees of first input and output are interchanged. But the sum of those two degrees is still $1$, so the total sign of the polygon is not changed. This proves the claim.
\end{proof}

Now, we want to relate AR exact sequences with Lagrangian surgery exact sequences.
First, let us recall that a Lagrangian surgery can be related to a cone.
The following lemma is a modification of Lemma 5.4 of \cite{Ab08}.
\begin{figure}[h]
\begin{subfigure}{0.43\textwidth}
\includegraphics[scale=0.6]{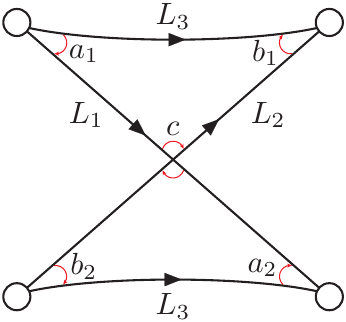}
\centering
\end{subfigure}
\begin{subfigure}{0.43\textwidth}
\includegraphics[scale=0.6]{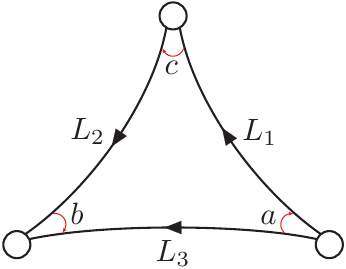}
\centering
\end{subfigure}
\centering

\caption{Lagrangian surgery at $c$}
\label{fig:conec}
\end{figure}

\begin{lemma}\label{lem:surc}
Let $L_1$ and $L_2$ be unobstructed non-compact curves which intersect transversally and
minimally at a single point $c$ in the interior so that $c \in CW^1(L_1,L_2)$, or $L_1$ and $L_2$ are disjoint 
but conical to the same puncture, with the wrapped generator $c  \in CW^1(L_1,L_2)$
as in Figure \ref{fig:conec}.
Then, $L_3$ that is obtained after Lagrangian surgery at $c$ is  isomorphic to the twisted complex $\mathrm{Cone}(c):= L_1 \stackrel{c}{\to} L_2$.
\end{lemma}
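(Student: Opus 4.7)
The plan is to show directly that $L_3$ represents the cone $\mathrm{Cone}(c)$ by producing the canonical morphisms $L_2\to L_3$ and $L_3\to L_1[1]$ that fit into the standard exact triangle $L_1\xrightarrow{c} L_2\to \mathrm{Cone}(c)\to L_1[1]$. I would treat the interior-transverse case first and then reduce the conical case to it.

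First, I would fix a Darboux disc $U$ around $c$ in which $L_1$ and $L_2$ are modelled by the real and imaginary axes, and describe the surgery $L_3$ explicitly by replacing $(L_1\cup L_2)\cap U$ with a smooth hyperbolic arc $\gamma_\varepsilon$ sitting in one pair of opposite quadrants. After a small Hamiltonian perturbation of $L_3$ off the axes inside $U$, the only new intersections created by the surgery are a single point $\alpha \in L_2\cap L_3$ and a single point $\beta \in L_3\cap L_1$, both lying near $c$. A direct local computation using the $\Omega$-grading of Definition \ref{Omega-grading} shows that $\alpha$ has degree $0$ as an element of $CF^\bullet(L_2,L_3)$ and $\beta$ has degree $0$ as an element of $CF^\bullet(L_3,L_1[1])$, and the three points $c,\alpha,\beta$ bound a unique small immersed triangle $\Delta\subset U$ with boundary on $L_1\cup L_2\cup L_3$ and no interior marked points.

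The next step is to use $\Delta$, together with the surface sign convention recalled in Section 3, to show that $m_2(\alpha,c)=\beta$ in cohomology. For this I would argue that $\alpha$ and $\beta$ are cocycles and that no additional polygons contribute to the $m_2(\alpha,c)$ count beyond $\Delta$; any competing polygon would have to leave $U$ and re-enter, which I can rule out by choosing the Hamiltonian perturbation of $L_3$ sufficiently $C^0$-small compared to the distances to all other wrapped generators and Lagrangian crossings. This identifies $L_1\xrightarrow{c}L_2\xrightarrow{\alpha}L_3\xrightarrow{\beta}L_1[1]$ as an exact triangle in the wrapped Fukaya category, whence uniqueness of cones gives $L_3\simeq \mathrm{Cone}(c)$.

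For the conical case, where $L_1$ and $L_2$ are disjoint but share a puncture end with $c$ a wrapped generator, I would apply a small admissible Hamiltonian isotopy to one of $L_1, L_2$ inside the Liouville end so that the two branches now meet at a single interior transverse point representing $c$; the resulting surgery is Hamiltonian isotopic to the original $L_3$, and the argument above applies verbatim. The main obstacle is the polygon-exclusion step: one must confirm that no global polygon on the surface contributes to $m_1(\alpha)$, $m_1(\beta)$, or to the coefficient of $\beta$ in $m_2(\alpha,c)$ beyond the local triangle $\Delta$. In the present setting this is tractable because the explicit tessellations of $M_W$ from Section \ref{sec:eqtop} and the finite polygon counts used throughout Section \ref{sec:eqFloer} make any such extraneous polygon combinatorially visible, so one can exclude it by an area bound on the surgery neck.
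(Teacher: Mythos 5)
Your overall strategy differs from the paper's: the paper does not try to verify that some candidate triangle is distinguished, but instead writes down explicit degree-zero morphisms $a\in CW^0(L_3,L_1)$ and $b\in CW^0(L_2,L_3)$ (in the interior-intersection case these are \emph{sums} of two generators, $a=a_1-a_2$, $b=b_1+b_2$, created on the two strands near the surgery), regards them as morphisms between $L_3$ and the twisted complex $\mathrm{Cone}(c)$, and checks by counting the small surgery triangles that $m_2^{Tw}(a,b)=\mathbf{1}_{L_3}$ and $m_2^{Tw}(b,a)=\mathbf{1}_{L_1}+\mathbf{1}_{L_2}$, i.e.\ that $m_3(a,c,b)$ and $m_3(c,b,a)+m_3(b,a,c)$ are units. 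That direct ``mutually inverse morphisms to the twisted complex'' computation, following Abouzaid, is the whole content of the lemma.

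Your proposal has a genuine gap at its final step. Producing $\alpha\in CF^0(L_2,L_3)$, $\beta\in CF^0(L_3,L_1[1])$ and one local triangle does not show that $L_1\xrightarrow{c}L_2\xrightarrow{\alpha}L_3\xrightarrow{\beta}L_1[1]$ is an \emph{exact} (distinguished) triangle; in the derived Fukaya category the distinguished triangles are by definition those isomorphic to cone triangles of twisted complexes, so proving distinguishedness is exactly the statement $L_3\simeq\mathrm{Cone}(c)$ that you are trying to deduce, and invoking ``uniqueness of cones'' before that is circular. Moreover the relation you propose to verify, $m_2(\alpha,c)=\beta$, does not typecheck: $m_2(\alpha,c)$ lives in $CF(L_1,L_3)$ while $\beta\in CF(L_3,L_1[1])$, and in any case composition identities of this kind (or the vanishing of consecutive compositions) are necessary but nowhere near sufficient for exactness. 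What is actually needed is to package $\alpha$-type and $\beta$-type generators into morphisms to and from the twisted complex and to verify, via the $m_3$-counts with an insertion of $c$, that both composites are the identity; your local triangle $\Delta$ is one of the polygons entering such a count, but as used it proves nothing. A secondary issue: your claim that the surgery creates only one new intersection point with each of $L_1,L_2$ is at odds with the geometry in the transverse case (the surgered curve meets each of $L_1,L_2$ in two points near $c$, which is why the paper's $a,b$ are two-term sums with a sign), and your reduction of the conical case by a non-compactly-supported isotopy at infinity would itself need justification, whereas the paper treats that case by the same direct computation.
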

\begin{proof}
We find an explicit isomorphism as follows.
We define $ a \in CW^0(L_3, L_1), b \in CW^0(L_2, L_3)$ as in Figure \ref{fig:conec}.
In the first case, $a, b$ are taken to be the sum of two generators, $a = a_1 - a_2$ and $b = b_1 + b_2$.
Then we may regard $a,b$ as $a \in \Hom^0(L_3, \mathrm{Cone}(c)), b \in \Hom^0(\mathrm{Cone}(c), L_3)$. Then, it is enough to show that $m_2^{Tw}(a,b)=\be_{L_3}$
and $m_2^{Tw}(b,a)= \be_{\mathrm{Cone}(c)} = \be_{L_1} + \be_{L_2}$. By definition of $m_2^{Tw}$ for twisted complexes, we have $m_2^{Tw}(a,b) = m_3(a,c,b) = \be_{L_3}$, and
$m_2^{Tw}(b,a) = m_3(c,b,a) + m_3(b,a,c) = \be_{L_1} + \be_{L_2}$.
We can prove similarly when the orientations of $L_1$ and $L_2$ are reversed.
\end{proof}
\begin{lemma}\label{lem:surc2}
In the setting of the previous lemma, we have an exact triangle of 
matrix factorizations for $W_\bL$.
\begin{equation}\label{eq:arl}
 \big( CW^\bullet(L_1,\bL), -m_1^{0,\bb}\big) \stackrel{\mathcal{F}^\bL_1(a)}{\longrightarrow}  \big( CW^\bullet(L_3,\bL), -m_1^{0,\bb}\big)  \stackrel{\mathcal{F}^\bL_1(b)}{\longrightarrow}   \big( CW^\bullet(L_2, \bL), -m_1^{0,\bb}\big) \to   \end{equation}
We may set $g(x,y,z)=0$ in the above to obtain the exact triangle for $W^T$.
 
 \end{lemma}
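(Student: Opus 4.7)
The plan is to transport the Lagrangian-surgery identification from Lemma \ref{lem:surc} through the $A_\infty$-functor $\mathcal{F}^\bL$. By Lemma \ref{lem:surc}, $L_3$ is quasi-isomorphic to $\mathrm{Cone}(c : L_1 \to L_2)$ in the pre-triangulated enlargement $\mathrm{Tw}(\mathcal{WF}([M_W/G_W]))$, and this quasi-isomorphism is witnessed explicitly by $a$ and $b$ (together with $c$) via the relations $m_3(a,c,b) = \be_{L_3}$ and $m_3(c,b,a) + m_3(b,a,c) = \be_{L_1} + \be_{L_2}$. Any $A_\infty$-functor automatically extends to pre-triangulated envelopes and preserves mapping cones, so $\mathcal{F}^\bL(L_3)$ is quasi-isomorphic to $\mathrm{Cone}\bigl(\mathcal{F}^\bL_1(c): M_{L_1} \to M_{L_2}\bigr)$ inside the DG category $\mathcal{MF}(W_\bL)$.

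Next, I would invoke the standard fact that for $\Z/2$-graded matrix factorizations any mapping cone fits into an exact triangle of matrix factorizations, namely $M_{L_1}\to \mathrm{Cone}\to M_{L_2}\to M_{L_1}[1]$ where, in the $\Z/2$-graded setting, $[1]$ acts as a shift. Composing the canonical structure maps of this cone with the quasi-isomorphism $\mathcal{F}^\bL(L_3)\simeq \mathrm{Cone}(\mathcal{F}^\bL_1(c))$ identifies them (up to the chosen orientation of the triangle) with $\mathcal{F}^\bL_1(a)$ and $\mathcal{F}^\bL_1(b)$, since these were precisely the first-order images of the surgery morphisms intertwining $L_3$ with the cone in Lemma \ref{lem:surc}. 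This yields the first exact triangle \eqref{eq:arl} in $\mathcal{MF}(W_\bL)$.

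For the second assertion, I would then apply the restriction DG-functor $i^* : \mathcal{MF}(W_\bL) \to \mathcal{MF}(W^T)$ obtained by setting $g = 0$, as in Lemma \ref{lem:es} and Corollary \ref{cor:triangle}. Restriction is exact (it is given by tensoring with $\C[x,y,z]/(g)$ on free modules), hence sends exact triangles to exact triangles; applying it to \eqref{eq:arl} produces the corresponding triangle in $\mathcal{MF}(W^T)$.

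The main technical obstacle will be verifying that the connecting morphisms are exactly $\mathcal{F}^\bL_1(a)$ and $\mathcal{F}^\bL_1(b)$ (rather than merely chain-homotopic to them) and that the sign and orientation conventions for the cone in $\Z/2$-graded $\mathcal{MF}$ match the Seidel sign conventions used to define $\mathcal{F}^\bL$. In particular, the higher components $\mathcal{F}^\bL_{\geq 2}$ evaluated on strings of $a,c,b$ mixed with copies of the bounding cochain $\bb$ may contribute correction terms that need to be absorbed into an explicit homotopy; these corrections are harmless at the level of triangles but must be tracked to assert the precise form stated in \eqref{eq:arl}.
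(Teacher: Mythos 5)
Your proposal is correct and follows essentially the same route as the paper: the paper simply observes that $L_3\simeq\mathrm{Cone}(c)$ by Lemma \ref{lem:surc} and that both the localized mirror functor $\cF^\bL$ and the restriction $g=0$ are exact (citing \cite[Proposition 6.5]{CCJ20}), which is exactly your argument of pushing the cone through the $\AI$-functor and then through $i^*$. The ``technical obstacle'' you flag is harmless: since exact triangles are taken in the homotopy category, it suffices that the structure maps of the cone agree with $\mathcal{F}^\bL_1(a)$ and $\mathcal{F}^\bL_1(b)$ up to homotopy, which is what the isomorphism data of Lemma \ref{lem:surc} provides.
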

 \begin{proof}
Those two triangles are just 
\[\cF^\bL(L_1) \to \cF^\bL(L_3) \to \cF^\bL(L_2) \to \hskip 0.2cm\textrm{and}\hskip 0.2cm (i^*\circ \cF^\bL)(L_1) \to (i^*\circ \cF^\bL)(L_3) \to (i^*\circ \cF^\bL)(L_2) \to.\]
The proposition follows from the fact that a localized mirror functor $\cF^\bL$ and the restriction $g(x,y,z)=0$ are both exact (see \cite[Proposition 6.5]{CCJ20}).
 \end{proof}

If an AR sequence \eqref{eq:ar} is the same as \eqref{eq:arl}, we will say
that AR sequence is realized as Lagrangian surgery exact sequence.
In this section, we prove the following.
\begin{thm}\label{thm:ADEAR}
For ADE curve singularity $W^T$, we find explicit Lagrangians in the Milnor fiber of $W$ corresponding to indecomposable matrix factorizations in the AR quiver.
Also, all AR sequences of matrix factorizations for ADE curve singularities $W^T$ can be realized as Lagrangian surgery exact sequence for $W$.
Namely, for any AR exact sequence \eqref{eq:ar}, we can find Lagrangians  $L_1,L_2,L_3$ in $[M_W/G_W]$ 
(with a Floer generator $c  \in CW^1(L_1, L_2)$) such that
\eqref{eq:arl} can be identified with  \eqref{eq:ar} in the cohomology category.
\end{thm}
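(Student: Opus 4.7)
The proof proceeds by an explicit case-by-case construction for the five ADE types, namely $A_n$ (where $W = F_{n+1,2}$), $D_n$ (where $W = C_{2,n-1}$ since $D_n = C_{n-1,2}$ as invertibles), $E_6$ (where $W = F_{3,4}$), $E_7$ (where $W = C_{3,3}$), and $E_8$ (where $W = F_{3,5}$). For each type I would proceed in three stages.

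\textbf{Stage 1 (indecomposables $\leftrightarrow$ Lagrangians).} Using the classification of indecomposable matrix factorizations for ADE curve singularities due to Dieterich--Wiedemann and Yoshino, I produce an explicit list of non-compact arcs $\{L_i\}$ in $[M_W/G_W]$ in terms of the tessellation of $M_W$ given by Theorem \ref{thm:tess}. The candidate arcs are built from segments of the equator (connecting orbifold points and punctures) together with short loops around orbifold points; the tessellations shown in Section \ref{sec:eqtop}, together with the figures for $D_5^T$ and Figure \ref{fig:KS}, provide the combinatorial template. I then compute $\mathcal{G}(L_i) = \mathcal{F}^\bL(L_i)|_{g=0}$ using the Koszul-type formulas derived in Section \ref{sec:HMSMilnor}, reading off $\delta_0, \delta_1$ by counting the (finitely many) immersed polygons bounding $L_i$ and $\bL$, and compare the result with Yoshino's list.

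\textbf{Stage 2 (AR translation via orientation reversal).} By Lemma \ref{lem:lagtr}, reversing the orientation of $L$ interchanges $\phi$ and $\psi$ and therefore realizes the Auslander--Reiten translation $\tau$ on the matrix factorization side. So if $L$ represents the indecomposable $M$, then $L[1]$ automatically represents $\tau(M)$, giving a purely geometric incarnation of $\tau$.

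\textbf{Stage 3 (AR sequences as surgery triangles).} For each AR exact sequence $0 \to \tau(M) \to N \to M \to 0$, I locate the Lagrangians $L, L[1]$ representing $M, \tau(M)$ from Stage 1, identify the wrapped generator $c \in CW^1(L[1], L)$ corresponding (under $\mathcal{F}^\bL_1$) to the extension class in $\mathrm{Ext}^1(M, \tau(M))$, and perform Lagrangian surgery at $c$ to produce a curve $L''$. Lemma \ref{lem:surc} identifies $L''$ with $\mathrm{Cone}(c)$ in the twisted category, and Lemma \ref{lem:surc2} then yields the exact triangle $\mathcal{G}(L[1]) \to \mathcal{G}(L'') \to \mathcal{G}(L) \to$ in $\mathcal{MF}(W^T)$, i.e.\ a short exact sequence of MCM modules with ends $\tau(M)$ and $M$. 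Applying the appendix by Iyama, any short exact sequence between indecomposable MCM modules that shares its ends with an AR sequence must be isomorphic to it, so the surgery triangle is $0 \to \tau(M) \to N \to M \to 0$ as required, which in particular forces the middle term $\mathcal{G}(L'')$ to be isomorphic to $N$.

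\textbf{Main obstacle.} The substantive difficulty lies in Stage 1: finding, for every vertex of each AR quiver, an explicit arc whose Koszul-type matrix factorization matches Yoshino's presentation up to isomorphism. For Fermat types $A_n$, $E_6$, $E_8$ the symmetric polygonal tessellation makes the Lagrangians essentially edges or short diagonals of the fundamental domain, and the polygon counts are transparent; for the chain/loop cases $D_n$ and $E_7$ the tessellation is asymmetric (cf.\ Figure \ref{fig:E7gluing}) and some indecomposables correspond to arcs that wrap around an orbifold vertex, so the disc counts and the verification that the resulting matrix factorization is indeed the expected indecomposable require more care. Once Stage 1 is in place, the appendix by Iyama allows Stage 3 to go through without an independent decomposition of $\mathcal{G}(L'')$, reducing the rest of the argument to a bookkeeping of endpoints.
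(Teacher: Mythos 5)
Your Stages 1 and 2 follow the same route as the paper (explicit arcs matched to Yoshino's list by polygon counting, orientation reversal realizing $\tau$ via Lemma \ref{lem:lagtr}), but Stage 3 contains a genuine gap: you misstate what the appendix proves. Theorem \ref{determined by terms} (and its corollary) says that an almost split sequence is determined by its \emph{terms}, i.e.\ by all three modules $\tau(M)$, $N$, $M$ simultaneously; it does \emph{not} say that a short exact sequence sharing only its two \emph{ends} with an AR sequence is isomorphic to it. Since $\mathrm{Ext}^1(M,\tau M)$ is dual to $\underline{\mathrm{End}}(M)$ and can have dimension larger than one for the bigger indecomposables, a non-split extension with ends $\tau(M)$ and $M$ need not be almost split, and the appendix itself warns that general sequences are not determined by their terms. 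Hence you cannot conclude, as you do, that the middle term $\mathcal{G}(L'')$ ``is forced'' to be $N$; identifying the middle term must be done independently, and this is precisely the step your closing paragraph declares unnecessary (``without an independent decomposition of $\mathcal{G}(L'')$'').

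This omission is not cosmetic, because the middle-term check is where the remaining substantive work of the proof lies. What Appendix \ref{sec:ac} buys is only that one need not compute the \emph{morphisms} in the surgery triangle: after checking that all three Lagrangians map to the correct matrix factorizations $\tau(M)$, $N$, $M$, the sequence is isomorphic to the AR sequence. In particular, for the sequences \eqref{ar:e63}, \eqref{ar:e77} and \eqref{ar:e88} the surgered picture produces a single connected Lagrangian whose mirror must be shown to split as the required direct sum ($A\oplus B$ for $E_6$, $Y_1\oplus X_2$ for $E_7$, $C_2\oplus X_2$ for $E_8$); the paper does this by exhibiting explicit null-homotopies of the relevant connecting morphisms, and your plan has no mechanism replacing these computations. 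Alternatively you could verify that $\mathcal{F}^\bL_1(c)$ equals the AR extension class (the identification you mention but then discard), which would bypass the appendix entirely, as the paper illustrates only in the $A_n$ case via \eqref{eq:anmory}; but you must carry out one of the two verifications --- you cannot skip both.
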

\begin{remark}
Due to $\AI$-convention for matrix factorization category, the direction of a morphism $c$ in Floer theory is the opposite of
$\alpha \in \mathrm{Ext}^1(M,\tau(M))$.
\end{remark}
\begin{remark}
For an $\AI$-functor $\mathcal{F}^\bL|_{g=0}:\mathcal{WF}([M_W/G_W]) \to \mathcal{MF}(W^T)$, we will find several non-isomorphic Lagrangians
mapping to the same matrix factorization. This may look strange but this is because $\mathcal{F}^\bL|_{g=0}$ is not an equivalence.
\end{remark}

 In our definition of $\mathcal{MF}(W^T)$, trivial matrix factorization $(W^T,1)$ is homotopic to a zero object, but
 for AR theory in \cite{Yo}, $R=\C[x,y]/W^T$ (cokernel of the above factorization) is
 taken as an indecomposable MCM module. Hence  $\mathcal{MF}(W^T)$ corresponds to
 stable matrix factorization category in \cite{Yo}.
   
  Before going further, we recall the construction of a mapping cone in the matrix factorization (DG) category. Let $P = (P_{1}, P_{2})$, $P^{\prime} = (P_{1}^{\prime}, P_{2}^{\prime})$ be two matrix factorizations and $\phi = (\phi_{1}, \phi_{2}) : (P_{1}, P_{2}) \to (P_{1}^{\prime}, P_{2}^{\prime})$. We define a mapping cone of $\phi$, $\mathrm{Cone}(\phi)$, as a block matrix given by
\begin{equation*}
\begin{pmatrix}
P_{1}^{\prime}& \phi_{1}\\
0& P_{2}\\
\end{pmatrix}
\begin{pmatrix}
P_{2}^{\prime}& -\phi_{2}\\
0& P_{1}\\
\end{pmatrix}.
\end{equation*}
 
\subsection{$\boldsymbol{A_n}$-cases} 
For $A_n$-singularity given by $F_{n+1,2} = x^{n+1}+y^2$ with $R = \C[x,y]/F_{n+1,2}$,
the AR quiver can be described as follows.
AR quiver depends on the parity of $n$:  for even $n$,
$$\begin{tikzcd}
R \arrow[r,shift left] & M_{1} \arrow[l,shift left] \arrow[r,shift left] \arrow[out=240,in=300,dash,loop,swap,dashed] & M_{2} \arrow[l,shift left] \arrow[r,shift left] \arrow[out=240,in=300,dash,loop,swap,dashed] & \cdots \arrow[l,shift left] \arrow[r,shift left] & M_{\frac{n}{2}} \arrow[l,shift left] \arrow[out=250,in=290,dash,loop,swap,dashed] \arrow[l,loop right]
\end{tikzcd}$$
for odd $n$,
$$\begin{tikzcd}[row sep=small]
& & & & &N_{-} \arrow[dl,shift left]\\
R \arrow[r,shift left] & M_{1} \arrow[l,shift left] \arrow[r,shift left] \arrow[out=240,in=300,dash,loop,swap,dashed] & M_{2} \arrow[l,shift left] \arrow[r,shift left] \arrow[out=240,in=300,dash,loop,swap,dashed] & \cdots \arrow[l,shift left] \arrow[r,shift left] & M_{\frac{n-1}{2}} \arrow[l,shift left] \arrow[out=250,in=290,dash,loop,swap,dashed] \arrow[ur,shift left] \arrow[dr,shift left] \\
& & & & &N_{+} \arrow[ul,shift left] \arrow[uu,dash,dashed]
\end{tikzcd}$$

Here $M_k$ is the $2 \times 2$ matrix factorization given by 
\begin{equation*}
\begin{pmatrix}
y& x^{k}\\
x^{n+1-k}& -y\\
\end{pmatrix} \cdot
\begin{pmatrix}
y& x^{k}\\
x^{n+1-k}& -y\\
\end{pmatrix},
\end{equation*}
$N_-$  is the ($ 1 \times 1$ matrix) factorization
$$ (x^{\frac{n+1}{2}}-iy) \cdot (x^{\frac{n+1}{2}}+iy),$$
and $N_+$ is obtained from $N_-$ by switching two factors.
In fact, one can easily check
that  $2 \times 2$ matrix factorization $N_- \oplus N_+$ is isomorphic to $M_{\frac{n+1}{2}}$,
and this is what we obtain from a Lagrangian.

\begin{figure}[h]
\begin{subfigure}{0.43\textwidth}
\includegraphics[scale=0.55]{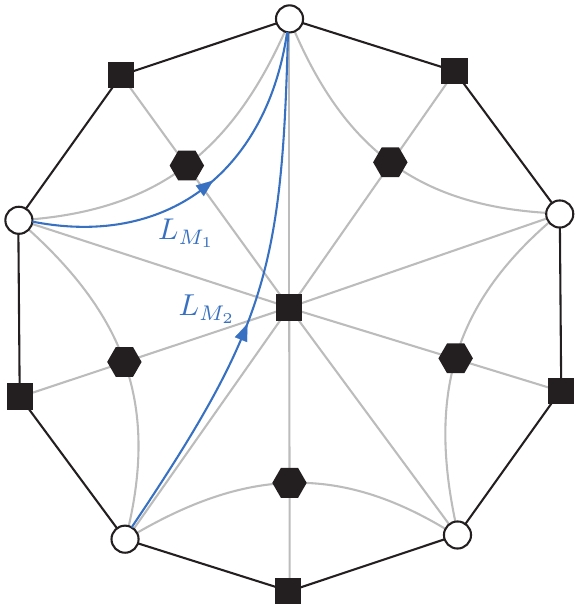}
\centering
\caption{\label{fig:anar} Lagrangians for $M_1, M_2$ in $A_4$-case}
\end{subfigure}
\begin{subfigure}{0.43\textwidth}
\includegraphics[scale=0.55]{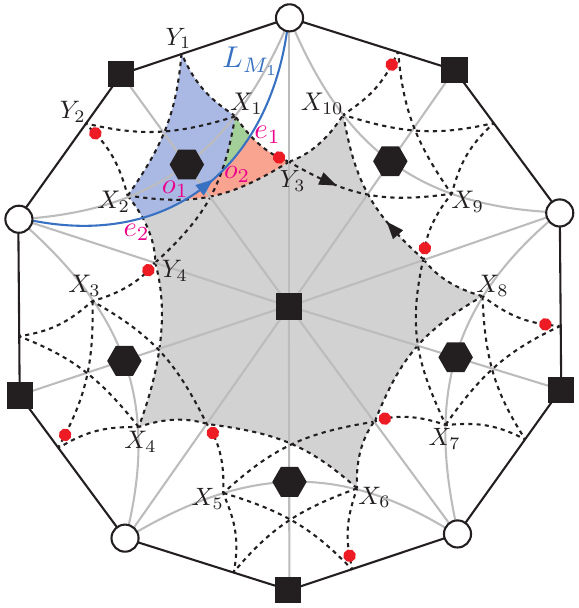}
\centering
\caption{Holomorphic polygons for $m^{0,\bb}_{1}$ calculation}
\label{fig:A4_disc}\end{subfigure}
\centering
\caption{\label{fig:famor}}
\end{figure}

We will find that as in Figure \ref{fig:famor} (A), $M_k$ corresponds to a non-compact Lagrangian connecting two punctures of the $2(n+1)$-gons. Namely, if we label the punctures as $v_{1}, \ldots, v_{n+1}$ counter-clockwise direction, then Lagrangian $L_{M_k}$ is given by the curve connecting
$v_1$ and $v_{1+k}$ (either project it to the quotient Milnor fiber or consider the $G_W$-copies in the Milnor fiber).

\begin{lemma}
$L_{M_k}$ maps to $M_k$ under $\mathcal{F}^\bL \mid_{z=0}$.
\end{lemma}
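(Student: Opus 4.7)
The plan is to read off the Floer complex $(CF(L_{M_k}, \bL), -m_1^{0,\bb})$ directly from the polygon picture in Figure~\ref{fig:famor}, exactly in the spirit of the $D_5$ example worked out after Theorem~\ref{thm:lmf}.  I first locate $L_{M_k}$ inside the $2(n+1)$-gon model of $\WH{M}_{F_{n+1,2}}$ from Theorem~\ref{thm:tess}: it is the arc connecting the two vertices $v_1$ and $v_{1+k}$, passing on one side of $k$ of the intermediate orbifold points and on the other side of the remaining $n+1-k$.  A representative of the Seidel Lagrangian $\bL$ in the quotient meets $L_{M_k}$ transversally at four points, producing two even generators $e_1, e_2$ and two odd generators $o_1, o_2$ under the $\Omega$-grading of Definition~\ref{Omega-grading}.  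Hence $\mathcal{F}^\bL(L_{M_k})$ is a matrix factorization of rank $(2,2)$, the correct shape to match $M_k$.

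Next I enumerate the pseudo-holomorphic polygons contributing to $m_1^{0,\bb}$.  Each such polygon has boundary alternating on $L_{M_k}$ and $\bL$, with corners at a chosen input, a chosen output, and an arbitrary number of immersed corners of $\bL$ decorated by $\bb = xX + yY + zZ$; once we restrict to $z=0$, only polygons whose immersed corners are all $X$ or $Y$ survive.  I expect to find four families, drawn in Figure~\ref{fig:A4_disc}: two small $Y$-triangles, one at each endpoint of $L_{M_k}$, contributing $\pm y$ on the diagonal; a polygon on the ``$k$-side'' of $L_{M_k}$ sweeping through $k$ consecutive $X$-corners of $\bL$ and contributing $x^k$ off-diagonally; and a complementary polygon on the other side sweeping through $n+1-k$ $X$-corners and contributing $x^{n+1-k}$ to the opposite off-diagonal entry.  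The $\Omega$- and $H_1$-gradings of Proposition~\ref{grading computation} and Lemma~\ref{topological grading}, combined with the exactness of the Milnor fiber, should rule out any further polygon surviving $z=0$.

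Finally I fix the signs using the surface convention recalled just before Theorem~\ref{thm:lmf} (orientation mismatches with $\partial P$, combined with the red-dot spin twist on $\bL$), which after a suitable ordering of the bases turns the differential into exactly
\[
  \begin{pmatrix} y & x^{k} \\ x^{n+1-k} & -y \end{pmatrix},
\]
whose self-product is $(y^2 + x^{n+1})\cdot I = W^T \cdot I$, so $\mathcal{F}^\bL|_{z=0}(L_{M_k}) = M_k$.  The main obstacle is the polygon enumeration itself: I must exclude polygons wrapping multiply around the Milnor fiber or producing off-diagonal entries with the wrong power of $x$.  I expect these exclusions to follow from the $H_1$-grading constraint (which pins each contributing polygon to a single homology class determined by the matrix entry it represents) together with exactness of $M_W$ (which forbids nontrivial area classes); the remaining sign bookkeeping is then mechanical.
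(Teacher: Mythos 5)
Your proposal follows essentially the same route as the paper: a direct enumeration of the holomorphic polygons computing $-m_1^{0,\bb}$ on $CF(L_{M_k},\bL)$, which the paper carries out explicitly for $M_1$ in the $A_4$ case (eight polygons: four $Y$-triangles plus two polygons with the string of $k$ $X$-corners and two with the string of $n+1-k$ $X$-corners, one for each of the two matrices of the factorization) and then identifies the result with $M_k$ after a change of basis. The only caveats are that your count should be doubled, since each entry of both $\delta_0$ and $\delta_1$ has its own polygon, and that the exclusion of further polygons really rests on the fact that polygons cannot pass through the punctures rather than on the $H_1$-grading (which is torsion in the relevant directions) or on exactness.
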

 
\begin{proof}
This is given by direct computation.
We give an explicit calculation for $M_{1}$ in $A_{4}$-case.
The Lagrangian $L_{M_1}$ intersects $\bL$ at 4 points, which we label as $o_{1}, o_{2}, e_{1},  e_{2}$.
$m_1^{0,\bb}$ counts holomorphic polygons connecting these intersections, while allowing $X,Y,Z$-corners in $\bL$.
In Figure \ref{fig:famor} (B), we illustrated some of the holomorphic polygons. 
Instead of drawing all of them, we write down the labels of the corners. For example, a grey polygon is denoted by $e_{2} X_{4} X_{6} X_{8} X_{10} o_{1}$. It contributes $x^{4}$ to the coefficient of $o_{1}$ in $m^{0,\bb}_{1}(e_{2})$.
Here is the list of all 8 polygons for $m^{0,\bb}_{1}$.
$$o_{1} Y_{3} e_{1},\; o_{1} X_{2} e_{2}, \;o_{2} X_{3} X_{5} X_{7} X_{9} e_{1}, \;o_{2} Y_{1} e_{2}, \;e_{1} Y_{2} o_{1}, \;e_{1} X_{1} o_{2}, \;e_{2} X_{4} X_{6} X_{8} X_{10} o_{1}, \;e_{2} Y_{4} o_{2}.$$
Signs can be computed accordingly (see the paragraph after Theorem \ref{thm:lmf})
and we obtain the following.
\begin{align*}
m^{0,\bb}_{1}(o_{1}) &= -y \cdot e_{1} + x \cdot e_{2} &
m^{0,\bb}_{1}(e_{1}) &= -y \cdot o_{1} + x \cdot o_{2} \\
m^{0,\bb}_{1}(o_{2}) &= x^{4} \cdot e_{1} + y \cdot e_{2} &
m^{0,\bb}_{1}(e_{2}) &= x^{4} \cdot o_{1} + y \cdot o_{2}
\end{align*}
This can be made into two $2\times 2$ matrices $M_{4}$, which is isomorphic to $M_1$ by a change of basis. The rest are similar and omitted.
\end{proof}

\begin{figure}[h]
\begin{subfigure}{0.43\textwidth}
\includegraphics[scale=0.5]{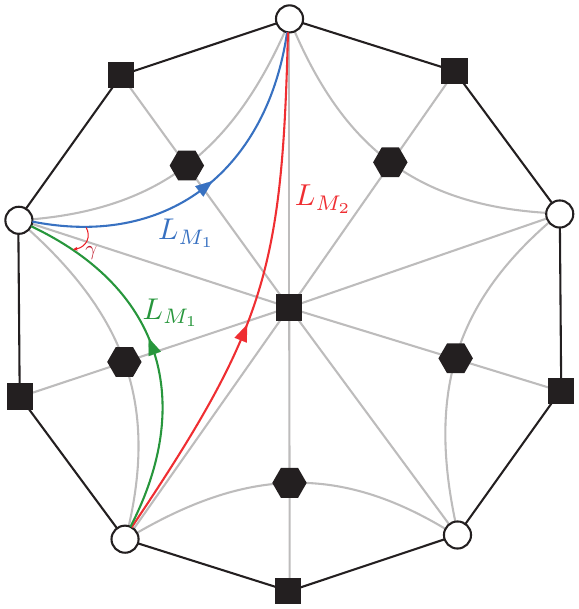}
\centering
\end{subfigure}
\begin{subfigure}{0.43\textwidth}
\includegraphics[scale=0.5]{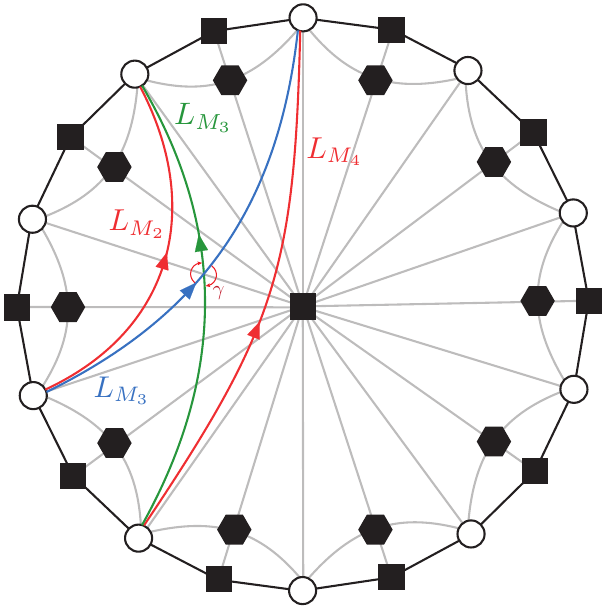}
\centering
\end{subfigure}
\centering
\caption{\label{fig:famor2}   }
\end{figure}

In Figure \ref{fig:famor2}, we take $\gamma$  as an odd morphism from $L_{M_1}$ to $L_{M_1}$,
and $\mathrm{Cone}(\gamma)$ becomes $L_{M_2}$. This realizes the AR sequence
$$0 \to \tau(M_1) \to M_2 \to M_1 \to 0$$
In general, the AR sequence of $A_n$ case is of the form 
$$0 \to  M_{k} \to M_{k-1} \oplus M_{k+1} \to M_{k} \to 0$$
In Chapter 9 of \cite{Yo}, a (half of) explicit morphism whose cone gives this AR sequence is specified:
\begin{equation}\label{eq:anmory}
\begin{pmatrix}
0& x^{k-1}\\
-x^{n-k}&0\\
\end{pmatrix}
\end{equation}

For $L_{M_{k}}$ with $k \geq 2$, we take an odd immersed generator $\gamma_k$
as follows. This is at the intersection between $L_{M_{k}}$
and the counter-clockwise rotation by $2\pi/(n+1)$ of $L_{M_{k}}$.
It is not hard to check that $\mathcal{F}^\bL_1(\gamma_k)$ equals the morphism \eqref{eq:anmory}
by counting suitable polygons.
For example, in the case of $A_4$ and $\gamma$ as in Figure \ref{fig:famor2}, we get $
- \begin{pmatrix}
0& x^{3} \\
-1&0\\
\end{pmatrix}$.
This is the morphism given in \eqref{eq:anmory} for $k=4, n=4$.

\begin{remark}
When we find any AR sequence, we do not calculate all morphisms in the sequence. It is enough to check whether Lagrangians in surgery sequence map to the corresponding indecomposable matrix factorizations in AR sequence. See Appendix \ref{sec:ac}.
\end{remark}

\subsection{$\boldsymbol{D_n}$-cases}
For $D_n$ singularity $x^{n-1} + xy^2$, AR quiver of indecomposable matrix factorizations
are given as follows. For odd $n$, we have

$$\begin{tikzcd}[column sep=scriptsize, row sep=small]
A \arrow[dddr] & & & & & & \\
& Y_{1} \arrow[ul] \arrow[dddl] \arrow[r] & M_{1} \arrow[ddl] \arrow[r] & Y_{2} \arrow[ddl] \arrow[r] & \cdots \arrow[ddl] \arrow[r] & M_{\frac{n-3}{2}} \arrow[ddl] \arrow[dr,shift left]& \\
B \arrow[uu,dash,dashed] \arrow[ur] & & & & \cdots & & X_{\frac{n-1}{2}} \arrow[ul,shift left] \arrow[dl,shift left] \arrow[l,loop right,dash,dashed] \\
& X_{1} \arrow[ul] \arrow[r] \arrow[uu,dash,dashed] & N_{1} \arrow[uul] \arrow[r] \arrow[uu,dash,dashed] & X_{2} \arrow[uul] \arrow[r] \arrow[uu,dash,dashed] & \cdots \arrow[uul] \arrow[r] & N_{\frac{n-3}{2}} \arrow[uul] \arrow[uu,dash,dashed] \arrow[ur,shift left] & \\
R \arrow[ur] & & & & & & \\
\end{tikzcd}$$

Here, $A$ (resp. $B$) is a $1\times 1$ factorization $(x, x^{n-1}+y^2)$ (resp. $( x^{n-1}+y^2, y)$).
\begin{equation*}
\phi_j = \begin{pmatrix}
y& x^{j}\\
x^{n-j-2}& -y\\
\end{pmatrix}, \psi_j=
\begin{pmatrix}
xy& x^{j+1}\\
x^{n-j-1}& -xy\\
\end{pmatrix}
\end{equation*}
$M_j$ (resp. $N_j$) is a $2\times 2$ matrix factorization given by $(\phi_j,\psi_j)$ (resp. $(\psi_j,\phi_j)$).
\begin{equation}\label{eq:Dn2}
\xi_j = \begin{pmatrix}
y& x^{j}\\
x^{n-j-1}& -xy\\
\end{pmatrix}, \eta_j=
\begin{pmatrix}
xy& x^{j}\\
x^{n-j-1}& -y\\
\end{pmatrix}
\end{equation}
$X_j$ (resp. $Y_j$) is a $2\times 2$ matrix factorization given by $(\xi_j,\eta_j)$ (resp. $(\eta_j,\xi_j)$).
For even $n$,
$$\begin{tikzcd}[row sep=tiny]
A \arrow[ddddr] & & & & & & C_{+} \arrow[dl]\\
& Y_{1} \arrow[ul] \arrow[ddddl] \arrow[r] & M_{1} \arrow[dddl] \arrow[r] & Y_{2} \arrow[dddl] \arrow[r] & \cdots \arrow[dddl] \arrow[r] & Y_{\frac{n-2}{2}} \arrow[dddl] \arrow[dr] \arrow[ddddr]& \\
B \arrow[uu,dash,dashed] \arrow[ur] & & & & \cdots & & D_{+} \arrow[uu,dash,dashed] \arrow[ddl]\\
& & & & \cdots & & C_{-} \arrow[uul]\\
& X_{1} \arrow[uul] \arrow[r] \arrow[uuu,dash,dashed] & N_{1} \arrow[uuul] \arrow[r] \arrow[uuu,dash,dashed] & X_{2} \arrow[uuul] \arrow[r] \arrow[uuu,dash,dashed] & \cdots \arrow[uuul] \arrow[r] & X_{\frac{n-2}{2}} \arrow[uuul] \arrow[uuu,dash,dashed] \arrow[ur] \arrow[uuuur] & \\
R \arrow[ur] & & & & & & D_{-} \arrow[ul] \arrow[uu,dash,dashed]\\
\end{tikzcd}$$

To find the corresponding Lagrangians, we consider the dual singularity $D_n^T$ given by 
$$C_{2,n-1} = x^2 + xy^{n-1}.$$
First, Milnor fiber of $C_{2,n-1}$ is  given as LHS of Figure \ref{fig:dncell}, which is a $4(n-1)$-gon with edges identified as  
$\pm 3$ pattern. We label immersed generators of Seidel Lagrangian as in Figure \ref{fig:dncell}.
 Then the potential of Seidel Lagrangian is $W_\bL=y^{n-1} + xyz$, and if we restrict to the hypersurface $z-x=0$, we get $D_n$ singularity $y^{n-1}+yx^2$.

We now describe Lagrangians in Milnor fiber of  $D_n^T$ corresponding to indecomposable matrix factorizations of $D_n$. Note that our potential is $y^{n-1}+yx^2$, hence we have to switch $x$ and $y$ after calculating matrix factorization to match them with the above list.
\begin{lemma}
Lagrangians $L_A, L_{Y_i},L_{M_i}$(defined as  in Figure \ref{fig:dncell}) correspond to matrix factorizations $A,Y_i,M_i$.
By Lemma \ref{lem:lagtr},
orientation reversals of these Lagrangians correspond to $B,X_i, N_i$ respectively.
\end{lemma}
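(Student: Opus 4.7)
The plan is to verify the correspondence by direct computation of $\mathcal{F}^{\bL}(L)$ for each Lagrangian $L \in \{L_A, L_{Y_i}, L_{M_i}\}$, following the template already established in the $A_n$ computation for $L_{M_1}$. Concretely, for each such $L$ I would first identify the intersection points of $L$ with $\bL$ in $[M_W/G_W]$, lift them to a set of generators $\{o_j, e_j\}$ of $CF^{\bullet}(L,\bL)$ split by $\mathbb{Z}/2$-degree, and then enumerate the immersed holomorphic polygons bounded by $L$ and (branches of) $\bL$ in the fundamental $4(n-1)$-gon shown in Figure~\ref{fig:dncell} whose corners contribute to $m_1^{0,\bb}$. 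Each such polygon has one corner at a generator of $CF(L,\bL)$ as input, one as output, and arbitrarily many $X,Y,Z$ corners on $\bL$ which are weighted by the bounding cochain $\bb=xX+yY+zZ$. Assembling the resulting coefficients into two matrices and attaching Seidel-convention signs then yields a pair $(\delta_0,\delta_1)$ factoring $W_{\bL}=y^{n-1}+xyz$.

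The second step is to compare these pairs with the explicit matrices in the paper. For $L_A$, which is a single arc between two punctures, $CF(L_A,\bL)$ has a single odd and a single even generator, so $\mathcal{F}^{\bL}(L_A)$ is a $1\times 1$ factorization; one checks directly that the two polygons contributing to $m_1^{0,\bb}$ give $(y, y^{n-2}+xz)$, which becomes $(y, y^{n-2}+x^2)$ after the restriction $z=x$, and this is $A=(y,x^{n-1}+y^2)$ after the variable swap $x\leftrightarrow y$ needed to match the $D_n$ presentation of the paper. For $L_{Y_i}$ and $L_{M_i}$, the Lagrangians intersect $\bL$ at four points, and the polygon count reproduces (up to basis change) the $2\times 2$ matrices $(\eta_j,\xi_j)$ and $(\phi_j,\psi_j)$ of \eqref{eq:Dn2}, with the factor of $x$ that distinguishes $M_j$ from $Y_j$ (and $N_j$ from $X_j$) arising from the extra monomials in $\psi_j$ compared to $\xi_j$, visible as longer polygons on one side of the Lagrangian than the other. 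The index $j$ of the matrix factorization is read off directly from how many edges of the polygon are traversed between consecutive lifts of $\bL$, which matches the combinatorial position of the endpoints of $L_{M_i}, L_{Y_i}$ in the $4(n-1)$-gon.

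Once the correspondences $L_A\leftrightarrow A$, $L_{Y_i}\leftrightarrow Y_i$, $L_{M_i}\leftrightarrow M_i$ are established, the statement for $B$, $X_i$, $N_i$ follows immediately from Lemma~\ref{lem:lagtr}: the orientation reversal $L[1]$ maps to the AR translate $\tau\mathcal{F}^{\bL}(L)$, and from the definition of $\phi_j,\psi_j,\xi_j,\eta_j$ one has $\tau(A)=B$, $\tau(Y_i)=X_i$, $\tau(M_i)=N_i$.

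The main technical obstacle is the bookkeeping of polygons and signs: one has to show there are no additional polygons beyond the small families enumerated above. As in the $A_n$ computation, this is ensured by the fact that each holomorphic polygon in the Milnor fiber has a unique lift to the fundamental domain, and its corners $o_j, e_j$ together with the geometric constraint that $\bL$-edges must connect along the $\pm 3$ identification pattern of the $4(n-1)$-gon force the polygon to lie in one of finitely many combinatorial types. I would formalize this by checking, case by case in $i$, that the polygons enumerated have total $(x,y,z)$-weight matching the corresponding matrix entry, and that any further polygon would have to traverse a disallowed identification of the $4(n-1)$-gon. The sign computation is then mechanical from the Seidel convention recalled after Theorem~\ref{thm:lmf}.
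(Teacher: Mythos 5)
Your proposal is correct and follows essentially the same route as the paper: the paper also proves this lemma by directly enumerating the holomorphic polygons contributing to $m_1^{0,\bb}$ in the fundamental domain (illustrating $L_{Y_1}$ with its eight polygons, treating $L_A$ via the two potential-defining polygons cut by the Lagrangian, and leaving the remaining cases as an analogous exercise), matching the resulting matrices with $(\eta_i,\xi_i)$, $(\phi_i,\psi_i)$ up to sign and change of basis after the restriction and the $x\leftrightarrow y$ swap, and then invoking Lemma \ref{lem:lagtr} together with $\tau$ exchanging the two factors for $B$, $X_i$, $N_i$. The only deviations are cosmetic bookkeeping (your polygon count and the exact form of the $1\times1$ factorization for $L_A$ have minor slips, but they do not affect the argument).
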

\begin{proof}
\begin{figure}[h]
\includegraphics[scale=0.5]{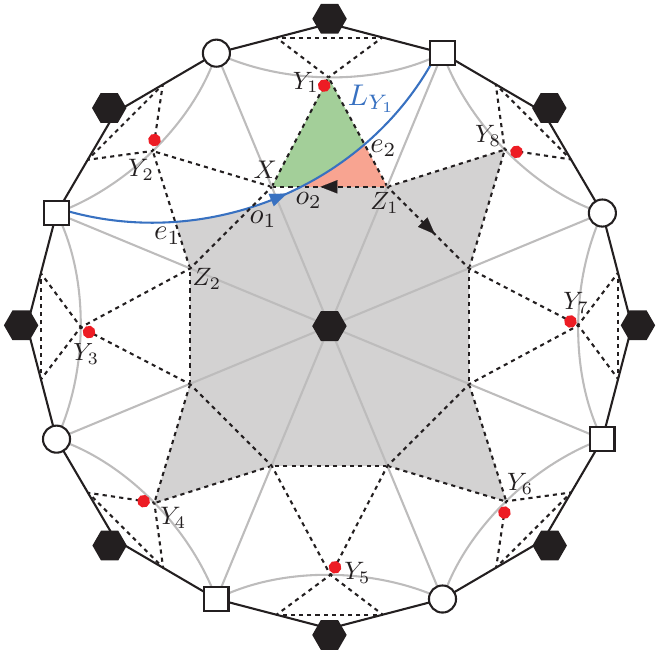}
\centering
\caption{Holomorphic polygons for $m^{0,\bb}_{1}$ in the case of $L_{Y_1}$ in $D_5^T$ Milnor fiber}
\label{fig:D5_disc}
\end{figure}
Potential $W_\bL$ comes from two polygons, minimal  $XYZ$ triangle, and $(n-1)$-gon with all $Y$-corners.
Note that Lagrangian $L_{A}$ cuts each of these two polygons into two parts. 
This gives the corresponding $1\times 1$ matrix factorization.
For the other cases, each of $L_{Y_i},L_{M_i}$ intersect  $\WT{\bL}$ at
two even and two odd points, and counting polygons with signs provides the desired $2\times2$ matrix factorizations.
Let us illustrate the case of $L_{Y_{1}}$ and leave the rest as an exercise. In this case, there are 8 polygons given by
$$o_{1} X Y_{2} e_{1}, o_{1} Y_{3} Y_{5} Y_{7} e_{2}, o_{2} Y_{2} e_{1}, o_{2} Z_{1} e_{2}, e_{1} Z_{2} o_{1}, e_{1} Y_{4} Y_{6} Y_{8} o_{2}, e_{2} Y_{1} o_{1}, e_{2} Y_{1} X o_{2}.$$

These polygons contribute to the following $m^{0,\bb}_{1}$ computations.
\begin{align*}
m^{0,\bb}_{1}(o_{1}) &=   xy \cdot e_{1} - y^{3} \cdot e_{2} &
m^{0,\bb}_{1}(e_{1}) &=   x \cdot o_{1} + y^{3} \cdot o_{2} \\
m^{0,\bb}_{1}(o_{2}) &=  y \cdot e_{1} + x \cdot e_{2} &
m^{0,\bb}_{1}(e_{2}) &=   -y \cdot o_{1} + xy \cdot o_{2}
\end{align*}
Note that some of the signs in $(CF(L_{Y_{1}},\mathbb{L}), -m_{1}^{0,\bb})$ are different from (\ref{eq:Dn2}), but by a simple change of basis, we can identify it with $(\eta_1,\xi_1)$.
\end{proof}

\begin{lemma}\label{lem:dnex}
All AR exact sequences for $D_n$ singularity can be realized as Lagrangian surgeries.
\end{lemma}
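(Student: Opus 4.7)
The plan is to realize each AR exact sequence $0 \to \tau(M) \to N \to M \to 0$ as the surgery exact triangle of Lemma \ref{lem:surc2}. First I would enumerate the AR sequences by reading off the AR quiver of $D_n$: these fall into three structural families --- boundary sequences at the rank-one factorizations $A$ and $B$, internal chain sequences involving $M_i, N_i, X_i, Y_i$ whose middle term is a direct sum of neighbouring indecomposables (targets of irreducible morphisms from $\tau(M)$), and endpoint sequences involving $X_{(n-1)/2}$ for odd $n$ or $C_\pm, D_\pm$ for even $n$. In each case I need to exhibit a pair of Lagrangians $L_1, L_2$ in the Milnor fiber of $D_n^T = C_{2,n-1}$ together with an odd Floer generator $c \in CW^1(L_1, L_2)$ such that $\mathcal{F}^\bL|_{g=0}(L_1) \cong \tau(M)$ and $\mathcal{F}^\bL|_{g=0}(L_2) \cong M$, and such that the Lagrangian surgery $L_3$ of $L_1$ and $L_2$ at $c$ maps under $\mathcal{F}^\bL|_{g=0}$ to $N$. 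Applying Lemma \ref{lem:surc2} and the uniqueness statement of Appendix \ref{sec:ac}, the resulting triangle is then forced to be isomorphic to the AR sequence.

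To choose the Lagrangians I would build on the identifications already made: $L_A, L_{Y_i}, L_{M_i}$ for $A, Y_i, M_i$, and their orientation reversals for $B, X_i, N_i$ via Lemma \ref{lem:lagtr}. For a boundary sequence such as $0 \to B \to Y_1 \to A \to 0$, the pair $L_B = L_A[1]$ and $L_A$ is conical to a common puncture, and the wrapped generator $c \in CW^1(L_B, L_A)$ there has surgery homotopic to $L_{Y_1}$ --- the analogue of the innermost surgery step in the $A_n$ treatment. For an internal chain sequence, I would take $c$ at the transverse intersection between a chosen $L_{Y_i}$ (or $L_{M_i}$) and a suitable rotated copy of another Lagrangian in the chain, mirroring the role of the generator $\gamma_k$ from the $A_n$ discussion; the two-lobed nature of the resulting surgery curve then geometrically witnesses the direct-sum decomposition of the middle term $N$. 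In every case the verification of $\mathcal{F}^\bL|_{g=0}(L_i)$ reduces to a signed polygon count for $m_1^{0,\bb}$, identical in flavour to the computations already presented for $L_A$ and $L_{Y_1}$.

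The main obstacle will be the endpoint cases, where the symmetry of the $D_n$ quiver degenerates. For odd $n$, $X_{(n-1)/2}$ is $\tau$-fixed (the dotted self-loop), which forces the associated Lagrangian to be isomorphic to its orientation reversal; for even $n$, the splitting of the quiver into the $C_\pm$ and $D_\pm$ branches must be matched by a geometric splitting of the corresponding Lagrangian into two component curves. Identifying the correct Lagrangian representative in these exceptional positions, and confirming that the surgery produces the claimed (possibly disconnected) Lagrangian realizing the middle term, will require a careful analysis of the tessellation from Theorem \ref{thm:tess} near the central orbifold point of $[M_{C_{2,n-1}}/G_{C_{2,n-1}}]$. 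Once these endpoint pictures are fixed, every remaining AR sequence is verified by the same uniform recipe: draw $L_1, L_2$, read off $c$, perform surgery, and count polygons to confirm that the mirror matrix factorization agrees with the prescribed indecomposable.
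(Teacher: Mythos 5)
Your proposal follows essentially the same route as the paper: the paper reduces to the four families of AR sequences $0\to B\to Y_1\to A\to 0$, $0\to X_1\to B\oplus N_1\to Y_1\to 0$, $0\to N_i\to Y_i\oplus X_i\to M_i\to 0$, $0\to X_i\to M_{i-1}\oplus N_i\to Y_i\to 0$ (the rest being orientation reversals via Lemma \ref{lem:lagtr}), exhibits the surgeries in pictures using the Lagrangians already matched to the indecomposables, and invokes Lemma \ref{lem:surc2} together with the Appendix \ref{sec:ac} uniqueness to identify the triangles with the AR sequences. Your extra caution about the endpoint/$\tau$-fixed positions is reasonable but handled in the paper by the same figure-plus-orientation-reversal bookkeeping, so there is no substantive difference in method.
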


\begin{proof}
It is enough to check the following exact sequences (the other four will correspond to the orientation reversal of Lagrangians).
\begin{figure}[h]
\begin{subfigure}{0.45\textwidth}
\includegraphics[scale=0.45]{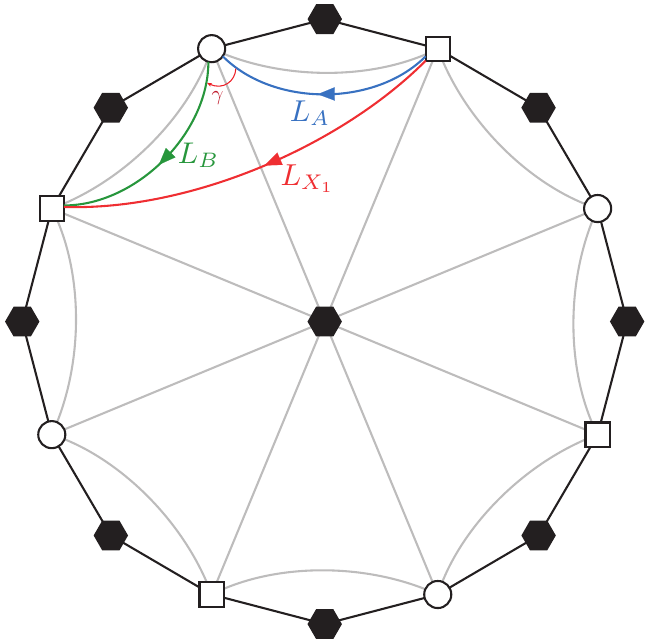}
\centering
\caption{Lagrangian surgery for \eqref{ar:d1}}
\end{subfigure}
\begin{subfigure}{0.45\textwidth}
\includegraphics[scale=0.45]{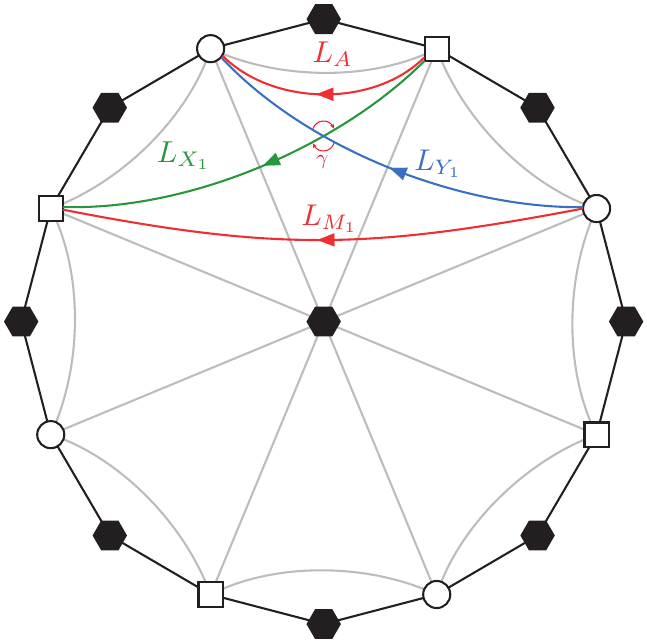}
\centering
\caption{Lagrangian surgery for \eqref{ar:d2}}
\end{subfigure}
\centering
\begin{subfigure}{0.45\textwidth}
\includegraphics[scale=0.45]{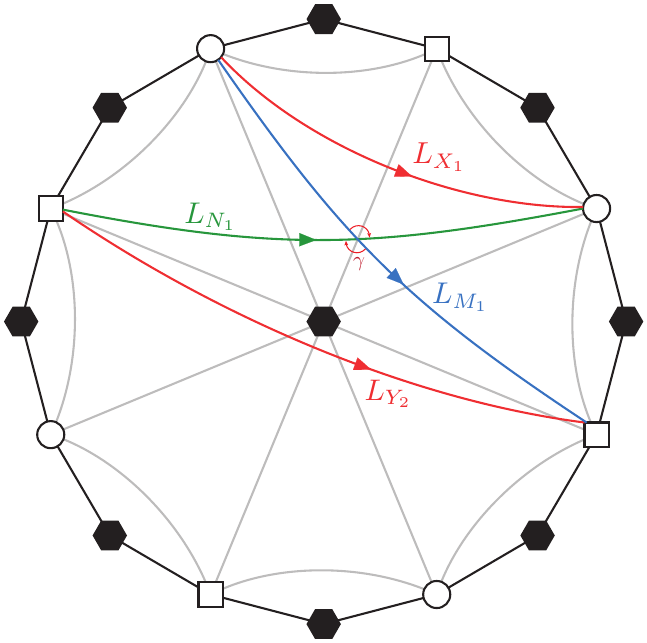}
\centering
\caption{Lagrangian surgery for \eqref{ar:d3}}
\end{subfigure}
\begin{subfigure}{0.45\textwidth}
\includegraphics[scale=0.45]{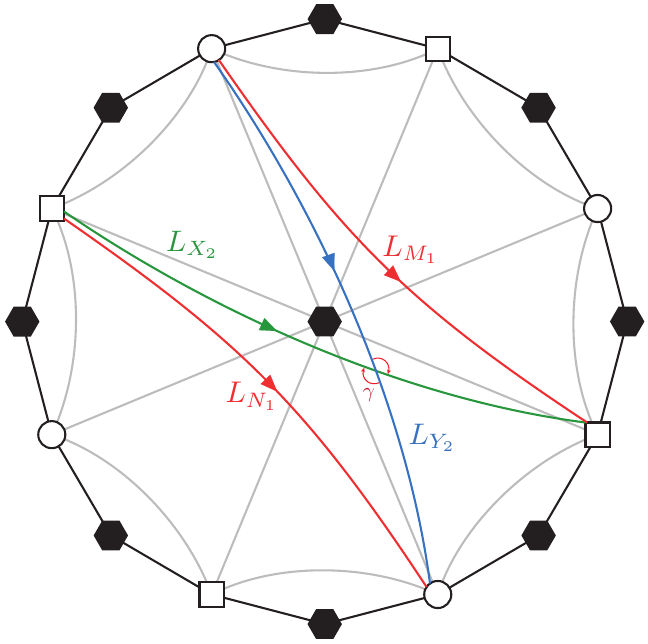}
\centering
\caption{Lagrangian surgery for \eqref{ar:d4}}
\end{subfigure}
\centering
\caption{Auslander--Reiten exact sequences for $D_5$ via Lagrangian surgery}
\label{fig:dnmor} 
\end{figure}

\centering{
\begin{subequations}
\noindent \begin{minipage}{0.45\textwidth}
\begin{align}
0 \to B \to Y_{1} \to A \to 0  \label{ar:d1}
\end{align}
\end{minipage}
\begin{minipage}{0.45\textwidth} 
\begin{align}
0 \to X_1 \to B \oplus  N_1 \to  Y_1 \to 0  \label{ar:d2}
\end{align}
\end{minipage}\\
\begin{minipage}{0.45\textwidth}
\begin{align}
0 \to N_i \to Y_i \oplus  X_i \to  M_i \to 0 \label{ar:d3}
\end{align}
\end{minipage}
\begin{minipage}{0.45\textwidth}
\begin{align}
0 \to X_i \to M_{i-1} \oplus N_i \to  Y_i \to 0 \label{ar:d4}
\end{align}
\end{minipage}
\end{subequations}}

Corresponding Lagrangian surgeries are illustrated in Figure \ref{fig:dnmor}, and
we obtain the above exact sequence, by applying Lemma \ref{lem:surc2}.
\end{proof}

\subsection{$\boldsymbol{E_6}$-case}
For $E_6$ singularity $x^3+y^4$, its AR quiver of indecomposable MF's is the following.
$$\begin{tikzcd}
& & B \arrow[dl,shift left] \arrow[r] & M_{1} \arrow[ddl] & \\
M_{2} \arrow[r,shift left] \arrow[out=240,in=300,dash,loop,swap,dashed]  & X \arrow[l,shift left] \arrow[ur,shift left] \arrow[dr,shift left] \arrow[out=235,in=305,dash,loop,swap,dashed] & & & R \arrow[ul] \\
& & A \arrow[ul,shift left] \arrow[r] \arrow[uu,dash,dashed] & N_{1} \arrow[ur] \arrow[uu,dash,dashed] \arrow[uul]& \\
\end{tikzcd}$$

Here, $M_j$ (resp. $N_j$) is a $2\times 2$ matrix factorization given by $(\phi_j,\psi_j)$ (resp. $(\psi_j,\phi_j)$) for $j=1,2$.
\begin{equation*} 
\phi_1 = \begin{pmatrix}
x& y\\
y^{3}& -x^2\\
\end{pmatrix}, \psi_1=
\begin{pmatrix}
x^2& y\\
y^3& -x\\
\end{pmatrix}
\end{equation*}
\begin{equation*}
\phi_2 = \begin{pmatrix}
x& y^2\\
y^{2}& -x^2\\
\end{pmatrix}, \psi_2=
\begin{pmatrix}
x^2& y^2\\
y^2& -x\\
\end{pmatrix}
\end{equation*}
$A$ (resp. $B$) is a $3\times 3$ matrix factorization given by $(\alpha,\beta)$ (resp. $(\beta,\alpha)$).
\begin{equation}\label{eq:mfe6b}
\alpha = \begin{pmatrix}
 y^{3}& x^{2}& xy^{2}\\
 xy&-y^{2}& x^{2}\\
 x^{2}&-xy&  -y^{3}\\
\end{pmatrix},  \beta= \begin{pmatrix}
 y& 0&  x\\
  x&  -y^{2}& 0\\
0&  x&  -y\\
\end{pmatrix}
\end{equation}
$X$ is a $4 \times 4$ matrix factorization given by $(\xi,\eta)$ where
\begin{equation*} 
\xi= \begin{pmatrix}
\phi_2& \epsilon_3\\
0 & \psi_2\\
\end{pmatrix}, \eta=
\begin{pmatrix}
\psi_2& \epsilon_4\\
0 &  \phi_2 \\
\end{pmatrix} 
\;\;\; \textrm{with} \;\; 
\epsilon_3= \begin{pmatrix}
0& y\\
-xy & 0\\
\end{pmatrix},
\epsilon_4= \begin{pmatrix}
0& xy\\
-y & 0\\
\end{pmatrix}.
\end{equation*}
\begin{figure}[h]
\begin{subfigure}{0.43\textwidth}
\includegraphics[scale=0.5]{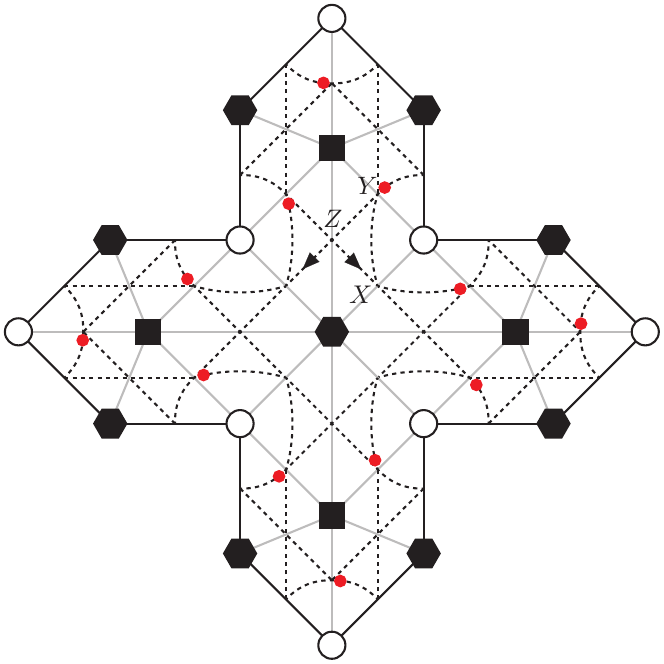}
\centering
\end{subfigure}
\begin{subfigure}{0.43\textwidth}
\includegraphics[scale=0.5]{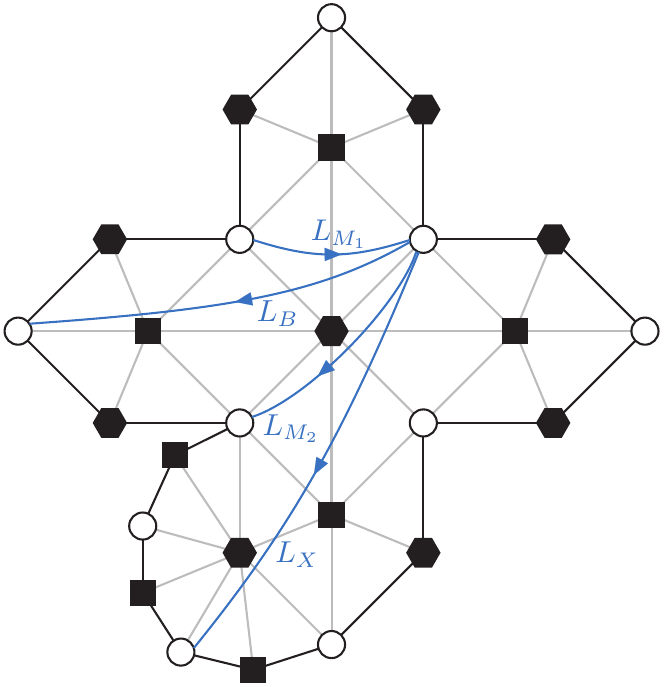}
\centering
\end{subfigure}
\centering
\caption{\label{fig:e6cell} Milnor fiber for $E_6$ and  Lagrangians corresponding to indecomposable MF's}
\end{figure}

Since $F_{3,4}^T = F_{3,4}$, we consider the Milnor fiber of $F_{3,4} = x^3+ y^{4}$.
First, it is given by $\Z/4$-copies of a hexagon glued as in Figure \ref{fig:e6cell}, whose boundary is identified with $(\pm 5)$ pattern.
$\Z/4$-action is the rotation at the center, and $\Z/3$-action is the simultaneous rotation in every hexagon. 

Lifts of Seidel Lagrangian are drawn in dotted lines, and $XXX$ and $XYZ$-triangles, and $YYYY$-quadrangle produce the potential  $W_\bL= x^3+y^4+xyz$
and after the restriction $z=0$, we get $E_6$ singularity $x^3+y^4$.

\begin{lemma}
Lagrangians $L_{M_i}, L_{B}, L_X$ in Figure \ref{fig:e6cell}  correspond to matrix factorizations $M_i,B,X$.
By Lemma \ref{lem:lagtr},
orientation reversals of these Lagrangians correspond to $N_i,A,X$ respectively.
\end{lemma}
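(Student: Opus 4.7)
The plan is to carry out the same kind of direct polygon count as was done for the $A_n$ and $D_n$ cases, applied to the four Lagrangians $L_{M_1}, L_{M_2}, L_B, L_X$ displayed in Figure~\ref{fig:e6cell}. For each Lagrangian $L$ we must identify the intersection points $L\cap\WT\bL$ (split into even/odd generators), enumerate every immersed holomorphic polygon with boundary on $L\cup \bL$ whose interior corners on $\bL$ are among $\{X,Y,Z\}$ and whose two corners on $L$ are the chosen input/output intersections, and then compute $m_{1}^{0,\bb}$ by the usual signed sum following the conventions recalled after Theorem~\ref{thm:lmf}. Finally, a linear change of basis (together with a sign normalization on individual generators of $CF(L,\bL)$) is used to put the resulting factorization into the normal form listed for $\phi_i,\psi_i,\alpha,\beta,\xi,\eta$. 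Once the $M_i,B,X$ cases are established, the corresponding statements for $N_i,A$ and the orientation-reversed $X$ follow immediately from Lemma~\ref{lem:lagtr}.

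First I would do the two rank-$2$ cases $L_{M_1}$ and $L_{M_2}$, which are entirely parallel to the $A_4$ and $D_5$ computations done in the previous subsections: each has two even and two odd intersections with $\WT\bL$, and the polygons one finds are a ``short'' $XYZ$-triangle on one side and a longer polygon on the other, running once around either the $\Z/3$ or the $\Z/4$ vertex of Figure~\ref{fig:e6cell}. Reading off the entries and setting $z=0$ in $W_\bL=x^3+y^4+xyz$ gives exactly $(\phi_1,\psi_1)$ and $(\phi_2,\psi_2)$ respectively. The Lagrangian $L_B$ will have three even and three odd intersections, and the polygon count reproduces the $3\times 3$ matrices $(\beta,\alpha)$ of \eqref{eq:mfe6b}; the off-diagonal entries $xy$, $xy^2$, $y^3$ there come from polygons that wrap partially around the $\Z/3$ vertex before closing up.

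The main obstacle is $L_X$. It is the longest of the four curves, intersects $\WT\bL$ in $4+4$ points, and produces a $4\times 4$ matrix factorization. The block upper-triangular shape of $(\xi,\eta)$ must be exhibited by the polygons: the diagonal blocks $\phi_2,\psi_2$ arise from short polygons living entirely on one ``half'' of $L_X$ (so this reuses the $M_2$ computation twice), while the off-diagonal blocks $\epsilon_3,\epsilon_4$ must come from exactly those polygons that cross from one half of $L_X$ to the other, picking up one extra $X$ or $Y$ corner on $\bL$. Verifying that no additional polygons occur requires ruling out curves that wrap multiple times around the two orbifold vertices of $\mathbb P^1_{3,4,12}$; here one uses that $L_X$ is exact and embedded in a single fundamental domain, so wrapping numbers are bounded by the discrete topology of the tessellation in Figure~\ref{fig:e6cell}. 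After assembling, one checks that the signs produced by the orientation of $\bL$, the reversal of orientation of $L_X$ at each corner, and the spin-structure marked dot combine so that the boundary maps are $\xi,\eta$ up to conjugation by a diagonal sign matrix.

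Once these four correspondences are verified, Lemma~\ref{lem:lagtr} gives $\overline{L_{M_i}}\mapsto N_i$, $\overline{L_B}\mapsto A$, and $\overline{L_X}\mapsto X$ (the $X$ case is self-dual at the level of objects but the resulting automorphism distinguishes the two orientations), completing the statement. The only genuinely new work beyond Propositions already used in the $A_n,D_n$ cases is the polygon enumeration for $L_B$ and $L_X$, and I expect that to be straightforward but tedious; the potential pitfall, as above, is to correctly identify the off-diagonal polygons for $L_X$ and the $3$-cycle of off-diagonal polygons for $L_B$ that produce the cyclic pattern visible in $\alpha,\beta$.
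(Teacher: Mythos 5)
Your plan is correct and is essentially the paper's own argument: the paper proves this lemma by exactly the same direct signed polygon count computing $m_1^{0,\bb}$ for the Lagrangians of Figure \ref{fig:e6cell} (writing out only the $L_B$ case in detail, listing its 14 polygons and identifying the result with the matrices of \eqref{eq:mfe6b} up to change of basis), and then deduces the $N_i$, $A$ and orientation-reversed $X$ cases from Lemma \ref{lem:lagtr} just as you do. The only differences are that your sketch defers the actual enumerations that the paper carries out, and a minor descriptive slip (the off-diagonal entries $y^3$, $xy^2$ of $\alpha$ come from polygons wrapping the order-$4$ vertex with $Y$-corners, not the $\Z/3$ vertex), neither of which affects the validity of the approach.
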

\begin{proof}
We explain the case of $L_B$ only. There are 14 polygons;
$$o_{1} Y_{3} e_{1}, o_{1} X_{1} e_{2}, o_{2} Y_{6} Y_{8} e_{2}, o_{2} X_{7} e_{3}, o_{3} X_{10} e_{1}, o_{3} Y_{9} e_{3},$$
$$e_{1} Y_{5} Y_{7} Y_{1} o_{1}, e_{1} X_{6} Y_{4} o_{2}, e_{1} X_{6} X_{8} o_{3}, e_{2} Y_{2} Y_{4} o_{2}, e_{2} Y_{2} X_{8} o_{3}, e_{3} X_{9} Y_{7} Y_{1} o_{1}, e_{3} X_{9} X_{11} o_{2}, e_{3} Y^{\prime}_{4} Y^{\prime}_{6} Y^{\prime}_{8} o_{3}.$$
Last polygon is a part of polygon for $y^4$, cut out by $L_B$ along the edge $e_{3}o_3$.
It lies outside of the domain in Figure \ref{fig:E6_disc}, but can be obtained by a group action of the vertex without prime.
\begin{figure}[h]
\includegraphics[scale=0.6]{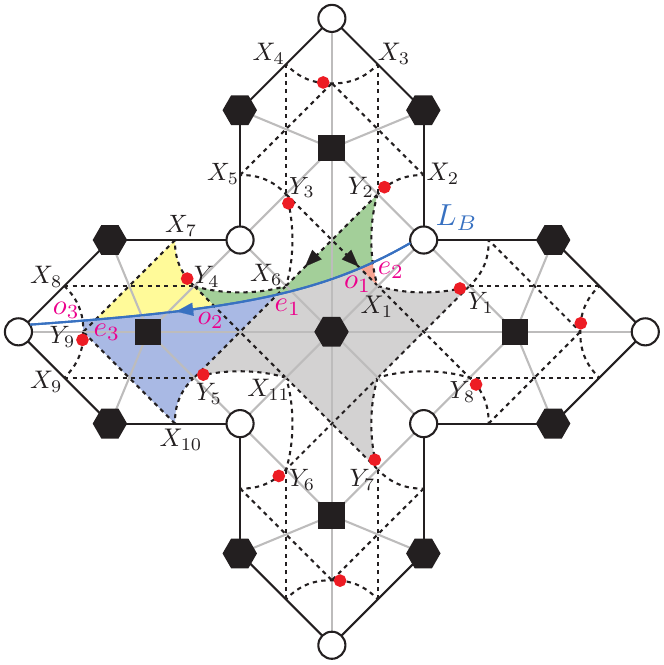}
\centering
\caption{
Holomorphic polygons for $m^{0,\bb}_{1}$ in the case of $L_B$ in $E_6$ Milnor fiber}
\label{fig:E6_disc}
\end{figure}
The above polygons contribute to $m^{0,\bb}_{1}$ as follows.
\begin{align*}
m^{0,\bb}_{1}(o_{1}) &=   -y \cdot e_{1} + x \cdot e_{2} &m^{0,\bb}_{1}(e_{1}) &=   -y^{3} \cdot o_{1} + xy \cdot o_{2} + x^{2} \cdot o_{3}\\
m^{0,\bb}_{1}(o_{2}) &=  y^{2} \cdot e_{2} - x \cdot e_{3} &m^{0,\bb}_{1}(e_{2}) &=   x^{2} \cdot o_{1} + y^{2} \cdot o_{2} + xy \cdot o_{3}\\
m^{0,\bb}_{1}(o_{3}) &=   x \cdot e_{1} + y \cdot e_{3} &m^{0,\bb}_{1}(e_{3}) &=   xy^{2} \cdot o_{1} - x^{2} \cdot o_{2} + y^{3} \cdot o_{3}\\
\end{align*}
These give the matrices in \eqref{eq:mfe6b} (up to change of a basis).
\end{proof}

\begin{lemma}
All AR exact sequences for $E_6$ singularity can be realized as Lagrangian surgeries.
\end{lemma}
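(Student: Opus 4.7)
The plan is to replicate the strategy of Lemma \ref{lem:dnex}: enumerate the six AR exact sequences for $E_{6}$ from the quiver above, and for each one exhibit a triple of Lagrangians in $[M_{F_{3,4}}/G_W]$ together with a Floer morphism whose mapping cone realises the middle term, then apply Lemma \ref{lem:surc} and Lemma \ref{lem:surc2} to obtain the corresponding AR triangle on the mirror side. Using the AR translations $\tau(M_1)=N_1$, $\tau(B)=A$, $\tau(X)=X$, $\tau(M_2)=M_2$ read off from the dashed arrows, and after discarding the free summand $R$ in the stable category, the six AR sequences are
\begin{align*}
&0\to N_1\to B\to M_1\to 0, & &0\to M_1\to A\to N_1\to 0,\\
&0\to B\to M_1\oplus X\to A\to 0, & &0\to A\to N_1\oplus X\to B\to 0,\\
&0\to M_2\to X\to M_2\to 0, & &0\to X\to B\oplus A\oplus M_2\to X\to 0.
\end{align*}
By Lemma \ref{lem:lagtr}, the members of each of the first two horizontally adjacent pairs are exchanged by orientation reversal of every Lagrangian involved, so it suffices to realise the left column together with the two self-dual bottom-row sequences.

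For the AR sequences of $M_1$ and of $M_2$, the plan is to model the surgery on the corresponding $A_n$ and $D_n$ constructions: take the shortest wrapped chord $c$ between $L_{N_1}$ and $L_{B}$ (resp.\ between $L_{M_2}$ and itself, using the self-duality $\tau M_2 = M_2$) near a suitable puncture of the tessellated Milnor fiber, and identify the Lagrangian surgery at $c$ with $L_{M_1}$ (resp.\ with $L_X$) as curves in Figure \ref{fig:e6cell}. For the AR of $A$, I would choose an odd Floer generator $c \in CW^{1}(L_B, L_{M_1}\oplus L_X)$ visible in the same tessellation and check that the surgery produces the Lagrangian $L_A$ of the previous lemma. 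Since these Lagrangians are already known to map to the indecomposable matrix factorizations $A,B,M_i,N_i,X$ under $\mathcal{F}^\bL|_{g=0}$, Lemma \ref{lem:surc2} produces distinguished triangles in $\mathcal{MF}(W_\bL)|_{g=0}\simeq \mathcal{MF}(E_6)$ with the correct outer terms, and the uniqueness result of Appendix \ref{sec:ac} then forces these triangles to agree with the listed AR sequences.

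The main obstacle will be the AR sequence for $X$, whose middle term is a threefold direct sum $B\oplus A\oplus M_2$. The plan is to decompose the surgery into two stages mirroring an iterated extension: first, use the irreducible morphism $M_2\to X$ to surger $L_X$ against $L_{M_2}$, obtaining an intermediate Lagrangian quasi-isomorphic to a two-step extension built from $L_A\oplus L_B$; second, perform a further surgery against $L_A\oplus L_B$ to return to $L_X$. Equivalently, one can work directly in twisted complexes with a single morphism $c\in CW^{1}(L_X,L_X)$ whose cone, via the $A_\infty$-structure, expands to the required three-term complex. Verifying this decomposition reduces to counting a small number of polygons in the equivariant tessellation, after which the conclusion is again immediate from Lemma \ref{lem:surc2} and the uniqueness statement of Appendix \ref{sec:ac}.
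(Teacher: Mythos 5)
Your overall strategy is the paper's: pick the explicit Lagrangians of Figure \ref{fig:e6cell}, realize each AR sequence as a surgery triangle via Lemmas \ref{lem:surc}--\ref{lem:surc2}, reduce to half the list by orientation reversal (Lemma \ref{lem:lagtr}), and let the uniqueness theorem of Appendix \ref{sec:ac} identify the resulting triangle with the AR sequence. However, several of your surgery assignments are incompatible with the convention of the lemmas you invoke: in Lemma \ref{lem:surc2} the surgery $L_3$ of $L_1,L_2$ at $c\in CW^1(L_1,L_2)$ is the cone and sits as the \emph{middle} term of the triangle $L_1\to L_3\to L_2$. So to realize $0\to B\to X\oplus M_1\to A\to 0$ one must surger the Lagrangians mirror to $B$ and $A$ and check that the output maps to $X\oplus M_1$; surgering $L_B$ against $L_{M_1}\oplus L_X$ and asking the result to be $L_A$ would instead realize a triangle $B\to A\to M_1\oplus X$, which is not an AR sequence (already the sizes/ranks cannot match). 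The same inversion occurs in your plan for the sequence ending at $M_1$: surgering $L_{N_1}$ with $L_B$ and expecting $L_{M_1}$ realizes $N_1\to M_1\to B$ rather than $N_1\to B\to M_1$; the paper instead realizes the translated sequence $0\to M_1\to A\to N_1\to 0$ by surgering the curves for $M_1$ and $N_1$ and obtaining the curve for $A$ as the middle term. (Your treatment of $0\to M_2\to X\to M_2\to 0$ is arranged correctly.)

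The more serious gap is in the sequence $0\to X\to M_2\oplus A\oplus B\to X\to 0$, which you correctly single out as the main obstacle but do not actually resolve. The self-surgery of $L_X$ produces, besides the curve for $M_2$, a \emph{connected} Lagrangian whose mirror is a priori only a cone of some morphism $\gamma\colon B[1]\to A$ --- exactly the ``two-step extension built from $L_A\oplus L_B$'' in your sketch. The appendix theorem only identifies an exact sequence with the AR sequence when its terms agree, so you must prove that this extension is trivial, i.e.\ that the middle term is literally the direct sum $A\oplus B$. That is not a matter of counting a few more polygons: in the paper one computes the $6\times 6$ matrix factorization of the connected Lagrangian, reads off $\gamma=(y^{2}\cdot\mathrm{id},\,y^{2}\cdot\mathrm{id})$, and exhibits an explicit null-homotopy $h=(h_1,h_2)$ showing $\gamma\simeq 0$. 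Your two-stage surgery proposal never addresses this splitting, and without it the identification with the AR sequence for $X$ (and hence the lemma) does not follow.
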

\begin{proof}
We need to check following exact sequences (the other four are given by orientation reversals).
\begin{align}
0 \to M_1\to A \to N_1 \to 0  \label{ar:e61}\\
0 \to B \to X \oplus  M_1 \to A \to 0 \label{ar:e62}\\
0 \to X  \to M_2 \oplus A \oplus B \to  X \to 0 \label{ar:e63} \\
0 \to M_2 \to X \to M_2 \to 0 \label{ar:e64}
\end{align}

We illustrated the corresponding Lagrangian surgery in Figure \ref{fig:e6mor}.
\begin{figure}[h]
\begin{subfigure}{0.5\textwidth}
\includegraphics[scale=0.45]{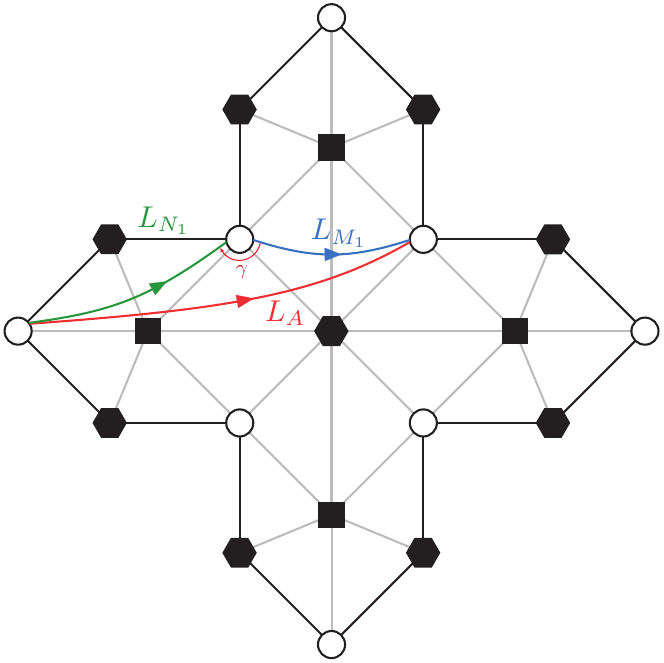}
\centering
\caption{Lagrangian surgery for the exact sequence \eqref{ar:e61}}
\end{subfigure}
\begin{subfigure}{0.33\textwidth}
\includegraphics[scale=0.45]{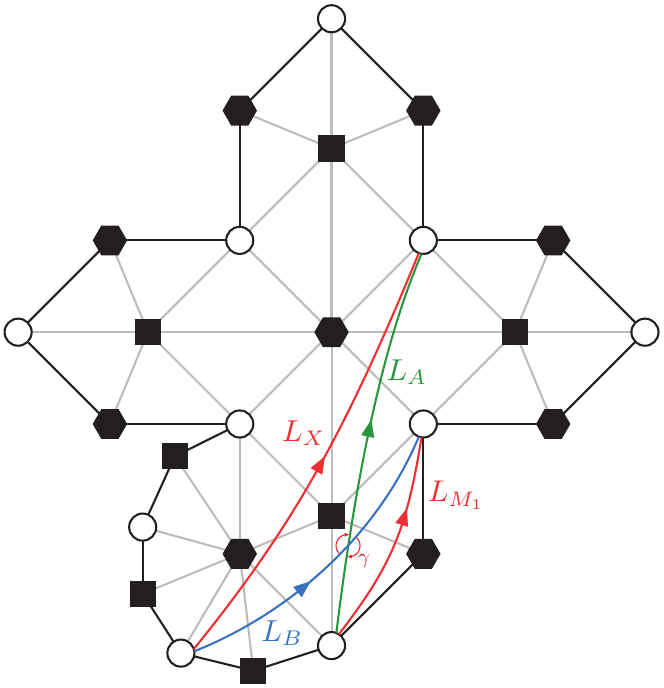}
\centering
\caption{for the exact sequence \eqref{ar:e62}}
\end{subfigure}
\newline
\begin{subfigure}{0.5\textwidth}
\includegraphics[scale=0.45]{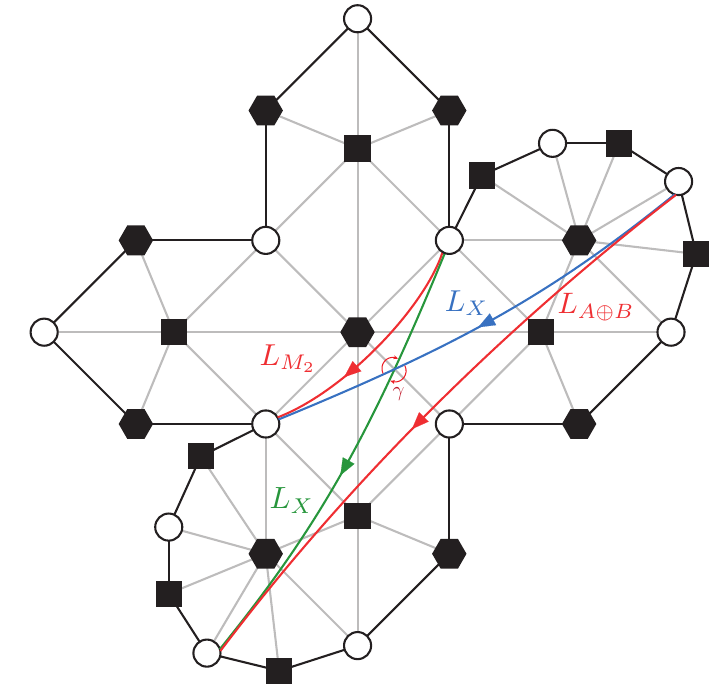}
\centering
\caption{Lagrangian surgery for the exact sequence \eqref{ar:e63}}
\end{subfigure}
\begin{subfigure}{0.33\textwidth}
\includegraphics[scale=0.45]{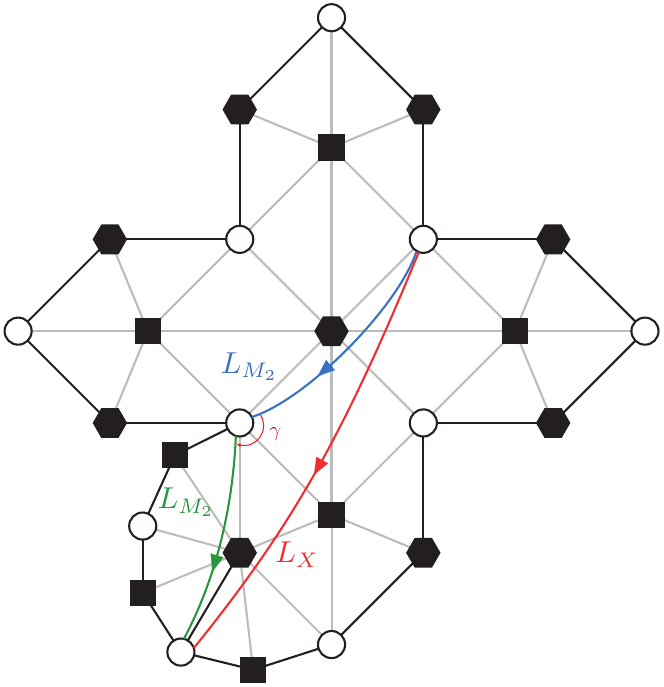}
\caption{for the exact sequence \eqref{ar:e64}}
\centering
\end{subfigure}
\centering
\caption{Auslander--Reiten exact sequences for $E_6$ via Lagrangian surgery  }
\label{fig:e6mor}
\end{figure}
Note that in the case of  \eqref{ar:e63}, a connected Lagrangian submanifold, denoted as  $L_{A \oplus B}$,
corresponds to the direct sum $A \oplus B$. This does not directly follow from the picture: $L_{A \oplus B}$
can be considered as a mapping cone of a morphism $\gamma$ from $L_A$ to $L_B$, but
$\gamma$ can be shown to be zero up to homotopy.
More precisely, matrix factorization from the Lagrangian $L_{A \oplus B}$ can be computed as 
\begin{equation*}
-\begin{pmatrix}
-y^{3}& x^{2}& xy^{2} & y^{2} &0 &0\\
xy&  y^{2}& -x^{2}& 0& y^{2}& 0\\
x^{2}&  xy& y^{3}& 0& 0& y^{2}\\
0& 0& 0& -y& 0& x\\
0& 0& 0& x& y^{2}& 0\\
0& 0& 0& 0& -x& y\\
\end{pmatrix} \cdot
-\begin{pmatrix}
-y& 0& x& -y^{2} &0 &0\\
x& y^{2}& 0& 0& -y^{2}& 0\\
0& -x& y& 0& 0& -y^{2}\\
0& 0& 0& -y^{3}& x^{2}& xy^{2}\\
0& 0& 0& xy& y^{2}& -x^{2}\\
0& 0& 0& x^{2}& xy& y^{3}\\
\end{pmatrix}
\end{equation*}
after basis change. It is a (minus of) mapping cone of $\gamma: B[1] \to A$ given by $(y^2 \cdot id, y^2 \cdot id)$.
$$\begin{tikzcd}[row sep=1.2cm, column sep=2cm, ampersand replacement=\&]
S^{3} \arrow{r}{-\phi} \arrow{d}{-y^{2} \cdot id} \& S^{3} \arrow{r}{-\psi} \arrow{d}{-y^{2} \cdot id} \arrow{dl}{h_{1}} \& S^{3} \arrow{d}{-y^{2} \cdot id} \arrow{dl}{h_{2}} \\
S^{3} \arrow{r}[swap]{-\phi} \& S^{3} \arrow{r}[swap]{-\psi} \& S^{3} \\
\end{tikzcd}$$
where $\phi = \begin{pmatrix}
-y^{3}& x^{2}& xy^{2}\\
xy& y^{2}& -x^{2}\\
x^{2}& xy& y^{3}\\
\end{pmatrix}$, $\psi = \begin{pmatrix}
-y& 0& x\\
x& y^{2}& 0\\
0& -x& y\\
\end{pmatrix}.$

We can find explicit null-homotopy $h = (h_{1}, h_{2})$ of $(y^2 \cdot id, y^2 \cdot id) \in \mbox{Hom} (A,A)$.

\begin{equation*}
\begin{pmatrix}
-y^{3}& x^{2}& xy^{2}\\
xy& y^{2}& -x^{2}\\
x^{2}& xy& y^{3}\\
\end{pmatrix}
\begin{pmatrix}
0& 0& 0\\
0& 1& 0\\
0& 0& 0\\
\end{pmatrix}
+
\begin{pmatrix}
-y& 0& x\\
0& 0& 0\\
0& 0& y\\
\end{pmatrix}
\begin{pmatrix}
-y& 0& x\\
x& y^{2}& 0\\
0& -x& y\\
\end{pmatrix} 
=
\begin{pmatrix}
y^{2}& 0& 0\\
0& y^{2}& 0\\
0& 0& y^{2}\\
\end{pmatrix} 
\end{equation*}

\begin{equation*}
\begin{pmatrix}
-y& 0& x\\
x& y^{2}& 0\\
0& -x& y\\
\end{pmatrix} 
\begin{pmatrix}
-y& 0& x\\
0& 0& 0\\
0& 0& y\\
\end{pmatrix}
+
\begin{pmatrix}
0& 0& 0\\
0& 1& 0\\
0& 0& 0\\
\end{pmatrix}
\begin{pmatrix}
-y^{3}& x^{2}& xy^{2}\\
xy& y^{2}& -x^{2}\\
x^{2}& xy& y^{3}\\
\end{pmatrix}
=
\begin{pmatrix}
y^{2}& 0& 0\\
0& y^{2}& 0\\
0& 0& y^{2}\\
\end{pmatrix} 
\end{equation*}
Hence, this mapping cone is isomorphic to the direct sum $A \oplus B$.

\end{proof}

\begin{remark} \label{rmk:inv}
 We can prove that if $L$ maps to a matrix factorization $P$, then its involution image maps to $P$ or $P[1]$ in ADE case. 
 \end{remark}

\subsection{$\boldsymbol{E_7}$-case}
For $E_7$ singularity $x^3+xy^3$, its AR quiver of indecomposable MF's is the following.
$$\begin{tikzcd}[row sep=small]
& & & C \arrow[dddd] \arrow[r,dash,dashed] & D \arrow[dd] & & & \\
& & & & & & & \\
A \arrow[r] & M_{2} \arrow[ddl] \arrow[r] & Y_{2} \arrow[ddl] \arrow[rr] & & Y_{3} \arrow[uul] \arrow[ddll] \arrow[r] & Y_{1} \arrow[ddll] \arrow[r] & M_{1} \arrow[ddl] & \\
& & & & & & & R \arrow[ul] \\
B \arrow[r] \arrow[uu,dash,dashed] & N_{2} \arrow[uul] \arrow[r] \arrow[uu,dash,dashed] & X_{2} \arrow[uul] \arrow[r] \arrow[uu,dash,dashed] & X_{3} \arrow[uuuur] \arrow[uul] \arrow[rr] \arrow[uur,dash,dashed] & & X_{1} \arrow[uul] \arrow[r] \arrow[uu,dash,dashed] & N_{1} \arrow[uul] \arrow[ur] \arrow[uu,dash,dashed] & \\
\end{tikzcd}$$

Here, $A$ (resp. $B$) is a $1\times 1$ factorization $(x, x^2+y^3)$ (resp. $( x^2+y^3, x)$).
$C$ (resp. $D$) is a $2\times 2$ matrix factorization given by $(\gamma,\delta)$ (resp. $(\delta,\gamma)$).
\begin{equation*} 
\gamma= \begin{pmatrix}
x^2& xy\\
xy^2&  -x^2\\
\end{pmatrix}, \delta=
\begin{pmatrix}
x& y\\
y^2& -x\\
\end{pmatrix}
\end{equation*}
$M_j$ (resp. $N_j$) is a $2\times 2$ matrix factorization given by $(\phi_j,\psi_j)$ (resp. $(\psi_j,\phi_j)$) for $j=1,2$.
\begin{equation*} 
\phi_1 = \begin{pmatrix}
x& y \\
xy^2& -x^2\\
\end{pmatrix}, \psi_1=
\begin{pmatrix}
x^2& y\\
xy^2& -x\\
\end{pmatrix}
\end{equation*}
\begin{equation*} 
\phi_2 = \begin{pmatrix}
x& y^2\\
xy& -x^2\\
\end{pmatrix}, \psi_2=
\begin{pmatrix}
x^2& y^2\\
xy& -x\\
\end{pmatrix}
\end{equation*}
$X_j$ (resp. $Y_j$) is a $3\times 3$ matrix factorization given by $(\xi_j,\eta_j)$ (resp. $(\eta_j,\xi_j)$) for $j=1,2$.
$$\xi_1 = \begin{pmatrix}
 xy^2& -x^{2}& -x^2y\\
 xy&y^{2}& -x^{2}\\
 x^{2}&xy&  xy^{2}\\
\end{pmatrix},  \eta_1= \begin{pmatrix}
 y& 0&  x\\
  -x&  xy& 0\\
0&  -x&  y\\
\end{pmatrix}
$$
$$\xi_2 = \begin{pmatrix}
 x^2& -y^{2}& -xy\\
 xy& x& -y^{2}\\
 xy^2& xy&  x^2\\
\end{pmatrix},  \eta_2= \begin{pmatrix}
 x& 0&  y\\
  -xy& x^{2}& 0\\
0&  -xy&  x\\
\end{pmatrix}
$$
Finally, $X_3$ (resp. $Y_3$) is a $4\times 4$ matrix factorization given by $(\xi_3,\eta_3)$ (resp. $(\eta_3,\xi_3)$).
\begin{equation*} 
\xi_3= \begin{pmatrix}
\gamma& \epsilon\\
0 & \delta\\
\end{pmatrix}, \eta_3=
\begin{pmatrix}
\delta& -\epsilon\\
0 &  \gamma \\
\end{pmatrix} 
\;\;\; \textrm{with} \;\; 
\epsilon= \begin{pmatrix}
y^{2}& 0\\
0 & y^{2}\\
\end{pmatrix}.
\end{equation*}
\begin{remark}
The matrix $\epsilon$ is defined as $\begin{pmatrix}
y& 0\\
0 & y\\
\end{pmatrix}$ in \cite{Yo} and it seems to be a typo.
\end{remark}

To find the corresponding symplectic geometry, we consider the Milnor fiber of $C_{3,3}= x^3 +xy^3$.

\begin{figure}[h]
\includegraphics[scale=0.50]{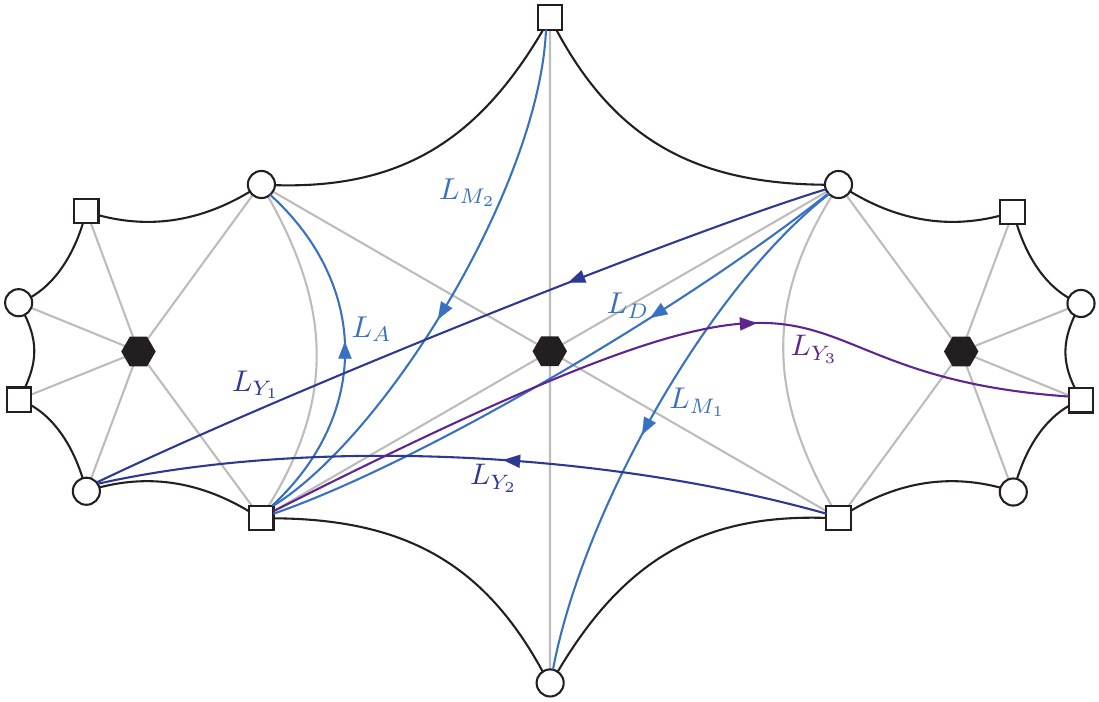}
\centering
\caption{\label{fig:e7cell} Lagrangians for indecomposable MF's for $E_7$ case}
\end{figure}

Recall that the fundamental domain is given by 3 copies of hexagons as we have seen in Figure \ref{fig:C33} (with $\Z/3$-fixed point 
at the center of each hexagon). The potential is $W_\bL= y^3+xyz$
and after we set $z=x^2$, we get $E_7$ singularity $y^3+yx^3$.
We remark that two punctures behave differently. Namely, puncture $A$ (drawn as a rectangle), and puncture $C$ (drawn as a circle)
have different monodromies  in Proposition \ref{prop:homo}, and this results in the relation $z=x^2$ via Kodaira--Spencer map. Because of a similar reason in $D_n$ cases, we need to switch $x$ and $y$ for complete correspondence.

\begin{lemma}
Lagrangians $L_{A}, L_{D}, L_{M_i}, L_{Y_i}$ in Figure \ref{fig:e7cell}  correspond to
indecomposable matrix factorizations $A,D,M_i,Y_i$ respectively.
By Lemma \ref{lem:lagtr},
orientation reversals of these Lagrangians correspond to $B,C,N_i,X_i$ respectively.
\end{lemma}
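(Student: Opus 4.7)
The plan is to follow the same strategy used in the $A_n$, $D_n$, and $E_6$ cases: for each Lagrangian $L$ in the list, compute $\mathcal{F}^\bL(L) = (CF(L,\bL), -m_1^{0,\bb})$ by direct enumeration of the holomorphic polygons with corners on $\bL$ (decorated by the bounding cochain $\bb = xX + yY + zZ$) and one boundary on (a lift of) $L$, and then match the resulting $\Z/2$-graded matrix factorization of $W_\bL = y^3 + xyz$ with the stated factorization after the variable switch $x \leftrightarrow y$ and the restriction $z = x^2$. The orientation reversal statement is then immediate from Lemma \ref{lem:lagtr}.

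More concretely, I would first set up the fundamental domain for $[M_{C_{3,3}}/G_{C_{3,3}}]$ as the three hexagons glued around the central $\Z/3$-fixed point (see Figures \ref{fig:C33} and \ref{fig:e7cell}), together with lifts of $\bL$ drawn as dashed segments crossing each hexagon. For each $L \in \{L_A, L_D, L_{M_1}, L_{M_2}, L_{Y_1}, L_{Y_2}, L_{Y_3}\}$, I would label the intersection points with $\bL$ as $o_1, \ldots, o_k$ (odd) and $e_1, \ldots, e_k$ (even), where $k \in \{1,2,3,4\}$ matches the size of the target factorization. Then I would list all polygons with corners among $\{X_i, Y_j, Z_l\}$ on the $\bL$-side and two intersection points on the $L$-side, read off their contributions to $m_1^{0,\bb}(o_i)$ and $m_1^{0,\bb}(e_i)$, and assemble the two matrices $\delta_0, \delta_1$.

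The main bookkeeping work is for the $3 \times 3$ factorizations $Y_1, Y_2$ and the $4 \times 4$ factorization $Y_3$: here one must be careful that boundary edges passing through the red spin dot on $\bL$ contribute a sign $(-1)$, and that the Seidel sign convention for polygons whose orientation disagrees with $\bL$'s orientation is applied correctly. I would use the same template as the $L_B$ computation in the $E_6$ case: list every polygon explicitly (polygons of edge length up to 4 suffice, since higher polygons would exceed the Milnor number), and, if necessary, apply a coordinate change that is invertible over $\C[x,y,z]$ to bring $(\delta_0,\delta_1)$ into the form $(\phi_j, \psi_j)$, $(\gamma,\delta)$, $(\xi_j,\eta_j)$ of the stated matrices. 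For $Y_3$ one expects a block-triangular structure reflecting its definition as a nontrivial extension of $\gamma$ by $\delta$ with extension cochain $\epsilon$; verifying the off-diagonal block equals $\epsilon$ is the step that pins down the correct $E_7$ factorization as opposed to a direct sum $C \oplus D$.

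The hardest step, as in the $E_6$ case, will be interpreting Lagrangians that map to matrix factorizations built as nontrivial extensions, in particular $L_{Y_3}$. Just as $L_{A \oplus B}$ in the $E_6$ case could a priori have been the mapping cone of some morphism $\gamma \in \Hom(B[1], A)$ that had to be shown null-homotopic, here I expect that the ``extension class'' $\epsilon$ represented by the Lagrangian $L_{Y_3}$ in the cone $L_C \to L_D$ must be shown to be cohomologically nonzero (so that the cone is genuinely $Y_3$, not $C \oplus D$). This is done by exhibiting the two-sided polygons joining $L_C$ and $L_D$ through the immersed $Y$-corners that produce the block $\epsilon = y^2 \cdot I$ and checking that no null-homotopy exists --- equivalently, one uses the Floer-theoretic fact that the relevant morphism between $L_C$ and $L_D$ is non-trivial in cohomology, which can be seen directly from the geometry near the puncture $B$. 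Once $\mathcal{F}^\bL(L_{Y_3})$ is identified, the orientation reversal piece of the claim follows from Lemma \ref{lem:lagtr}, completing the correspondence.
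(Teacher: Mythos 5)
Your proposal is correct and follows essentially the same route as the paper: for each Lagrangian one enumerates the holomorphic polygons contributing to $m_1^{0,\bb}$, assembles the resulting $\Z/2$-graded factorization of $W_\bL = y^3 + xyz$, and matches it (after the $x\leftrightarrow y$ switch and the restriction $z = x^2$, up to base change) with the explicit matrices from Yoshino's list, with the orientation-reversal half supplied by Lemma \ref{lem:lagtr}; the paper does exactly this, spelling out only the $L_{Y_3}$ case via its 20 polygons. The one extra step you flag --- proving the extension class $\epsilon$ in the cone $L_C \to L_D$ is not null-homotopic --- is unnecessary, since once the computed matrices literally agree with $(\eta_3,\xi_3)$ the object is $Y_3$ by definition, and its indecomposability (i.e., non-isomorphism with $C \oplus D$) is part of the known classification rather than something to be re-verified Floer-theoretically.
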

\begin{proof}
We explain the case of $Y_{3}$ only. There are 20 polygons;
$$o_{1} Y_{10} e_{1}, o_{1} Z_{5} e_{2}, o_{2} X_{4} e_{1}, o_{2} Y_{7} e_{2}, o_{3} Z_{3} e_{1}, o_{3} Y_{1} Y_{3} e_{3}, o_{3} Z_{3} Y_{6} e_{4}, o_{4} Y_{4} e_{1}, o_{4} Y_{4} X_{3} e_{3}, o_{4} Y_{4} Y_{6} e_{4},$$
$$e_{1} Y_{6} Y_{8} o_{1}, e_{1} Y_{6} Z_{4} o_{2}, e_{2} X_{5} Y_{8} o_{1}, e_{2} Y_{9} Y_{11} o_{2}, e_{3} Y_{5} o_{3}, e_{3} Z_{2} o_{4}, e_{4} Y_{8} o_{1}, e_{4} Z_{4} o_{2}, e_{4} X_{1} o_{3}, e_{4} Y_{2} o_{4}.$$
\begin{figure}[h]
\includegraphics[scale=0.5]{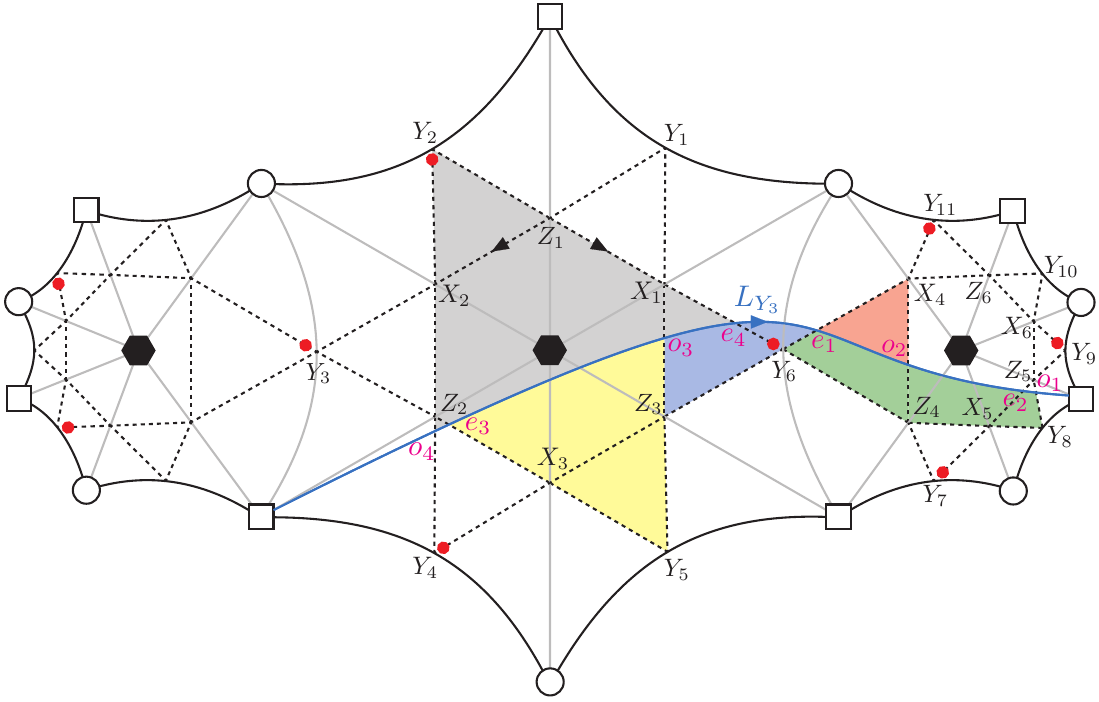}
\centering
\caption{Holomorphic polygons for $m^{0,\bb}_{1}$ calculation}
\label{fig:E7_disc}
\end{figure}
We have the following $m^{0,\bb}_{1}$ calculations.
\begin{align*}
m^{0,\bb}_{1}(o_{1}) &= y \cdot e_{1} + x^{2} \cdot e_{2} &m^{0,\bb}_{1}(e_{1}) &= y^{2} \cdot o_{1} + x^{2}y \cdot o_{2}\\
m^{0,\bb}_{1}(o_{2}) &= x \cdot e_{1} - y \cdot e_{2} &m^{0,\bb}_{1}(e_{2}) &= xy \cdot o_{1} - y^{2} \cdot o_{2}\\
m^{0,\bb}_{1}(o_{3}) &= x^{2} \cdot e_{1} + y^{2} \cdot e_{3} + x^{2}y \cdot e_{4} &m^{0,\bb}_{1}(e_{3}) &= y \cdot o_{3} + x^{2} \cdot o_{4}\\
m^{0,\bb}_{1}(o_{4}) &= -y \cdot e_{1} + xy \cdot e_{3} - y^{2} \cdot e_{4} &m^{0,\bb}_{1}(e_{4}) &= -y \cdot o_{1} - x^{2} \cdot o_{2} + x \cdot o_{3} - y \cdot o_{4}\\
\end{align*}
\end{proof}

\begin{lemma}
The following AR exact sequences (and their AR translation) for $E_7$ singularity can be realized as Lagrangian surgeries.

\centering{
\noindent \begin{subequations}
\begin{minipage}{0.45\textwidth}
\begin{align}
0 \to A \to M_2 \to B \to 0  \label{ar:e71}
\end{align}
\end{minipage}
\begin{minipage}{0.45\textwidth} 
\begin{align}
0 \to D \to Y_3 \to C \to 0 \label{ar:e72}
\end{align}
\end{minipage}\\
\begin{minipage}{0.45\textwidth}
\begin{align}
0 \to M_1 \to X_1 \to N_1 \to 0 \label{ar:e73}
\end{align}
\end{minipage}
\begin{minipage}{0.45\textwidth}
\begin{align}
0 \to M_2\to B \oplus Y_2 \to N_2 \to 0 \label{ar:e74}
\end{align}
\end{minipage}\\
\begin{minipage}{0.45\textwidth}
\begin{align}
0 \to Y_1 \to M_1 \oplus X_3 \to X_1 \to 0 \label{ar:e75}
\end{align}
\end{minipage}
\begin{minipage}{0.45\textwidth}
\begin{align}
0 \to Y_2 \to N_2 \oplus Y_3 \to X_2 \to 0 \label{ar:e76}
\end{align}
\end{minipage}\\
\begin{minipage}{0.45\textwidth}
\begin{align}
0 \to Y_3 \to X_2 \oplus C \oplus Y_1 \to X_3 \to 0 \label{ar:e77}
\end{align}
\end{minipage}
\end{subequations}}
\end{lemma}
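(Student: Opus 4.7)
The plan is to mirror the strategy used for the $D_n$ and $E_6$ cases: for each of the seven AR exact sequences listed, we locate three Lagrangians $L_1, L_2, L_3$ in the Milnor fiber of $C_{3,3}$ (as tessellated in Figure \ref{fig:e7cell}), together with an odd generator $c \in CW^1(L_1,L_2)$ realized either as a transverse interior intersection or as the shortest Reeb chord between branches at a common puncture, such that $L_3$ is the Lagrangian surgery of $L_1$ and $L_2$ at $c$. Then Lemma \ref{lem:surc2} produces a distinguished triangle of matrix factorizations of $W^T$, and by the preceding lemma identifying each $L_A,L_D,L_{M_i},L_{Y_i}$ and their orientation reversals with the indecomposable matrix factorizations $A,D,M_i,Y_i,B,C,N_i,X_i$, that triangle can be matched with the desired AR sequence. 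By the remark after Theorem \ref{thm:ADEAR} and the discussion in Appendix \ref{sec:ac}, it is sufficient to check that the three terms of the surgery triangle agree with the three terms of the AR sequence up to isomorphism of indecomposables; we do not need to reconstruct the irreducible morphisms themselves.

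The construction of the Lagrangians proceeds sequence by sequence. The sequences (\ref{ar:e71}) and (\ref{ar:e73}) are handled exactly as in the corresponding $D_n$ picture (Figure \ref{fig:dnmor}(a)): choose $L_A$ and $L_B$ (resp.\ $L_{M_1}$ and $L_{N_1}$) meeting minimally at one puncture so that surgery at their common Reeb chord produces $L_{M_2}$ (resp.\ $L_{X_1}$). Sequence (\ref{ar:e72}) is an analogue of the $E_6$ sequence (\ref{ar:e61}), with $L_D$, $L_C=L_D[1]$ and their surgery giving $L_{Y_3}$. Sequences (\ref{ar:e74}), (\ref{ar:e75}), (\ref{ar:e76}) each involve a two-term direct sum in the middle; the corresponding connected Lagrangian is obtained by a single surgery of the two summand-Lagrangians at an appropriately chosen generator, exactly as in the $D_n$ case (\ref{ar:d3}) and (\ref{ar:d4}).

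The main obstacle will be the three-summand sequence (\ref{ar:e77}). Here the middle term $X_2 \oplus C \oplus Y_1$ should be realized by a connected Lagrangian obtained by performing two successive surgeries among $L_{X_2}$, $L_C$, $L_{Y_1}$. As in the $E_6$-case of $L_{A\oplus B}$ (Figure \ref{fig:e6mor}(c)), the matrix factorization coming from this connected Lagrangian will present itself as an iterated mapping cone of morphisms in $\Hom(C,X_2)$ and $\Hom(Y_1,X_2)$ (or some cyclic reordering). The statement to verify is that these connecting morphisms are null-homotopic, so that the mapping cone splits into the direct sum. The plan is to write down the matrix factorizations $\mathcal F^\bL(L_{X_2}), \mathcal F^\bL(L_C), \mathcal F^\bL(L_{Y_1})$ explicitly from the polygon counts in Figure \ref{fig:e7cell}, produce the connecting morphism induced by the gluing, and exhibit explicit null-homotopies with entries in $\C[x,y]/(xy^3+y^q\cdots)$ (of the same flavor as the $y^2\cdot\mathrm{id}$ homotopy used in the $E_6$ computation). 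Once the middle object is identified up to homotopy with the expected direct sum, the triangle produced by Lemma \ref{lem:surc2} matches (\ref{ar:e77}).

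Finally, the AR-translations of these seven sequences (the shaded dashed arrows on the right of the $E_7$-quiver) are automatic: reversing the orientations of the three Lagrangians $L_1,L_2,L_3$ replaces each matrix factorization by its translate via Lemma \ref{lem:lagtr}, and the surgery exact triangle is preserved. Hence the AR-translated versions of all seven sequences are realized by the reflected/orientation-reversed surgery configurations, completing the proof.
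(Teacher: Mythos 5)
Your proposal takes essentially the same route as the paper: realize each sequence as a surgery exact triangle (Lemma \ref{lem:surc2}) between explicitly drawn Lagrangians in the $C_{3,3}$ Milnor fiber, invoke the remark after the $A_n$ case and Appendix \ref{sec:ac} so that only the three terms need matching, obtain the AR translates by orientation reversal via Lemma \ref{lem:lagtr}, and for \eqref{ar:e77} verify by an explicit null-homotopy that the connected middle piece splits -- which is exactly the one computation the paper writes out (there the surgered middle Lagrangian is $L_C$ together with a connected $L_{Y_1\oplus X_2}$, so a single null-homotopy of one morphism between $3\times 3$ factorizations suffices rather than two connecting maps involving $C$). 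Just keep the mechanism as your first paragraph states it: the middle Lagrangian must arise as the surgery of the two \emph{outer} Lagrangians ($L_{Y_3}$ and $L_{X_3}$, resp.\ $L_{M_2}$ and $L_{N_2}$, etc.) at the chosen degree-one generator, and is only afterwards identified with the summand Lagrangians, rather than being built by surgering the summands as your later paragraphs phrase it.
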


\begin{proof}
In the case of \eqref{ar:e77}, we need to show that $L_{Y_{1} \oplus X_{2}}$ corresponds to the direct sum $Y_{1} \oplus X_{2}$. $L_{Y_{1} \oplus X_{2}}$ maps to matrix factorization  given by
\begin{small}
\begin{equation*} 
-\begin{pmatrix}
x& 0& y& 0& 0&0\\
-y& 0& x^{2}& 0& 0& 0\\
0& y& -x& -x^{2}& -x& -y\\
0& 0& 0& x^{3}+y^{2}& x^{2}& xy\\
0& 0& 0& 0& -y& x^{2}\\
0& 0& 0& 0& 0& x^{3}+y^{2}\\
\end{pmatrix} \cdot
-\begin{pmatrix}
x^{2}y& -y^{2}& 0& 0 &0 &0\\
xy& x^{2}& x^{3}+y^{2}& x^{2}& -xy& y\\
y^{2}& xy& 0& 0& 0& 0\\
0& 0& 0& y& x^{2}& -x\\
0& 0& 0& 0& -x^{3}-y^{2}& x^{2}\\
0& 0& 0& 0& 0& y\\
\end{pmatrix}
\end{equation*}
\end{small}
As a mapping cone, the corresponding morphism is homotopic to zero;

\begin{equation*} 
\begin{pmatrix}
x& 0& y\\
-y& 0& x^{2}\\
0& y& -x\\
\end{pmatrix} 
\begin{pmatrix}
-y& 0& -x\\
0& 0& -1\\
x& 1& 0\\
\end{pmatrix}
+
\begin{pmatrix}
0& 1& 0\\
-1& 0& 0\\
0& 0& 0\\
\end{pmatrix}
\begin{pmatrix}
x^{3}+y^{2}& x^{2}& xy\\
0& -y& x^{2}\\
0& 0& x^{3}+y^{2}\\
\end{pmatrix}
=
\begin{pmatrix}
0& 0& 0\\
0& 0& 0\\
-x^{2}& -x& -y\\
\end{pmatrix} 
\end{equation*}

\begin{equation*} 
\begin{pmatrix}
x^{2}y& -y^{2}& 0\\
xy& x^{2}& x^{3}+y^{2}\\
y^{2}& xy& 0\\
\end{pmatrix}
\begin{pmatrix}
0& 1& 0\\
-1& 0& 0\\
0& 0& 0\\
\end{pmatrix}
+
\begin{pmatrix}
-y& 0& -x\\
0& 0& -1\\
x& 1& 0\\
\end{pmatrix}
\begin{pmatrix}
y& x^{2}& -x\\
0& -x^{3}-y^{2}& x^{2}\\
0& 0& y\\
\end{pmatrix} 
=
\begin{pmatrix}
0& 0& 0\\
-x^{2}& xy& -y\\
0& 0& 0\\
\end{pmatrix} 
\end{equation*}
\end{proof}

\begin{figure}[H]
\begin{subfigure}{0.46\textwidth}
\includegraphics[scale=0.35]{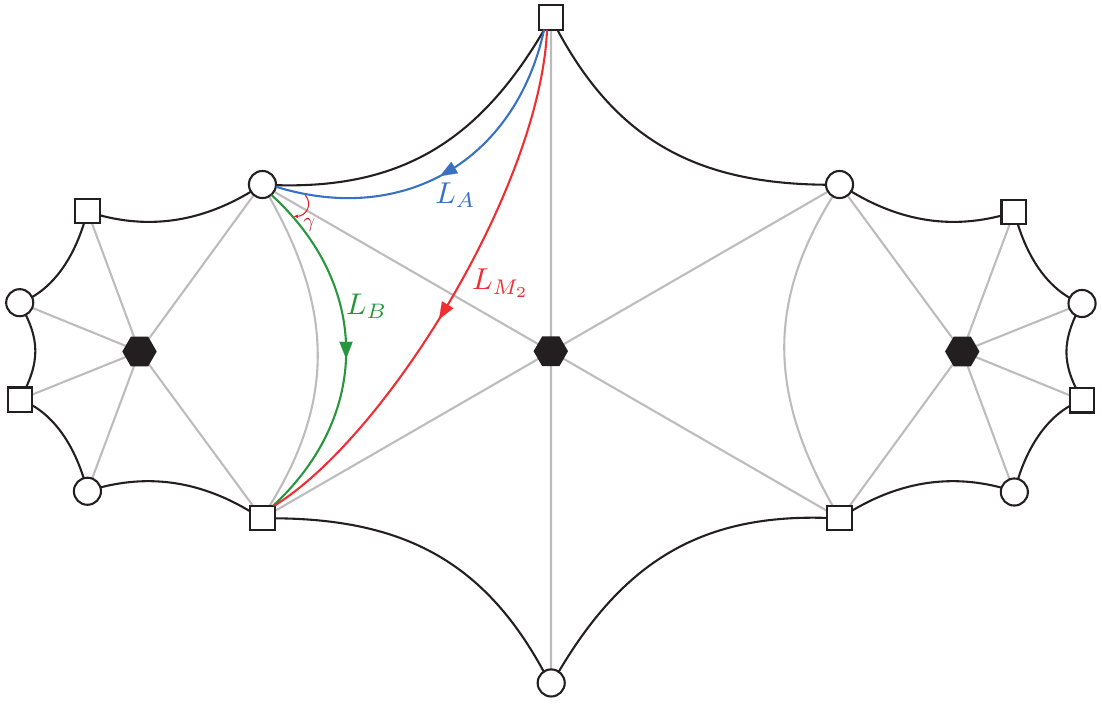}
\centering
\caption{Lagrangian surgery for \eqref{ar:e71}}
\end{subfigure}
\begin{subfigure}{0.46\textwidth}
\includegraphics[scale=0.35]{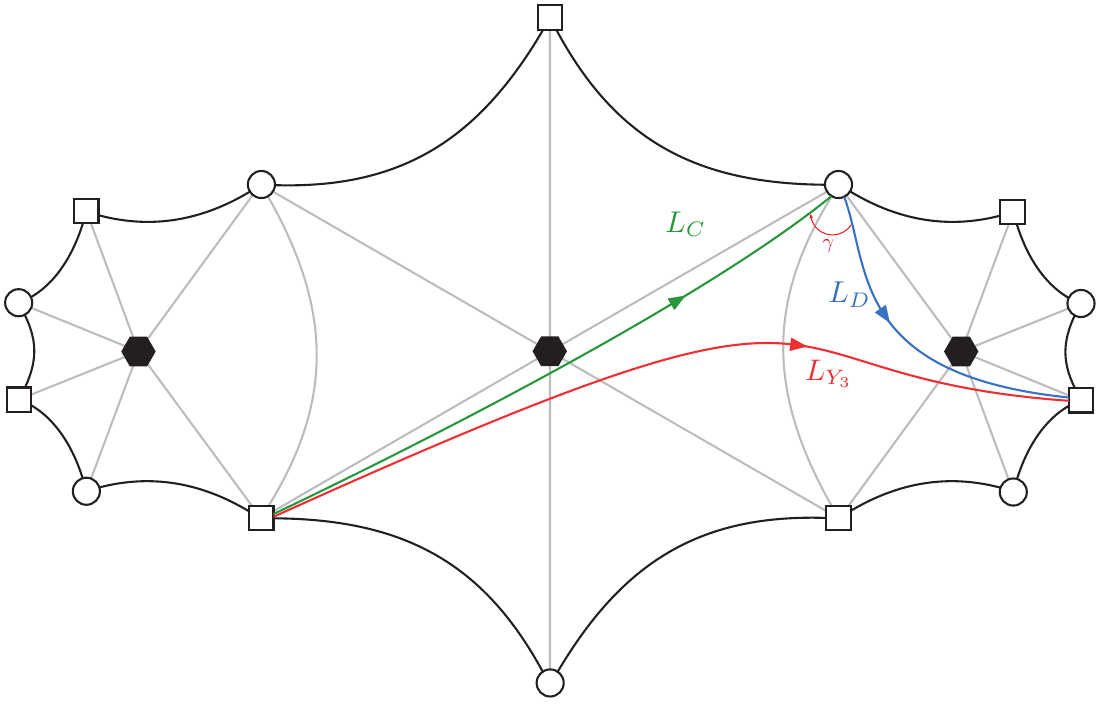}
\centering
\caption{Lagrangian surgery for \eqref{ar:e72}}
\end{subfigure}
\centering
\begin{subfigure}{0.46\textwidth}
\includegraphics[scale=0.35]{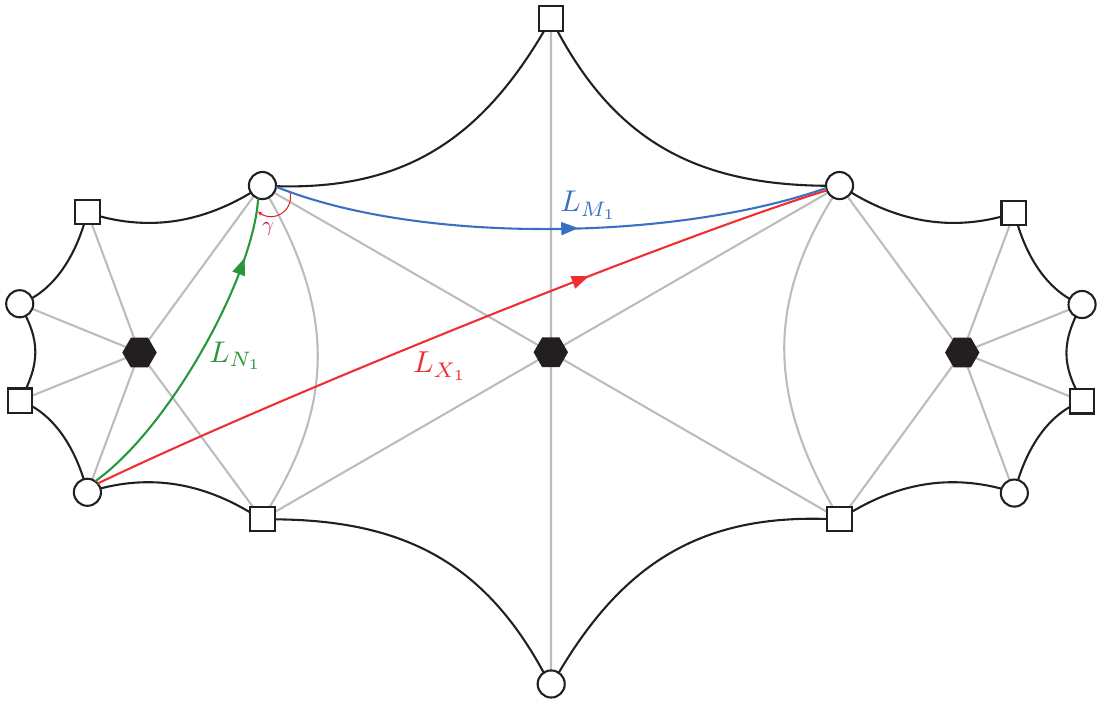}
\centering
\caption{Lagrangian surgery for \eqref{ar:e73}}
\end{subfigure}
\begin{subfigure}{0.46\textwidth}
\includegraphics[scale=0.35]{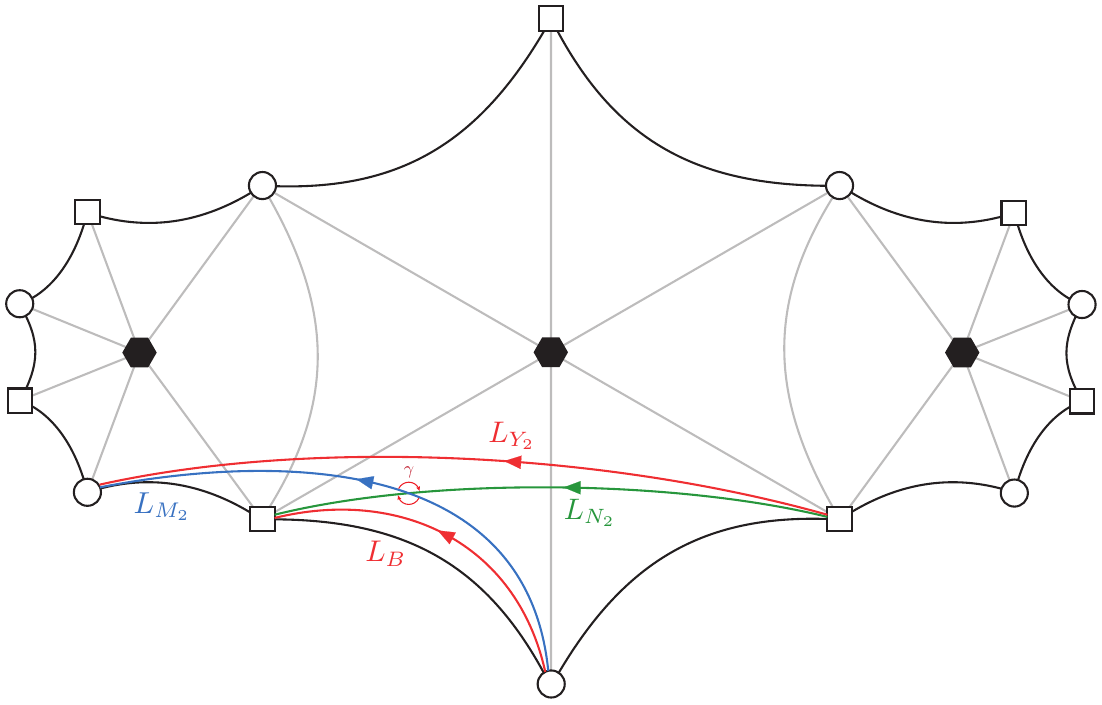}
\centering
\caption{Lagrangian surgery for \eqref{ar:e74}}
\end{subfigure}
\centering
\caption{Auslander--Reiten exact sequences for $E_7$ via Lagrangian surgery}
\end{figure}

\begin{figure}[h]
\begin{subfigure}{0.46\textwidth}
\includegraphics[scale=0.35]{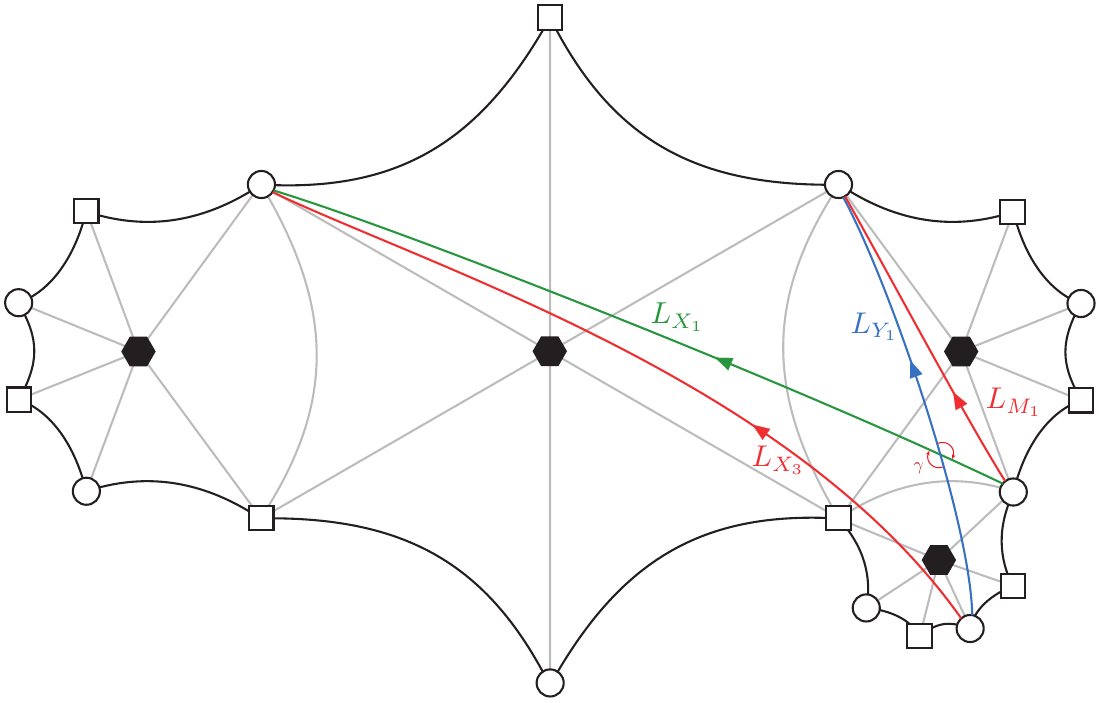}
\centering
\caption{Lagrangian surgery for \eqref{ar:e75}}
\end{subfigure}
\begin{subfigure}{0.46\textwidth}
\includegraphics[scale=0.35]{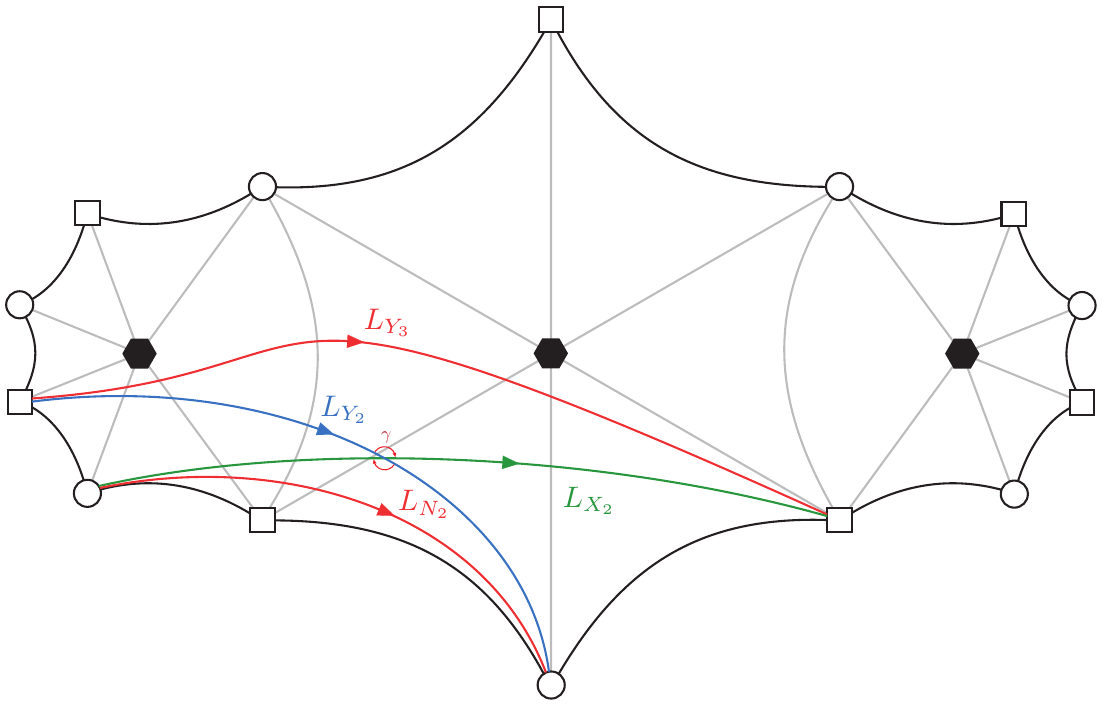}
\centering
\caption{Lagrangian surgery for \eqref{ar:e76}}
\end{subfigure}
\centering
\begin{subfigure}{0.5\textwidth}
\includegraphics[scale=0.35]{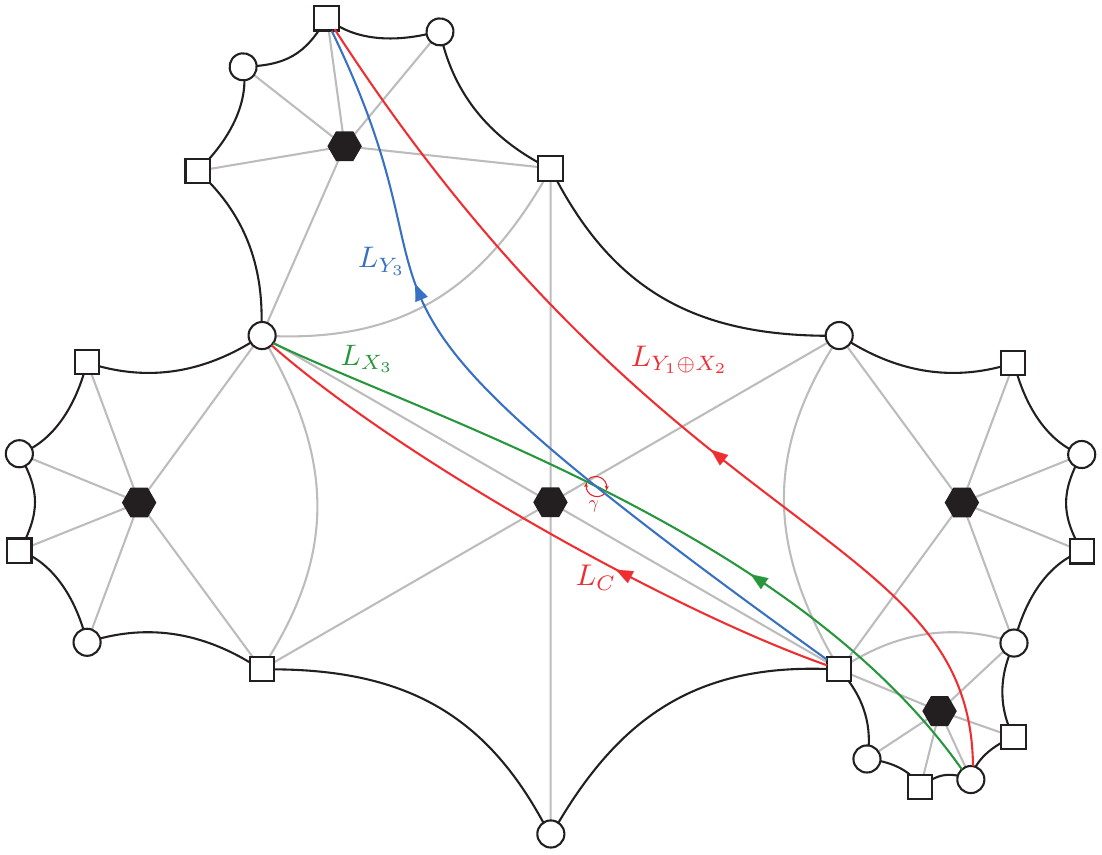}
\centering
\caption{Lagrangian surgery for \eqref{ar:e77}}
\end{subfigure}
\caption{Auslander--Reiten exact sequences for $E_7$ via Lagrangian surgery}
\label{fig:e7mor2}
\end{figure}

\subsection{$\boldsymbol{E_8}$-case}
For $E_8$ singularity $x^3+y^5$, its AR quiver of indecomposable MF's is the following.
$$\begin{tikzcd}[row sep=small,column sep=scriptsize]
& & A_{2} \arrow[dddd] \arrow[r,dash,dashed]  & B_{2} \arrow[dd] & & & & & \\
& & & & & & & & \\
N_{2} \arrow[r] & D_{2} \arrow[ddl] \arrow[rr] & & X_{1} \arrow[uul] \arrow[ddll] \arrow[r] & X_{2} \arrow[ddll] \arrow[r] & C_{1} \arrow[ddl] \arrow[r] & B_{1} \arrow[ddl] \arrow[r] & N_{1} \arrow[ddl] \arrow[dr] & \\
& & & & & & & & R \arrow[dl] \\
M_{2} \arrow[r] \arrow[uu,dash,dashed] & C_{2} \arrow[uul] \arrow[r] \arrow[uu,dash,dashed] & Y_{1} \arrow[uul] \arrow[rr] \arrow[uur,dash,dashed] \arrow[uuuur] & & Y_{2} \arrow[uul] \arrow[r] \arrow[uu,dash,dashed] & D_{1} \arrow[uul] \arrow[r] \arrow[uu,dash,dashed] & A_{1} \arrow[uul] \arrow[r] \arrow[uu,dash,dashed] & M_{1} \arrow[uul] \arrow[uu,dash,dashed] & \\
\end{tikzcd}$$

Here, $M_i$ (resp. $N_i$) is a $2\times 2$ matrix factorization given by $(\phi_i,\psi_i)$ (resp. $(\psi_i,\phi_i)$) for $i=1,2$.
\begin{equation*}
\phi_i = \begin{pmatrix}
x& y^i\\
y^{5-i}& -x^2\\
\end{pmatrix}, \psi_i=
\begin{pmatrix}
x^2& y^i\\
y^{5-i}& -x\\
\end{pmatrix}
\end{equation*}
$A_i$ (resp. $B_i$) is a $3\times 3$ matrix factorization given by $(\alpha_i,\beta_i)$ (resp. $(\beta_i,\alpha_i)$) for $i=1,2$.
$$\alpha_i = \begin{pmatrix}
 y& -x& 0\\
 0&y^i & -x\\
 x&0&  y^{4-i}\\
\end{pmatrix},  \beta_i= \begin{pmatrix}
 y^4& xy^{4-i}&  x^2\\
  -x^2&  y^{5-i}& xy\\
-xy^i&  -x^2&  y^{i+1}\\
\end{pmatrix}
$$
$C_i$ (resp. $D_i$) is a $4\times 4$ matrix factorization given by $(\gamma_i,\delta_i)$ (resp. $(\gamma_i,\delta_i)$) for $i=1,2$.
$$\gamma_1 = \begin{pmatrix}
 y& -x& 0 & y^3\\
 x& 0 & -y^3 & 0 \\
 -y^2&0 & -x^2 &0 \\
 0& -y^2 & -xy&  -x^2\\
\end{pmatrix},  \delta_1= \begin{pmatrix}
 0& x^2& -y^3 & 0\\
 -x^2& xy & 0& -y^3 \\
 0& -y^2&-x &0 \\
 y^2 &0& y&  -x\\
\end{pmatrix}
$$
\begin{equation*} 
\gamma_2= \begin{pmatrix}
\phi_2& \epsilon_1\\
0 & \psi_2\\
\end{pmatrix}, \delta_2=
\begin{pmatrix}
\psi_2& \epsilon_2\\
0 &  \phi_2 \\
\end{pmatrix} 
\;\;\; \textrm{with} \;\; 
\epsilon_1= \begin{pmatrix}
0& y\\
-xy^2 & 0\\
\end{pmatrix},
\epsilon_2= \begin{pmatrix}
0& xy\\
-y^2 & 0\\
\end{pmatrix}.
\end{equation*}
$X_1$ (resp. $Y_1$) is a $6\times 6$ matrix factorization given by $(\xi_1,\eta_1)$ (resp. $(\eta_1,\xi_1)$).
\begin{equation*}
\xi_1= \begin{pmatrix}
\beta_2& \epsilon_3\\
0 & \alpha_2\\
\end{pmatrix}, \eta_1=
\begin{pmatrix}
\alpha_2& \epsilon_4\\
0 &  \beta_2 \\
\end{pmatrix} 
\;\;\; \textrm{with} \;\; 
\epsilon_3= \begin{pmatrix}
0&0& xy\\
-x &0&0 \\
0& -xy & 0\\
\end{pmatrix},
\epsilon_4= \begin{pmatrix}
0&0& -x\\
xy&0&0 \\
0 &xy&  0\\
\end{pmatrix}.
\end{equation*}
$X_2$ (resp. $Y_2$) is a $5\times 5$ matrix factorization given by $(\xi_2,\eta_2)$ (resp. $(\eta_2,\xi_2)$).
$$\xi_2 = \begin{pmatrix}
 y^4& x^2& 0 & -xy^2 &0 \\
 -x^2& xy& 0 & -y^3 & 0 \\
0& -y^2&-x & 0 & y^3\\
-xy^2 & y^3 & 0 & x^2 & 0 \\
-y^3&  0& -y^2 & xy&  -x^2\\
\end{pmatrix},  \eta_2= \begin{pmatrix}
 y & -x & 0 & 0 & 0 \\
x & 0& 0 & y^2 & 0\\
 -y^2& 0& -x^2 & 0& -y^3 \\
 0& -y^2&0& x &0 \\
 0& 0& y^2 & y&  -x\\
\end{pmatrix}
$$

Since $F_{3,5}^T = F_{3,5}$, we consider the Milnor fiber of $F_{3,5} = x^3+ y^{5}$.
First, it is given by $\Z/5$-copies of a hexagon glued as in Figure \ref{fig:e8cell}, whose boundary is identified with $(\pm 5)$ pattern.
$\Z/5$-action is the rotation at the center, and $\Z/3$-action is the simultaneous rotation in every hexagons.  
Lifts of Seidel Lagrangian are drawn as dotted lines, and $XXX$ and $XYZ$-triangles, and pentagon with $Y$-corners produce the potential  $W_\bL= x^3+y^5+xyz$.
With the restriction $z=0$, we get $E_8$ singularity $x^3+y^5$.

\begin{figure}[h]
\begin{subfigure}{0.43\textwidth}
\includegraphics[scale=0.55]{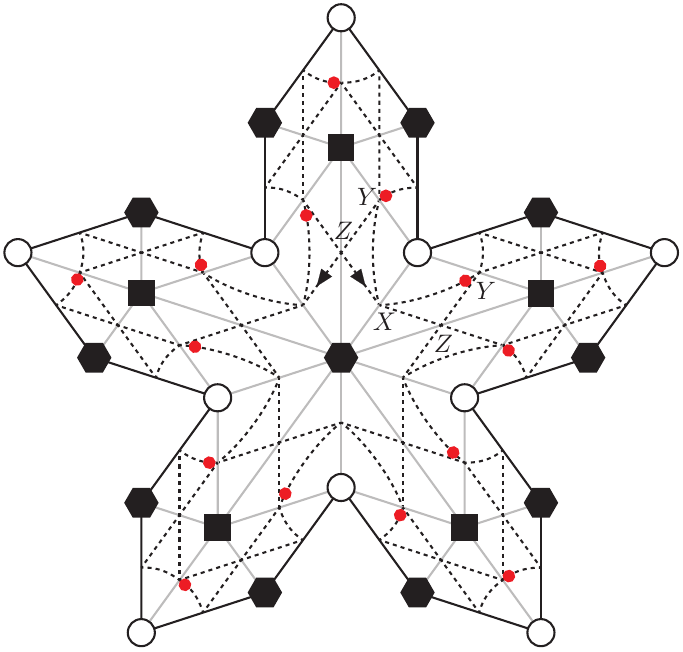}
\centering
\end{subfigure}
\begin{subfigure}{0.43\textwidth}
\includegraphics[scale=0.55]{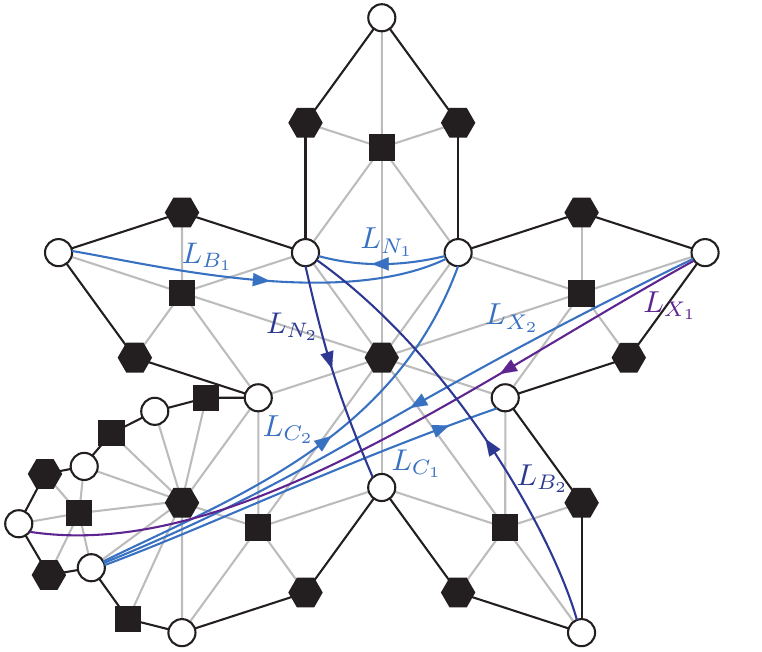}
\centering
\end{subfigure}
\centering
\caption{\label{fig:e8cell} Milnor fiber for $E_8$ and Lagrangians for indecomposable MF's}
\end{figure}
\begin{lemma}
Lagrangians  $ L_{B_i}, L_{C_i}, L_{N_i}, L_{X_i}$ in Figure \ref{fig:e8cell}  correspond to their respective matrix factorizations.
By Lemma \ref{lem:lagtr},
orientation reversals of these Lagrangians correspond to $A_i,D_i,M_i,Y_i$ respectively.
\end{lemma}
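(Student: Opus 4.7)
The plan is to follow exactly the pattern established in the $A_n$, $D_n$, $E_6$, and $E_7$ cases: for each Lagrangian $L \in \{L_{B_i}, L_{C_i}, L_{N_i}, L_{X_i}\}$ drawn in Figure \ref{fig:e8cell}, I identify the Floer generators of $CF(L,\bL)$ as the transverse intersections of $L$ with the lifts of $\bL$ inside the fundamental domain, enumerate every holomorphic polygon bounded by $L$ and $\bL$ whose corners are allowed to be the immersed points $X,Y,Z$ of $\bL$, and read off $m_1^{0,\bb}$ from these polygons (with signs given by the convention after Theorem \ref{thm:lmf}). The resulting pair of matrices should then be identified, up to an obvious change of basis, with the factorization $(\phi_i,\psi_i)$, $(\gamma_i,\delta_i)$, etc., of $E_8$ listed in the preceding paragraphs. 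The statement about orientation reversals is automatic from Lemma \ref{lem:lagtr}, so the real content is the polygon count for the eight explicit Lagrangians.

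The strategy splits naturally by matrix size. For $L_{N_i}$ ($2\times 2$), the Lagrangian meets $\WT\bL$ in two even and two odd intersection points, so the counting is essentially identical to the $A_n$ computation for $L_{M_k}$ and I expect monomials $x, x^2, y^i, y^{5-i}$ to appear as coefficients, matching $\psi_i$ and $\phi_i$. For $L_{B_i}$ ($3\times 3$), the Lagrangian is one edge longer, giving three even and three odd generators; the pattern of polygons is modeled on the $E_6$ computation for $L_B$, and one expects to recover the $3\times 3$ blocks $\alpha_i,\beta_i$ displayed above. The $L_{C_i}$ cases ($4\times 4$) should split analogously, with $C_2$ visibly of the mapping-cone form $\bigl(\begin{smallmatrix}\phi_2&\epsilon_1\\0&\psi_2\end{smallmatrix}\bigr)$, i.e.\ $L_{C_2}$ can be viewed as a surgery of two copies of $L_{N_2}$ glued by an odd morphism whose image under $\cF^\bL_1$ is $\epsilon_1$; this fact I can verify directly by exhibiting the relevant triangle and quadrilateral polygons, and the same remark applies to $L_{C_1}$ and to $L_{X_2}$ ($5\times 5$).

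The only genuinely new calculation is $L_{X_1}$, a $6\times 6$ matrix factorization. Here I expect the Lagrangian to realize $\xi_1,\eta_1$ as the explicit mapping cone $\bigl(\begin{smallmatrix}\beta_2&\epsilon_3\\0&\alpha_2\end{smallmatrix}\bigr)$, so by Lemma \ref{lem:surc} it suffices to find Lagrangians $L', L''$ in the picture isotopic to $L_{B_2}$ and to $L_{\mathrm{something}}\simeq L_{A_2}$ together with a single odd Floer generator $c\in CW^1(L', L'')$ such that the surgery at $c$ is Hamiltonian isotopic to $L_{X_1}$, and then to check that $\cF^\bL_1(c)$ yields exactly the off-diagonal block $\epsilon_3$. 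The main obstacle will be bookkeeping: the $6\times 6$ case produces many more holomorphic polygons than the $E_6$ or $E_7$ verifications, and guaranteeing that no polygons are missed requires a careful sweep of the fundamental pentagon together with its $\Z/5$-translates, using the $\pm 5$ edge-identification pattern from Theorem \ref{thm:tess}.

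Once all eight identifications are established, the orientation-reversal half of the lemma follows from Lemma \ref{lem:lagtr} applied entry-by-entry, swapping the roles of $\phi$ and $\psi$ (respectively $\alpha$ and $\beta$, $\gamma$ and $\delta$, $\xi$ and $\eta$) in the AR quiver. As in the $E_6$ and $E_7$ cases, no AR sequence by itself needs a full computation of all morphisms: Appendix \ref{sec:ac} guarantees that matching the objects at the three corners of each surgery exact triangle suffices to identify the sequence with the corresponding almost split exact sequence.
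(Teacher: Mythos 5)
Your proposal is essentially the paper's proof for the small-rank cases, but it takes a genuinely different route for the large factorizations, and that difference is worth spelling out. The paper's own argument is a single brute-force verification: it enumerates the $28$ holomorphic polygons for $L_{X_2}$, reads off $m_1^{0,\bb}$, assembles the two $5\times5$ matrices, and declares the remaining seven Lagrangians "similar", with the orientation-reversal statement coming from Lemma \ref{lem:lagtr} exactly as you say. You instead propose to do the direct count only for the $2\times2$ and $3\times3$ objects ($N_i$, $B_i$) and to obtain $C_i$, $X_2$ and especially the $6\times6$ factorization $X_1$ as mapping cones via Lemma \ref{lem:surc} and the exactness of $\cF^\bL$ (Lemma \ref{lem:surc2}); this is legitimate and in fact consistent with the paper's later surgery picture for the AR sequence $0\to B_2\to X_1\to A_2\to 0$, and it buys a much shorter computation than sweeping all polygons for a rank-$6$ object. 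The one point you must not skim over is the verification that the connecting generator $c$ maps under $\cF^\bL_1$ to the off-diagonal block $\epsilon_3$ (respectively $\epsilon_1$ for $C_2$, etc.) and in particular is \emph{not} null-homotopic: as the paper's own $E_6$ and $E_8$ computations of $L_{A\oplus B}$ and $L_{C_2\oplus X_2}$ show, a cone with the same end terms can degenerate to a direct sum, and the blocks $\xi_1,\eta_1$ versus $\beta_2\oplus\alpha_2$ are genuinely different matrix factorizations, so "matching the corner objects" does not suffice here (Appendix \ref{sec:ac} applies to almost split exact sequences, not to identifying $\cF^\bL(L_{X_1})$ itself). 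Two minor factual slips: the alternative domain for $F_{3,5}$ used in Figure \ref{fig:e8cell} consists of $\Z/5$-copies of a \emph{hexagon} glued with the $\pm5$ pattern (the description of Theorem \ref{thm:tess} proper would give a $24$-gon with $\pm9$ identification), not a fundamental pentagon; neither affects the substance of the argument.
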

\begin{proof}
We explain the case of $X_{2}$. There are 28 polygons;
$$o_{1} Y^{\prime}_{3} Y^{\prime}_{5} Y^{\prime}_{7} Y^{\prime}_{9} e_{1}, o_{1} X_{5} X_{1} e_{2}, o_{1} X_{5} Y_{2} Y_{4} e_{4}, o_{2} X_{2} X_{4} e_{1}, o_{2} X_{2} Y_{9} e_{2}, o_{2} Y_{10} Y_{2} Y_{4} e_{4}, o_{3} Y_{7} X_{4} e_{1}, o_{3} Y_{7} Y_{9} e_{2}, o_{3} X_{9} e_{3},$$
$$o_{3} Y_{12} Y_{14} Y_{16} e_{5}, o_{4} Y_{5} Y_{7} X_{4} e_{1}, o_{4} Y_{5} Y_{7} Y_{9} e_{2}, o_{4} X_{10} X_{12} e_{4}, o_{5} Y_{17} Y_{19} e_{3}, o_{5} Y_{17} X_{12} e_{4}, o_{5} X^{\prime}_{4} X^{\prime}_{6} e_{5}, e_{1} Y_{20} o_{1}, e_{1} X_{6} o_{2},$$
$$e_{2} X_{3} o_{1}, e_{2} Y_{1} Y_{3} o_{4}, e_{3} X_{11} Y_{8} o_{2}, e_{3} X_{11} X_{7} o_{3}, e_{3} Y_{11} Y_{13} Y_{15} o_{5}, e_{4} Y_{6} Y_{8} o_{2}, e_{4} X_{8} o_{4}, e_{5} Y_{18} Y_{5} o_{3}, e_{5} Y_{18} o_{4}, e_{5} X_{13} o_{5}.$$
The vertices marked with prime in the above expression lie outside of Figure \ref{fig:E8_disc}
(but can be obtained by a group action of the vertex without prime).
\begin{figure}[h]
\includegraphics[scale=0.7]{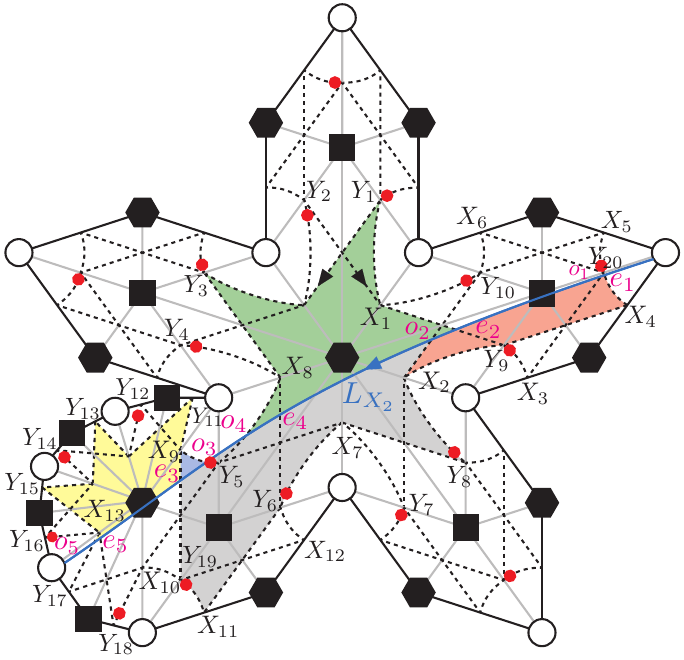}
\centering
\caption{Holomorphic polygons for $m^{0,\bb}_{1}$ calculation}
\label{fig:E8_disc}
\end{figure}
Then $m^{0,\bb}_{1}$ is given as follow.
\begin{align*}
m^{0,\bb}_{1}(o_{1}) &=   y^{4} \cdot e_{1} - x^{2} \cdot e_{2} + xy^{2} \cdot e_{4} &m^{0,\bb}_{1}(e_{1}) &=   y \cdot o_{1} + x \cdot o_{2}\\
m^{0,\bb}_{1}(o_{2}) &=  x^{2} \cdot e_{1} + xy \cdot e_{2} - y^{3} \cdot e_{4} &m^{0,\bb}_{1}(e_{2}) &=   -x \cdot o_{1} + y^{2} \cdot o_{4}\\
m^{0,\bb}_{1}(o_{3}) &=   xy \cdot e_{1} + y^{2} \cdot e_{2} - x \cdot e_{3} + y^{3} \cdot e_{5} &m^{0,\bb}_{1}(e_{3}) &=   xy \cdot o_{2} - x^{2} \cdot o_{3} + y^{3} \cdot o_{5}\\
m^{0,\bb}_{1}(o_{4}) &=   xy^{2} \cdot e_{1} + y^{3} \cdot e_{2} + x^{2} \cdot e_{4} &m^{0,\bb}_{1}(e_{4}) &=   -y^{2} \cdot o_{2} + x \cdot o_{4}\\
m^{0,\bb}_{1}(o_{5}) &=   y^{2} \cdot e_{3} + xy \cdot e_{4} + x^{2} \cdot e_{5} &m^{0,\bb}_{1}(e_{5}) &=   y^{2} \cdot o_{3} - y \cdot o_{4} + x \cdot o_{5}\\
\end{align*}
Putting them into two matrices, we obtain $X_2$.
\end{proof}

\begin{lemma}
The following AR exact sequences (and their AR translation) for $E_8$ singularity can be realized as Lagrangian surgeries.

\centering{
\noindent \begin{subequations}
\begin{minipage}{0.45\textwidth}
\begin{align}
0 \to N_1 \to A_1 \oplus R \to M_1 \to 0 \label{ar:e81}
\end{align}
\end{minipage}
\begin{minipage}{0.45\textwidth} 
\begin{align}
0 \to B_1 \to D_1 \oplus N_1 \to A_1 \to 0 \label{ar:e82}
\end{align}
\end{minipage}\\
\begin{minipage}{0.45\textwidth}
\begin{align}
0 \to N_2 \to D_2 \to M_2 \to 0 \label{ar:e83}
\end{align}
\end{minipage}
\begin{minipage}{0.45\textwidth}
\begin{align}
0 \to B_2 \to X_1 \to A_2 \to 0 \label{ar:e84}
\end{align}
\end{minipage}\\
\begin{minipage}{0.45\textwidth}
\begin{align}
0 \to C_1 \to B_1 \oplus Y_2 \to D_1 \to 0 \label{ar:e85}
\end{align}
\end{minipage}
\begin{minipage}{0.45\textwidth}
\begin{align}
0\to D_2 \to M_2 \oplus X_1 \to C_2 \to 0 \label{ar:e86}
\end{align}
\end{minipage}\\
\begin{minipage}{0.45\textwidth}
\begin{align}
0 \to X_2 \to Y_1 \oplus C_1 \to Y_2 \to 0 \label{ar:e87}
\end{align}
\end{minipage}
\begin{minipage}{0.45\textwidth}
\begin{align}
0 \to X_1 \to A_2 \oplus C_2 \oplus X_2  \to Y_1 \to 0 \label{ar:e88}
\end{align}
\end{minipage}
\end{subequations}}
\end{lemma}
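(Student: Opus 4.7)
The plan is to proceed exactly as in the $D_n$, $E_6$, and $E_7$ cases: for each of the eight AR exact sequences, I will exhibit three Lagrangians $L_1, L_2, L_3$ in $[M_{F_{3,5}}/G_{F_{3,5}}]$ (or its $G_W$-cover) together with a degree-one Floer generator $c \in CW^1(L_1, L_2)$ such that $L_3$ is obtained from $L_1, L_2$ by Lagrangian surgery at $c$. Applying Lemma \ref{lem:surc} and Lemma \ref{lem:surc2} then produces an exact triangle of matrix factorizations of $W^T = E_8$ whose three terms, by the preceding identification lemma (together with Lemma \ref{lem:lagtr} and Remark \ref{rmk:inv}), are the indecomposable matrix factorizations appearing in the corresponding AR sequence. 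The identification of these triangles with the actual AR exact sequences at the level of the homotopy category then follows from the appendix (Appendix \ref{sec:ac}): once the three objects in the surgery triangle match those in the AR sequence up to isomorphism, the triangle must be isomorphic to the AR sequence.

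For sequences \eqref{ar:e81}, \eqref{ar:e83}, \eqref{ar:e84} the pictures are straightforward surgeries between two indecomposable Lagrangians, directly analogous to $0 \to B \to Y_1 \to A \to 0$ in $D_n$ and $0 \to A \to M_2 \to B \to 0$ in $E_7$. For the remaining five sequences \eqref{ar:e82}, \eqref{ar:e85}, \eqref{ar:e86}, \eqref{ar:e87}, \eqref{ar:e88}, the middle term is a direct sum, so after the surgery the middle Lagrangian $L_3$ is a connected (possibly self-glued) curve whose image under $\mathcal{F}^\bL|_{g=0}$ is a priori a mapping cone of some morphism $\gamma$ between the two summands, rather than their direct sum on the nose. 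The strategy here will mirror what was done for \eqref{ar:e63} in the $E_6$ case and \eqref{ar:e77} in the $E_7$ case: compute the mapping cone matrix factorization from the polygon count, and then exhibit an explicit null-homotopy of $\gamma$ to conclude that the cone splits as the desired direct sum.

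The main obstacle is exactly this null-homotopy verification for the sequence \eqref{ar:e88}, where the middle term is the triple sum $A_2 \oplus C_2 \oplus X_2$. Here, after the surgery connecting $L_{X_1}$ to $L_{Y_1}$, the resulting curve $L_3$ should correspond to a sequence of three mapping cones, and I will need to produce homotopies killing all of the off-diagonal connecting morphisms $\gamma_{ij}$ between the three summands simultaneously. I expect these homotopies to be found by writing down explicit $3\times 3$ or larger block matrices whose entries are polynomials in $x, y$ of appropriate weighted degree, mirroring the explicit $h_1, h_2$ computed for $E_6$ case \eqref{ar:e63}: the weighted grading on $\C[x,y]/(x^3+y^5)$ restricts the possible entries to a finite-dimensional vector space, so the computation is a finite linear-algebra check. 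The analogous simpler null-homotopy problem for \eqref{ar:e85}, \eqref{ar:e86}, \eqref{ar:e87} (two summands) and for \eqref{ar:e82} (involving the stably-trivial $R$, which drops out in $\mathcal{MF}(W^T)$) should be done by the same method, following the explicit templates already established in the $D_n$, $E_6$, and $E_7$ proofs.

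Once all eight surgery identifications are in place, applying the $\AI$-functor $\mathcal{G} = \mathcal{F}^\bL|_{g=0}$ and invoking Appendix \ref{sec:ac} finishes the proof; the AR-translated sequences, which constitute the remaining half of the $E_8$ AR quiver, follow automatically from Lemma \ref{lem:lagtr} by reversing the orientation of each Lagrangian in the surgery picture.
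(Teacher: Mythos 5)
Your proposal is correct and follows essentially the same route as the paper: exhibit the surgery pictures for each of the eight sequences, push them through $\mathcal{F}^{\bL}|_{g=0}$ via Lemmas \ref{lem:surc} and \ref{lem:surc2}, split the direct-sum middle terms by explicit null-homotopies of the connecting morphisms, and invoke Appendix \ref{sec:ac} to identify the resulting triangles with the AR sequences. The only notable difference is that for \eqref{ar:e88} the paper's surgered Lagrangian needs just a single null-homotopy (showing $L_{C_2\oplus X_2}$ splits as $C_2\oplus X_2$, the $A_2$ piece coming out separately), rather than the simultaneous three-summand check you anticipate, so your plan is slightly more work than necessary but not wrong.
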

\begin{proof}

\begin{figure}[b]
\begin{subfigure}{0.46\textwidth}
\includegraphics[scale=0.46]{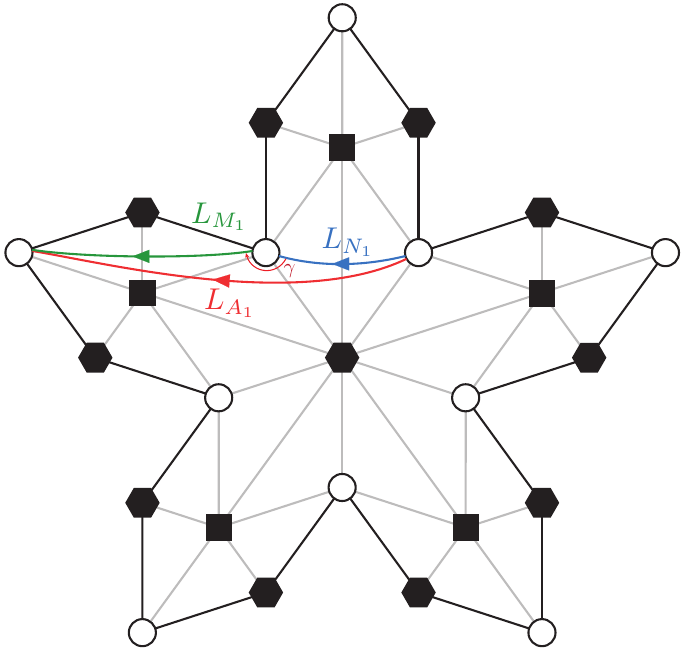}
\centering
\caption{Lagrangian surgery for \eqref{ar:e81}}
\end{subfigure}
\begin{subfigure}{0.46\textwidth}
\includegraphics[scale=0.46]{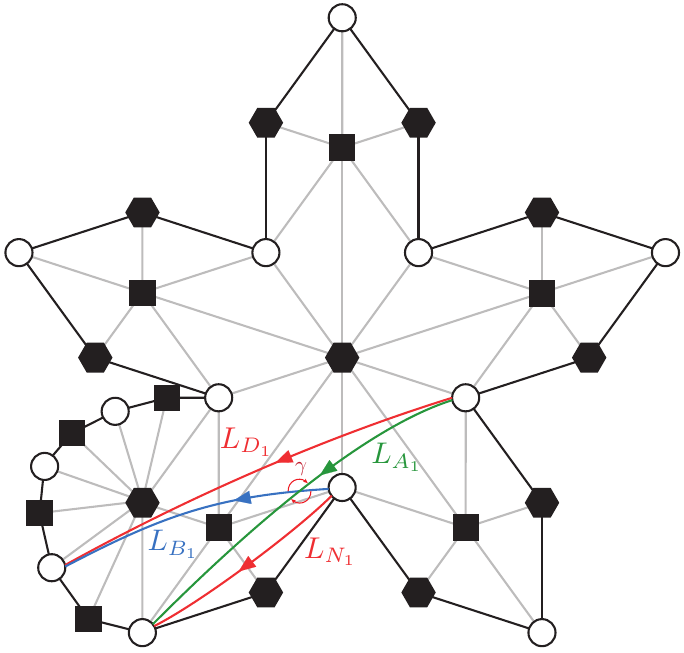}
\centering
\caption{Lagrangian surgery for \eqref{ar:e82}}
\end{subfigure}
\centering
\begin{subfigure}{0.46\textwidth}
\includegraphics[scale=0.46]{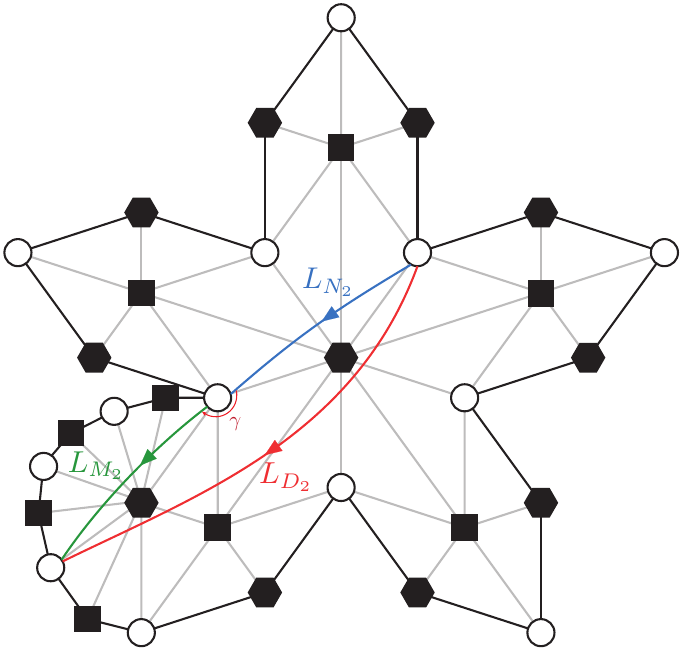}
\centering
\caption{Lagrangian surgery for \eqref{ar:e83}}
\end{subfigure}
\begin{subfigure}{0.46\textwidth}
\includegraphics[scale=0.46]{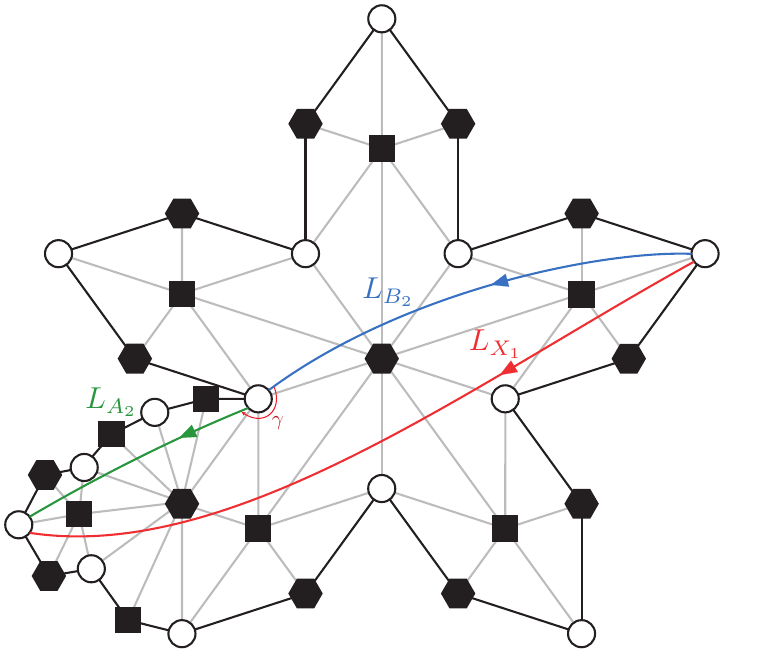}
\centering
\caption{Lagrangian surgery for \eqref{ar:e84}}
\end{subfigure}
\centering
\caption{\label{fig:e8mor3}  Auslander--Reiten exact sequences for $E_8$ via Lagrangian surgery}
\end{figure}

\begin{figure}[h]
\begin{subfigure}{0.46\textwidth}
\includegraphics[scale=0.46]{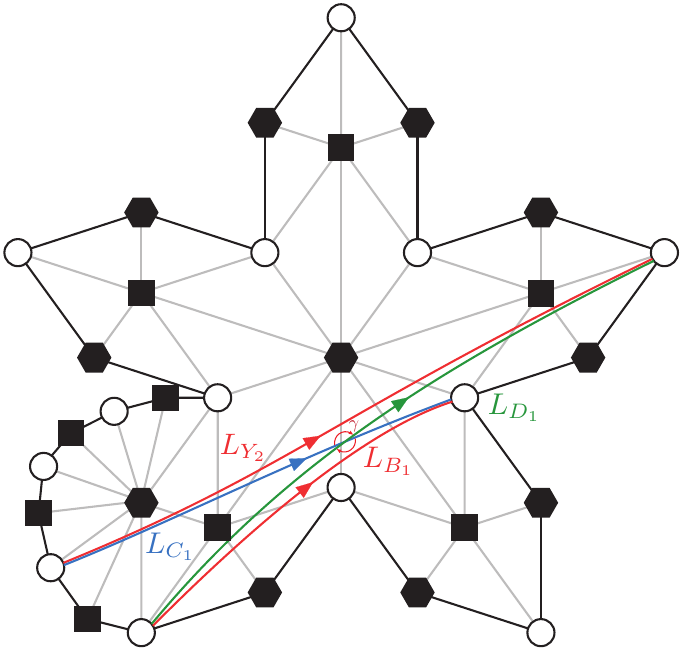}
\centering
\caption{Lagrangian surgery for \eqref{ar:e85}}
\end{subfigure}
\begin{subfigure}{0.46\textwidth}
\includegraphics[scale=0.45]{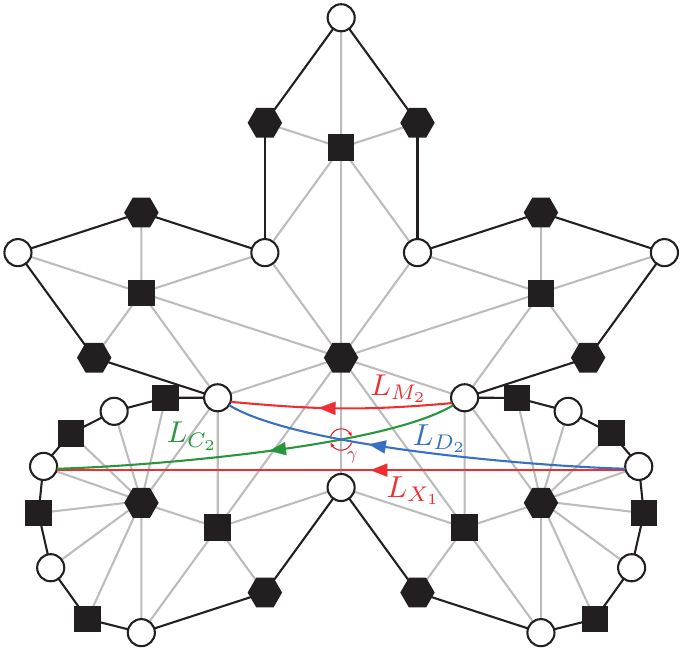}
\centering
\caption{Lagrangian surgery for \eqref{ar:e86}}
\end{subfigure}
\centering
\begin{subfigure}{0.46\textwidth}
\includegraphics[scale=0.5]{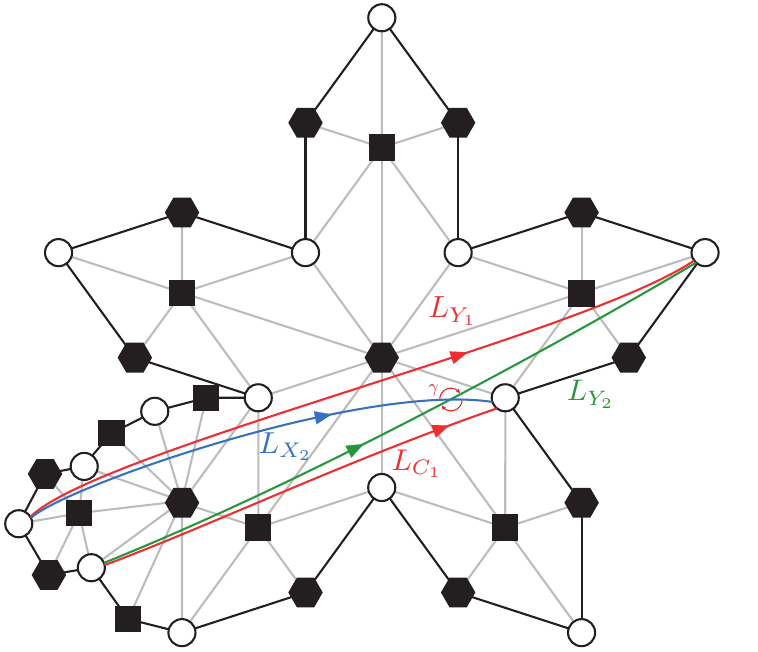}
\centering
\caption{Lagrangian surgery for \eqref{ar:e87}}
\label{fig:e8cell2}
\end{subfigure}
\begin{subfigure}{0.46\textwidth}
\includegraphics[scale=0.5]{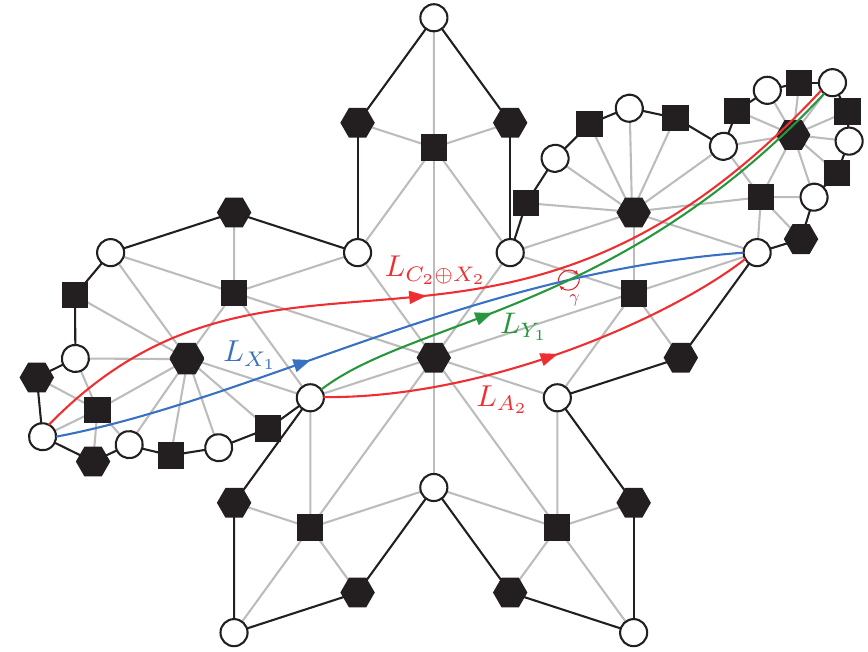}
\centering
\caption{Lagrangian surgery for \eqref{ar:e88}}
\label{fig:e8cell3}
\end{subfigure}
\caption{\label{fig:e8mor4}  Auslander--Reiten exact sequences for $E_8$ via Lagrangian surgery}
\end{figure}

We show that $L_{C_{2} \oplus X_{2}}$ corresponds to the direct sum $C_{2} \oplus X_{2}$ in \eqref{ar:e88}. $L_{C_{2} \oplus X_{2}}$ maps to $9 \times 9$ matrix factorization. As a mapping cone, we give a null-homotopy of the corresponding morphism.

\begin{equation*}
\begin{split}
\small \begin{pmatrix}
x& y^{3}& 0& 0\\
y^{2}& -x^{2}& 0& 0\\
0& xy^{2}& x^{2}& y^{3}\\
-y& 0& y^{2}& -x\\
\end{pmatrix}
\begin{pmatrix}
x& -y& 0& 0& 0\\
0& 0& 0& -1& 0\\
0& -1& 0& 0& 0\\
-y& 0& 0& 0& -1\\
\end{pmatrix}
+&
\small \begin{pmatrix}
0& 1& 0& 0& 0\\
0& 0& 0& -1& 0\\
1& 0& y& 0& 0\\
0& 0& 0& 0& 0\\
\end{pmatrix}
\begin{pmatrix}
y^{4}& x^{2}& xy& xy^{2}& 0\\
-x^{2}& xy& y^{2}& y^{3}& 0\\
0& 0& -x& 0& y^{2}\\
xy^{2}& -y^{3}& 0& x^{2}& xy\\
0& 0& y^{3}& 0& x^{2}\\
\end{pmatrix} \\
&\hspace{36mm}=
\small \begin{pmatrix}
0& 0& y^{2}& 0& 0\\
0& 0& 0& 0& -xy\\
0& 0& 0& 0& 0\\
0& 0& 0& 0& x\\
\end{pmatrix}
\end{split}
\end{equation*}
\begin{equation*}
\begin{split}
\small \begin{pmatrix}
x^{2}& y^{3}& 0& 0\\
y^{2}& -x& 0& 0\\
0& y^{2}& x& y^{3}\\
-xy& 0& y^{2}& -x^{2}\\
\end{pmatrix}
\begin{pmatrix}
0& 1& 0& 0& 0\\
0& 0& 0& -1& 0\\
1& 0& y& 0& 0\\
0& 0& 0& 0& 0\\
\end{pmatrix}
+&
\small \begin{pmatrix}
x& -y& 0& 0& 0\\
0& 0& 0& -1& 0\\
0& -1& 0& 0& 0\\
-y& 0& 0& 0& -1\\
\end{pmatrix}
\begin{pmatrix}
y& -x& 0& 0& 0\\
x& 0& xy& -y^{2}& 0\\
0& 0& -x^{2}& 0& y^{2}\\
0& y^{2}& 0& x& -y\\
0& 0& y^{3}& 0& x\\
\end{pmatrix} \\
&\hspace{37.5mm}=
\small \begin{pmatrix}
0& 0& -xy^{2}& 0& 0\\
0& 0& 0& 0& y\\
0& 0& 0& 0& 0\\
0& 0& 0& 0& -x\\
\end{pmatrix}
\end{split}
\end{equation*}

\end{proof}


\section{Homology categories for invertible curve singularities}\label{sec:A3}
In \cite{CCJ20}, we constructed a new $\AI$-category $\cF(W, G)$ associated to $(W,G)$ of log Fano or Calabi-Yau type. Invertible curve singularities are of log general type (except $F_{2,2}$), hence $\AI$-structural equations may have obstructions to hold. In this section, we show that the construction of \cite{CCJ20} defines at least an $A_3$-structure so that we can define the homology category $\mathcal{H}(W,G)$ for invertible curve singularities.

Let us consider the case of $G=G_W$ for simplicity, as the case of general $G$ follows from that of $G_W$ as in Section 7.6 of \cite{CCJ20}. Recall that $\cF(W, G_{W})$ and $\mathcal{WF}([M_W/G_W])$ has the same set of objects. A morphism space between $L_{1}$, $L_{2} \in \cF(W, G_{W})$ is given by
$$\hom_{\cF(W, G_W)}(L_1, L_2) \coloneqq CW(L_{1},L_{2}) \oplus CW(L_{1},L_{2}) \epsilon$$
where $\epsilon$ is a formal variable of degree $(-1+\deg \Gamma_W)$.  The differential $M_1$ is defined so that the cohomology of $\cF(W, G_{W})$ is the cohomology of the cone complex \eqref{eq:capi} of quantum cap action $\cap \Gamma_W$. Namely, the cap action is the component of $M_1$ from the $\epsilon$-component $CW(L_{1},L_{2}) \epsilon$ to $CW(L_{1},L_{2})$ in the above. Recall that quantum cap action has an insertion on the geodesic line between the input and output marked point.

In \cite{CCJ20}, the new $\AI$-structure map generalizes this quantum cap action with arbitrary many insertions of $\Gamma_W$. Namely, whenever an input of $\AI$-operation is from the $\epsilon$-components,  we insert $\Gamma_W$ on the geodesic from the corresponding input marked point to the output marked point.  Discs with such geodesics and interior markings are called popsicles, and they were first introduced by Abouzaid and Seidel to define a wrapped Fukaya category \cite{AS}. In their case, interior marked points are used to locate the support of suitable one forms for telescoping and hence were not used as inputs.

Interior marked points are allowed to overlap each other in \cite{AS} but this is not the case in our setup. 
This requires a different compactification. The new compactification turns out to be quite tricky because some of the conformal structures of the discs and sphere are correlated (due to the popsicle lines). We introduced the notion of alignment data to keep track of such relations between conformal structures and defined the new compactification in \cite{CCJ20}. In this setting, new codimension 1 strata with sphere bubbles may appear and these are new obstructions to define the $\AI$-structure (Figure \ref{fig:popdeg} for example).  Under the log Fano and Calabi-Yau condition, we have shown that such sphere bubbles of codimension one does not appear, and we get the $A_{\infty}$-structure on $\cF(W, G_{W})$. We refer readers to \cite{CCJ20} for details.

\begin{figure}[h]
\includegraphics[scale=0.5]{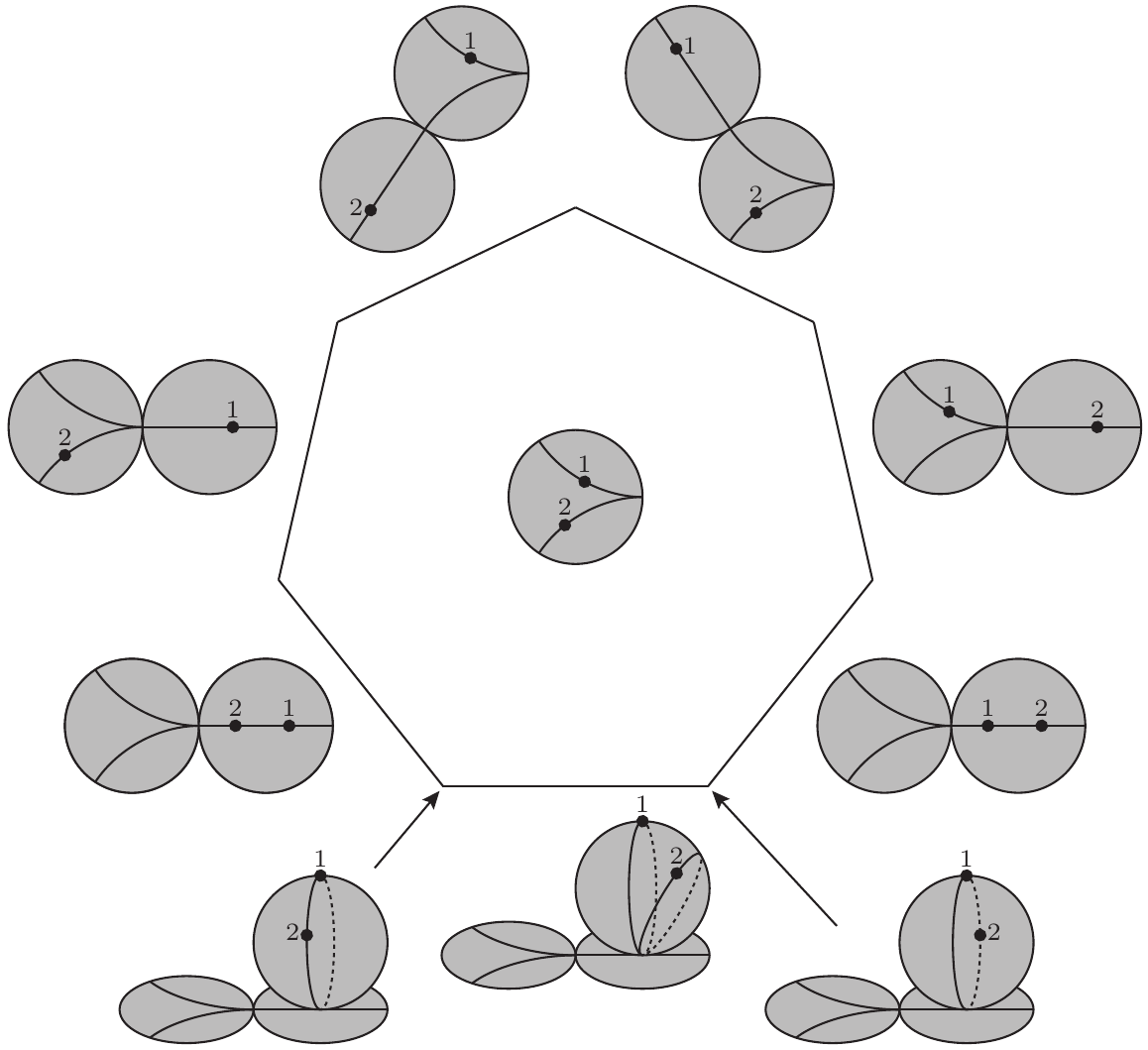}
\centering
\caption{Moduli space compactification having codimension one sphere bubbles}
\label{fig:popdeg}
\end{figure}

\subsection{$A_3$-category structure on $\cF(W, G_W)$}

\begin{defn}
An $A_{3}$-structure on the category is a collection of three operators $M_{1}$, $M_{2}$ and $M_{3}$ satisfying $M_1^2 =0$ and 
\begin{equation} \label{eqn:cond1}
M_{1}(M_{2}(a,b)) \pm M_{2}(M_{1}(a),b) \pm M_{2}(a,M_{1}(b)) = 0,
\end{equation}
\begin{equation} \label{eqn:cond2}
\begin{aligned}
M_{1}(M_{3}(a,b,c)) &\pm M_{2}(M_{2}(a,b),c) \pm M_{2}(a,M_{2}(b,c)) \\
&\pm M_{3}(M_{1}(a),b,c) \pm M_{3}(a,M_{1}(b),c) \pm M_{3}(a,b,M_{1}(c)) = 0
\end{aligned}
\end{equation}
where the signs are the standard Koszul signs as in \cite{FOOO}.
\end{defn}
 As we mentioned, $M_1$ is the differential of the cone complex  \eqref{eq:capi}, hence $M_1^2=0$ already holds. A construction of three operators $M_{1}$, $M_{2}$ and $M_{3}$ is the same as in \cite{CCJ20}. 
 Recall that the $\AI$-identities are deduced from the compactification of the moduli space of popsicle discs, or more precisely from their codimension one faces. The obstructions are given by the possible popsicle sphere bubbles that appear in codimension 1 strata.

\begin{figure}[h]
\begin{subfigure}{0.43\textwidth}
\includegraphics[scale=1]{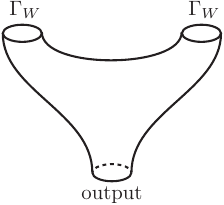}
\centering
\caption{Sphere bubble with 2 $\Gamma_W$ inputs}
\end{subfigure}
\begin{subfigure}{0.43\textwidth}
\includegraphics[scale=1]{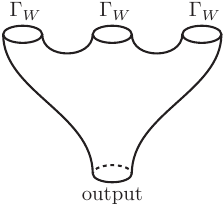}
\centering
\caption{Sphere bubble with 3 $\Gamma_W$ inputs}
\end{subfigure}
\centering
\caption{ }
\end{figure}

\begin{thm} \label{thm:A3}
For the invertible curve singularity $W$ except three cases $F_{3,3}$, $C_{3,2}$, and $L_{2,2}$, there exist the $A_{3}$-structure on $\cF(W, G_W)$.
\begin{proof}
We need to show the vanishing of obstructions for the $A_3$-algebra equations.
 We already have $M_1^2 =0$. Also note that a component of $M_1$ counts popsicle disc with one $\Gamma_W$ insertion, but the proof of $M_1^2=0$ only involves popsicle disc with one $\Gamma_W$ insertion instead of two $\Gamma_W$ insertions. This is because the output of a popsicle disc of a quantum cap action lies in a component without $\epsilon$
 
Similarly, the $A_3$-equations amounts to show the vanishing of codimension one popsicle sphere bubbles in the case of two or three $\Gamma_W$-insertions. The proof will be a repeated application of the following observations. 

\begin{lemma}
\label{lemma: degree}
Write $\gamma_\mathrm{out}$ as a possible nontrivial output of a popsicle sphere bubble with $N\geq 2$ many $\Gamma_W$-insertions. Then,
\begin{enumerate}
	\item $\deg \gamma_\mathrm{out} < N\cdot\deg\Gamma_W-N+1$
	\item $[\gamma_\mathrm{out}] \in H_1^\mathrm{orb}$ (see \eqref{eqn:1sthlgy}) must be represented as $-K_i\cdot \gamma_i$ for some $1\leq i \leq 3$ with $K_i$ nonnegative. 
	\item Moreover, such $K_i$ can be chosen to be strictly less than the winding number of $N$-th iterate of $\Gamma_W$ around $\gamma_i$.
\end{enumerate}
\end{lemma}
\begin{proof}
The first item follows from \cite[Proposition 5.4]{CCJ20} where such operation is shown to be of a strictly negative degree. For the second item observe that $\gamma_\mathrm{out}$ is a (possibly a constant) Reeb orbit and therefore either zero or the multiple of a single $\gamma_i$, which also carries an orientation opposite from the induced orientation on the boundary. The third item follows from the first one. If not, the degree of a Hamiltonian orbit represents $K_i\cdot\gamma_i$ is bigger or equal to $N \deg \Gamma_W$, a contradiction. 
\end{proof}

The following case analysis shows that a nontrivial output $\gamma_\mathrm{out}$ of a popsicle sphere bubble operation cannot satisfy all the conditions in Lemma \ref{lemma: degree}.

\begin{itemize}
\item Fermat cases $F_{p,q}$:
Since $F_{2,2}$ is log Calabi--Yau, any popsicle sphere bubble does not exist as proved in \cite{CCJ20}.
Let us assume that $(p,q) \neq (2,2)$. Notice that the orbifold $[M_W/G_W]$ has only one boundary component.

Let $\gamma_\mathrm{out}$ be an output of a popsicle sphere bubble with two $\Gamma_W$-insertion.
Observe that the only possible way to represent $[\gamma_\mathrm{out}]$ as a non-positive multiple of a single $\gamma_i$ is  $-2\gamma_3$. We conclude that there are no such $\gamma_\mathrm{out}$ by applying Lemma \ref{lemma: degree}.

Further assume that $(p,q) \neq (3,3)$. For a sphere bubble with three $\Gamma_W$ insertions, the only possible way to represent $[\gamma_\mathrm{out}]$ as a non-positive multiple of a single $\gamma_i$ is  $-3\gamma_3$. The above argument works exactly the same.

\item Chain cases $C_{p,q}$:
The quotient $M_{C_{p,q}}/G_{C_{p,q}}$ has two boundary components and $\Gamma_W$ also has two components $(1-p)\gamma_{1}$ and $-\gamma_{3}$ in each boundary component. 

For a sphere bubble of two $\Gamma_W$ insertions, we divide a possible homology class of $\gamma_\mathrm{out}$ into the following three cases.
\begin{itemize}
\item $2(1-p)\gamma_{1}$: This is the only possible way to represent it as a non-positive multiple of a single $\gamma_i$, which violates (3) of Lemma \ref{lemma: degree}.
\item $(1-p)\gamma_{1} -\gamma_{3}$: It does not satisfy (2) of Lemma \ref{lemma: degree}.
\item $-2\gamma_{3}$ : Again, it violates (3) of Lemma \ref{lemma: degree} because this is the only possible way tof representing $[\gamma_\mathrm{out}]$ as a non-positive multiple of a single $\gamma_i$.
\end{itemize}

For a sphere bubble of three $\Gamma_W$ insertions, we divide a possible output into four cases:
\begin{itemize}
\item $3(1-p)\gamma_{1}$: Does not satisfy (3) of Lemma \ref{lemma: degree}.
\item $2(1-p)\gamma_{1}-\gamma_{3}$: Does not satisfy (2) of Lemma \ref{lemma: degree}.
\item $(1-p)\gamma_{1} - 2\gamma_{3}$: When $q \neq 2$, it does not satisfy (2) of Lemma \ref{lemma: degree}. If $q=2$, It can be represented as $(3-p)\gamma_{1}$ since $2\gamma_{1} + 2\gamma_{3} = 0$. If $(p,q) = (2,2)$, then the coefficient is positive, so we may exclude this case. If $(p, q) = (p \geq 4$, 2), the argument is a little more involved; according to \cite[Proposition 8.15]{CCJ20}, one can compute the degree of the Morse-Bott family of Reeb orbits $\Sigma_{g,l}$ representing $(3-p)\gamma_{1}$.
They are twisted by the element $J^{-3} \in G_W$ with a period $\frac{p-3}{p-1}$. Following the notation therein, we have
$$
(g, l) = \left(J^{-3}, \frac{p-3}{p-1} \right), \hspace{5pt} I = \{y\}, \hspace{5pt} \theta_{J} = \left(\frac{1}{p}, \frac{p-1}{2p}\right), \hspace{5pt} \theta_{g} =  \left(\frac{p-3}{p}, \frac{p-3}{2p}\right).
$$
Plugging in this data to the formula, we get $\mu_\mathrm{RS}(\Sigma_{g,l}) = 1-\frac{3(p-3)}{p}$ and therefore 
\begin{equation} \label{eqn:deg1}
\deg \gamma_\mathrm{out} \in \left\{ 1- \mu_\mathrm{RS}(\Sigma_{g,l}), 2- \mu_\mathrm{RS}(\Sigma_{g,l}) \right\}  = \left\{ \frac{3(p-3)}{p} , \frac{3(p-3)}{p} +1\right\}.
\end{equation}
On the other hand, we have
\begin{equation} \label{eqn:deg2}
3 \deg \Gamma_W - 2 = 3 \left( \frac{p-1}{p} \right) - 2 = \frac{p-3}{p}.
\end{equation}
which is strictly less than the elements in \eqref{eqn:deg1}. It contradicts (1) of Lemma \ref{lemma: degree}.
\item $-3\gamma_{3}$ : If $q=3$ then it violates (2) of Lemma \ref{lemma: degree}. Otherwise, it violates (3) of Lemma \ref{lemma: degree}. 
\end{itemize}

\item Loop cases $L_{p,q}$:
 $\Gamma_W$ has three components $(1-p)\gamma_{1}$, $(1-q)\gamma_{2}$ and $-\gamma_{3}$. 
Let us we assume $(p,q) \neq (2,2)$ first. When all inputs of a sphere bubble are the same class, then its possible output cannot satisfy (3) of Lemma \ref{lemma: degree}. Otherwise, it violates (2) of Lemma \ref{lemma: degree}. 
 \end{itemize}
Thus, we can construct an $A_{3}$-structure except the cases $F_{3,3}$, $C_{3,2}$, and $L_{2,2}$.
\end{proof}
\end{thm}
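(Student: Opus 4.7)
The plan is to adopt the popsicle-disc construction of \cite{CCJ20} as the definitions of $M_1$, $M_2$, $M_3$ on $\cF(W, G_W)$, and then verify the first three $\AI$-identities by analyzing the codimension-one boundary strata of the corresponding moduli spaces. Since $M_1$ is the mapping-cone differential of the cap action $\cap \Gamma_W$, its square vanishes for reasons that involve only popsicle discs with a single $\Gamma_W$-insertion, which was already handled in \cite{CCJ20} without any log Fano/Calabi--Yau hypothesis. The remaining two relations --- the Leibniz rule \eqref{eqn:cond1} and homotopy-associativity \eqref{eqn:cond2} --- reduce to showing that, on top of the expected disc-breaking strata, the new codimension-one strata arising from popsicle sphere bubbles with two or three interior $\Gamma_W$-insertions contribute nothing.

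The main technical tool will be a constraint lemma for any nontrivial asymptote $\gamma_{\mathrm{out}}$ of such a sphere bubble carrying $N \in \{2,3\}$ insertions. By the degree estimate of \cite[Proposition 5.4]{CCJ20} one has $\deg \gamma_{\mathrm{out}} < N \deg \Gamma_W - N + 1$. Since the bubble attaches along a single boundary component of $[M_W/G_W]$, its asymptote is a Reeb orbit of that component traversed with reversed orientation, forcing $[\gamma_{\mathrm{out}}] \in H_1^{\mathrm{orb}}$ to be of the form $-K_i \gamma_i$ for one of the puncture loops, with $K_i \geq 0$. Combining these two conditions forces $K_i$ to be strictly smaller than the winding number of $\Gamma_W^N$ about $\gamma_i$. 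Combined with Proposition \ref{prop:Gamma computation}, this explicitly lists the finitely many candidate output classes in each type.

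The bulk of the argument is then a case analysis. For Fermat $F_{p,q}$ a single boundary component with $\Gamma_W \leftrightarrow \gamma_3^{-1}$ leaves only $-N \gamma_3$ as a candidate, and the winding bound rules out every case except those where the puncture order coincides with $N$, producing the exception $F_{3,3}$ (the case $F_{2,2}$ already being log Calabi--Yau). For loop $L_{p,q}$, the three-puncture shape of $\Gamma_W$ means that nearly every distribution of insertions produces an output mixing two generators, which fails the single-generator constraint; the remaining pure contributions are ruled out by the winding bound except at $L_{2,2}$.

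I expect the chain case to be the genuine obstacle. For $C_{p,q}$ with $N = 3$ and candidate class $(1-p)\gamma_1 - 2\gamma_3$, when $q = 2$ the relation $2\gamma_1 + 2\gamma_3 = 0$ in $H_1^{\mathrm{orb}}$ collapses the class to a pure multiple of $\gamma_1$, circumventing the homology argument entirely. Handling this subcase requires computing the Conley--Zehnder index of the associated Morse--Bott Reeb orbit family explicitly, using the formula of \cite[Proposition 8.15]{CCJ20}, and verifying the strict degree inequality by hand. This index computation will exclude all remaining chain cases except $C_{3,2}$, producing precisely the three claimed exceptions and completing the verification of the $A_3$-identities.
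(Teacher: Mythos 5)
Your proposal follows essentially the same route as the paper's proof: the same reduction of the $A_3$-identities to excluding codimension-one popsicle sphere bubbles with two or three $\Gamma_W$-insertions, the same constraint lemma (degree bound from \cite[Proposition 5.4]{CCJ20}, single-generator homology class $-K_i\gamma_i$, winding bound), the same type-by-type case analysis, and the same treatment of the delicate chain subcase $(1-p)\gamma_1-2\gamma_3$ with $q=2$ via the Morse--Bott index formula of \cite[Proposition 8.15]{CCJ20}, yielding the exceptions $F_{3,3}$, $C_{3,2}$, $L_{2,2}$. It is a correct plan matching the paper's argument, with only cosmetic differences in how the exceptional cases are phrased.
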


\begin{remark}
\label{not enough}
The above topological argument has a few exceptions. For $F_{3,3}$ case, $-3\gamma_3$ is trivial in $H_1^{\mathrm{orb}}$ and hence can be represented by generators in the interior of the orbifold $[M_W/G_W]$. Hence, we cannot exclude the existence of such sphere bubbles using purely topological arguments for the case of  $(p,q) = (3,3)$. For example, there is a $3:1$ map from the pair of pants to the maximal quotient.

A similar phenomenon happens for $L_{2,2}$ case.  From the relation $\gamma_{1} + \gamma_{2} + \gamma_{3} = 0$, the maximal quotient which is a pair of pants itself represents a possible sphere bubble with three inputs $-\gamma_{1}$, $-\gamma_{2}$, $-\gamma_{3}$, and one interior output. Hence, we cannot exclude this case topologically.
\end{remark}

By definition, $M_{2}$ induces an associative product on a cohomology of morphism space. Therefore, we can define a cohomology category of $\cF(W, G_W)$ and denote it by $\cH(W, G_W)$.

\subsection{Computations in $\cH(W,G_W)$}
We compute the cohomology of a Lagrangian $K$ in $[M_W/G_W]$, by explicitly computing
the quantum cap action of $\Gamma_W$. Wrapped Floer cohomology $K$ is infinite dimensional but 
most of them lie in the image of cap actions of $\Gamma_W$, and the resulting cohomology becomes finite dimensional.

At the cohomology level, 
$\Gamma_W$ acts on $HW^\bullet(L,L)$, and is the same as multiplication by $\mathrm{CO}_L(\Gamma_W) \in HW^\bullet(L,L)$
by \cite[Proposition 4.2]{CCJ20}. 
For a non-compact Lagrangian $L$, there is an obvious candidate for $\mathrm{CO}_L(\Gamma_W)$.
Our $\Gamma_W$ corresponds to time-1 periodic Reeb orbits of $[L_{W, \delta}/G_W] $. 
In a similar way, we can apply the time-1 Reeb flow to generate Hamiltonian chords on the intersection points $L \cap [L_{W, \delta}/G_W]$. We call it a \textit{monodromy chord of $L$} and denote it by $\gamma_{W, L} \in CW^\bullet(L,L)$.

\begin{prop} We have $\mathrm{CO}_{L}(\Gamma_W)=\gamma_{W,L}$. Hence, 
  \[\Hom_{\cF(W, G_W)}(L, L) \simeq \Cone \left(m_2(-, \gamma_{W,L}):HW^\bullet(L, L) \to HW^\bullet(L,L)\right).\]
\end{prop}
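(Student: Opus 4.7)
The plan is to first identify $\mathrm{CO}_{L}(\Gamma_{W})$ with the monodromy chord $\gamma_{W,L}$ by a neck-stretching/local model argument near the contact boundary $[L_{W,\delta}/G_{W}]$, and then deduce the cone description from the quantum cap action formula already established in \cite[Proposition 4.2]{CCJ20}. Recall that the quantum cap action by a symplectic cohomology class $\alpha$ on $HW^{\bullet}(L,L)$ agrees up to sign with left multiplication by $\mathrm{CO}_{L}(\alpha)$ once $\mathrm{CO}_{L}$ is a unital ring homomorphism to $HW^{\bullet}(L,L)$; applying this to $\alpha = \Gamma_{W}$ and the definition of $M_{1}$ on $\cF(W,G_{W})$ as the mapping cone of $\cap \Gamma_{W}$ will yield the stated isomorphism as soon as the first half is proved.

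For the identification $\mathrm{CO}_{L}(\Gamma_{W}) = \gamma_{W,L}$, I would work in a Liouville collar neighborhood $[L_{W,\delta}/G_{W}] \times (-\epsilon,\epsilon) \subset [M_{W}/G_{W}]$ where $\Gamma_{W}$ is supported. By construction in Subsection \ref{subsec:Gamma}, $\Gamma_{W}$ is the Morse-Bott family of time-$1$ Reeb orbits representing the fundamental class $[L_{W,\delta}/G_{W}]$, with the equivariant Morse function $h$ from \cite[Lemma 9.1]{CCJ20} used to perturb the Morse-Bott component. On the other hand, the Hamiltonian chords of $L$ near infinity arise from the Reeb flow applied to the intersection points $L \cap [L_{W,\delta}/G_{W}]$, and $\gamma_{W,L}$ is precisely the collection of time-$1$ such chords. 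The CO map $\mathrm{CO}_{L}$ counts rigid holomorphic half-cylinders in $[M_{W}/G_{W}]$ with one interior puncture asymptotic to a Hamiltonian orbit and boundary on $L$, outputting a Hamiltonian chord. In the collar, these degenerate after a neck-stretching argument to trivial half-cylinders inside $[L_{W,\delta}/G_{W}] \times \R$ projecting tautologically onto Reeb trajectories from an orbit in the $\Gamma_{W}$ family to a chord endpoint of $L$. A standard SFT compactness / automatic transversality argument then produces a bijective count between elements of the $\Gamma_{W}$ Morse-Bott family cut out by the Morse function $h$ and the time-$1$ Reeb chords on $L$, with the fundamental class $[L_{W,\delta}/G_{W}]$ hitting the full family of boundary chords that constitutes $\gamma_{W,L}$. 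The signs match since both come from the canonical orientation of the Morse-Bott locus and the orientation of the Reeb orbits.

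The main obstacle will be the Morse-Bott analytical setup: rigorously justifying that the CO-count reduces to the trivial-cylinder count in the collar, accounting for all contributions from the perturbation $\overline{h}$, and excluding spurious disk or sphere bubbles. The index computation for the relevant $\Gamma_{W}$ Morse-Bott component (again in the spirit of \cite[Proposition 8.15]{CCJ20} and \cite{KvK16}) forces the only rigid configurations in the collar to be the trivial half-cylinders, so the matter reduces to automatic transversality for such thimbles and confined energy estimates (a no-escape lemma applied to the collar). Once this local computation is in place, $G_{W}$-equivariance guarantees the full family $\gamma_{W,L}$ appears with coefficient $1$.

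Given $\mathrm{CO}_{L}(\Gamma_{W}) = \gamma_{W,L}$, the formula $\cap \Gamma_{W} = m_{2}(-,\gamma_{W,L})$ on $HW^{\bullet}(L,L)$ follows from \cite[Proposition 4.2]{CCJ20}. By the definition of $M_{1}$ on $\hom_{\cF(W,G_{W})}(L,L) = CW(L,L) \oplus CW(L,L)\epsilon$ as the mapping cone of $\cap \Gamma_{W}$, passing to cohomology yields
\[
\Hom_{\cF(W,G_{W})}(L,L) \;\cong\; \Cone\!\left( m_{2}(-,\gamma_{W,L}) : HW^{\bullet}(L,L) \to HW^{\bullet}(L,L) \right),
\]
which is the desired identification.
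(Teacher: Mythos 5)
Your second half (deducing the cone description from $\mathrm{CO}_L(\Gamma_W)=\gamma_{W,L}$ via the cap-action formula of \cite[Proposition 4.2]{CCJ20} and the definition of $M_1$ on $\cF(W,G_W)$) is exactly what the paper does. For the first half, however, you take a genuinely different and much heavier route than the paper. The paper's argument is soft: since $L$ is simply connected, $CW^\bullet(L,L)$ carries the $H_1^{\mathrm{orb}}$-grading of Lemma \ref{topological grading}, the closed--open map preserves this grading, and on each conical end of $L$ there is a \emph{unique} wrapped generator whose $H_1$-grading matches that of $\Gamma_W$; this pins down the possible output to be (a multiple of) $\gamma_{W,L}$, and the coefficient is then computed by an explicit two-dimensional count modeled on the standard cylinder example (citing \cite{PJ}). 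You instead propose a neck-stretching/Morse--Bott SFT argument in the Liouville collar. That could in principle work and would be more intrinsic (no appeal to the surface dimension), but as written it leaves the crux unestablished: you assert, rather than prove, that the CO-count localizes to the collar (curves asymptotic to $\Gamma_W$ with output a chord at infinity can a priori sweep through the interior of $[M_W/G_W]$, and ruling this out requires more than the no-escape lemma you gesture at, since both asymptotics sit near the boundary), and the Morse--Bott transversality/orientation bookkeeping for the perturbation $\overline h$ is also deferred. The $H_1$-grading observation is precisely what lets the paper avoid all of this, so either import that grading argument to constrain the output before doing any stretching, or supply the localization and automatic-transversality details in full; as it stands the first half of your proof is a plan rather than a proof.
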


\begin{proof}
By Lemma \ref{topological grading}, $CW^\bullet(L, L)$ carries an $H_1$-grading since $L$ is simply connected. 
Also, closed-open map preserves $H_1$-grading as well. Note that for each component of $\partial L$, there is the unique wrapped
generator which has the same $H_1$-grading as $\Gamma_W$ on that conical end. The sum of these chords is $\gamma_{W, L}$. Since our Milnor fiber is a Riemann surface, it is not hard to see that
$\mathrm{CO}_L(\Gamma_W)$ is exactly $\gamma_{W,L}$, following the standard example of cylinder (see \cite{PJ} for more details).
\end{proof}

Let $K$ be a split-generator of $\WF([M_W/G_W])$ given as in Figure \ref{fig:splitgen}. More precisely, it is a Lagrangian whose boundary lies in the specific boundary component of $[M_W/G_W]$, which is
the c-vertex in $\mathbb{P}^1_{a,b,c}$ in Proposition \ref{prop:abc} (This is represented by a circle in Section \ref{sec:AR}).

\begin{thm} \label{thm:qiso}
Let $\mathcal{G}:\mathcal{WF}([M_W/G_W]) \to \mathcal{MF}(W^T)$ be an $\AI$-functor which is the composition of localized mirror functor and restriction functor in Theorem \ref{thm:intro1}. Then, the following isomorphism holds:
$$\Hom_{\cH(W, G_W)}(K,K) \cong \Hom_{\mathcal{MF}(W^T)}(\mathcal{G}(K),\mathcal{G}(K)).$$
\end{thm}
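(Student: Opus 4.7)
The plan is to derive Theorem \ref{thm:qiso} as a direct consequence of the $\AI$-bimodule quasi-isomorphism in Theorem \ref{thm:c1} (equivalently Corollary \ref{cor:triangle}), specialized to the pair $(K,K)$. First I would unpack the left-hand side: by the proposition immediately preceding the theorem, together with the construction of $\cF(W,G_W)$ recalled in Section \ref{sec:KS} as the mapping cone of the cap action of $\Gamma_W$, the morphism complex $\hom_{\cF(W,G_W)}(K,K)$ is quasi-isomorphic to the cone of $m_2(-, \gamma_{W,K})$ on $CW^\bullet(K,K)$. Its cohomology is $\Hom_{\cH(W,G_W)}(K,K)$ by definition of the homology category $\cH(W,G_W)$.

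Second, I would invoke the right-most vertical arrow in the diagram of Theorem \ref{thm:c1}: $\widetilde{\cF}^\bL: \cF(W,G_W) \to \mathcal{MF}(W_\bL)\!\mid_g$ is a quasi-isomorphism of $\AI$-bimodules over $\WF([M_W/G_W])$. Evaluating at the pair $(K,K)$ yields a chain-level quasi-isomorphism
$$\hom_{\cF(W,G_W)}(K,K) \xrightarrow{\sim} \hom_{\mathcal{MF}(W_\bL)\mid_g}\!\bigl(\cF^\bL(K),\cF^\bL(K)\bigr).$$
Combined with the identification $\mathcal{MF}(W_\bL)\!\mid_g \simeq \mathcal{MF}(W^T)$ from the last clause of Theorem \ref{thm:c1} (whose proof rests on Lemma \ref{lem:es}), under which $\cF^\bL(K)$ restricts to $\mathcal{G}(K) = \cF^\bL(K)\mid_{g=0}$, this gives the desired isomorphism on cohomology.

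The principal obstacle will be making this abstract chain of bimodule identifications concrete enough to confirm that the matrix factorization produced coincides with the one, $\mathcal{G}(K)$, that is computed via the explicit formulas of Section \ref{sec:HMSMilnor} followed by the hypersurface restriction $\{g=0\}$. More precisely, $\mathcal{MF}(W_\bL)\!\mid_g$ appears in Theorem \ref{thm:c1} as the cone of multiplication by $g$ viewed as a bimodule; Lemma \ref{lem:es} supplies essential surjectivity of the restriction functor $i^*$, but promoting this to a quasi-isomorphism on Hom complexes between the objects $\cF^\bL(K)$ requires an additional Koszul-type argument reflecting the fact that $g$ is a regular element of $\C[x,y,z]/W^T$. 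As an independent sanity check, I would compute both sides directly using the explicit picture of $K$ in Figure \ref{fig:splitgen}: on the Floer side, read off $HW^\bullet(K,K)$ and the action of $\gamma_{W,K}$ from the combinatorial tessellation of Section \ref{sec:eqtop}, and on the mirror side, evaluate $\mathcal{G}(K)$ as a Koszul matrix factorization of $W^T$ and compute its endomorphism algebra by the DG-polynomial-differential-operator technique used in Section \ref{sec:HMSMilnor}. Matching the two cone cohomologies case-by-case for Fermat, chain, and loop types would then provide a fully explicit verification complementing the abstract bimodule argument.
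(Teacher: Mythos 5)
Your primary argument is correct in outline but takes a genuinely different route from the paper's. The paper proves Theorem \ref{thm:qiso} by direct computation: it writes out $\mathcal{G}(K)$ explicitly in each of the three types, reduces it by elementary operations to the stabilized residue field $k^{\mathrm{stab}}_{W^T}$, computes $CW^\bullet(K,K)\simeq T(a,b)/\mathcal{R}$ as in Lemma \ref{lem:fermatww}, notes that $\gamma_{W,K}=ab+ba$ and that $m_2(-,\gamma_{W,K})$ is injective so the cone cohomology is the cokernel $\langle 1,a,b,a\otimes b\rangle$, and matches this with Dyckerhoff's computation of $\mathrm{End}_{\MF(W^T)}\big(k^{\mathrm{stab}}_{W^T}\big)$ in \cite{Dy} --- in other words, your ``sanity check'' is essentially the paper's entire proof. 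Your main route instead specializes the bimodule quasi-isomorphism of Theorem \ref{thm:c1}/Corollary \ref{cor:triangle} at the pair $(K,K)$; this is legitimate, since the morphism complex of $\cF(W,G_W)$ is by construction the cone of the cap action, and the right-hand vertical arrow $\widetilde{\cF}^\bL$ is a quasi-isomorphism (resting on full faithfulness of $\cF^\bL$ from Theorem \ref{thm:um} and the five lemma for cones). The step you flag as the principal obstacle --- that the cone of $\cdot\, g$ on $\hom_{\MF(W_\bL)}(M_K,M_K)$ computes $\Hom_{\MF(W^T)}(\mathcal{G}(K),\mathcal{G}(K))$ --- is not missing from the paper's framework: since $g=z-f(x,y)$, the paper invokes \cite[Corollary 6.7]{CCJ20} together with Lemma \ref{lem:es} to identify $\mathcal{MF}(W_\bL)|_g$ with $\mathcal{MF}(W^T)$ compatibly with $\mathcal{G}=i^*\circ\cF^\bL$, so your route closes once you cite that. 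What the abstract route buys is naturality and uniformity in the three types, with the isomorphism induced by an explicit bimodule map rather than a coincidence of computations; what the paper's computation buys is the concrete identification of $\mathcal{G}(K)$ with $k^{\mathrm{stab}}_{W^T}$ and of both endomorphism spaces with the four-dimensional algebra $\langle 1,a,b,ab\rangle$, which is the explicit content the paper actually wants to exhibit.
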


\begin{proof}
In each case, $\mathcal{G}(K)$ is given as follows:
\begin{itemize}
\item Fermat  $$\mathcal{G}(K) = \begin{pmatrix}
y& x \\
-x^{p-1}& y^{q-1}
\end{pmatrix}
\begin{pmatrix}
y^{q-1} & -x \\
x^{p-1}& y \\
\end{pmatrix},$$
\item Chain  $$\mathcal{G}(K) = \begin{pmatrix}
y& x \\
0& x^{p}+y^{q-1}
\end{pmatrix}
\begin{pmatrix}
x^{p}+y^{q-1} & -x \\
0& y \\
\end{pmatrix},$$
\item Loop $$\mathcal{G}(K) = \begin{pmatrix}
y& x \\
0& x^{p}+xy^{q-1}
\end{pmatrix}
\begin{pmatrix}
x^{p}+xy^{q-1} & -x \\
0& y \\
\end{pmatrix}.$$
\end{itemize}
After some elementary operations, it is easy to see that $\mathcal{G}(K)$ is in fact $k^{\mathrm{stab}}_{W^T}$.

We can compute $\Hom_{\cH(W, G_W)}(K,K)$ as in Lemma \ref{lem:fermatww}. We have
\[CW^\bullet(K, K)\simeq T(a,b)/\mathcal R_{F_{p,q}}, \hskip 0.3cm \mathcal R_{F_{p,q}} = \langle a\otimes a =\delta_{2,p}, b\otimes b=\delta_{2,q}.\rangle\]
Since $K$ intersect the boundary twice, we have $\gamma_{W, K}=ab+ba$. An $m_2$-multiplication is injective, so we conclude that the cone of $m_2(-,\gamma_{W, K})$ is a cokernel of it. Hence, we have
\[\Hom_{\cH(W,G_W)}(K,K) \simeq \langle 1, a,b, a\otimes b\rangle.\]
This result matches to the computation of $\mathrm{End}_{\MF(W^T)}\left(k^{\mathrm{stab}}_{W^T}\right)$ in \cite{Dy}.
\end{proof}

\begin{remark}
Since $\cF(W, G_W)$ is not an $\AI$-category, the notion of a split-generator is not defined.
But if it is, it comes with a canonical $\AI$-functor from $\mathcal{WF}([M_W/G_W])$. (which has the same set of objects) and
hence split generators will map to split generators.
\end{remark}

\appendix

\section{Determination of Auslander--Reiten sequence}\label{sec:ac}
This appendix is due to Osamu Iyama. The result in this appendix guarantees that the exact sequences from Lagrangian Floer theory
indeed coincides with those in Auslander--Reiten theory.
  \begin{defn}
Let $\mathcal{E}$ be an exact category.
We say that \emph{an exact sequence $0\to A\xrightarrow{a}B\xrightarrow{b}C\to0$ is determined by its terms} if, for each exact sequence $0\to A\xrightarrow{a'}B\xrightarrow{b'}C\to0$, there exists a commutative diagram whose vertical maps are isomorphism:
\[
      \begin{tikzcd}
        0 \arrow[r]  & A \arrow[r,"a^{\prime}"] \arrow[d, "\simeq"] & B \arrow[r,"b^{\prime}"] \arrow[d, "\simeq"]&  C \arrow[r] \arrow[d, "\simeq"] & 0\\
        0 \arrow[r] & A \arrow[r,"a"]& B \arrow[r,"b"]&  C \arrow[r] & 0
      \end{tikzcd}
      \]
\end{defn}

For a Cohen-Macaulay local ring $R$, we denote by $\mbox{CM} R$ the category of maximal Cohen--Macaulay $R$-modules.

\begin{thm}\label{determined by terms}
Let $R$ be a complete local Cohen--Macaulay isolated singularity.\begin{enumerate}
\item Each split exact sequence in $\mbox{CM} R$ is determined by its terms.
\item Each almost split sequence in $\mbox{CM} R$ is determined by its terms.
\end{enumerate}
\end{thm}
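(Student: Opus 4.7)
My plan is to split the proof along the two parts of the statement. Part (1) I would treat as a direct application of Miyata's theorem from 1967: any short exact sequence $0\to A\to B\to C\to 0$ of finitely generated modules over a commutative Noetherian ring with $B\cong A\oplus C$ must split. Since $R$ is a complete local Cohen--Macaulay ring and every MCM module is finitely generated, the hypotheses are amply satisfied; the theorem immediately gives that any exact sequence with the same terms as a split one is split, and a choice of splitting produces the required isomorphism of sequences.

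For part (2), let $\sigma:0\to A\xrightarrow{a}B\xrightarrow{b}C\to 0$ be almost split and $\sigma':0\to A\xrightarrow{a'}B\xrightarrow{b'}C\to 0$ another exact sequence with the same terms as $R$-modules. The first step is to use part (1) to conclude that $\sigma'$ is non-split: were it split, the middle term would be $A\oplus C$, and part (1) applied to $\sigma$ would then force $\sigma$ to split, contradicting the definition of an almost split sequence. Next I apply the right almost split property of $b$ to the non-split epimorphism $b'$: this gives $\phi_B\colon B\to B$ with $b\phi_B=b'$ and hence an induced morphism of short exact sequences $(\phi_A,\phi_B,1_C)\colon\sigma'\to\sigma$. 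By the five lemma it suffices to show that $\phi_A$ is an automorphism.

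To handle $\phi_A$, I would use that $A$ is indecomposable (being the first term of an AR sequence), so by Krull--Schmidt for the complete local $R$ the ring $\mathrm{End}_R(A)$ is local. Suppose for contradiction that $\phi_A\in\mathrm{rad}\,\mathrm{End}_R(A)$. Dually apply the left almost split property of $a$ to the non-split monomorphism $a'$ to get $(1_A,\psi_B,\psi_C)\colon\sigma\to\sigma'$, and then consider the composite endomorphism $(\phi_A,\phi_B\psi_B,\psi_C)\in\mathrm{End}(\sigma)$. The Auslander--Reiten classical fact that $\mathrm{End}(\sigma)$ is local with invertibility detected by any single diagonal component then forces either $\phi_A$ to be invertible (which is what we want) or $\psi_C$ to lie in $\mathrm{rad}\,\mathrm{End}_R(C)$.

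The hard part of the argument is ruling out this last case, and here I plan to invoke the isolated singularity hypothesis in an essential way. It implies that $\mathrm{Ext}^1_R(C,A)$ has finite length as an $R$-module, and the AR class $[\sigma]$ is characterized by the socle conditions $\mathrm{rad}\,\mathrm{End}_R(A)\cdot[\sigma]=0=[\sigma]\cdot\mathrm{rad}\,\mathrm{End}_R(C)$ in this finite-length bimodule. Combined with the pushout/pullback identities $\phi_A\cdot[\sigma']=[\sigma]=[\sigma']\cdot\psi_C$ produced by the morphisms above, a careful bimodule analysis (iterating the relation $\phi_A^n[\sigma']=[\sigma']\psi_C^n$ and using the annihilation properties of $[\sigma]$) should force $[\sigma]=0$, contradicting the non-splitness of $\sigma$. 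I expect this final bimodule/finite-length argument to be the technical heart of the proof; the earlier steps are largely formal consequences of Miyata's theorem, the AR definitions, and localness of the relevant endomorphism rings.
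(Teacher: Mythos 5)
Your part (1) is fine and is a genuinely different (and shorter) route than the paper's: invoking Miyata's theorem for finitely generated modules over a commutative noetherian ring immediately gives that a sequence whose middle term is the direct sum of its ends splits, without any use of completeness or the isolated-singularity hypothesis. The paper instead proves this from scratch by a length count in the stable category $\underline{\mathrm{CM}}\,R$, using Hom-finiteness of $\underline{\Hom}$ (which is where the isolated singularity enters); your shortcut buys generality, the paper's argument stays self-contained inside the stable category it needs anyway.

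Part (2), however, has a genuine gap at exactly the step you call the technical heart. After reducing to ``$\phi_A$ is an automorphism'', the only data you retain are: $\sigma'$ non-split, the relations $\phi_A[\sigma']=[\sigma]=[\sigma']\psi_C$, the assumptions $\phi_A\in\mathrm{rad}\,\mathrm{End}_R(A)$, $\psi_C\in\mathrm{rad}\,\mathrm{End}_R(C)$, and finite length of $\mathrm{Ext}^1_R(C,A)$. This configuration is consistent and really occurs: for \emph{any} non-split extension $\sigma'$ of $C$ by $A=\tau C$ whose middle term is \emph{not} isomorphic to $B$, the almost split property still produces the two comparison maps, and there $\phi_A$ and $\psi_C$ are forced to be non-invertible while $[\sigma]\neq 0$. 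The one-parameter families $0\to X\to R^{\oplus 2}\oplus X\to X\to 0$ in the paper's closing Remark are of exactly this type. Hence no bimodule analysis can squeeze $[\sigma]=0$ out of that data; note also that your iteration $\phi_A^n[\sigma']=[\sigma']\psi_C^n$ collapses to $0=0$ for $n\geq 2$, since $\mathrm{rad}\,\mathrm{End}_R(A)\cdot[\sigma]=0=[\sigma]\cdot\mathrm{rad}\,\mathrm{End}_R(C)$. What is missing is a second use of the hypothesis that the middle terms are isomorphic: after the initial non-splitness step you never touch $B'\cong B$ again, yet the statement you are trying to prove is false without it. This is precisely how the paper closes the argument: from the comparison $(g,f,1_C)\colon\sigma'\to\sigma$ it forms the exact sequence $0\to A\to A\oplus B'\to B\to 0$ coming from the left-hand square; because $B'\cong B$, its middle term is the direct sum of its ends, so it splits by part (1); therefore the induced epimorphism $A\oplus B\to B$ is split, and since $a\colon A\to B$ is a radical morphism of the Krull--Schmidt category $\mathrm{CM}\,R$, the component $f\colon B\to B$ must be a split epimorphism, hence an isomorphism, and then $g$ is one by the five lemma. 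Some argument of this kind, re-invoking part (1) together with $B'\cong B$, is needed to complete your proof; the socle/finite-length analysis alone cannot.
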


Let $\mathcal{E}$ be an exact category with enough projectives.
We denote by $\underline{\mathcal{E}}$ the stable category.
It has the same objects as $\mathcal{E}$, and the morphisms between $X,Y\in\mathcal{E}$ are given by
\[\underline{\Hom}_{\mathcal{E}}(X,Y) := \Hom_{\mathcal{E}}(X,Y)/P(X,Y),\]
where $P(X,Y)$ is the subgroup of $\Hom_{\mathcal{E}}(X,Y)$ consisting of morphisms factoring through projective objects in $\mathcal{E}$.
For example, we denote by $\underline{\mbox{CM}} R$ the stable category of $\mbox{CM} R$.

\begin{prop}\label{Hom finite}
Let $R$ be a complete local Cohen--Macaulay isolated singularity. Then the stable category $\underline{\mbox{CM}}R$ is Hom-finite, that is, the $R$-module $\underline{\Hom}_R(X,Y)$ has finite length for each $X,Y\in\mbox{CM} R$.
\end{prop}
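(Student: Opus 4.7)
The plan is to establish two properties of $\underline{\Hom}_R(X,Y)$: that it is finitely generated over $R$, and that it is supported only at the maximal ideal $\mathfrak{m}$. A finitely generated $R$-module supported at a single closed point of a Noetherian local ring is annihilated by some power of $\mathfrak{m}$ and hence has finite length, so these two items suffice.

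First, I would observe that finite generation is immediate: $\Hom_R(X,Y)$ is a finitely generated $R$-module since $R$ is Noetherian and $X, Y$ are finitely generated, and $\underline{\Hom}_R(X,Y)$ is its quotient by the $R$-submodule $P(X,Y)$ of morphisms factoring through a free $R$-module (note that for a Cohen--Macaulay local ring, the projective objects of $\mbox{CM} R$ are precisely the free modules).

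Next, to identify the support, I would take any nonmaximal prime $\mathfrak{p} \subsetneq \mathfrak{m}$ and show $\underline{\Hom}_R(X,Y)_\mathfrak{p} = 0$. The isolated singularity hypothesis gives that $R_\mathfrak{p}$ is regular; combined with $X_\mathfrak{p} \in \mbox{CM} R_\mathfrak{p}$ (MCM is preserved under localization over a Cohen--Macaulay ring), Auslander--Buchsbaum--Serre then tells us $X_\mathfrak{p}$ is free over $R_\mathfrak{p}$. The goal is to leverage this freeness to show every $\phi \in \Hom_R(X,Y)$ becomes a map factoring through a free module after multiplication by some element outside $\mathfrak{p}$.

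The main technical step will be promoting the local splitting to a global factorization through a free $R$-module after clearing denominators. Concretely, fix a surjection $\pi: R^n \twoheadrightarrow X$; projectivity of $X_\mathfrak{p}$ over $R_\mathfrak{p}$ provides a splitting $\sigma$ of $\pi_\mathfrak{p}$ in $\Hom_{R_\mathfrak{p}}(X_\mathfrak{p}, R_\mathfrak{p}^n) = \Hom_R(X, R^n)_\mathfrak{p}$. Clearing denominators and using Noetherianity of $\Hom_R(X,X)$ to absorb higher-order torsion, one obtains a global $R$-linear map $\tau: X \to R^n$ and an element $s \in R \setminus \mathfrak{p}$ satisfying $\pi \circ \tau = s^M \cdot \mathrm{id}_X$ in $\mathrm{End}_R(X)$ for some $M \geq 0$. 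Then for any $\phi: X \to Y$, the composite $(\phi \circ \pi) \circ \tau : X \to Y$ equals $s^M \phi$ and visibly factors through $R^n$, so $s^M \phi$ belongs to $P(X,Y)$. This proves $\underline{\Hom}_R(X,Y)_\mathfrak{p} = 0$, and combined with finite generation yields the finite length conclusion.
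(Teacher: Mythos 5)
Your proof is correct. Note that the paper itself gives no proof of this proposition: in the appendix it is stated as a known fact (it is Auslander's classical theorem on isolated singularities, cf.\ Yoshino's book) and is only \emph{used} to prove the theorem that split and almost split sequences in $\mbox{CM} R$ are determined by their terms. Your argument is exactly the standard one behind that citation: finite generation of $\underline{\Hom}_R(X,Y)$ is clear, and for a nonmaximal prime $\mathfrak{p}$ one uses regularity of $R_\mathfrak{p}$ (isolated singularity) plus Auslander--Buchsbaum to make $X_\mathfrak{p}$ free, splits a presentation $\pi\colon R^n\twoheadrightarrow X$ after localizing, clears denominators to get $\tau\colon X\to R^n$ with $\pi\circ\tau=u\cdot\mathrm{id}_X$ for some $u\in R\setminus\mathfrak{p}$, and concludes that $u$ kills $\underline{\Hom}_R(X,Y)$, so the localization at $\mathfrak{p}$ vanishes and the support is $\{\mathfrak{m}\}$. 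Two cosmetic remarks: the element you produce is naturally $u=ts$ (denominator times the element absorbing the torsion), not literally a power $s^M$ of a single $s$, though this changes nothing since all that matters is $u\notin\mathfrak{p}$; and your argument never uses completeness of $R$, which is harmless --- completeness is needed elsewhere in the appendix (Krull--Schmidt, existence of almost split sequences), not for Hom-finiteness.
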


We are ready to prove Theorem \ref{determined by terms}.

\begin{proof}[Proof of Theorem \ref{determined by terms}]
(a) We need to show that each exact sequence $0\to A\xrightarrow{a} B\xrightarrow{b} C\to0$ in $\mbox{CM} R$ with $B\cong A\oplus C$ splits.

It is well-known that we have an induced exact sequence
\begin{equation*}
\underline{\Hom}_R(-,A)\xrightarrow{a}\underline{\Hom}_R(-,B)\xrightarrow{b}\underline{\Hom}_R(-,C)
\end{equation*}
of functors on $\mod R$. Evaluating $C$, we obtain an exact sequence
\begin{equation}\label{evaluate C}
\underline{\Hom}_R(C,A)\xrightarrow{a}\underline{\Hom}_R(C,B)\xrightarrow{b}\underline{\Hom}_R(C,C).
\end{equation}
On the other hand, since $B\cong A\oplus C$,
\[\mbox{length}_R\underline{\Hom}_R(C,A)+\mbox{length}_R\underline{\Hom}_R(C,C)=\mbox{length}_R\underline{\Hom}_R(C,B)\]
holds, where each term is finite by Proposition \ref{Hom finite}. In particular, the right map in \eqref{evaluate C} has to be surjective. Thus
\[\mbox{End}_R(C)=b\cdot\Hom_R(C,B)+P(C,C)\]
holds. Since $b\colon B \to C$ is surjective, each morphism in $P(C,C)$ factors through $b$. Thus $P(C,C)\subset b\cdot\Hom(C,B)$ and hence $\mbox{End}_R(C)=b\cdot\Hom_R(C,B)$ holds. In particular, $b$ is a split epimorphism.

(b) Let $\alpha\colon0\to A\xrightarrow{a}B\xrightarrow{b}C\to0$ be an almost split sequence, and $\beta\colon0\to A\xrightarrow{a'}B\xrightarrow{b'}C\to0$ an arbitrary exact sequence.

If $\beta$ splits, then (a) implies that $\alpha$ also splits, a contradiction.
Thus $\beta$ does not split.
Since $b\colon B\to A$ is right almost split and $b'\colon B\to A$ is not a split epimorphism, there exists $f\colon B\to B$ such that $b'=bf$. Thus we obtain a commutative diagram:
\[
      \begin{tikzcd}
        0 \arrow[r]  & A \arrow[r,"a^{\prime}"] \arrow[d, "g"] & B \arrow[r,"b^{\prime}"] \arrow[d, "f"]&  C \arrow[r] \arrow[d, "id"] & 0\\
        0 \arrow[r] & A \arrow[r,"a"]& B \arrow[r,"b"]&  C \arrow[r] & 0
      \end{tikzcd}
      \]
The left commutative square gives rise to an exact sequence
\[0\to A\xrightarrow{\left[\begin{smallmatrix}g\\ -a'\end{smallmatrix}\right]}A\oplus B\xrightarrow{\left[\begin{smallmatrix}a\ f\end{smallmatrix}\right]}B\to0.\]
 This sequence splits by (a). In particular, $\left[\begin{smallmatrix}a\ f\end{smallmatrix}\right]\colon A\oplus B\to B$ is a split epimorphism.
Since $a\colon A\to B$ belongs to the radical of the Krull--Schmidt category $\mbox{CM} R$, the morphism $f\colon B\to B$ has to be a split epimorphism.
Thus $f$ is an isomorphism, and so is $g$.
\end{proof}

Note that above proof works for morphisms in $\underline{\mbox{CM}} R$ since only difference is that we consider morphisms up to homotopy in $\underline{\mbox{CM}} R$.

\begin{cor}
Theorem \ref{Hom finite} holds in $\underline{\mathrm{CM}} R$ also.
\end{cor}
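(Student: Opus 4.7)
The corollary is the stable-category analogue of Theorem \ref{determined by terms}, with the reference to \ref{Hom finite} most naturally read as a typo for \ref{determined by terms}; the plan is therefore to port both parts of Theorem \ref{determined by terms} to the triangulated category $\underline{\mathrm{CM}}\,R$, replacing short exact sequences by distinguished triangles and $\Hom_R$ by $\underline{\Hom}_R$ throughout. The precise target statements are: (a$'$) a distinguished triangle $A\to B\to C\to A[1]$ in $\underline{\mathrm{CM}}\,R$ with $B\simeq A\oplus C$ splits, and (b$'$) an AR triangle in $\underline{\mathrm{CM}}\,R$ is determined up to isomorphism of triangles by its outer terms.

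For (a$'$), I would apply the cohomological functor $\underline{\Hom}_R(C,-)$ to the triangle, obtaining a long exact sequence whose relevant portion is
\[
\underline{\Hom}_R(C,A)\xrightarrow{a_\ast}\underline{\Hom}_R(C,B)\xrightarrow{b_\ast}\underline{\mathrm{End}}_R(C).
\]
Each term is of finite length over $R$ by Proposition \ref{Hom finite}, and the hypothesis $B\simeq A\oplus C$ forces
\[
\mathrm{length}_R\underline{\Hom}_R(C,A)+\mathrm{length}_R\underline{\mathrm{End}}_R(C)=\mathrm{length}_R\underline{\Hom}_R(C,B),
\]
so the three-term sequence is necessarily right exact. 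Consequently $\mathrm{id}_C$ admits a preimage under $b_\ast$, which is exactly a splitting of $b$ in $\underline{\mathrm{CM}}\,R$. Crucially, this half of the argument is \emph{cleaner} than in the exact-category case: since we are already working modulo morphisms that factor through projectives, the subgroup $P(C,C)$ has been quotiented out, so the closing step of the original proof (the observation that $P(C,C)\subset b\cdot\Hom_R(C,B)$) is not needed.

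For (b$'$), I would mimic the exact-category proof verbatim. Given an AR triangle $\alpha$ and an arbitrary triangle $\beta$ on the same terms, if $\beta$ split then by (a$'$) so would $\alpha$, contradicting the AR property; so $b'$ is not a split epimorphism in $\underline{\mathrm{CM}}\,R$, whence the defining right-almost-split property of $b$ produces $f\colon B\to B$ with $b'=b\circ f$ in $\underline{\mathrm{CM}}\,R$. The triangle axiom (TR3) supplies a morphism of triangles with identity on $C$, from which I extract the induced map $g\colon A\to A$. The mapping-cone (octahedral) construction then gives a triangle with middle term isomorphic to $A\oplus B$, and applying (a$'$) shows that this triangle splits, forcing $\bigl[\begin{smallmatrix}a & f\end{smallmatrix}\bigr]\colon A\oplus B\to B$ to be a split epimorphism in $\underline{\mathrm{CM}}\,R$; since $a$ lies in the radical of $\underline{\mathrm{CM}}\,R$, this forces $f$, and hence $g$, to be an isomorphism.

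The main care-point I anticipate is verifying that $\underline{\mathrm{CM}}\,R$ is Krull--Schmidt and that the morphism $a$ of an AR triangle lies in its radical, since these underpin the final step of (b$'$). Both follow from the fact that $R$ is a complete local ring---$\mbox{CM}\,R$ is Krull--Schmidt, and the quotient functor to $\underline{\mathrm{CM}}\,R$ preserves the Krull--Schmidt and radical structure---so no genuinely new input is required beyond what is already used implicitly in Theorem \ref{determined by terms}. Modulo this verification, the argument is a direct transcription as suggested by the preceding remark.
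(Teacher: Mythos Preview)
Your proposal is correct and follows the same approach as the paper. The paper's entire proof is the one-sentence remark immediately preceding the corollary: ``the above proof works for morphisms in $\underline{\mbox{CM}}\,R$ since the only difference is that we consider morphisms up to homotopy.'' You have unpacked this remark in triangulated-category language---replacing short exact sequences by distinguished triangles and the pushout square by the homotopy-cartesian (Mayer--Vietoris) triangle---which is the natural way to make the claim precise in the setting where it is actually applied (matrix factorizations of a hypersurface), and you correctly observe that the $P(C,C)$ step of part~(a) becomes vacuous after passing to the stable category.
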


\begin{remark}
Not all exact sequences are determined by their terms.
A simple example is given by a simple singularity $R=k[[x,y,z]]/(x^3+xy^2+z^2)$ of type $D_4$.
For a unique indecomposable object $X\in\mbox{CM} R$ with rank $2$, there exists a one-parameter family of non-isomorphic exact sequences of the form $0\to X\to R^{\oplus2}\oplus X\to X\to0$.
\end{remark}

The stable matrix factorization category is equivalent to $\underline{\mathrm{CM}} R$. Hence we get a following corollary for Lagrangian surgery exact sequences.

\begin{cor}
Let $0\to A\xrightarrow{a} B \xrightarrow{b} C\to0$ be some AR sequence in $\underline{\mathrm{CM}} R$. If Lagrangian surgery exact sequence is given by $0\to A\xrightarrow{a^{\prime}} B \xrightarrow{b^{\prime}} C\to0$, this exact sequence is isomorphic to AR sequence.
\end{cor}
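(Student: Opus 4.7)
The proof is essentially an immediate application of Theorem \ref{determined by terms}(b) from the appendix, once we verify that the Lagrangian surgery triangle passes to a bona fide short exact sequence in $\underline{\mathrm{CM}}\, R$. So my plan is to set up this translation carefully and then invoke the theorem.

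First, I would recall the dictionary. The Lagrangian surgery exact triangle of Lemma \ref{lem:surc2}, restricted to $\{g=0\}$, produces a distinguished triangle
\[
\cG(L_1) \xrightarrow{\;\cG_1(a)\;} \cG(L_3) \xrightarrow{\;\cG_1(b)\;} \cG(L_2) \to \cG(L_1)[1]
\]
in the homotopy category of $\mathcal{MF}(W^T)$. Under Eisenbud's equivalence $\mathcal{MF}(W^T) \simeq \underline{\mathrm{CM}}\,R$ (with $R = \C[x,y]/W^T$), distinguished triangles correspond to stable images of short exact sequences in $\mathrm{CM}\,R$. Hence, assuming the modules involved are (stably) isomorphic to $A, B, C$ of the given AR sequence, the surgery triangle yields an exact sequence of the form
\[
0 \to A \xrightarrow{a'} B \xrightarrow{b'} C \to 0
\]
in $\mathrm{CM}\,R$, as asserted in the statement of the corollary.

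Next, I would simply invoke Theorem \ref{determined by terms}(b): since $A, B, C$ are the terms of an almost split (Auslander--Reiten) sequence, any exact sequence $0 \to A \to B \to C \to 0$ with those same terms must fit into a commutative diagram with vertical isomorphisms onto the given AR sequence. This is the content of being ``determined by its terms,'' and the corollary follows.

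The only point that needs any real care is the first step, i.e., verifying that $\cG_1(a)$ and $\cG_1(b)$ assemble into an honest short exact sequence of MCM modules (as opposed to merely a distinguished triangle), and that the modules on the nodes are indeed the $A, B, C$ of the AR sequence rather than shifts or stabilizations thereof. For the latter, one uses that in the constructions of Section \ref{sec:AR} we exhibited explicit Lagrangians $L_1, L_2, L_3$ whose images under $\cG$ were identified (up to basis change) with the prescribed indecomposable matrix factorizations, and that the mapping cone $L_3 \simeq \mathrm{Cone}(c)$ from Lemma \ref{lem:surc} translates via the $\AI$-functor $\cG$ into the mapping cone of matrix factorizations, whose cokernel under Eisenbud's equivalence is exactly the middle term of the induced short exact sequence of MCM modules. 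Once this is in place, the corollary is a clean consequence of the appendix theorem with essentially no further work.
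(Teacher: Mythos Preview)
Your proposal is correct and follows essentially the same approach as the paper: the corollary is obtained by translating the Lagrangian surgery triangle through the equivalence $\mathcal{MF}(W^T)\simeq\underline{\mathrm{CM}}\,R$ and then invoking Theorem \ref{determined by terms}(b) (in its stable form). The paper's own argument is even more terse---it simply notes the equivalence and states the corollary---so your added discussion of why the surgery triangle yields a genuine short exact sequence with the correct terms is a welcome elaboration rather than a deviation.
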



\bibliographystyle{amsalpha}
\bibliography{FukayaSing}

\end{document}